\newtheorem{MainThm}{Theorem}
\theoremstyle{definition}
\newtheorem{defn}{Definition}[subsection]
\newtheorem{thm}[defn]{Theorem}
\newtheorem{cor}[defn]{Corollary}
\newtheorem{prop}[defn]{Proposition}
\newtheorem{lem}[defn]{Lemma}
\newtheorem{ex}[defn]{Example}
\newtheorem{rmk}[defn]{Remark}
\newtheorem{setup}[defn]{Hypothesis}
\DeclareMathOperator{\Frech}{Frech}
\DeclareMathAlphabet{\mathpzc}{OT1}{pzc}{m}{it}
\DeclareSymbolFont{largesymbols}{OMX}{yhex}{m}{n}
\DeclareMathAccent{\wideparen}{\mathord}{largesymbols}{"F3}
\newcommand{\h}[1]{\widehat{#1}}
\newcommand{\hK}[1]{\h{#1_K}}
\newcommand{\w}[1]{\wideparen{#1}}
\newcommand{\fr}[1]{\mathfrak{{#1}}}
\newcommand{\be}{\begin{enumerate}[{(}a{)}]}
\newcommand{\ee}{\end{enumerate}}
\newcommand{\qmb}[1]{\quad\mbox{#1}\quad}
\newcommand{\hsp}{\hspace{0.1cm}}
\newcommand{\congs}{\stackrel{\cong}{\longrightarrow}}
\newcommand{\wUg}[1]{\w{U}(\fr{g},{#1})}
\newcommand{\tocong}{\stackrel{\cong}{\longrightarrow}}
\newcommand{\utimes}[1]{\underset{#1}{\otimes}{}}
\newcommand{\wotimes}[1]{\underset{#1}{\w\otimes}{}}
  \let\leq=\leqslant
  \let\geq=\geqslant
\newcommand{\uM}{\underline{M}}
\begin{document}

\title{Irreducibility results for equivariant $\cD$-modules on rigid analytic spaces}
\author{Konstantin Ardakov}
\address{K. Ardakov\\Mathematical Institute\\University of Oxford\\Oxford OX2 6GG\\UK}

\author{Tobias Schmidt}
\address{T. Schmidt\\Bergische Universität Wuppertal\\42119 Wuppertal\\ GERMANY}

\subjclass[2010]{14G22; 32C38}
\begin{abstract} 
We prove a general irreducibility result for geometrically induced 
coadmissible equivariant $\cD$-modules on rigid analytic spaces.
As an application, we geometrically reprove the irreducibility of certain locally analytic representations 
previously constructed by Orlik-Strauch. 
\end{abstract}
\maketitle
\tableofcontents
\section{Introduction}

Let $G$ be a $p$-adic Lie group and let $K$ be a non-archimedean field of mixed characteristic $(0,p).$ Let $\bX$ be a smooth rigid $K$-analytic space endowed with an action of $G$.
The category $\cC_{\bX/G}$ of coadmissible $G$-equivariant $\cD$-modules
is the $p$-adic analogue of the classical category of equivariant coherent $D$-modules on a smooth complex variety endowed with an action of a real or complex Lie group. Similar to the classical case, one of the main motivations for the construction of $\cC_{\bX/G}$ comes from representation theory: if $\bX$ is the analytic flag variety of a $p$-adic reductive group $G$, then there is a Beilinson-Bernstein style localisation theorem which provides an equivalence of categories between $\cC_{\bX/G}$ and the category of admissible locally analytic $G$-representations with trivial infinitesimal character \cite{EqDCapTwo,HPSS}. This opens up the way to study locally analytic $G$-representations geometrically through techniques from $\cD$-module theory. In particular, one may try to construct irreducible representations geometrically on the flag variety $\bX$. 

\vskip5pt 

In this light, it is natural to study the preservation of irreducibility under various operations on equivariant $\cD$-modules. Such operations may come in two flavours, which correspond, vaguely speaking, to change of space or change of group. An example for the first case is the equivariant Kashiwara theorem 
\cite{EqDCapTwo}: given a smooth rigid analytic space $\bX$ together with an embedding $i: \bY\rightarrow\bX$ of a Zariski closed $G$-stable subspace, the equivariant direct image 
$i_+^G$ induces an equivalence between $\cC_{\bY/G}$ and the full subcategory $\cC^\bY_{\bX/G}$ of $\cC_{\bX/G}$ consisting of modules with support in $\bY$. An example for the second case is the induction equivalence \cite{EqDCapTwo}: if $P\subseteq G$ is a closed cocompact subgroup, then there is a geometric induction functor 
$$\ind_P^G: \cC_{\bX /P} \rightarrow  \cC_{\bX /G}.$$ Suppose that $P$ equals the stabilizer of a Zariski-closed subspace $\bY\subseteq\bX$, which is irreducible and quasi-compact and suppose additionally that $\bX$ is separated. If the $G$-orbit of $\bY$ is regular in $\bX$, i.e. distinct $G$-translates of $\bY$ are disjoint, then $\ind_P^G$ induces an equivalence of categories between 
$ \cC^\bY_{\bX /G}$ and $\cC^{G\bY}_{\bX /G}$. 
\vskip5pt 
Requiring the $G$-orbit of a given Zariski-closed subspace $\bY\subseteq\bX$ to be regular is a strong condition on $\bY$ and often not satisfied in practice. The present paper is motivated by the following question: under which weaker conditions does the functor $\ind_P^G$ still preserve irreducibility? In order to address this question properly, we establish first some useful foundational results.
\subsection{Induction and side-changing} According to \cite{EqDCapTwo}, there are side-changing functors $\Omega_\bX\otimes (-)$ and 
 ${\mathcal Hom} (\Omega_\bX,-)$ yielding mutually inverse equivalences of categories 
 between $\cC_{\bX /G}$ and and its right module version ${^r\cC}_{\bX /G}$. Similarly, there is a right module version ${^r\ind}_P^G$ of the induction functor, going from right $P$-equivariant $\cD_\bX$-modules to right $G$-equivariant $\cD_\bX$-modules.
\begin{MainThm} Let $P\subseteq G$ be a closed cocompact subgroup. Let $\cN\in \cC_{\bX /P}$. There is a natural isomorphism in 
${^r\cC}_{\bX /G}$
\[ {^r\ind}_P^G(\Omega_\bX\otimes \cN)\congs \Omega_\bX\otimes \ind_P^G\cN.\]
\end{MainThm}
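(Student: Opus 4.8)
The plan is to realise the side-changing functor as an intertwiner between $\ind_P^G$ and ${^r\ind}_P^G$. There are two routes. The quick conceptual one is available once one knows that $\ind_P^G$ (resp.\ ${^r\ind}_P^G$) is left adjoint to the restriction functor $\Res_P^G\colon\cC_{\bX/G}\to\cC_{\bX/P}$ (resp.\ its right-module analogue ${^r\Res}_P^G$) --- or, more weakly, that it enjoys the appropriate universal property relative to it. One then checks the compatibility
\[ \bigl(\Omega_\bX\otimes(-)\bigr)\circ\Res_P^G \;\cong\; {^r\Res}_P^G\circ\bigl(\Omega_\bX\otimes(-)\bigr), \]
where the side-changing on the left is that for $P$-equivariant modules and on the right that for $G$-equivariant ones: both functors send a left $\cD_\bX$-module with $G$-action to $\Omega_\bX\otimes_{\cO_\bX}(-)$, equipped with the canonical right $\cD_\bX$-action and the $P$-action obtained by restricting along $P\subseteq G$, and the only thing to observe is that the $P$-equivariant structure on $\Omega_\bX$ used on the left is the restriction of the $G$-equivariant structure used on the right --- these being one and the same canonical structure, namely pullback of top-degree differential forms. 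Passing to left adjoints, and using that the side-changing functors are equivalences (as recalled above), then produces the natural isomorphism ${^r\ind}_P^G(\Omega_\bX\otimes\cN)\congs\Omega_\bX\otimes\ind_P^G\cN$.

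Failing such a reciprocity --- and, in any case, to make the isomorphism explicit and check naturality --- one argues directly. All of the functors in sight are built from sheaf-theoretic data on $\bX$ and commute with restriction to admissible open subspaces, so it is enough to produce a canonical isomorphism over the members $\bU$ of an admissible affinoid cover of $\bX$ on which $\Omega_\bX$ is free of rank one; fix such a $\bU$ and a trivialisation $\Omega_\bX|_\bU\cong\cO_\bU$. Over $\bU$, the induction functors are, up to a coadmissible completion, extension of scalars along the inclusion $\w\cD(\bU,P)\hookrightarrow\w\cD(\bU,G)$, the completion being needed --- and available --- because $P$ is cocompact. Since $\Omega_\bX$ is an invertible $\cO_\bX$-module, $\Omega_\bX\otimes_{\cO_\bX}(-)$ is exact and commutes both with these extensions of scalars (which operate only in the group direction) and with the coadmissible completions; this yields, on the underlying $\cO_\bU$-modules, the required natural isomorphism, namely the evident associativity/projection isomorphism for the tensor products involved.

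It then remains to upgrade this $\cO_\bU$-linear isomorphism to one of right $\cD_\bX$-modules carrying compatible $G$-actions, and to glue. Here two things must be checked: first, that the twisted right $\cD_\bX$-action produced by side-changing after inducing coincides with the one produced by inducing after side-changing --- which holds because side-changing is transport of structure along the canonical invertible bimodule $\Omega_\bX\otimes_{\cO_\bX}\cD_\bX$, an operation purely along $\bX$ and hence commuting with the extension of scalars $\w\cD(\bU,P)\hookrightarrow\w\cD(\bU,G)$; second, that the two $G$-equivariant structures on the common underlying sheaf agree --- which, as in the first route, reduces to the fact that the canonical $G$-equivariant structure on $\Omega_\bX$ restricts along $P\subseteq G$ to the $P$-equivariant structure used on $\cC_{\bX/P}$. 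Naturality in $\cN$ and independence of the trivialisation are then automatic, since the construction uses only the intrinsic bimodule $\Omega_\bX\otimes_{\cO_\bX}\cD_\bX$ and the projection isomorphism, so the local isomorphisms glue to the asserted natural isomorphism in ${^r\cC}_{\bX/G}$. I expect the main obstacle to lie in this last step: carefully tracking the twisted right $\cD_\bX$-action and the twisted $G$-action through the induction construction and seeing that they match. That they do is no accident --- it reflects the compatibility of the canonical $G$-equivariant structure on $\Omega_\bX$ both with restriction $G\rightsquigarrow P$ and with the $\cD_\bX$-module structure on $\Omega_\bX$ --- and isolating this compatibility cleanly, perhaps as a lemma describing how $\Omega_\bX\otimes_{\cO_\bX}(-)$ interacts with the $\bigl(\w\cD(-,G),\w\cD(-,P)\bigr)$-bimodule $\w\cD(-,G)$, is really the heart of the matter.
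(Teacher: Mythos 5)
Your first route is genuinely different from the paper's argument and is viable in principle: the paper proves the left-module adjunction-type statement (Proposition \ref{Adjunction}, natural in both variables, with $\Hom$ in the Fr\'echet categories, which coincides with $\Hom$ in the coadmissible categories since morphisms of coadmissible modules are automatically continuous), so if you also establish its right-module analogue for ${^r\ind}_P^G$ and ${^r\Res}^G_P$ (not stated in the paper, but proved verbatim), then the trivial compatibility $\Omega_\bX\otimes\Res^G_P(-)\cong{^r\Res}^G_P(\Omega_\bX\otimes -)$ plus Yoneda/uniqueness of left adjoints does yield the natural isomorphism. What this buys is a formal, computation-free proof; what it costs is that the entire content is pushed into the two adjunction statements, whereas the paper's proof is an explicit local construction that it then reuses (e.g.\ in the proof of Theorem \ref{thm_IndDual}, where the concrete local form of the isomorphism matters).

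Your second, direct route has a genuine gap. The local model you use is wrong as stated: over $\bU\in\bX_w(\cT)$ the induction is \emph{not} extension of scalars along an inclusion $\w\cD(\bU,P)\hookrightarrow\w\cD(\bU,G)$ --- the algebra $\w\cD(\bU,G)$ is not even defined, since $G$ is in general neither compact nor $\bU$-small and need not stabilize $\bU$. The correct local description (\cite[2.2.12]{EqDCapTwo}) is an inverse limit over $\bU$-small compact open subgroups $H\subseteq G$ of direct sums over double cosets $Z\in H\setminus G/P$ of limits of $\w\cD(\bU,H)\wotimes{\w\cD(\bU,H\cap{}^sP)}[s]\cN(s^{-1}\bU)$, so any proof must handle the double cosets and the twists $[s]$, which your sketch never mentions. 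Moreover, the step you wave through --- that $\Omega_\bX\otimes_{\cO_\bX}(-)$ ``commutes with the coadmissible completions'' --- is exactly the nontrivial content: the paper proves it by descending to the Banach stages $S_n\to T_n$ of Fr\'echet--Stein presentations of $\w\cD(\bU,H\cap{}^sP)$ and $\w\cD(\bU,H)$, comparing $\Omega(\bU)\otimes_{\cO(\bU)}(-)$ with the completed product $\Omega(\bU)\oslash_{\cO(\bU)}(-)$ as in \cite[3.1.13/3.1.14]{EqDCapTwo}, exhibiting the explicit right $T_n$-linear isomorphism $\theta_{Q_n}((\omega\otimes m)\otimes r)=(\omega\otimes 1\otimes m)r$, passing to the limit in $n$, and then identifying $[s]\big(\Omega(s^{-1}\bU)\otimes\cN(s^{-1}\bU)\big)$ with $\Omega(\bU)\otimes[s]\cN(s^{-1}\bU)$ as right $\w\cD(\bU,H\cap{}^sP)$-modules before summing over cosets and taking the limit over $H$. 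You correctly identify this bimodule compatibility as ``the heart of the matter,'' but since you neither set up the correct local formula nor prove that key lemma, the proposal does not yet constitute a proof; no trivialisation of $\Omega_\bX$ is needed once the argument is set up intrinsically as above.
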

See Theorem \ref{thm_IndSide} for the proof.
\subsection{Induction and duality} The category $\cC_{\bX /G}$ contains the full subcategory $\cC^{\rm wh}_{\bX /G}$ consisting of weakly holonomic equivariant modules \cite{VuThesis}. This is a $G$-equivariant version of the category of weakly holonomic $\cD$-modules appearing in \cite{DCapThree}. Whenever Bernstein's inequality holds in $\cC_{\bX /G}$, then there is an involutive duality functor $\mathbb{D}_G$ on $\cC^{\rm wh}_{\bX /G}$. Note that Bernstein's inequality holds, for example, whenever $\bX$ has a smooth formal model.
\begin{MainThm} \label{thm_IndDual} Assume that Bernstein's inequality holds
in $\cC_{\bX /P}$ and $\cC_{\bX /G}$. Let $\cN\in  \cC^{\rm wh}_{\bX /P}$. There is a natural isomorphism in $\cC^{\rm wh}_{\bX /G}$
\[\mathbb{D}_G(\ind_P^G\cN)\congs \ind_P^G(\mathbb{D}_P\cN).\]
\end{MainThm}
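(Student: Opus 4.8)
We sketch the argument. The plan is to reduce the statement to an ambidexterity property of cocompact induction, by exploiting the formal behaviour of $\mathbb{D}$ under adjunctions. Recall first that, when Bernstein's inequality holds, $\mathbb{D}_G$ and $\mathbb{D}_P$ are obtained from the non-equivariant duality $\mathbb{D}$ of \cite{DCapThree, VuThesis} by transporting the equivariant descent datum along $\mathbb{D}$: since $\mathbb{D}$ commutes with pullback along any morphism up to a shift depending only on the relative dimension, and the action map $H\times\bX\to\bX$ and the projection $H\times\bX\to\bX$ have the same relative dimension, a descent datum for the action of a $p$-adic Lie group $H$ on $\cN$ produces one on $\mathbb{D}(\cN)$; thus the forgetful functors to the non-equivariant category $\cC^{\rm wh}_\bX$ intertwine $\mathbb{D}_G,\mathbb{D}_P$ with $\mathbb{D}$. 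Applying this uniformly in $H$ shows that the restriction-of-group functor $\Res^G_P\colon\cC^{\rm wh}_{\bX/G}\to\cC^{\rm wh}_{\bX/P}$ commutes with duality, $\mathbb{D}_P\circ\Res^G_P\cong\Res^G_P\circ\mathbb{D}_G$. One also records that $\ind_P^G$ restricts to a functor $\cC^{\rm wh}_{\bX/P}\to\cC^{\rm wh}_{\bX/G}$, by its compatibility with the dimension filtration (cf.\ \cite{VuThesis}), so that $\mathbb{D}_G(\ind_P^G\cN)$ is defined; and, using Theorem~A, one may run the whole argument on the side of right modules, where $\mathbb{D}$ is the ``primitive'' functor $\mathrm{R}\mathcal{H}om(-,\cD_\bX)$ carrying no $\Omega_\bX$-twist. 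I suppress these reductions below.

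Next recall from \cite{EqDCapTwo} that $\ind_P^G$ is extension of scalars along the canonical morphism $\w{\cD}(\bX,P)\to\w{\cD}(\bX,G)$ of the underlying distribution algebras, hence is left adjoint to $\Res^G_P$, which is the corresponding restriction of scalars. A purely formal calculation — using that $\mathbb{D}_G,\mathbb{D}_P$ are involutive anti-equivalences and that $\Res^G_P$ commutes with duality — then identifies $\mathbb{D}_G\circ\ind_P^G\circ\mathbb{D}_P$ with the \emph{right} adjoint of $\Res^G_P$, that is, with the coinduction functor $\mathrm{coind}_P^G$. Composing with $\mathbb{D}_G$ and substituting $\mathbb{D}_P\cN$ for $\cN$, the desired isomorphism $\mathbb{D}_G(\ind_P^G\cN)\congs\ind_P^G(\mathbb{D}_P\cN)$ becomes equivalent to the ambidexterity statement that, for a cocompact $P$, induction and coinduction agree: $\ind_P^G\cong\mathrm{coind}_P^G$, naturally in $\cN$.

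This ambidexterity is where the real work lies. It asks for a Frobenius-type self-duality of $\w{\cD}(\bX,G)$ over $\w{\cD}(\bX,P)$ and is, morally, Poincar\'e duality for the compact $p$-adic manifold $G/P$ transcribed to the distribution algebras. I would establish it by constructing, via integration over $G/P$, a nondegenerate $\w{\cD}(\bX,P)$-bilinear pairing
\[
\w{\cD}(\bX,G)\ \wotimes{\w{\cD}(\bX,P)}\ \w{\cD}(\bX,G)\ \longrightarrow\ \w{\cD}(\bX,P)
\]
and checking that it is perfect, level by level in a coadmissible presentation. The delicate point is that the modulus characters of $G$ and $P$, together with the orientation twist of $G/P$, must cancel, so that no residual character twist survives; this is precisely where the $\Omega_\bX$-normalisation built into the $\cD$-module formalism is used, and where the standing hypothesis that Bernstein's inequality holds — equivalently, that $\bX$ carries a smooth formal model — enters. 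Granting the pairing, what remains is bookkeeping: propagating the isomorphism through the completed tensor products and through the descent data defining $\cC_{\bX/G}$, and verifying its naturality in $\cN$.
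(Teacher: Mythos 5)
Your plan reduces the theorem to two statements, and neither of them is actually established; both are where the content lies. First, the commutation $\mathbb{D}_P\circ\Res^G_P\cong\Res^G_P\circ\mathbb{D}_G$ is justified by "transporting the descent datum along $\mathbb{D}$", using that $\mathbb{D}$ commutes with pullback along the action and projection maps up to a shift. No such formalism exists here: $\cC_{\bX/G}$ is not defined by descent data on a quotient, and $\mathbb{D}_G$ is not a derived functor with shifts --- it is built from the equivariant Ext sheaves $\bU\mapsto\varprojlim_H\Ext^d_{\w\cD(\bU,H)}(\cM(\bU),\w\cD(\bU,H))$ over $\bU$-small $H$, followed by side-changing. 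Comparing $\Ext^d$ over $\w\cD(\bU,H)$ with $\Ext^d$ over $\w\cD(\bU,H\cap P)$ is a genuine statement about (completed) crossed products; even the finite-index case needs a careful argument in this paper (the crossed-product Ext lemma and the proof that $i_+^G\cO_\bY$ is self-dual), and here $H\cap P$ has infinite index in $H$, so the claim cannot be waved through. Second, and more seriously, your formal adjunction yoga only converts the theorem into the ambidexterity statement $\ind_P^G\cong\mathrm{coind}_P^G$, i.e. a Frobenius-type perfect pairing $\w\cD(\bX,G)\wotimes{\w\cD(\bX,P)}\w\cD(\bX,G)\to\w\cD(\bX,P)$ with all modulus/orientation twists cancelling. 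This is at least as strong as the theorem itself: you do not construct the pairing, the asserted cancellation of the modulus character of $P$ against the twist by the top exterior power of $\fr{g}/\fr{p}$ is exactly the point at issue (in neighbouring settings such dualities do produce character twists), and it is not even clear that a right adjoint of $\Res^G_P$ exists within coadmissible modules, since the relevant completed Hom need not be coadmissible. So what remains is a programme, not a proof.

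For comparison, the paper's argument avoids coinduction and any global pairing entirely: it first proves that induction commutes with side-changing (Theorem \ref{thm_IndSide}), and then computes $\mathbb{D}_G(\ind_P^G\cN)$ locally. On a $\bU$-small pair $(\bU,H)$ one uses the Mackey decomposition $\cM(\bU)\cong\bigoplus_i\w\cD(\bU,H)\wotimes{\w\cD(\bU,H\cap{}^{s_i}P)}[s_i]\cN(s_i^{-1}\bU)$ and the base-change isomorphism
\[\Ext^d_{\w\cD(\bU,H)}\Bigl(\w\cD(\bU,H)\wotimes{\w\cD(\bU,H\cap{}^{s_i}P)}[s_i]\cN(s_i^{-1}\bU),\,\w\cD(\bU,H)\Bigr)\cong\Ext^d_{\w\cD(\bU,H^{s_i}\cap P)}\bigl(\cN(s_i^{-1}\bU),\w\cD(\bU,H^{s_i}\cap P)\bigr)[s_i]\wotimes{\w\cD(\bU,H\cap{}^{s_i}P)}\w\cD(\bU,H),\]
obtained from the flatness input of \cite[Lemma 2.5.3]{EqDCapTwo} and \cite[Lemma 8.4]{ST}, to identify ${^r\mathcal{E}}^d_G(\cM)$ with ${^r\ind}_P^G({^r\mathcal{E}}^d_P\cN)$ compatibly with restrictions and with the Fr\'echet structures. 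If you want to salvage your approach, the honest route is to prove your two inputs by exactly this kind of local Ext/base-change analysis --- at which point you have essentially reproduced the paper's proof rather than bypassed it.
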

See Theorem \ref{thm_IndDual} for the proof.
\subsection{The main irreducibility result.} The set up is as follows. Suppose that $\bX$ is a connected, smooth, rigid $K$-analytic variety and $G$ is a {\it compact} $p$-adic Lie group acting continuously on $\bX$. Let 
$\bY$ be a connected Zariski closed subset of $\bX$, with stabilizer $P := G_{\bY}$.
We suppose that the triple $(\bX, \bY, G)$ satisfies condition (LSC) from \cite[Definition 2.5.6]{EqDCapTwo}. The condition is a little too technical for the purpose of this introduction, but it is always satisfied, for example, if $\bX$ is separated and $\bY$ is irreducible and quasi-compact.
We call a module $\cN \in \cC^{\bY}_{\bX/P}$ {\it locally simple}, if 
$\cN_{|\bU}$ is a simple object in $\cC_{\bU/P_{\bU}}$ whenever $\bU \in \bX_w(\cT)$ is connected and $\bU \cap \bY$ is connected and non-empty. Here, $P_\bU$ denotes the stabilizer of $\bU$ in $P$. 
 
\begin{MainThm} Suppose there is a Zariski closed subset $\bZ$ of $\bY$ with $\dim \bZ < \dim \bY$ which has the following properties: 
\begin{enumerate}
\item $\bigcup\limits_{\stackrel{g,h \in G}{g\bY \neq h\bY}} g\bY \cap h\bY \subseteq G \bZ$,
\item $\bY \cap \Sigma$ is connected, where $\Sigma=\bX\setminus G.\bZ$.
\end{enumerate}
Let $\cN \in \cC^{\bY}_{\bX/P}$ be weakly holonomic. If $\cN$ and $\mathbb{D}_P(\cN)$ are locally simple, then the induced module $\ind_P^G(\cN)$ is a simple object in $\cC_{\bX/G}$.
\end{MainThm}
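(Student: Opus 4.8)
The plan is to reduce the global irreducibility of $\ind_P^G(\cN)$ to a local statement over the open subspace $\Sigma = \bX \setminus G.\bZ$, where the $G$-orbit of $\bY \cap \Sigma$ becomes \emph{regular}, so that the induction equivalence quoted in the introduction applies verbatim. Concretely, write $\bY' := \bY \cap \Sigma$ and $\bZ' := \bZ$ (or rather its intersection with $\bY$); condition (1) says precisely that over $\Sigma$ the translates $g\bY'$ for $g$ ranging over coset representatives of $G/P$ are pairwise disjoint, and $\bY'$ is a connected Zariski closed subset of $\Sigma$ by condition (2). Thus the induction equivalence gives that $\ind_{P_\Sigma}^{G}\bigl(\cN_{|\Sigma}\bigr)$ corresponds, under the equivariant Kashiwara equivalence $\cC^{\bY'}_{\Sigma/G} \simeq \cC_{\bY'/G}$, to the pushforward of $\cN_{|\bY'} \in \cC_{\bY'/P}$ — and the latter is simple as an object of $\cC_{\bY'/P}$ because $\cN$ is locally simple and $\bY'$ is connected, being covered by the connected admissible opens $\bU \cap \bY$ appearing in the definition of local simplicity (here I would use that dévissage on a connected space lets local simplicity on a cofinal family of opens propagate to simplicity of the whole module, exactly as in the proof that $\cC_{\bY'/P}$-simplicity is a local condition).

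Next I would bootstrap from $\Sigma$ back to $\bX$. Any nonzero $G$-equivariant coadmissible submodule $\cM \subseteq \ind_P^G(\cN)$ restricts to a submodule $\cM_{|\Sigma}$ of $\ind_P^G(\cN)_{|\Sigma} = \ind_{P_\Sigma}^G(\cN_{|\Sigma})$, which by the previous paragraph is simple; so either $\cM_{|\Sigma} = 0$ or $\cM_{|\Sigma} = \ind_P^G(\cN)_{|\Sigma}$. The first case is ruled out by a support argument: $\ind_P^G(\cN)$ has support contained in $G\bY$, and its support cannot be contained in $G.\bZ$ since $\dim \bZ < \dim \bY$ while $\cN$, being weakly holonomic and nonzero with support exactly $\bY$, forces $\ind_P^G(\cN)$ to have support of dimension $\dim \bY$ along $\bY$; hence a nonzero submodule must meet $\Sigma$. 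It remains to promote the equality $\cM_{|\Sigma} = \ind_P^G(\cN)_{|\Sigma}$ to $\cM = \ind_P^G(\cN)$, i.e. to show that the quotient $\cQ := \ind_P^G(\cN)/\cM$ is zero given that it is supported on the closed set $G.\bZ$ of dimension $< \dim\bY$. This is where weak holonomicity and duality enter: apply $\mathbb{D}_G$ and use Theorem~\ref{thm_IndDual} to identify $\mathbb{D}_G(\ind_P^G(\cN)) \cong \ind_P^G(\mathbb{D}_P(\cN))$; since $\mathbb{D}_P(\cN)$ is also locally simple by hypothesis, the same argument shows $\ind_P^G(\mathbb{D}_P(\cN))$ has no nonzero quotient supported on $G.\bZ$, equivalently $\ind_P^G(\cN)$ has no nonzero sub supported on $G.\bZ$; dualising the exact sequence $0 \to \cM \to \ind_P^G(\cN) \to \cQ \to 0$ turns $\cQ$ into such a sub, forcing $\cQ = 0$.

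Assembling these pieces: $\ind_P^G(\cN)$ is nonzero (it restricts to something nonzero on $\Sigma$), and every nonzero coadmissible $G$-equivariant submodule equals the whole module, so $\ind_P^G(\cN)$ is simple in $\cC_{\bX/G}$. The main obstacle, I expect, is the careful bookkeeping in the two reduction steps — first, checking that condition (LSC) is inherited by the triple $(\Sigma, \bY', G)$ and by the connected admissible opens $\bU$, so that both the Kashiwara equivalence and the induction equivalence are legitimately available over $\Sigma$; and second, the duality argument controlling submodules/quotients supported on $G.\bZ$, where one must know that a nonzero weakly holonomic equivariant module cannot be supported in dimension strictly below that of its "expected" support. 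The dimension estimate should follow from the behaviour of the characteristic variety under $\ind_P^G$ together with Bernstein's inequality (which we are assuming holds in $\cC_{\bX/P}$ and $\cC_{\bX/G}$), but making this precise in the equivariant rigid-analytic setting — rather than citing a classical black box — is the delicate point, and I would expect the bulk of the work, and any hidden hypotheses, to live there.
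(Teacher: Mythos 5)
Your overall architecture is the same as the paper's: restrict to $\Sigma$, where condition (1) makes the $G$-orbit of $\bY \cap \Sigma$ regular, so that the induction equivalence together with local simplicity of $\cN$ and connectedness of $\bY\cap\Sigma$ gives simplicity of $\ind_P^G(\cN)_{|\Sigma}$ (Lemma \ref{simpleNonSigma} and Proposition \ref{simpleMSigma}; no Kashiwara step is needed there), and then use $\mathbb{D}_G(\ind_P^G\cN)\cong\ind_P^G(\mathbb{D}_P\cN)$ from Theorem \ref{thm_IndDual} to reduce everything to the statement that no nonzero subobject of $\ind_P^G(\cN)$, respectively of $\ind_P^G(\mathbb{D}_P\cN)$, restricts to zero on $\Sigma$ (Corollary \ref{KeyLemma}). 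The gap is in how you propose to prove that last statement. You justify it by a dimension count: the support of $\ind_P^G(\cN)$ has dimension $\dim\bY>\dim\bZ$, and you appeal to Bernstein's inequality and the behaviour of characteristic varieties to conclude that a nonzero (sub)module cannot be supported on $G\bZ$. This fails twice over: the fact that the ambient module's support is not contained in $G\bZ$ says nothing about a submodule; and there is no lower bound of this kind for weakly holonomic modules --- for instance the equivariant pushforward of $\cO_{\bZ}$ is weakly holonomic and supported exactly on $\bZ$, and Bernstein's inequality only bounds the dimension of the characteristic variety below by $\dim\bX$, which is compatible with arbitrarily small supports. So the proposed mechanism cannot rule out a subobject concentrated on $G\bZ$.

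What is actually needed is the local cohomology vanishing $\cH^0_{G\bZ}(\ind_P^G\cN)=0$, and the paper proves it in two non-formal steps that are missing from your sketch. First, $\cH^0_{G\bZ}(\cN)=0$ (Lemma \ref{H0NalongGZ}): on a suitable affinoid $\bU$ the kernel of the restriction $\cN(\bU)\to\cN(\bV_n)$ is a closed $P_{\bU}$-stable submodule, hence $0$ or all of $\cN(\bU)$ by local simplicity, and the second possibility is excluded because $\Supp\cN=\bY$ meets $\bU\setminus G_{\bU}\bZ'$ --- this is the only place where $\dim\bZ<\dim\bY$ is used, via Lemma \ref{SHY}; no characteristic varieties enter. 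Second, this injectivity of restriction maps must be transferred from $\cN$ to the induced module (Theorem \ref{LCIndM} and Lemma \ref{H0MalongGZ}); since $\ind$ is built from completed tensor products over Fr\'echet--Stein algebras, injectivity does not pass through for free, and the paper's proof requires the Mackey decomposition, the freeness of $R_m$ over $D_m$ on the basis $\mathbf{b}^\alpha$ for uniform pro-$p$ groups, an interchange of projective limits, and a reduction to isolated subgroups. In addition, you restrict to $\Sigma$ without knowing it is an admissible open: $G\bZ$ is an orbit of a Zariski closed set, not itself Zariski closed, and the admissibility of $\Sigma$ (Proposition \ref{prop3} and Lemma \ref{SigmaAdm}, using compactness of $G$) together with $G_{\bY\cap\Sigma}=P$ (Corollary \ref{StabYcapSigma}, which again needs the dimension hypothesis) are prerequisites for both the restriction functor to $\cC_{\Sigma/G}$ and the local cohomology sequences you invoke.
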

See Theorem \ref{MainResult} for the proof. It is the first point (1) involving self-intersections, which is the crucial condition. For example, if $\bY$ has a regular $G$-orbit in $\bX$, then $\bZ=\emptyset$ trivially satisfies (1) and (2). Below we explain how to verify condition (1) in practice. In many situations, the locally simple module $\cN$ is in fact self-dual, so that the condition on $\mathbb{D}(\cN)$ is redundant. For example, assume that Bernstein's inequality holds
in $\cC_{\bX /G}$ and that $\bY\subseteq\bX$ is a $G$-stable Zariski closed subvariety. Denote by $i_+^G: \cC_{\bY/P}\rightarrow \cC_{\bX/G}$ the equivariant direct image functor 
\cite{EqDCapTwo}. We show in Thm. \ref{prop_SelfGDual} that $i_+^G\cO_\bY$ is self-dual. 

\subsection{The set of self-intersections}
 In this subsection, we give a criterion to verify condition (1) in the preceding theorem in practice. 
 Let $\bX$ be a rigid analytic variety and $G$ a $p$-adic Lie group (possibly non-compact) acting continuously on $\bX$. Let $\bY$ a Zariski closed subset of $\bX$ with the stabilizer $G_\bY$.
Let $S$ be a set of representatives for the double cosets $G_\bY \setminus G \;/ G_\bY$ containing $1\in G$ and define $S^\ast := S \backslash \{1\}$. We write
\[ \bR_v := \bY \cap v \bY \qmb{for every} v \in S, \qmb{and} \bZ :=  \bigcup\limits_{v\in S^\ast} \bR_v .\] The set $\bZ$ is Zariski closed in $\bX$, whenever $G_\bY \setminus G \;/ \;G_\bY$ is finite, and one has
\[\bigcup\limits_{\stackrel{g,h \in G}{g\bY \neq h\bY}} g\bY \cap h\bY = G \bZ,\]
whenever $\bX$ is quasi-compact. In this situation, the set $\bY$ has a regular $G$-orbit in $\bX$ if and only if $\bZ$ is empty. Hence $\bZ$ is the obstruction to $\bY$ having a regular $G$-orbit in $\bX$, at least if $\bX$ is quasi-compact. A major open question which we cannot answer completely at the moment is: under what general conditions is the complement $\Sigma=\bX \setminus G\bZ$ an admissible open subset of $\bX$? The main problem is that the stabilizer $G_\bZ$ in general is not cocompact in $G$, even when $G_\bY$ is cocompact in $G$. It is this open problem, which limits our current range of applications.

\subsection{Schubert varieties.} A first class of examples to which the main result applies are Schubert varieties in projective space. Consider $G={\rm GL}_n(L)$, where $L$ is a finite extension of $\Qp$ contained in $K$, acting on rigid analytic projective space $\P_K^{n-1,\rm an}$. Let 
\[\bX_{1} \subset \bX_{2} \subset \cdot\cdot\cdot \subset \bX_{n}\]
be the chain of Schubert varieties in $\P_K^{n-1,\rm an}$, i.e. 
$\bX_j$ is the Zariski closed subvariety of $\bX$ where the last $n-j$ homogeneous coordinates vanish. 
 \begin{MainThm}\label{thm-SchubProjIrred} Fix $j$ and let $P=G_{\bX_j}$.
Let $i: \bX_j\hookrightarrow \P_K^{n-1,{\rm an}}$ denote the closed embedding. 
 Let $\cN:=i^{P}_{+}\cO_{\bX_j}\in \cC_{\P_K^{n-1,{\rm an}}/P}^{\bX_j}$ be the $P$-equivariant pushforward of the structure sheaf $\cO_{\bX_j}$. Then the induced module
 $\ind_P^G \cN$ is a simple object in $\cC_{\P_K^{n-1,{\rm an}}/G}$.
  \end{MainThm}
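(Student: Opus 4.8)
The plan is to deduce Theorem~\ref{thm-SchubProjIrred} from the main irreducibility result (Theorem~C above) applied to the triple $(\bX, \bY, G) = (\P_K^{n-1,\mathrm{an}}, \bX_j, G)$. First I would check the standing hypotheses: $\P_K^{n-1,\mathrm{an}}$ is connected, smooth, and separated, $\bX_j$ is an irreducible (hence connected) quasi-compact Zariski closed subvariety, and so condition (LSC) is automatic. However, $G = \mathrm{GL}_n(L)$ is \emph{not} compact, so I cannot apply Theorem~C directly; the standard device is to choose a compact open subgroup $G_0 \subseteq G$ that still acts transitively on the relevant orbit of $\bX_j$ (using that $\mathrm{GL}_n$ has a compact open subgroup surjecting onto the relevant partial flag variety over the residue field / via the Iwasawa or Bruhat decomposition), prove simplicity of $\ind_{P_0}^{G_0}\cN$ for $P_0 = P \cap G_0$, and then transfer back along the induction equivalence / transitivity of induction $\ind_{P}^{G} = \ind_{G_0}^{G}\circ\ind_{P_0}^{G_0}$ together with the fact that $\ind$ from a cocompact subgroup is exact and preserves simplicity once the orbit structure is controlled. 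Alternatively — and this is cleaner — one observes that all the self-intersection geometry is insensitive to shrinking $G$, so the essential content is purely about the $\mathrm{GL}_n$-geometry of Schubert varieties.

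The heart of the argument is verifying conditions (1) and (2) of Theorem~C, i.e.\ producing a Zariski closed $\bZ \subsetneq \bX_j$ with $\dim\bZ < \dim\bX_j$ controlling all self-intersections $g\bX_j \cap h\bX_j$ with $g\bX_j \neq h\bX_j$, and such that $\bX_j \cap (\bX \setminus G\bZ)$ is connected. Using the double-coset description from the preceding subsection, I would take $S$ a set of representatives for $P \backslash G / P$ and set $\bR_v = \bX_j \cap v\bX_j$, $\bZ = \bigcup_{v \in S^\ast}\bR_v$. The key computation: $\bX_j$ is the locus where the last $n-j$ homogeneous coordinates vanish, i.e.\ the projectivization $\P(W_j)$ of the coordinate subspace $W_j = \langle e_1,\dots,e_j\rangle$; for $g \in G$, $g\bX_j = \P(gW_j)$, and $\P(W_j) \cap \P(g W_j) = \P(W_j \cap gW_j)$, which is a linear subvariety of strictly smaller dimension precisely when $gW_j \neq W_j$, i.e.\ $g \notin P$. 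So each $\bR_v$ for $v \in S^\ast$ is a \emph{proper} linear subspace of $\bX_j = \P(W_j)$, hence has dimension $\le j - 2 < j - 1 = \dim\bX_j$. The union over the (finitely many — since $P\backslash G/P$ is finite, this needs the Bruhat decomposition for the parabolic $P$) representatives is then a proper Zariski closed subset; in fact one can take $\bZ$ to be the union of all proper coordinate-type linear subspaces appearing, but it is cleaner to argue it is contained in a single hyperplane section of $\bX_j$.

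For condition (2), I need $\bX_j \cap \Sigma$ connected where $\Sigma = \bX \setminus G\bZ$. Since $\bZ$ is a proper closed subset of the irreducible variety $\bX_j$ of codimension $\ge 1$, the open complement $\bX_j \setminus (\bX_j \cap G\bZ) = \bX_j \setminus \bigcup_{g} (g^{-1}\bX_j \cap \bX_j \text{ proper})$ is a Zariski-dense open in the irreducible rigid variety $\bX_j$, hence connected (an admissible open dense subset of a connected smooth rigid variety is connected). The one genuine subtlety flagged in the excerpt is whether $\Sigma$ is an \emph{admissible} open subset of $\bX$ — i.e.\ whether $G\bZ$ is a reasonable (ind-Zariski-closed, or at least analytically closed) subset; here $G\bZ = \bigcup_{g\bX_j \neq h\bX_j} g\bX_j \cap h\bX_j$ is a countable union of linear subvarieties of $\P^{n-1}$, and I would argue admissibility using that these are all contained in the finitely many $G$-translates of a fixed hyperplane arrangement up to the $G_0$-action, or by a direct covering argument on $\P^{n-1,\mathrm{an}}$.

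Finally, self-duality of $\cN = i_+^P \cO_{\bX_j}$: by the remark after Theorem~C (namely Theorem~\ref{prop_SelfGDual}), once Bernstein's inequality holds in $\cC_{\bX/G}$ — which it does because $\P^{n-1,\mathrm{an}}$ has the obvious smooth formal model — the module $i_+^G\cO_{\bX_j}$ is self-dual, and correspondingly $\mathbb{D}_P(\cN) \cong \cN$, so the hypothesis of Theorem~C on $\mathbb{D}_P(\cN)$ reduces to local simplicity of $\cN$ itself. Local simplicity of $i_+^P\cO_{\bX_j}$ follows from the equivariant Kashiwara equivalence: restricted to a suitable connected admissible open $\bU$ meeting $\bX_j$ connectedly, $\cN_{|\bU}$ corresponds under Kashiwara to $\cO_{\bX_j \cap \bU}$, which is simple as a $\cD$-module on the connected smooth space $\bX_j \cap \bU$ (equivariantly, using that $\cO$ is the unique simple object supported everywhere). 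I expect the main obstacle to be precisely the admissibility of $\Sigma = \bX \setminus G\bZ$ as a rigid-analytic open — the authors explicitly call this an open problem in general — so the real work of this application is checking that in the concrete projective-space/Schubert setting the arrangement $G\bZ$ is tame enough for $\Sigma$ to be admissible open, which I would handle by exhibiting $G\bZ$ inside a locally finite (for the canonical admissible covering of $\P^{n-1,\mathrm{an}}$) union of linear subvarieties.
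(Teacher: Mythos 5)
Your overall strategy---pass to a compact open $G_0$, apply the main irreducibility result, identify the self-intersection locus via the Pl\"ucker/linear-subspace description, and use self-duality to collapse the two local-simplicity hypotheses into one---matches the paper. But there are two concrete gaps, one of which is serious.

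First, the reduction from $G$ to $G_0$ is not merely a matter of ``the self-intersection geometry being insensitive to shrinking $G$''. Condition (1) of Theorem~C is a statement about the full $G$-orbit of $\bZ$, and Corollary~\ref{cor_compact_stab} shows that to obtain the analogous statement for $G_0$ you must know $G\bZ_j = G_0 \bZ_j$. This is exactly what the paper proves in Lemma~\ref{lem-linearlydefined}(b): one of the double-coset representatives, namely $s_j$, produces $\bR_{s_j} = \bX_{j-1}$, so $G_0\bZ_j \supseteq G_0\bX_{j-1}$, and the Iwasawa decomposition $G = G_0 P_{j-1}$ (stabilizer of $\bX_{j-1}$) then forces $G\bZ_j = G\bX_{j-1} = G_0\bX_{j-1} = G_0\bZ_j$. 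Your proposal never nails this down; it is the hinge that makes the passage to $G_0$ legitimate. Relatedly, you flag admissibility of $\Sigma$ as ``the main obstacle'', but once one is working over a \emph{compact} group the paper's Proposition~\ref{prop3} and Lemma~\ref{SigmaAdm} already show $\Sigma$ is admissible open in full generality; the open problem cited in the introduction concerns non-compact $G$, and the equality $G\bZ_j = G_0\bZ_j$ is precisely how one escapes it here.

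Second, and more seriously, your argument for condition (2) does not work. You claim that $\bX_j \cap \Sigma$ is connected because ``an admissible open dense subset of a connected smooth rigid variety is connected''. This is not a valid general principle (and $\Sigma$ is not Zariski open---$G\bZ$ is a \emph{countable} union of Zariski closed subvarieties, so Conrad-type irreducibility arguments do not apply). In the paper, $\bX_j \cap \Sigma$ is identified explicitly with the $(j-1)$-dimensional Drinfeld half-space $\P_K^{j-1,\mathrm{an}} \setminus \bigcup_{H\in\mathcal H} H$, the complement of all $L$-rational hyperplanes, and its connectedness is invoked as a known nontrivial theorem about that specific space. Your generic density argument silently replaces a substantial geometric fact with an unjustified (and in general false) topological heuristic; this is a real gap, not a matter of presentation.
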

See Theorem \ref{thm-SchubProjIrred} for the proof. We may also consider Schubert varieties in full flag varieties. So let $\G$ be a split connected reductive $K$-group $\G$, with its natural $\G$-action given by conjugating the Borel subgroups of $\G$. Let $G$ be a $p$-adic Lie group with a continuous homomorphism $G\rightarrow\G(K)$. Let $\T\subseteq\B$ be a Borel subgroup in $\G$ containing a split maximal torus $\T$. Let $W$ be the Weyl group of the pair $(\G,\T)$. The $\B$-orbits $C_w$ in the full flag variety $\G/\B$ can be indexed by the Weyl elements $w\in W$ and their Zariski closures $\X_w$ are the classical Schubert varieties. Let $\bX=(\G/\B)^{\rm an}$ and $\bX_w=(\X_w)^{\rm an}$ the corresponding analytic spaces. For a Schubert variety $i: \bX_w\subseteq \bX$ denote by $\bZ_w$ its set of self-intersections corresponding to a (finite) set of representatives 
 for $G_{\bX_w}\setminus G / \;G_{\bX_w}$.  
\begin{MainThm} Let $w\in W$ and $P:=G_{\bX_w}$. Suppose the following three conditions. 
\be 

\item $G\bZ_{w}=G_0\bZ_{w}$ with $G_0\subset G$ some compact open subgroup such that $G=G_0P$.
\item $\bX_w\setminus G\bZ_w$ is connected.
\item $\bX_{w}$ is smooth.
\ee
 Let $\cN:=i^{P}_{+}\cO_{\bX_w}\in \cC_{\bX/P}^{\bX_w}$ be the $P$-equivariant pushforward of the structure sheaf $\cO_{\bX_w}$. Then the induced module
 $\cM:=\ind_{P}^G \cN$ is a simple object in $\cC_{\bX/G}$.
  \end{MainThm}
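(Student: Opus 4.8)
The plan is to deduce this from the main irreducibility theorem, Theorem~\ref{MainResult}, whose conclusion is stated only for a \emph{compact} acting group; conditions (a) and (b) are exactly what is needed to descend from $G$ to a compact open subgroup, and the argument will run in parallel with that of Theorem~\ref{thm-SchubProjIrred}, with (a)--(c) supplying by hand the facts that were automatic for the linear Schubert varieties in projective space. First I would set $P_0 := G_0\cap P = (G_0)_{\bX_w}$. Since $G = G_0P$, the orbit $G/P$ is compact (it equals $G_0/P_0$), so $P$ is cocompact in $G$ and $\ind_P^G$ is defined; moreover $G_0$ acts transitively on $G/P$ with point stabiliser $P_0$, so $P_0$ is cocompact in the compact group $G_0$. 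Using this together with the base-change compatibility of the geometric induction functor \cite{EqDCapTwo}, I would produce a natural isomorphism
\[ \Res^G_{G_0}\,\ind_P^G\cN \;\congs\; \ind_{P_0}^{G_0}\cN_0, \qquad \cN_0 := \Res^P_{P_0}\cN = i_+^{P_0}\cO_{\bX_w}, \]
in $\cC_{\bX/G_0}$, where restriction of the equivariant structure commutes with pushforward along $i$. As any $G$-submodule of $\cM = \ind_P^G\cN$ is in particular a $G_0$-submodule of $\Res^G_{G_0}\cM$, it then suffices to prove that $\ind_{P_0}^{G_0}\cN_0$ is simple in $\cC_{\bX/G_0}$.

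For that I would invoke Theorem~\ref{MainResult} with the compact group $G_0$, the space $\bX = (\G/\B)^{\rm an}$, the connected Zariski closed subset $\bY = \bX_w$ (Schubert varieties are irreducible), the stabiliser $P_0$, and $\bZ = \bZ_w$. Here $\bX$ is connected, smooth and separated (indeed proper), and it has a smooth formal model obtained from a Chevalley model of $\G/\B$ over $\cO_K$, so Bernstein's inequality holds in $\cC_{\bX/G_0}$ and $\cC_{\bX/P_0}$; and (LSC) for $(\bX,\bX_w,G_0)$ holds since $\bX$ is separated and $\bX_w$ is irreducible and quasi-compact. From the discussion of the set of self-intersections, $\bZ_w\subseteq\bX_w$ is Zariski closed with $\dim\bZ_w < \dim\bX_w$, since each $\bX_w\cap v\bX_w$ with $v\notin P$ is a proper closed subvariety of the irreducible $\bX_w$ and the union defining $\bZ_w$ is finite. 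Since $G\bX_w = G_0\bX_w$, the set $\bigcup_{g\bX_w\ne h\bX_w} g\bX_w\cap h\bX_w$ is the same whether $g,h$ range over $G$ or over $G_0$; by quasi-compactness of $\bX$ it equals $G\bZ_w$, which by (a) equals $G_0\bZ_w$, giving condition~(1) for $G_0$. By (a) again, $\bX\setminus G_0\bZ_w = \bX\setminus G\bZ_w$, so $\bX_w\cap(\bX\setminus G_0\bZ_w) = \bX_w\setminus G\bZ_w$ is connected by (b), giving condition~(2).

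It remains to check the hypotheses on $\cN_0$. By hypothesis (c) the Schubert variety $\bX_w$ is smooth, so $\cO_{\bX_w}$ is a coadmissible holonomic $\cD$-module and $\cN_0 = i_+^{P_0}\cO_{\bX_w}\in\cC^{\bX_w}_{\bX/P_0}$ is weakly holonomic \cite{VuThesis,DCapThree}. Since $\bX_w$ is $P_0$-stable, Theorem~\ref{prop_SelfGDual} applied with the group $P_0$ gives $\mathbb{D}_{P_0}(\cN_0)\congs\cN_0$, so it is enough to check that $\cN_0$ is locally simple. If $\bU$ is a connected admissible open of the relevant class with $\bU\cap\bX_w$ connected and non-empty, then $\bU\cap\bX_w$ is a connected smooth Zariski closed subspace of $\bU$, the equivariant Kashiwara equivalence \cite{EqDCapTwo} identifies $(\cN_0)_{|\bU}$ with $\cO_{\bU\cap\bX_w}\in\cC_{(\bU\cap\bX_w)/(P_0)_\bU}$, and the structure sheaf of a connected smooth rigid space is a simple coadmissible equivariant $\cD$-module; hence $\cN_0$, and by self-duality $\mathbb{D}_{P_0}(\cN_0)$, is locally simple. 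Theorem~\ref{MainResult} then yields that $\ind_{P_0}^{G_0}\cN_0$ is simple in $\cC_{\bX/G_0}$, and by the first paragraph so is $\cM = \ind_P^G\cN$ in $\cC_{\bX/G}$.

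I expect the main obstacle to be the isomorphism $\Res^G_{G_0}\ind_P^G\cong\ind_{P_0}^{G_0}\Res^P_{P_0}$: it follows formally from the decomposition $G = G_0P$, but making it precise requires the correct compatibility of geometric induction with restriction of the equivariant structure along an open subgroup, which has to be extracted from the construction of $\ind_P^G$ in \cite{EqDCapTwo}. A secondary but essential point is that the smoothness hypothesis (c) is genuinely used — general Schubert varieties in $\G/\B$ are singular — since it is what makes $\cO_{\bX_w}$ holonomic, and what allows the use of Theorem~\ref{prop_SelfGDual} and of equivariant Kashiwara on the small opens $\bU$; the verification of (LSC), weak holonomicity, and the translation of conditions (1)--(2) into (a)--(b) is then routine bookkeeping.
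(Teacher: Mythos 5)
Your proposal is correct and follows essentially the same route as the paper: reduce to the compact group $G_0$ via the compatibility $\Res^G_{G_0}\ind_P^G\cong\ind_{P_0}^{G_0}\Res^P_{P_0}$ (Proposition \ref{prop_iwasawa}, using $G=G_0P$), then apply Theorem \ref{MainResult} to $(\bX,G_0,\bX_w,\bZ_w,\cN)$, checking (A) via separatedness, irreducibility and quasi-compactness, (B) via the quasi-compactness description of self-intersections (Corollaries \ref{cor-richardson}/\ref{cor_compact_stab}) together with hypothesis (a), (C) from hypothesis (b), Bernstein's inequality from the smooth model of the split flag variety, self-duality from Theorem \ref{prop_SelfGDual}, and local simplicity via the equivariant Kashiwara theorem and simplicity of the structure sheaf on connected smooth pieces. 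The remaining items (support and weak holonomicity) are the same routine verifications as in the paper's treatment of projective space.
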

  
See Theorem \ref{MainResult2} for the proof.  We briefly comment on the two conditions (a), (b) and (c) of the Theorem. 
Condition (a) does not hold for all Schubert varieties $\bX_w$ in $\bX=(\G/\B)^{\rm an}$, and is directly related to the open problem alluded above. A first case in which it fails, appears in the case $\G={\rm GL}_4$ and $\bX_w$ equal to the inverse image of the unique Schubert divisor in the analytic Grassmannian ${\rm Gr}(2,4)^{\rm an}$. 
This $\bX_w$ is non-smooth, so condition (c) also fails in this case.  One may imagine to eventually remove condition (c) by replacing the push-forward of $\cO_{\bX_w}$ by some
intermediate extension of $\cO_{\bC_w}$ where $\bC_w$ equals the corresponding Bruhat cell. However, a rigid analytic theory of intermediate extensions is currently not available. 
As for condition (b), we are not aware of any counterexamples where this conditions fails.

\vskip5pt 
Assume that $\B\subseteq\P$ is a parabolic subgroup and consider the projection
 $f^{\rm an}: (\G/\B)^{\rm an}\rightarrow (\G/\P)^{\rm an}$. Suppose that $\bX_w$ is the inverse image of a Schubert variety $\bX_{vP}$ in $(\G/\P)^{\rm an}$. We show that the conditions of the preceding theorem are satisfied as soon as the (analogous) conditions are satisfied for 
 $\X_{vP}$. This produces many examples. For example, the theorem covers open and closed Schubert varieties, the Schubert curves $\bX_{s}$ (for simple reflections $s\in W$) or Schubert varieties of the form $\bX_{w_{o,\P}}$ where $w_{o,\P}$ is the longest element in a parabolic subgroup $W_{\P}$ of $W$. In these cases, the $G$-orbit of $\bX_w$ is in fact regular (so that $\bZ_w=\emptyset$). In the case $\G={\rm GL}_n$, all Schubert varieties arising as inverse images from Schubert varieties in projective space are covered. In particular, all Schubert varieties for the groups $\G={\rm GL}_2$ or ${\rm GL}_3$ are covered.

\subsection{Application to locally analytic representations.} 
We give some first applications to the locally analytic representation theory of $p$-adic groups.
To this end, we fix a finite extension $L/\Qp$ and let $\G_L$ be a connected semisimple algebraic group over $L$. Let $L\subseteq K$ be a complete non-archimedean splitting field for $\G_L$. Set $\G:=\G_L \times_L K$ and let $\fr{g}$ be the Lie algebra of $\G$.
Let $\P_L\subseteq\G_L$ be a parabolic subgroup. 
Let $\T_L\subset \mathbb{L}_L\subset \P_L$ be a maximal split torus and a Levi subgroup respectively. Let $T, P, G$ be the groups of $L$-rational points
of $\T_L, \P_L, \G_L$ respectively. Let  $\T,\mathbb{L},\P$ be the base change from $L$ to $K$ of the groups  $\T_L,\mathbb{L}_L,\P_L$ respectively. 
Let $\fr{t}, \fr{l}, \fr{p}$ be the $K$-Lie algebras of $\T,\mathbb{L},\P$ respectively. 
Let $D(G,K)$ be the algebra of $K$-valued locally analytic distributions. Denote by $\X$ the algebraic flag variety of the split $K$-group $\G=\G_L \times_L K$, with its natural $\G$-action given by conjugating the Borel subgroups of $\G$. Let $\bX=\X^{\an}$ be the rigid analytification of $\X$, with its induced $G$-action. In the case where $\G_L$ is $L$-split, Orlik-Strauch introduce in \cite{OrlikStrauchCatO} a certain locally analytic lift $\cO^P$ of the parabolic BGG category $\cO^{\fr{p}}$. The definition extends without difficulty to our case of a $K$-split group $\G_L$. 
The category $\cO^P$ is abelian, of finite length and comes with an exact functor 
$\cF_P^G(-)': \cO^P\rightarrow \cC_{D(G,K)}$ into the category of coadmissible $D(G,K)$-modules, which preserves irreducibility under certain conditions. Our last main result proves a general compatibility of this functor with geometric induction in the following sense.
Classical Beilinson-Bernstein localization composed with rigid analytification gives a functor 
$\Loc^{U(\fr{g})}_\bX$ from $\cO_0^P$ into coherent $\cD_{\bX}$-modules. 

Let $\cD_{\bX}\subset\w\cD_{\bX}$ be the sheaf of analytic infinite order differential operators as constructed and studied in \cite{DCapOne}, together with the extension functor $E_{\bX}$ from coherent $\cD_{\bX}$-modules into the category $\cC_{\bX}$ of coadmissible $\w\cD_{\bX}$-modules \cite[Lemma 4.14]{BodeCTP}, \cite[\S 7.2]{DCapThree}. 
\begin{MainThm} The functor $E_{\bX}\circ \Loc^{U(\fr{g})}_\bX$, restricted to the category 
$\cO_0^P$, takes values in $\cC_{\bX/P}$. The resulting
diagram of functors 
\[ \xymatrix{\cO_0^{P}  \ar[rr]^{\cF_P^G(-)' } \ar[d] _{E_{\bX}\circ  \Loc^{U(\fr{g})}_\bX}&& \cC_{D(G,K),0} \ar[d]^{ \Loc^{D(G,K)}_\bX} \\ \cC_{\bX/P}  \ar[rr]_{\ind_P^G} &&  \cC_{\bX/G}.}\]
is commutative up to natural isomorphism.
\end{MainThm}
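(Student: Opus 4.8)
The plan is to compare the two composites in the square by evaluating everything on the standard affinoid cover of $\bX$ coming from a smooth formal model, and then reducing to a statement about (completed) distribution algebras and $\mathfrak g$-modules where the compatibility is essentially formal. First I would recall that $\cF_P^G(-)'$ on $\cO_0^P$ is built, following Orlik--Strauch, out of the data of a $U(\mathfrak g)$-module $M\in\cO_0^{\mathfrak p}$ together with its $P$-action; applying $\Loc_\bX^{D(G,K)}$ to it should be rewritten, via the localisation theorem of \cite{EqDCapTwo,HPSS}, as a geometric induction from the $P$-equivariant side. So the first genuine step is to identify $\Loc_\bX^{D(G,K)}\circ\cF_P^G(-)'$ with $\ind_P^G$ applied to the $P$-equivariant localisation of $M$; this is where the Orlik--Strauch construction is matched with the geometric $\ind_P^G$ of \cite{EqDCapTwo}, and it is really a bookkeeping identification of two induction procedures (algebraic coinduction of distributions versus the sheaf-theoretic $\ind_P^G$) once one knows both are computed by the same completed tensor product over $D(P,K)$.

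The second step is to show that the $P$-equivariant localisation of $M\in\cO_0^{\mathfrak p}$ — i.e. the object that feeds into $\ind_P^G$ on the algebraic side — agrees with $E_\bX\circ\Loc_\bX^{U(\mathfrak g)}(M)$ equipped with its natural $P$-equivariant structure. For this I would first check the unequivariant statement: $\Loc_\bX^{U(\mathfrak g)}(M)$ is a coherent $\cD_\bX$-module, $E_\bX$ sends it into $\cC_\bX$, and on each affinoid $\bU$ in the cover the sections are the expected completed tensor product $\hdnK\otimes_{U(\mathfrak g)}M$. The $P$-action on $M$ (which exists precisely because $M\in\cO_0^{\mathfrak p}$ and $P$ acts algebraically compatibly with $\mathfrak g$) then induces a $P$-equivariant structure on $\Loc_\bX^{U(\mathfrak g)}(M)$ because Beilinson--Bernstein localisation is functorial for the $\G$-action on $\X$, hence $P$-equivariantly; passing through $E_\bX$ preserves this since $E_\bX$ is a functor and the equivariance data are morphisms. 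This shows $E_\bX\circ\Loc_\bX^{U(\mathfrak g)}$ indeed lands in $\cC_{\bX/P}$, which is also the first assertion of the theorem.

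Combining the two steps: $\Loc_\bX^{D(G,K)}\circ\cF_P^G(M)'\cong\ind_P^G$ of the $P$-equivariant localisation $\cong\ind_P^G\circ(E_\bX\circ\Loc_\bX^{U(\mathfrak g)})(M)$, and one checks the isomorphisms are natural in $M$ and compatible with morphisms in $\cO_0^P$, giving the commutativity up to natural isomorphism. The main obstacle I anticipate is the first step: reconciling the Orlik--Strauch functor, whose definition goes through the Fréchet--Stein structure of $D(G,K)$ and a somewhat delicate locally analytic induction, with the geometric induction functor $\ind_P^G$ of \cite{EqDCapTwo}, whose definition is sheaf-theoretic and local on $\bX$. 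The key technical point is presumably to show both are computed, on the level of sections over affinoids coming from a fixed formal model and over a suitable exhausting system of compact open subgroups $G_0\subseteq G$, by the same module $D(G_0,K)\widehat\otimes_{D(P\cap G_0,K)}(-)$, so that one can match them after passing to the limit over the Fréchet--Stein structure. A subsidiary point is keeping careful track of the infinitesimal-character normalisation (the subscript $0$) through all the functors so that the localisation theorems apply on both sides. Once these identifications are in place, the rest is a diagram chase.
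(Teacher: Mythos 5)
Your two-step decomposition has the same overall shape as the paper's argument: the paper also factors the comparison into (i) commutativity of the square against an auxiliary $P$-equivariant localisation, and (ii) identification of that auxiliary localisation with $E_{\bX}\circ\Loc^{U(\fr{g})}_\bX$. However, as described both steps have genuine gaps, and one claim is simply wrong.

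In your first step you say both inductions are ``computed by the same completed tensor product over $D(P,K)$.'' This is not the right ring. By definition $\cF_P^G(\uM)' = D(G,K)\otimes_{D(\fr{g},P)}\uM$, where $D(\fr{g},P)$ is generated by $U(\fr{g})$ \emph{and} $D(P,K)$ inside $D(G,K)$; inducing over $D(P,K)$ alone would be a different, larger object. The actual comparison requires replacing $D(\fr{g},P)$ by its Fr\'echet--Stein completion $\wUg{P}$, establishing the double coset decomposition $\wUg{G}=\bigoplus_{Z\in H\setminus G/P}\wUg{H}Z\wUg{P}$ (Proposition~\ref{decomp}), and running a Mackey-type computation on local sections (Proposition~\ref{prop-compatible}), together with the twisting compatibility of Proposition~\ref{compatwist}. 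None of this is ``bookkeeping'': without introducing $\wUg{P}$ you cannot even form the coadmissible module $\w\uM:=\wUg{P}\otimes_{D(\fr{g},P)}\uM$ that mediates between the two sides.

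In your second step, the claim that $E_{\bX}\circ\Loc^{U(\fr{g})}_\bX(M)$ lands in $\cC_{\bX/P}$ ``since $E_\bX$ is a functor'' does not follow. Equivariance of the underlying sheaf is indeed formal, but membership in $\cC_{\bX/P}$ requires that the local sections over each small $(\bU,H)$ be \emph{coadmissible} as $\w\cD(\bU,H\cap P)$-modules, not merely as $\w\cD(\bU)$-modules carrying a compatible $P$-action. This is exactly what the paper has to work to prove: first that $\w\uM$ is a coadmissible $\wUg{P}$-module (Proposition~\ref{prop-coad1}), which uses the finite generation of $M$ over $U(\fr{g})$, the density of $K[P_0]$ in $D(P_0,K)$, and closedness of coadmissible submodules; then that the natural map $\w{M}\to\w\uM$ is a continuous bijection (Proposition~\ref{prop-bijectiveGlobal}); and finally that this induces an isomorphism $\Loc^{\w U(\fr{g})}_\bX(\w M)\congs\Loc^{\wUg P}_\bX(\w\uM)$ (Proposition~\ref{prop-bijectiveLocal}), which gives both the landing-in-$\cC_{\bX/P}$ statement and the identification you want. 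Absent an argument of this kind, the first assertion of the theorem is not established and the diagram does not make sense as stated.
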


See Theorem \ref{thm-compatible} for the proof. The irreducible $U(\fr{g})_0$-modules $L_w:=L(-w(\rho)-\rho)$ for $w\in W$ exhaust the irreducible objects in 
$\cO_0$. If $\P$ denotes the stabilizer of $\X_w$, then $L_w\in\cO^{\mathfrak{p}}$ for $\fr{p}=\Lie(\P)$ and the main theorem of Orlik-Strauch in \cite{OrlikStrauchJH} proves that
$\cF_{P_w}^G(L_w)'$ is an irreducible $D(G,K)$-module provided that 
(H1)  $\G_L$ is $L$-split and (H2) that $p>2$ if the root system of $\G$ has irreducible components of type $B$, $C$ or $F_4$, and $p>3$ if the root system has
irreducible components of type $G_2$. Their argument relies on
the delicate calculation of explicit formulae for the action of certain nilpotent generators on highest weight modules of the BGG category $\cO$. Theorem F. allows us to deduce the irreducibility of $\cF_{P_w}^G(L_w)'$ for a non-split semisimple group $\G_L$ and for any $p$, whenever the geometric conditions (a), (b) and (c) from Theorem E. are satisfied for the analytic Schubert variety $\bX_w$.

\vskip5pt

For more details we refer to the main body of the text.

\section{Some complements on $D$-modules}
\subsection{Sheaves and supports}\label{Support}

Given an abelian sheaf $\cF$ on a topological space $X$, its support is defined as $\Supp \cF=\{x\in X: \cF_x\neq 0  \}$.

\vskip5pt



Let $\bX$ be a rigid $K$-analytic space. We denote by $\mathscr{P}(\bX)$ its associated Huber space. We have an inclusion $\bX\rightarrow 
\mathscr{P}(\bX)$ which sends a point $x\in \bX$ to the principal maximal filter $\mathfrak{m}_x:=\{ \text{admissible open }\bU\subseteq \bX: x\in \bU  \} $.
The sets of the form 
$\tilde{\bU}=\{p\in  \mathscr{P}(\bX): \bU \in p\}$ as $\bU$ runs over the admissible open subsets of $\bX$ form a basis of the topology for $\mathscr{P}(\bX)$.
There is an equivalence of categories $\cM\mapsto\tilde{\cM}$ between the abelian sheaves on $\bX$ and on $\mathscr{P}(\bX)$ \cite[\S 5]{SchVdPut}.
One has $\cM(\bU)=\tilde{\cM}(\tilde{\bU})$ for any admissible open subset $\bU$ of $\bX$, as follows from the proof of \cite[Theorem 1]{SchVdPut}.

\vskip5pt

Let $\cM$ be an abelian sheaf on $\bX$. Its {\it support} $\Supp \cM$ is defined to be the support of the associated sheaf $\tilde{\cM}$. Given a subset $S\subseteq\bX$, the
sheaf $\cM$ is said to be {\it supported on} $S$, in the sense of \cite[2.1.1]{EqDCapTwo}, if $\cM|_{\bV}=0$ for any admissible open subset $\bV$ of $\bX\setminus S$.

\begin{prop}\label{prop-zariskiclosedsupport} Let  $\bY \subseteq\bX$ be a Zariski-closed subset. Let $\cM$ be a sheaf on $\bX$ such that $\Supp \cM=\overline{\bY}$. Then $\cM$ is supported on $\bY$ and \[\bY =\bX - \bigcup \big\{\bV \text{ admissible open in } \bX \text{ with } \cM|_{\bV}=0\big\}.\]
\end{prop}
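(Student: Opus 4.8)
The strategy is to transport the statement through the van der Put--Schneider equivalence $\cM\mapsto\widetilde{\cM}$ between abelian sheaves on $\bX$ and on the Huber space $\mathscr{P}(\bX)$ \cite[\S 5]{SchVdPut}, under which $\Supp\cM$ is by definition the support of $\widetilde{\cM}$ as a sheaf on the topological space $\mathscr{P}(\bX)$; in particular $\Supp\cM$ is closed, with complement the largest open subset of $\mathscr{P}(\bX)$ on which $\widetilde{\cM}$ vanishes. The first step is a dictionary lemma: for an admissible open $\bV\subseteq\bX$, the conditions \textup{(i)} $\cM|_{\bV}=0$, \textup{(ii)} $\widetilde{\cM}|_{\widetilde{\bV}}=0$, and \textup{(iii)} $\widetilde{\bV}\cap\Supp\cM=\emptyset$ are equivalent. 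Here \textup{(ii)}$\Leftrightarrow$\textup{(iii)} is the property of the support just recalled, together with the fact (recalled above) that the sets $\widetilde{\bU}$ form a basis of $\mathscr{P}(\bX)$; and \textup{(i)}$\Leftrightarrow$\textup{(ii)} follows from $\cM(\bU)=\widetilde{\cM}(\widetilde{\bU})$, from the fact that the admissible open subsets of $\bV$ are exactly the admissible open subsets $\bU$ of $\bX$ with $\bU\subseteq\bV$, and from the fact that $\{\widetilde{\bU}:\bU\text{ admissible open},\ \bU\subseteq\bV\}$ is a basis of the open set $\widetilde{\bV}$ (since $\widetilde{\bU}\cap\widetilde{\bV}=\widetilde{\bU\cap\bV}$ by \cite[\S 5]{SchVdPut} and $\bU\cap\bV\subseteq\bV$), so that a sheaf vanishes on $\widetilde{\bV}$ iff it vanishes on all these basic opens.

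Next I would record two elementary facts about the inclusion $\bX\hookrightarrow\mathscr{P}(\bX)$, $x\mapsto\mathfrak{m}_x$. First, $\widetilde{\bV}\cap\bX=\bV$ for admissible open $\bV$, because $\bV\in\mathfrak{m}_x$ iff $x\in\bV$. Second, since $\bY$ is Zariski-closed, its complement $\bX\setminus\bY$ is admissible open, so $\widetilde{\bX\setminus\bY}$ is an open subset of $\mathscr{P}(\bX)$ disjoint from $\bY$, hence from $\overline{\bY}$; intersecting with $\bX$ gives $\overline{\bY}\cap\bX\subseteq\bX\setminus(\bX\setminus\bY)=\bY$, and as the reverse inclusion is clear, $\overline{\bY}\cap\bX=\bY$. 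Using these together with the openness of $\widetilde{\bV}$ in $\mathscr{P}(\bX)$, for any admissible open $\bV$ we obtain
\[ \bV\cap\bY=\emptyset\ \Longleftrightarrow\ \widetilde{\bV}\cap\bY=\emptyset\ \Longleftrightarrow\ \widetilde{\bV}\cap\overline{\bY}=\emptyset. \]

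The two assertions of the proposition now follow quickly. For the first: if $\bV$ is an admissible open subset of $\bX\setminus\bY$, then $\bV\cap\bY=\emptyset$, so $\widetilde{\bV}\cap\Supp\cM=\widetilde{\bV}\cap\overline{\bY}=\emptyset$, and the dictionary lemma gives $\cM|_{\bV}=0$; thus $\cM$ is supported on $\bY$. For the displayed formula, write $\bW$ for the union of all admissible opens $\bV$ with $\cM|_{\bV}=0$. A point $x\in\bX$ lies in $\bW$ iff some admissible open $\bV\ni x$ satisfies $\cM|_{\bV}=0$, iff (dictionary lemma) $\mathfrak{m}_x$ lies in some basic open $\widetilde{\bV}$ disjoint from $\Supp\cM$, iff $\mathfrak{m}_x\notin\Supp\cM=\overline{\bY}$ (as the $\widetilde{\bV}$ form a basis and $\Supp\cM$ is closed), iff $x\notin\overline{\bY}\cap\bX=\bY$. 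Hence $\bW=\bX\setminus\bY$ as subsets of $\bX$, i.e. $\bX-\bW=\bY$.

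The only step calling for genuine care is the dictionary lemma --- in particular the verification that $\{\widetilde{\bU}:\bU\subseteq\bV\}$ is a basis of $\widetilde{\bV}$ and that vanishing of a sheaf is detected on a basis --- but this is a routine unwinding of the set-up in \cite[\S 5]{SchVdPut}, not a real obstacle. The Zariski-closedness of $\bY$ enters only to ensure that $\bX\setminus\bY$ is admissible open.
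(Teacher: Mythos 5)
Your proof is correct and takes essentially the same route as the paper: pass to the Huber space $\mathscr{P}(\bX)$ via the van der Put--Schneider equivalence, use the basis of opens $\widetilde{\bU}$, and argue via stalks of $\widetilde{\cM}$; the paper simply cites \cite[2.1.4]{EqDCapTwo} for $\overline{\bY}=\mathscr{P}(\bX)\setminus\widetilde{\bX\setminus\bY}$ where you re-derive the (sufficient, weaker) fact $\overline{\bY}\cap\bX=\bY$ directly. One small caveat: your opening claim that the support of an arbitrary sheaf on a topological space is closed, with complement the largest open on which the sheaf vanishes, is false in general (extend a nonzero constant sheaf on a non-closed open subset by zero); it is harmless here only because the hypothesis $\Supp\cM=\overline{\bY}$ already forces the support to be closed, which is the one place you use closedness --- so attribute that to the hypothesis rather than to a general property of supports.
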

\begin{proof}
Let $\bU=\bX\setminus \bY$ so that $\overline{\bY} = \mathscr{P}(\bX) \backslash \tilde{\bU}$ by \cite[2.1.4]{EqDCapTwo}. Hence $\tilde{\cM}|_{\tilde{\bU}}=0$ and hence $\cM |_{\bU}=0$. This shows that $\cM$ is supported on $\bY$. We now have to show that \[\bU =\bigcup \big\{\bV \text{ admissible open in } \bX \text{ with } \cM|_{\bV}=0\big\}.\]
Let $\bV$  be an admissible open of $\bX$. If $\bV\cap \bY \neq \emptyset$, take $x\in \bV\cap \bY$. Then 
$\mathfrak{m}_x\in  \tilde{\bV}\cap \Supp \cM $ and hence $\tilde{\cM}_{\mathfrak{m}_x}\neq 0$. So  $\tilde{\cM}|_{\tilde{\bV}}\neq 0$ and therefore also 
$\cM|_{\bV}\neq 0$. So any $\bV$ with $\cM|_{\bV}= 0$ is contained in $\bU$.
\end{proof}

\subsection{Analytification} \label{Analytification}

Let $\X$ be a smooth $K$-scheme which is locally of finite type. We assume that 
$\X$ is quasi-compact and quasi-separated. 
Let $D_{\X}$ be the sheaf of algebraic finite order differential operators on $\X$. Denote by $\bX=\X^{\an}$ the rigid analytification of $\X$, a quasi-compact and quasi-separated rigid $K$-analytic space. \vskip5pt

Denote by $\rho:\bX\rightarrow\X$ the canonical morphism of locally ringed spaces and consider the functor 
$$\cM\rightsquigarrow \rho^*\cM:=\cO_{\bX}\otimes_{\rho^{-1}\cO_\X}\rho^{-1}\cM$$ 
from $\cO_{\X}$-modules to $\cO_{\bX}$-modules. We recall some basic properties. 

\begin{prop}\label{prop-analytification0}  

\be 

\item The functor $\rho^*$ is exact and faithful.

\item  If $\X$ is proper, then $\rho^*$ induces an equivalence between coherent modules.

\item If $\X$ is proper, then 
 \[H^{i}(\bX,\rho^*\cM)=H^{i}(\X,\cM)\] for any $i\geq 0$ and any
quasi-coherent $\cO_{\X}$-module $\cM$.

\ee
\end{prop}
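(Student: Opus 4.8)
The plan is to reduce all three parts to two standard inputs --- flatness of rigid analytification, and rigid analytic GAGA for proper $K$-schemes (K\"opf) --- of which part (a) needs only the first. The morphism $\rho$ is flat, so for every $x\in\bX$ the local homomorphism $\cO_{\X,\rho(x)}\to\cO_{\bX,x}$ is flat; being a local map of Noetherian local rings, it is moreover faithfully flat. Exactness of $\rho^*$ is then immediate, since exactness of a complex of $\cO_\bX$-modules may be tested stalkwise and $(\rho^*\cM)_x=\cO_{\bX,x}\otimes_{\cO_{\X,\rho(x)}}\cM_{\rho(x)}$. For faithfulness I would invoke the general principle that an exact functor between abelian categories is faithful once it reflects the zero object (apply it to the image of an arbitrary morphism); it then remains to see that $\cM\neq 0$ forces $\rho^*\cM\neq 0$. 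Since $\X$ is Noetherian, a nonzero quasi-coherent $\cM$ has nonzero stalk at some closed point $y$, which equals $\rho(x)$ for a suitable classical point $x\in\bX$, whence $(\rho^*\cM)_x\neq 0$ by faithful flatness. (Only the quasi-coherent case is needed in the sequel.)

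\textbf{Part (b).} Here I would simply invoke rigid analytic GAGA (K\"opf; see also Fresnel--van der Put, or Conrad): when $\X$ is proper over $K$, analytification is an equivalence between the categories of coherent $\cO_\X$- and $\cO_\bX$-modules. For a self-contained argument one splits this into the two usual halves. Full faithfulness I would derive from the $i=0$, coherent case of part (c) --- which is established directly, without recourse to (b) --- applied to the coherent sheaf $\mathcal{H}om_{\cO_\X}(\cM,\cN)$, using that $\Hom=H^0(\mathcal{H}om)$ on both sides and that $\rho^*$ commutes with $\mathcal{H}om$ of coherent sheaves (flatness of $\rho$, Noetherian target). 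Essential surjectivity is the genuinely analytic ingredient: one runs a d\'evissage, reducing by Chow's lemma to the projective case and there to twists of the structure sheaf by induction on dimension, using Kiehl's finiteness theorem and the comparison of cohomology along proper morphisms of rigid spaces. This essential-surjectivity step is the real obstacle, and it is the one I would cite rather than reprove.

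\textbf{Part (c).} For coherent $\cM$ the isomorphism $H^i(\X,\cM)\cong H^i(\bX,\rho^*\cM)$, compatible with the canonical base-change maps, is once more the GAGA comparison theorem for the proper scheme $\X$. To reach an arbitrary quasi-coherent $\cM$ I would write $\cM=\varinjlim_\alpha\cM_\alpha$ as the filtered colimit of its coherent $\cO_\X$-submodules, which exists because $\X$, being of finite type over $K$, is Noetherian. As $\rho^*$ is a left adjoint it preserves colimits, so $\rho^*\cM=\varinjlim_\alpha\rho^*\cM_\alpha$. Sheaf cohomology commutes with filtered colimits on the Noetherian scheme $\X$; and --- the one step requiring some care --- it does so as well on the quasi-compact quasi-separated rigid space $\bX$, either because the small admissible site of $\bX$ is a coherent site with $\bX$ a coherent object, or via \v{C}ech cohomology for a finite admissible affinoid covering together with an induction on the number of its members. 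Chaining these,
\[ H^i(\X,\cM)=\varinjlim_\alpha H^i(\X,\cM_\alpha)\cong\varinjlim_\alpha H^i(\bX,\rho^*\cM_\alpha)=H^i(\bX,\rho^*\cM), \]
which gives (c). Among the steps I would actually carry out, this interchange of $H^i(\bX,-)$ with the filtered colimit is the one I expect to demand the most care.
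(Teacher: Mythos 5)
Your proposal is correct and follows essentially the same route as the paper: parts (a), (b) and the coherent case of (c) are handled by citing the standard analytification/GAGA results (the paper quotes Conrad and Berkovich where you quote Köpf/Conrad, and gives no independent argument), and the quasi-coherent case of (c) is exactly the paper's argument — write $\cM$ as a filtered colimit of coherent subsheaves on the Noetherian scheme $\X$ and commute cohomology with colimits on both sides, the rigid side using quasi-compactness and quasi-separatedness of $\bX$ (the paper cites Conrad's ampleness paper for this step, which you flag as the delicate one).
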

\begin{proof} 
 According to \cite[5.1.2]{ConradIrreducible}, the functor $\rho^{*}$ is exact and faithful and preserves coherence. Suppose now that $\X$ is a proper $K$-scheme. The statement of (b) and the fact that
 $H^{i}(\bX,\rho^*\cM)=H^{i}(\X,\cM)$ for any $i\geq 0$ for coherent $\cM$ follow from \cite[3.3.3/4 and 3.4.10/11]{BerkovichBook}. So (c) holds in the coherent case. Since $\X$ is noetherian, any quasi-coherent module is the union of its coherent subsheaves and cohomology on $\X$ commutes with the formation of direct limits of abelian sheaves \cite[Ex. II.5.15 and III.2.9]{Hart}. On the other hand, since $\bX$ is a quasi-compact and quasi-separated rigid space, cohomology on $\bX$ also commutes with the formation of direct limits \cite[2.1.7 and its proof]{ConradAmple}. This completes the proof of (c).
\end{proof}

\begin{prop}\label{prop-analytification1}  

\be 

\item  $D_{\bX}$ and $D_{\X}$ are coherent sheaves of rings on $\bX$ and $\X$ respectively and one has $\rho^*D_{\X}=D_{\bX}$.

\item $\rho^*$ induces a functor from $D_{\X}$-modules to $D_{\bX}$-modules. 

\item If $\cM$ is a coherent $D_{\X}$-module, then $\rho^*\cM$ is a 
coherent $D_{\bX}$-module.
\ee
\end{prop}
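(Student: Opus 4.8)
The plan is to reduce everything to the behaviour of $\rho^*$ with respect to the order filtration. The coherence assertions in (a) are standard: $\X$ is locally Noetherian and smooth over the characteristic-zero field $K$, so locally on $\X$ the ring $D_\X$ is filtered by order with associated graded $\mathrm{gr}\, D_\X \cong \mathrm{Sym}_{\cO_\X}\cT_\X$, which is Noetherian; hence $D_\X$ is locally Noetherian and therefore a coherent sheaf of rings. Affinoid-locally the same argument applies to $D_\bX$, since an affinoid algebra is Noetherian, the module of differentials of a smooth affinoid is finite projective, and $\mathrm{gr}\, D_\bX \cong \mathrm{Sym}_{\cO_\bX}\cT_\bX$ is Noetherian (cf. \cite{DCapOne}); so $D_\bX$ is a coherent sheaf of rings as well.

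For the identification $\rho^* D_\X = D_\bX$, I would use that analytification commutes with K\"ahler differentials, $\rho^*\Omega^1_{\X/K} = \Omega^1_{\bX/K}$; as $\X$ is smooth this sheaf is locally free, so passing to duals and symmetric powers gives $\rho^*\cT_\X = \cT_\bX$ and $\rho^*\mathrm{Sym}^n_{\cO_\X}\cT_\X = \mathrm{Sym}^n_{\cO_\bX}\cT_\bX$ for all $n$. Functoriality of differential operators along the morphism of ringed spaces $\rho$ provides a filtered morphism of sheaves of $\cO_\bX$-algebras $\rho^* D_\X \to D_\bX$. Since $\rho^*$ is exact (Proposition~\ref{prop-analytification0}(a)), it sends the order filtration of $D_\X$ to an exhaustive filtration of $\rho^* D_\X$ whose $n$-th graded piece is $\rho^*(D_\X^{\leq n}/D_\X^{\leq n-1}) = \mathrm{Sym}^n_{\cO_\bX}\cT_\bX = \mathrm{gr}_n D_\bX$, and the morphism above induces on associated gradeds the canonical isomorphism. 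As both order filtrations are exhaustive and bounded below, it follows that $\rho^* D_\X \to D_\bX$ is an isomorphism of sheaves of rings.

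Parts (b) and (c) are then formal. Given a $D_\X$-module $\cM$, part (a) identifies the $\cO_\bX$-module $\rho^*\cM = \cO_\bX \otimes_{\rho^{-1}\cO_\X} \rho^{-1}\cM$ with $D_\bX \otimes_{\rho^{-1}D_\X} \rho^{-1}\cM$, on which $D_\bX$ acts through left multiplication in the first factor; this is the usual inverse-image construction for $\rho$, applicable precisely because $\rho^* D_\X = D_\bX$. This yields the functor of (b), manifestly compatible with the $\cO$-module functor $\rho^*$. For (c), a coherent $D_\X$-module admits, locally on $\X$, a finite presentation $D_\X^{\oplus a} \to D_\X^{\oplus b} \to \cM \to 0$ because $D_\X$ is a coherent sheaf of rings; applying the right-exact functor $\rho^*$ and using $\rho^* D_\X^{\oplus r} = D_\bX^{\oplus r}$ produces a finite presentation $D_\bX^{\oplus a} \to D_\bX^{\oplus b} \to \rho^*\cM \to 0$, so $\rho^*\cM$ is coherent over the coherent sheaf of rings $D_\bX$.

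The only ingredient that is not bookkeeping is the GAGA-type fact $\rho^*\Omega^1_{\X/K} = \Omega^1_{\bX/K}$, together with $\rho^*$ commuting with dualization of locally free sheaves, which underlies $\rho^*\cT_\X = \cT_\bX$. This is standard, and can in any case be verified locally since $\X$ is \'etale-locally affine space; so I anticipate no real obstacle here, only care in assembling the filtered and ring-theoretic statements correctly.
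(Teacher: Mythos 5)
Your proposal is correct and follows essentially the same route as the paper: both rest on $\rho^*\Omega_{\X}\cong\Omega_{\bX}$ and its dual $\rho^*\cT_{\X}\cong\cT_{\bX}$ to get $\rho^*D_{\X}=D_{\bX}$, and then transport finite presentations using exactness of $\rho^*$ and coherence of the two sheaves of rings. The only cosmetic difference is in (b), where you realise the $D_{\bX}$-structure on $\rho^*\cM$ via the identification with $D_{\bX}\otimes_{\rho^{-1}D_{\X}}\rho^{-1}\cM$, while the paper writes out the explicit action of vector fields; these give the same module structure.
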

\begin{proof}
$D_{\bX}$ and $D_{\X}$ are coherent sheaves of rings by 
\cite[1.1.1]{ChiaLeStum} and \cite[1.4.9]{HTT} respectively. 
The canonical morphism $\rho^{*}\Omega_{\X}\congs\Omega_{\bX}$
is an isomorphism and so is its $\cO_{\bX}$-linear dual $\cT_{\bX}\congs \rho^{*}\cT_{\X}.$ This implies $\rho^*D_{\X}=D_{\bX}$ and shows (a).
Suppose that $\cM$ is a $D_{\X}$-module. 
Denote by $\tilde{\theta}=\sum_j f_j\otimes\theta_j$ with $f_j\in\cO_{\bX}, \theta_j\in\cT_{\X}$ the image of a vector field $\theta$ under the map $\cT_{\bX}\rightarrow \rho^{*}\cT_{\X}$. As in the algebraic setting \cite[1.3]{HTT}, one defines an action of $\cT_{\bX}$ on $\rho^{*}\cM$ via 
$$ \theta(f \otimes m):= \theta(f)\otimes m + \sum_{j}f f_j\otimes\theta_j(m)$$
where $f\in\cO_{\bX}, m\in\cM$. 
The action extends to a $D_{\bX}$-module structure on $\rho^{*}\cM$ which is functorial in $\cM$. This gives (b).
Finally, locally on $\X$ and by (a), the functor $\rho^*$ transforms a finite presentation of $\cM$ as $D_{\X}$-module into a finite presentation of $\rho^{*}\cM$ as $D_{\bX}$-module. Since $D_{\bX}$ and $D_{\X}$ are coherent sheaves of rings, this shows (c).
\end{proof}

\begin{prop}\label{prop-analytification2}  
Suppose that $\X$ is proper and $D_{\X}$-affine.
Let $\cM$ be a $D_{\X}$-module which is quasi-coherent as $\cO_{\X}$-module.
\be 

\item One has $H^{0}(\bX,\rho^*\cM)=H^{0}(\X,\cM)$ for any $i\geq 0$ and $H^{i}(\bX,\rho^*\cM)=0$ for all $i>0$.
\item $\rho^*$ induces an exact and fully faithful functor from $D_{\X}$-modules, which are quasi-coherent over $\cO_{\X}$, to $D_{\bX}$-modules. 
 
\ee
\end{prop}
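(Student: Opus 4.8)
The plan is to reduce both parts to the properties of $\rho^*$ on $\cO$-modules recorded in Proposition \ref{prop-analytification0}, together with two standard facts about a proper $D_\X$-affine variety $\X$: that $H^i(\X,\cM) = 0$ for every quasi-coherent $D_\X$-module $\cM$ and every $i > 0$, and that every quasi-coherent $D_\X$-module is the cokernel of a morphism $D_\X^{(J)} \to D_\X^{(I)}$ between (possibly infinite) free $D_\X$-modules. I will use throughout that the underlying $\cO_\bX$-module of the $D_\bX$-module $\rho^*\cM$ built in Proposition \ref{prop-analytification1} is the $\cO$-module pullback $\cO_\bX\otimes_{\rho^{-1}\cO_\X}\rho^{-1}\cM$, so that cohomological and exactness statements about $\rho^*\cM$ as a $D_\bX$-module reduce to the same statements about it as a sheaf.

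Part (a) is then immediate: since $\cM$ is quasi-coherent over $\cO_\X$ and $\X$ is proper, Proposition \ref{prop-analytification0}(c) gives $H^i(\bX,\rho^*\cM) \cong H^i(\X,\cM)$ for all $i \geq 0$; the right-hand side vanishes for $i > 0$ because $\X$ is $D_\X$-affine, and equals $H^0(\X,\cM)$ when $i = 0$.

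For part (b), exactness of $\rho^*$ follows from Proposition \ref{prop-analytification0}(a), because a sequence of $D$-modules is exact exactly when it is exact as a sequence of sheaves, and the underlying sheaf of $\rho^*(-)$ is computed by the exact $\cO$-module functor $\rho^*$. For full faithfulness I fix $D_\X$-modules $\cM,\cN$ quasi-coherent over $\cO_\X$ and, using $D_\X$-affineness, pick a presentation $D_\X^{(J)} \to D_\X^{(I)} \to \cM \to 0$, calling the first map $A$ (a column-finite matrix of global differential operators). Applying $\rho^*$, which is exact, commutes with direct sums, and satisfies $\rho^*D_\X = D_\bX$ by Proposition \ref{prop-analytification1}(a), yields a presentation $D_\bX^{(J)} \to D_\bX^{(I)} \to \rho^*\cM \to 0$ with first map $\rho^*A$. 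Now applying $\Hom_{D_\X}(-,\cN)$ and $\Hom_{D_\bX}(-,\rho^*\cN)$ and using $\Hom_{D_\X}(D_\X,\cN) = \Gamma(\X,\cN)$ (and likewise on $\bX$) identifies $\Hom_{D_\X}(\cM,\cN)$ with the kernel of the map $\Gamma(\X,\cN)^I \to \Gamma(\X,\cN)^J$ induced by $A$, and $\Hom_{D_\bX}(\rho^*\cM,\rho^*\cN)$ with the kernel of the analogous map induced by $\rho^*A$. Proposition \ref{prop-analytification0}(c) in the case $i=0$ supplies a canonical isomorphism $\Gamma(\X,\cN) \cong \Gamma(\bX,\rho^*\cN)$, and it intertwines the two matrix maps because it is a morphism of modules over $\Gamma(\X,D_\X)$, the target being viewed as such through the ring map $\Gamma(\X,D_\X) \to \Gamma(\bX,D_\bX)$. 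Passing to kernels then gives the desired isomorphism, which unwinds to the natural map $\phi \mapsto \rho^*\phi$.

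The input facts about $D_\X$-affine varieties are standard, so the only place where genuine checking is required is the last step: that the comparison isomorphism $\Gamma(\X,\cN) \cong \Gamma(\bX,\rho^*\cN)$ of Proposition \ref{prop-analytification0}(c) is $\Gamma(\X,D_\X)$-linear. I expect this to be the main, though modest, obstacle, and it should be settled by identifying that isomorphism with the adjunction unit $\Gamma(\X,\cN) \to \Gamma(\X,\rho_*\rho^*\cN) = \Gamma(\bX,\rho^*\cN)$ and invoking $\rho^*D_\X = D_\bX$ once more.
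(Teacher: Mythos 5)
Your proof is correct, and the substance coincides with the paper's argument even though the formulation of part (b) differs. The paper treats (b) by exhibiting a left quasi-inverse to $\rho^*$, namely $\cP \mapsto D_\X \otimes_{\Gamma(\X,D_\X)} \Gamma(\bX,\cP)$, and asserts that this implies full faithfulness; but a left quasi-inverse alone only gives faithfulness, and to get fullness one still needs to know that a morphism out of $\rho^*\cM$ is determined by its effect on global sections, which is exactly where the surjection $D_\bX^{(I)}\twoheadrightarrow\rho^*\cM$ coming from a presentation of $\cM$ enters. Your explicit Hom computation from a free presentation $D_\X^{(J)}\to D_\X^{(I)}\to\cM\to 0$ simply makes that step visible rather than leaving it implicit; otherwise both proofs rest on the same three inputs — Proposition~\ref{prop-analytification0}, the identity $\rho^*D_\X = D_\bX$ from Proposition~\ref{prop-analytification1}(a), and $D_\X$-affineness. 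The one loose end you flag, the $\Gamma(\X,D_\X)$-linearity of the comparison isomorphism $\Gamma(\X,\cN)\cong\Gamma(\bX,\rho^*\cN)$, is the right thing to check and your adjunction-unit description is the right way to settle it: the unit $\cN\to\rho_*\rho^*\cN$ is $D_\X$-linear once $\rho_*\rho^*\cN$ is viewed as a $D_\X$-module via $D_\X\to\rho_*\rho^*D_\X=\rho_*D_\bX$, so the induced map on global sections is $\Gamma(\X,D_\X)$-linear.
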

\begin{proof}
Part (a) follows from \ref{prop-analytification0} and the $D_{\X}$-affinity. Moreover, $\cM\rightsquigarrow \rho^*\cM$ now has a left quasi-inverse given by taking global sections $M:=H^{0}(\bX,\rho^*\cM)=H^{0}(\X,\cM)$ followed by the functor $M\rightsquigarrow D_{\X}\otimes_{H^{0}(\X,\cD_{\X})} M$. This implies (b).
\end{proof}


\vskip5pt

Let $\tilde{\rho}: \mathscr{P}(\bX)\rightarrow\X$ be the canonical map.
Note that $\mathscr{P}(\bX)$ is the underlying topological space of the adic space $(\X^{\rm ad},\cO_{\X^{\rm ad}})$ associated to the finite type $K$-scheme $\X$. According to \cite[Remark 4.6(i)]{Hu94}, the morphism $\tilde{\rho}$ extends to a flat morphism of locally ringed spaces being the base change to $\X$ of the morphism ${\rm Spa}(K,K^{\circ})\rightarrow \Spec K$ induced by the identity $K\rightarrow K$. 

\begin{lem}\label{lem-zariskiclosedalg}
 Let  $\Y \subseteq\X$ be a closed subset and let $\bY=\rho^{-1}(\Y).$
 Then  $\bY\subseteq\bX$ is Zariski-closed and $\overline{\bY}=\tilde{\rho}^{-1}(\Y)$ in $\mathscr{P}(\bX)$.
\end{lem}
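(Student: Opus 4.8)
The plan is to unwind both claims through the dictionary between Zariski-closed subsets of the rigid space $\bX$, closed subsets of the scheme $\X$, and closed subsets of the Huber space $\mathscr{P}(\bX)$, using the two comparison maps $\rho\colon\bX\to\X$ and $\tilde\rho\colon\mathscr{P}(\bX)\to\X$. Write $\Y=V(\cI)$ for a coherent sheaf of ideals $\cI\subseteq\cO_\X$. Since $\rho$ is a morphism of locally ringed spaces and $\rho^{*}$ is exact (Proposition \ref{prop-analytification0}(a)), the analytified ideal $\rho^{*}\cI=\cI\cdot\cO_\bX$ is a coherent ideal of $\cO_\bX$, and I claim its vanishing locus is exactly $\bY=\rho^{-1}(\Y)$. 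This is a local, pointwise statement: for $x\in\bX$ lying over $\rho(x)\in\X$, the stalk map $\cO_{\X,\rho(x)}\to\cO_{\bX,x}$ is local and faithfully flat (it is a composite of the completion/localization maps appearing in the analytification, all of which are faithfully flat with the maximal ideal generating the maximal ideal), so $(\cI\cdot\cO_\bX)_x\subseteq\fr{m}_x$ if and only if $\cI_{\rho(x)}\subseteq\fr{m}_{\rho(x)}$, i.e.\ $x\in\bY\iff\rho(x)\in\Y$. Hence $\bY$ is Zariski-closed in $\bX$, which is the first assertion.

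For the second assertion, $\overline{\bY}=\tilde\rho^{-1}(\Y)$ in $\mathscr{P}(\bX)$, I would argue as follows. First, $\tilde\rho^{-1}(\Y)$ is closed in $\mathscr{P}(\bX)$ because $\Y$ is closed in $\X$ and $\tilde\rho$ is continuous; and it contains $\bY$ under the inclusion $\bX\hookrightarrow\mathscr{P}(\bX)$, since $\tilde\rho$ restricts to $\rho$ along this inclusion (both are the structural maps to $\X$, compatible by construction of $\mathscr{P}(\bX)$ as the adic space $\X^{\rm ad}$). Therefore $\overline{\bY}\subseteq\tilde\rho^{-1}(\Y)$. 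For the reverse inclusion, recall from \cite[2.1.4]{EqDCapTwo} (as used in the proof of Proposition \ref{prop-zariskiclosedsupport}) that for the admissible open $\bU=\bX\setminus\bY$ one has $\overline{\bY}=\mathscr{P}(\bX)\setminus\tilde\bU$. So it suffices to show $\tilde\bU\cap\tilde\rho^{-1}(\Y)=\emptyset$, equivalently that no filter containing $\bU$ lies over a point of $\Y$. A point $p\in\tilde\bU$ is a filter with $\bU\in p$; its image $\tilde\rho(p)$ is the corresponding point of the adic space $\X^{\rm ad}$, and since $\bU=\rho^{-1}(\X\setminus\Y)$ is the preimage of the open complement $\X\setminus\Y$, the basic open $\widetilde{\rho^{-1}(\X\setminus\Y)}=\tilde\bU$ maps into the open set of $\X^{\rm ad}$ over $\X\setminus\Y$ under $\tilde\rho$. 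Hence $\tilde\rho(p)\notin\Y$, giving $\tilde\bU\cap\tilde\rho^{-1}(\Y)=\emptyset$ and therefore $\tilde\rho^{-1}(\Y)\subseteq\overline{\bY}$.

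The main obstacle I anticipate is pinning down precisely the compatibility $\tilde\rho|_\bX=\rho$ and the behaviour of $\tilde\rho$ on the basic opens $\tilde\bU$ — i.e.\ that the preimage under $\tilde\rho$ of the open subscheme $\X\setminus\Y$ (viewed inside $\X^{\rm ad}$) is exactly the basic open $\widetilde{\bU}$ attached to $\bU=\X^{\rm an}\setminus\bY$. This is essentially the statement that analytification and the passage to adic/Huber spaces are compatible with open immersions and with the specialization map, which should follow from the description of $\mathscr{P}(\bX)$ as the underlying space of $\X^{\rm ad}$ together with \cite[Remark 4.6(i)]{Hu94} and the functoriality of $\mathscr{P}(-)$; once that bookkeeping is set up, both inclusions are formal. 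A secondary routine point is the faithful flatness of the stalk maps of $\rho$, which is standard for analytification of finite-type $K$-schemes and could alternatively be avoided by working directly with the scheme-theoretic definition $\bY:=\rho^{-1}(\Y)$ as a set and citing that $\rho^{*}\cI$ cuts out this set.
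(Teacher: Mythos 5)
Your proof is correct and relies on the same essential fact as the paper's proof: that $\tilde\rho$, being the base change of $\Spa(K,K^\circ)\to\Spec K$, pulls back a Zariski‑closed (or open) subscheme of $\X$ to the Huber space of the corresponding analytified subspace of $\bX$. The paper reaches the second claim in one step by giving $\bY$ its reduced rigid structure, so that $\overline{\bY}=\mathscr{P}(\bY)$, and then noting $\mathscr{P}(\bY)=\tilde\rho^{-1}(\Y)$ immediately from the base‑change description. You instead work on the complementary side: you quote $\overline{\bY}=\mathscr{P}(\bX)\setminus\tilde\bU$ from \cite[2.1.4]{EqDCapTwo} and reduce to $\tilde\bU=\tilde\rho^{-1}(\X\setminus\Y)$, which is the same compatibility phrased for the open complement. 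Both routes are fine; they use the same input and neither is substantively more elementary. One small remark: for the first assertion you do not actually need faithful flatness of the stalk maps $\cO_{\X,\rho(x)}\to\cO_{\bX,x}$ — locality of the ring homomorphism already gives $I\cdot\cO_{\bX,x}\subseteq\fr{m}_x \Leftrightarrow I\subseteq\fr{m}_{\rho(x)}$, and one can also simply check Zariski‑closedness on an affinoid cover of the analytification of an affine cover of $\X$, as the paper implicitly does by calling it clear.
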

\begin{proof}
 It is clear that the subset $\bY\subseteq\bX$ is Zariski-closed.
 If $\bY$ has its reduced rigid-analytic structure \cite[Prop. 9.5.3.4]{BGR}, then $\overline{\bY}=\mathscr{P}(\bY)$. From the above description of the morphism $\tilde{\rho}$ as a base change of ${\rm Spa}(K,K^{\circ})\rightarrow \Spec K$, it is clear that $\mathscr{P}(\bY)=\tilde{\rho}^{-1}(\Y)$.
 \end{proof}

\begin{prop} \label{prop-supportpreservedA} Let $\cM$ be a coherent $\cD_{\X}$-module. Then $\Supp \cM \subseteq \X$ is closed and 
\[\Supp \rho^*\cM =\tilde{\rho}^{-1} ( \Supp \cM ).\]
\end{prop}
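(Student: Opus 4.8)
The statement has two parts: first, that $\Supp\cM\subseteq\X$ is closed, and second, the compatibility $\Supp\rho^*\cM = \tilde\rho^{-1}(\Supp\cM)$. I would handle these separately, leaning on the results already established.

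For closedness of $\Supp\cM$: since $\cM$ is a coherent $\cD_\X$-module, it is in particular quasi-coherent as an $\cO_\X$-module (locally finitely generated over $\cO_\X$ if we pick a good filtration, or at worst the union of coherent $\cO_\X$-subsheaves), and one knows classically that the support of a coherent $\cO_\X$-module is closed; the support of $\cM$ as a $\cD_\X$-module coincides with its support as an $\cO_\X$-module since the stalk $\cM_x$ is zero as a $\cD_{\X,x}$-module iff it is zero as an $\cO_{\X,x}$-module. Alternatively, invoke that $D_\X$ is a coherent sheaf of rings (Proposition \ref{prop-analytification1}(a)) so $\cM$ has a local finite presentation, and the vanishing locus of the associated Fitting-type ideal is closed. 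This is the routine part.

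For the identity on supports: the key point is the local nature of everything combined with faithful flatness. Writing $\Y := \Supp\cM$, which is now known to be closed, and $\bY := \rho^{-1}(\Y)$, Lemma \ref{lem-zariskiclosedalg} tells us $\bY$ is Zariski-closed in $\bX$ with $\overline{\bY} = \tilde\rho^{-1}(\Y) = \tilde\rho^{-1}(\Supp\cM)$ in $\mathscr P(\bX)$. So it suffices to prove $\Supp\rho^*\cM = \overline{\bY}$, i.e. that the support of the associated sheaf $\widetilde{\rho^*\cM}$ on $\mathscr P(\bX)$ equals $\tilde\rho^{-1}(\Supp\cM)$. The inclusion $\subseteq$ is the easy direction: away from $\tilde\rho^{-1}(\Y)$ — equivalently, on $\widetilde{\bU}$ where $\bU = \rho^{-1}(\X\setminus\Y)$ — the pullback $\rho^*\cM$ is obtained by base change from $\cM|_{\X\setminus\Y} = 0$, hence vanishes, so $\widetilde{\rho^*\cM}$ is zero there. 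For the reverse inclusion, I would argue stalkwise: for a point $p\in\mathscr P(\bX)$ lying over a point $\fr y\in\Y$ with $\cM_{\fr y}\neq 0$, use that $\tilde\rho$ is a \emph{flat} morphism of locally ringed spaces (the base change of $\mathrm{Spa}(K,K^\circ)\to\Spec K$, as recalled before Lemma \ref{lem-zariskiclosedalg}) whose fibres are nonempty; flatness of the local ring map $\cO_{\X,\fr y}\to\cO_{\mathscr P(\bX),p}$ and faithful flatness (the map is local, so it is faithfully flat) give $(\rho^*\cM)^\sim_p \cong \cO_{\mathscr P(\bX),p}\otimes_{\cO_{\X,\fr y}}\cM_{\fr y}\neq 0$.

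The main obstacle I anticipate is bookkeeping the passage between the three spaces $\X$, $\bX$, $\mathscr P(\bX)$ and making the stalk computation of $\rho^*\cM$ on $\mathscr P(\bX)$ precise: one must check that $\widetilde{\rho^*\cM}$ is genuinely computed by the naive tensor-product formula $\cO_{\X^{\mathrm{ad}}}\otimes_{\tilde\rho^{-1}\cO_\X}\tilde\rho^{-1}\cM$ on the adic space, i.e. that analytification followed by $\cM\mapsto\widetilde{\cM}$ agrees with pulling back along $\tilde\rho$. This should follow from the compatibility $\cM(\bU) = \widetilde{\cM}(\widetilde{\bU})$ noted in Section \ref{Support} together with the fact that the basis sets $\widetilde{\bU}$ with $\bU$ an affinoid preimage of an affine open of $\X$ are cofinal enough, but it requires care. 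Once that identification is in hand, faithful flatness of $\tilde\rho$ closes the argument cleanly.
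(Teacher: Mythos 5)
Your treatment of the support identity is essentially the paper's argument: both pass to the adic space, both identify $\widetilde{\rho^*\cM}$ with $\tilde\rho^*\cM$ (the paper asserts this identification without the caveats you raise), and both conclude by noting that $\cO_{\X,\tilde\rho(x)}\to\cO_{\X^{\mathrm{ad}},x}$ is a flat local homomorphism of local rings, hence faithfully flat, so the stalk of $\tilde\rho^*\cM$ at $x$ vanishes if and only if $\cM_{\tilde\rho(x)}$ does.

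The closedness argument, however, has a genuine gap. A coherent $\cD_\X$-module is \emph{not} in general coherent (or even locally finitely generated) as an $\cO_\X$-module --- $\cD_\X$ itself is the standard counterexample --- so the assertion that $\cM$ is ``locally finitely generated over $\cO_\X$ if we pick a good filtration'' is false: a good filtration $F_\bullet\cM$ makes each $F_i\cM$ coherent over $\cO_\X$ and $\mathrm{gr}^F\cM$ coherent over $\mathrm{gr}\,\cD_\X$, but $\cM=\bigcup_i F_i\cM$ need not be $\cO_\X$-coherent. The ``union of coherent $\cO_\X$-subsheaves'' observation alone does not yield closed support (a union of closed sets need not be closed), and the Fitting-ideal route does not transplant to the noncommutative sheaf of rings $\cD_\X$. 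Your idea can nonetheless be salvaged: locally, a good filtration furnishes a coherent $\cO_\X$-submodule $F_0\cM$ that \emph{generates} $\cM$ over $\cD_\X$, and one checks on stalks that $\Supp\cM=\Supp F_0\cM$, which is closed; closedness being local, this suffices. The paper takes the cleaner standard route via the characteristic variety, $\Supp\cM = \mathrm{Ch}(\cM)\cap T^*_\X\X$, which gives closedness immediately.
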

\begin{proof}
One has $\Supp \cM={\rm Char}(\cM) \cap T_{\X}^*\X$ inside the cotangent bundle $T^*\X$ and so this is a closed subset of $\X$. 
Note that $\mathscr{P}(\bX)$ is the underlying topological space of the adic space $(\X^{\rm ad},\cO_{\X^{\rm ad}})$ associated to the finite type $K$-scheme $\X$. The map $\tilde{\rho}$ extends to a morphism of locally ringed spaces as explained above.
One has $\widetilde{\rho^*\cM}= \tilde{\rho}^*\cM$ where $\tilde{\rho}^*$ denotes the analogue of $\rho^*$ for $(\X^{\rm ad},\cO_{\X^{\rm ad}})$. For
$x\in \X^{\rm ad}$ one has $$(\tilde{\rho}^*\cM)_x=\cO_{\X^{\rm ad},x}\utimes{\cO_{\X,\tilde{\rho}(x)}} \cM_{\tilde{\rho}(x)}.$$
The homomorphism $\cO_{\X,\tilde{\rho}(x)}\rightarrow\cO_{\X^{\rm ad},x}$ is flat and hence faithfully flat (being a flat 
local homomorphism between local rings). So $(\tilde{\rho}^*\cM)_x=0$ if and only if  $\cM_{\tilde{\rho}(x)}=0$, which shows $\Supp \tilde{\rho}^*\cM=\rho^{-1} (\Supp \cM )$.\end{proof}


\vskip5pt 

Let $\w\cD_{\bX}$ be the sheaf of analytic (infinite order) differential operators as constructed and studied in 
\cite{DCapOne}.\footnote{In the case of the flag variety, the associated sheaf $\widetilde{\w\cD_{\bX}}$ on the space $\mathscr{P}(\bX)$ was independently constructed and studied 
in \cite{HPSS} where it is called $\mathscr{D}_{\infty}$.} Let $\cC_{\bX}$ be the category of coadmissible $\w\cD_{\bX}$-modules. 
The natural inclusion $\cD_{\bX}\subset\w\cD_{\bX}$ gives rise to an extension functor 
$$ E_{\bX}:  \cD_{\bX}{\rm -mod} \longrightarrow  \w\cD_{\bX}{\rm -mod},\;\; \cM\rightsquigarrow \w\cD_{\bX}\otimes_{\cD_{\bX}}\cM$$ 

which is exact and faithful and takes coherent $\cD_{\bX}$-modules into $\cC_{\bX}$
\cite[7.2]{DCapThree}.

\begin{prop} \label{prop-supportpreservedB}
One has $$\Supp E_{\bX}(\cM) =  \Supp \cM $$
for any coherent $\cD_{\bX}$-module $\cM$.
\end{prop}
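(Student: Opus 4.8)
The plan is to deduce the equality from the exactness and faithfulness of $E_{\bX}$, combined with the description of supports via the Huber space $\mathscr{P}(\bX)$. Recall from Section~\ref{Support} that for any abelian sheaf $\cN$ on $\bX$ one has $\Supp\cN=\Supp\tilde{\cN}$, that the sets $\tilde{\bV}$ ($\bV$ admissible open in $\bX$) form a basis of $\mathscr{P}(\bX)$, and that $\tilde{\cN}(\tilde{\bV})=\cN(\bV)$. Consequently $x\notin\Supp\cN$ if and only if $\tilde{\cN}$ vanishes on some basic open $\tilde{\bV}\ni x$, i.e. if and only if $\cN|_{\bV}=0$ for some admissible open $\bV$ with $x\in\tilde{\bV}$; in other words $\Supp\cN=\mathscr{P}(\bX)\setminus\bigcup\{\tilde{\bV}\st\cN|_{\bV}=0\}$. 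Applying this to $\cN=\cM$ and $\cN=E_{\bX}(\cM)$, it therefore suffices to show that, for every admissible open $\bV\subseteq\bX$,
\[\cM|_{\bV}=0 \quad\Longleftrightarrow\quad E_{\bX}(\cM)|_{\bV}=0.\]

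To prove this equivalence I would first note that $E$ commutes with restriction: formation of the tensor product of sheaves commutes with restriction to an admissible open, and $\w\cD_{\bX}|_{\bV}=\w\cD_{\bV}$, $\cD_{\bX}|_{\bV}=\cD_{\bV}$, so that $E_{\bX}(\cM)|_{\bV}=E_{\bV}(\cM|_{\bV})$ naturally. The forward implication is then immediate, as $E_{\bV}=\w\cD_{\bV}\otimes_{\cD_{\bV}}(-)$ is additive and hence takes the zero sheaf to the zero sheaf; this already yields $\Supp E_{\bX}(\cM)\subseteq\Supp\cM$ (equivalently, passing to $\mathscr{P}(\bX)$ one has $\widetilde{E_{\bX}(\cM)}=\widetilde{\w\cD_{\bX}}\otimes_{\widetilde{\cD_{\bX}}}\tilde{\cM}$, and tensoring a vanishing stalk of $\tilde{\cM}$ gives a vanishing stalk of $\widetilde{E_{\bX}(\cM)}$). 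For the converse, if $E_{\bX}(\cM)|_{\bV}=E_{\bV}(\cM|_{\bV})=0$ then $\cM|_{\bV}=0$, because $E_{\bV}$ is faithful \cite[7.2]{DCapThree} and a faithful additive functor reflects zero objects; hence $\Supp\cM\subseteq\Supp E_{\bX}(\cM)$, and the two inclusions give the claim.

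I expect no genuine obstacle here: the statement is essentially formal once the properties of $E_{\bX}$ recalled above are in hand, and in particular coherence of $\cM$ plays no role. The only two points that warrant a small amount of care are that faithfulness of the extension functor is needed on an arbitrary admissible open $\bV$ — but this is a local assertion, ultimately coming from the faithful flatness of $\w\cD$ over $\cD$ on affinoids admitting a free Lie lattice \cite{DCapOne} — and the routine bookkeeping, via the equivalence of Section~\ref{Support}, that translates vanishing of stalks of the associated sheaf on $\mathscr{P}(\bX)$ into vanishing of $\cM$ on honest admissible open subsets of $\bX$.
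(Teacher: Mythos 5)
Your reduction of the statement to the equivalence $\cM|_\bV=0\Leftrightarrow E_\bX(\cM)|_\bV=0$ rests on the assertion that for \emph{any} abelian sheaf $\cN$ on $\bX$ one has $\Supp\cN=\sP(\bX)\setminus\bigcup\{\tilde\bV : \cN|_\bV=0\}$, which you present as a formal consequence of the equivalence $\cM\mapsto\tilde\cM$. That assertion is false in general: $\tilde\cN_x=0$ is the vanishing of a direct limit, and does not imply that $\cN$ actually vanishes on some admissible open $\bV$ with $x\in\tilde\bV$ (think of a direct sum of skyscraper sheaves accumulating at $x$). What you actually need is that this characterization holds when $\cN$ is \emph{locally finitely generated} over a sheaf of rings: then $\cN_x=0$ lets you kill finitely many local generators on a common shrinking, and restricting the surjection $\cO^n\twoheadrightarrow\cN|_\bU$ forces $\cN$ to vanish near $x$. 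This works for $\cM$ precisely because $\cM$ is coherent, and it works for $E_{\bX}(\cM)$ because $E_{\bX}$ carries a local presentation $\cD^m\to\cD^n\to\cM\to0$ to $\w\cD^m\to\w\cD^n\to E_{\bX}(\cM)\to0$, so $E_{\bX}(\cM)$ is locally finitely generated over $\w\cD_\bX$. In other words, contrary to your closing remark, coherence of $\cM$ plays an essential role: it is what makes the reduction to restrictions legitimate for both sheaves appearing in the statement, and dismissing it is the tell-tale sign that the reduction has been treated as more formal than it is.

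With that repair your argument goes through — faithfulness and exactness of $E_{\bV}$ on arbitrary admissible opens then finish the proof as you say — but note that this is a genuinely different route from the paper's. The paper works stalkwise in $\sP(\bX)$: it first shows directly, by choosing elements and pulling them back to a small affinoid $\bU\in\bX_w(\cT)$ where $\w\cD(\bU)$ is faithfully flat over $\cD(\bU)$, that $\w\cD_x$ is a faithful right $\cD_x$-module; then it establishes $E_{\bX}(\cM)_x\cong\w\cD_x\otimes_{\cD_x}\cM_x$ by the Five Lemma applied to a coherent presentation; and these two facts together give $\cM_x\neq0\Leftrightarrow E_{\bX}(\cM)_x\neq0$. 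Your approach avoids any analysis of the local rings $\w\cD_x$ at the cost of leaning on local finite generation of both $\cM$ and $E_{\bX}(\cM)$ and on the sheaf-level faithfulness of $E_{\bV}$; the paper's approach isolates the algebraic input (faithfulness of $\w\cD_x$ over $\cD_x$) in a more self-contained form.
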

\begin{proof} Fix $x \in \sP(\bX)$; we will abuse notation and write $\cN_x$ to denote $\tilde{\cN}_x$ for every abelian sheaf $\cN$ on $\bX$. We will first show that $\w\cD_x$ is a faithful right $\cD_x$-module. Let $I$ be a left ideal in $\cD_x$ and suppose that $\w\cD_x \otimes_{\cD_x} (\cD_x/I) = 0$. Then $1 \in \w\cD_x \cdot I$, so we can find $Q_1,\ldots,Q_n \in \w\cD_x$ and $P_1,\ldots,P_n \in I$ such that $1 = \sum_{i=1}^n Q_iP_i$. We can find $\bU \in \bX_w(\cT)$ such that $\tilde{\bU} \ni x$, the maps $\w\cD(\bU) \to \w\cD_x$ and $\cD(\bU) \to \cD_x$ are injective and $Q_i \in \w\cD(\bU)$ and $P_i \in \cD(\bU)$ for each $i$. Then $1 = \sum_{i=1}^n Q_iP_i$ holds already in $\w\cD(\bU)$ so
\[\w\cD(\bU) \utimes{\cD(\bU)} \left( \frac{\cD(\bU)}{\sum_{i=1}^n \cD(\bU) P_i}\right) = 0.\]
But $\w\cD(\bU)$ is a faithfully flat $\w\cD(\bU)$-module by \cite[Theorem B]{DCapThree}, so $\sum_{i=1}^n\cD(\bU) P_i = \cD(\bU)$ and therefore $I = \cD_x$. Hence $\w\cD_x$ is a faithful right $\cD_x$-module as claimed. 

Now since $\cM$ is a coherent $\cD_{\bX}$-module, we can find $\bU \in \bX_w(\cT)$ such that $\tilde{\bU} \ni x$ and such that there is an exact sequence $\cD_{\bU}^m \to \cD_{\bU}^n \to \cM_{|\bU} \to 0$ for some integers $m, n$. Because the functors 
\[\cN \mapsto \w\cD_x \utimes{\cD_x} \cN_x\qmb{and} \cN \mapsto (\w\cD \utimes{\cD} \cN)_x=E_{\bX}(\cN)_x\]
are right exact and agree when $\cN$ is a free $\cD$-module of finite rank, we conclude using the Five Lemma that there is a natural isomorphism
\[ \w\cD_x \utimes{\cD_x} \cM_x \tocong E_{\bX}(\cM)_x\]
for every $x \in \sP(\bX)$. The faithfulness of $\w\cD_x$ as a right $\cD_x$-module established in the first paragraph now shows that $\cM_x \neq 0$ if and only if $E_{\bX}(\cM)_x \neq 0$, and the result follows.
\end{proof}

Combined with \ref{prop-supportpreservedA} we obtain:

\begin{cor}\label{cor-supportpreserved} Let $\cM$ be a coherent $\cD_{\X}$-module. One has 
$$\Supp E_{\bX}(\rho^*\cM) =\tilde{\rho}^{-1} ( \Supp \cM ).$$
\end{cor}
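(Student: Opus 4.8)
The plan is simply to concatenate Propositions~\ref{prop-supportpreservedA} and~\ref{prop-supportpreservedB}. First I would note that by Proposition~\ref{prop-analytification1}(c) the $\cD_{\bX}$-module $\rho^*\cM$ is coherent, so Proposition~\ref{prop-supportpreservedB} applies with its ``$\cM$'' replaced by $\rho^*\cM$ and yields
\[\Supp E_{\bX}(\rho^*\cM) = \Supp \rho^*\cM\]
as subsets of $\mathscr{P}(\bX)$, where $\Supp$ is understood in the sense of \S\ref{Support}, i.e.\ as the support of the associated sheaf on the Huber space. Next I would apply Proposition~\ref{prop-supportpreservedA} to the coherent $\cD_{\X}$-module $\cM$ to rewrite the right-hand side as $\tilde{\rho}^{-1}(\Supp \cM)$. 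Composing the two identities gives the assertion.

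There is no genuine obstacle here: the corollary is a purely formal consequence, and the substantive input has already been supplied by the two propositions — in particular the faithfulness of $\w\cD_x$ as a right $\cD_x$-module (which rests on the faithful flatness of $\w\cD(\bU)$ over $\cD(\bU)$ from \cite[Theorem B]{DCapThree}) and the faithful flatness of the local homomorphism $\cO_{\X,\tilde{\rho}(x)}\to \cO_{\X^{\rm ad},x}$. The only point that deserves a sentence of care is the bookkeeping of which space the various support sets live on: all three of $\Supp E_{\bX}(\rho^*\cM)$, $\Supp \rho^*\cM$ and $\tilde{\rho}^{-1}(\Supp \cM)$ are regarded as subsets of $\mathscr{P}(\bX)$ (equivalently of the adic space $\X^{\rm ad}$), following the convention fixed in \S\ref{Support}, so that the two equalities can be composed verbatim without any further identification.
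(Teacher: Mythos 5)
Your proof is correct and is exactly the paper's own argument: the corollary is stated immediately after Proposition~\ref{prop-supportpreservedB} with the one-line remark ``Combined with \ref{prop-supportpreservedA} we obtain,'' i.e.\ the concatenation you carry out. Your additional note that Proposition~\ref{prop-analytification1}(c) supplies the coherence of $\rho^*\cM$ needed to invoke Proposition~\ref{prop-supportpreservedB} is a reasonable piece of bookkeeping the paper leaves implicit.
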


\section{Some complements to localization, induction and duality}
Let $K$ be a non-archimedean complete field of mixed characteristic $(0,p)$.
 Let $G$ be a $p$-adic Lie group acting continuously on a smooth rigid $K$-analytic space $\bX$.
Let $P\subset G$ be a closed subgroup. 

\subsection{Intersection obstructions and inverse images}\label{sec-IntObstr}
Let for a moment $G$ be an abstract group acting on a set $X$ and let $Y$ be a subset of $X$. 
Denote by $$G_Y:=\Stab_G(Y)=\{g\in G: gY\subseteq Y\}$$ the stabilizer of the set $Y$ in $G$ and define
the {\it intersection obstruction} of $Y$ in $G$ to be
\[ Z_Y:=\{ g\in G : Y\cap gY \neq \emptyset\} .\]
We have $G_Y\subseteq Z_Y$. Recall \cite[Def. 2.1.8]{EqDCapTwo} that the $G$-orbit $G.Y$ of $Y$ is called {\it regular in $X$} if $Z_Y=G_Y$.

\begin{lem}\label{lem-elementary} Let $f:\tilde{X}\rightarrow X$ be a $G$-equivariant map between two $G$-sets $\tilde{X}$ and $X$. Let $\tilde{Y}=f^{-1}(Y)$. 

\be 
\item One has $Z_{\tilde{Y}}\subseteq Z_Y$. If $f$ is surjective, then $Z_{\tilde{Y}}= Z_Y$. 

\item One has $G_{\tilde{Y}}\supseteq G_Y$. If $f$ is surjective, then $G_{\tilde{Y}}= G_Y.$ 
\item If $f$ is surjective, then: \text{$G.\tilde{Y}$ is regular in $\tilde{X} \Longleftrightarrow G.Y$ is regular in $X.$}
\ee 
\end{lem}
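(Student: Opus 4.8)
The plan is to reduce all three statements to one observation: since $f$ is $G$-equivariant, $g\tilde{Y} = g f^{-1}(Y) = f^{-1}(gY)$ for every $g \in G$. Combined with the elementary set-theoretic facts $f^{-1}(A \cap B) = f^{-1}(A) \cap f^{-1}(B)$, $A \subseteq B \Rightarrow f^{-1}(A) \subseteq f^{-1}(B)$, and $f^{-1}(A) \neq \emptyset \Rightarrow A \neq \emptyset$ — the last of which \emph{reverses} when $f$ is surjective, since then $f^{-1}(A) = \emptyset \Leftrightarrow A = \emptyset$ and $f(f^{-1}(A)) = A$ — everything falls out quickly.

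For part (a), I would write $\tilde{Y} \cap g\tilde{Y} = f^{-1}(Y) \cap f^{-1}(gY) = f^{-1}(Y \cap gY)$; if the left-hand side is non-empty then so is $Y \cap gY$, which gives $Z_{\tilde{Y}} \subseteq Z_Y$. When $f$ is surjective, $f^{-1}(Y \cap gY)$ is empty precisely when $Y \cap gY$ is, so the implication becomes an equivalence and $Z_{\tilde{Y}} = Z_Y$.

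For part (b), if $g \in G_Y$ then $gY \subseteq Y$, hence $g\tilde{Y} = f^{-1}(gY) \subseteq f^{-1}(Y) = \tilde{Y}$, so $g \in G_{\tilde{Y}}$; this gives $G_Y \subseteq G_{\tilde{Y}}$. Conversely, if $f$ is surjective and $g \in G_{\tilde{Y}}$, then $f^{-1}(gY) = g\tilde{Y} \subseteq \tilde{Y} = f^{-1}(Y)$; applying $f$ and using $f(f^{-1}(A)) = A$ for surjective $f$ yields $gY \subseteq Y$, so $g \in G_Y$ and the two stabilisers coincide. Part (c) is then immediate: regularity of $G.\tilde{Y}$ in $\tilde{X}$ means $Z_{\tilde{Y}} = G_{\tilde{Y}}$, and when $f$ is surjective parts (a) and (b) identify this equality with $Z_Y = G_Y$, i.e.\ with regularity of $G.Y$ in $X$.

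There is no real obstacle here; the only point requiring care is not to invoke $f^{-1}(A) = \emptyset \Rightarrow A = \emptyset$ without surjectivity — which is exactly why the equalities in (a), (b) and the biconditional in (c) need $f$ surjective — together with a quick check that the reasoning is insensitive to whether $G_Y$ is read via $gY \subseteq Y$ or via $gY = Y$, since the same argument applies verbatim with $g$ replaced by $g^{-1}$.
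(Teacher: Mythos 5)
Your proof is correct and follows essentially the same route as the paper: both reduce everything to $g\tilde{Y}=f^{-1}(gY)$ and basic properties of preimages, with surjectivity used exactly where equality or biconditionals are needed. The only cosmetic difference is in (b), where the paper deduces $gY\subseteq Y$ from $g\in G_{\tilde Y}$ by applying $f$ to $g\tilde Y\subseteq\tilde Y$ via $f(\tilde Y)=Y$, while you pass through $f(f^{-1}(\cdot))$; both are fine.
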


\begin{proof}
(a) Let $g \in Z_{\tilde{Y}}$. Then $f^{-1}(Y \cap g Y) = \tilde{Y} \cap g \tilde{Y} \neq \emptyset$, so $Y \cap g Y \neq \emptyset$ and $g \in Z_Y$.  Conversely, suppose $f$ surjective and $g\in Z_Y$. Then $Y \cap gY \neq \emptyset$ and hence $f^{-1}(Y \cap g Y) \neq \emptyset$ because $f$ is surjective. Hence $\tilde{Y} \cap g \tilde{Y} = f^{-1}(Y \cap gY)  \neq \emptyset$ as well, so $g \in Z_{\tilde{Y}}$. 

(b) Let $g\in G_Y$. Then $f(g\tilde{Y})=gf(\tilde{Y})\subseteq gY\subseteq Y$ so that $g\tilde{Y}\subseteq f^{-1}(Y)=\tilde{Y}$ which implies $g\in G_{\tilde{Y}}$. Conversely, suppose $f$ surjective so that $Y = f(\tilde{Y})$. If $g\in G_{\tilde{Y}}$, then $gY = gf(\tilde{Y}) = f(g\tilde{Y}) \subseteq f(\tilde{Y}) = Y$ shows that $g \in G_Y$.

(c) This follows from (a) and (b).
\end{proof}

\subsection{Induction and restriction}\label{IndRes}
We start by recalling the geometric induction functor 
$$ \ind_P^G: \cC_{\bX /P} \rightarrow  \cC_{\bX /G}$$
from \cite{EqDCapTwo}. For a local description, let $\bU\in \bX_w(\cT)$. Then by definition \cite[2.2.12]{EqDCapTwo}
$$ \ind_P^G(\cN)(\bU)= \invlim\limits\bigoplus\limits_{Z\in H\setminus G / P} \lim\limits_{s\in Z}\w\cD(\bU,H) \underset{\w\cD(\bU,H\cap ^sP)}{\w\otimes} [s]\cN(s^{-1}\bU).$$
where the first inverse limit is over all $\bU$-small compact open subgroups $H$ of $G$. 

\vskip5pt
For any subset $S\subseteq\bX$ we denote by $\cC^{S}_{\bX /P}$ the full subcategory of $\cC_{\bX /P}$ consisting of those $\cM\in \cC_{\bX /P}$ which are supported on $S$, i.e. $\cM|_{\bV}=0$ for any admissible open subset $\bV$ of $\bX\setminus S$.

\begin{thm}\label{inductionequiv} Suppose that $\bX$ is separated. 
Let $\bY\subseteq\bX$ be a Zariski closed subset of $\bX$.
Suppose that $\bY$ is irreducible and quasi-compact, has a regular $G$-orbit in $\bX$ and a co-compact stabilizer $G_{\bY}$ in $G$.
Then the functor $\ind_{G_{\bY}}^G$ induces an equivalence of categories 
$$\ind_{G_{\bY}}^G: \cC^{\bY}_{\bX / G_{\bY}} \congs  \cC^{G\bY}_{\bX /G}.$$
A quasi-inverse is given by the functor $\cH^0_{\bY}$ of sections supported on $\bY$.
\end{thm}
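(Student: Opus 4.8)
Write $P:=G_\bY$ throughout. The plan is to show that $\ind_P^G$ carries $\cC^\bY_{\bX/P}$ into $\cC^{G\bY}_{\bX/G}$ and that $\cH^0_\bY$ carries $\cC^{G\bY}_{\bX/G}$ into $\cC^\bY_{\bX/P}$, then to produce natural morphisms $\cN\to\cH^0_\bY\ind_P^G\cN$ and $\ind_P^G\cH^0_\bY\cM\to\cM$, and finally to check that both are isomorphisms by evaluating on the basis $\bX_w(\cT)$, using the explicit formula for $\ind_P^G$ recalled above together with the disjointness of the $G$-translates of $\bY$ furnished by regularity.

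First, $\ind_P^G$ preserves the relevant supports: if $\bU\in\bX_w(\cT)$ satisfies $\bU\cap G\bY=\emptyset$, then each translate $s^{-1}\bU$ occurring in the formula is disjoint from $\bY$, so $\cN(s^{-1}\bU)=0$ because $\cN$ is supported on $\bY$; hence $\ind_P^G\cN(\bU)=0$, and since $\bX_w(\cT)$ is a basis we obtain $\ind_P^G\cN\in\cC^{G\bY}_{\bX/G}$. In the other direction, $\cH^0_\bY\cM$ is by construction a subsheaf of $\cM$ supported on $\bY$, and it is $P$-equivariant because $\bY$ is $P$-stable and $\cM$ is $G$-equivariant; the only point needing an argument is that $\cH^0_\bY\cM$ is again a coadmissible $\w\cD_\bX$-module, for which I would either invoke the local-cohomology formalism for coadmissible $\w\cD_\bX$-modules along a Zariski-closed subset, or transport the question through the equivariant Kashiwara equivalence $\cC^\bY_{\bX/P}\simeq\cC_{\bY/P}$.

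The core of the proof is the identification $\cN\congs\cH^0_\bY\ind_P^G\cN$. Since $P$ is cocompact in $G$, the double coset set $H\backslash G/P$ is finite for every $\bU$-small subgroup $H$, so over a fixed $\bU\in\bX_w(\cT)$ meeting $\bY$ the formula is a finite direct sum over double cosets $Z=HsP$, and only those with $\bU\cap s\bY\neq\emptyset$ survive, as $H$ stabilises $\bU$ and $\cN$ is supported on $\bY$. The key point is that passing to sections supported on $\bY$ annihilates every summand except the trivial coset $Z=HP$: on the one hand, for $s\in P$ and any $\bU'\in\bX_w(\cT)$ with $\bU'\subseteq\bU\setminus\bY$ one has $s^{-1}\bU'\cap\bY=\emptyset$, so the $HP$-summand restricts to zero on $\bU'$ and all of its sections are supported on $\bY$; on the other hand, for $s\notin P$ regularity forces $s\bY$ to be disjoint from $\bY$, hence $s\bY\subseteq\bU\setminus\bY$, so a nonzero section of the $Z$-summand has nonzero restriction to a suitable $\bU'\in\bX_w(\cT)$ inside $\bU\setminus\bY$ still meeting $s\bY$ and is therefore not supported on $\bY$ --- this is where regularity is used essentially. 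It then remains to identify the surviving $Z=HP$ summand, after passing to the limit over $H$ and using the $P$-equivariance of $\cN$, with $\cN(\bU)$ itself; this is a base-change statement for the completed tensor product $\underset{\w\cD(\bU,P_\bU)}{\w\otimes}$ along the inverse limit over $H$, belonging to the standard machinery relating $\cC_{\bX/P}$ to its sections on $\bX_w(\cT)$. Carrying this out uniformly in $\bU$, compatibly with restriction maps and with the equivariant structures, produces the desired natural isomorphism.

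For the opposite composite, one builds a natural $G$-equivariant comparison morphism $\ind_P^G\cH^0_\bY\cM\to\cM$ which over $\bX_w(\cT)$ is assembled from the inclusions $\cH^0_\bY\cM\hookrightarrow\cM$, the $G$-equivariant structure of $\cM$, and the $\w\cD$-module structure of $\cM$ (over the trivial coset it is simply the inclusion), and one verifies it is an isomorphism on each $\bU\in\bX_w(\cT)$ by the same local analysis, read now in the direction of reconstructing $\cM(\bU)$ from the sections $\cM(s^{-1}\bU)$, using that $\cM$ is supported on $G\bY$ and that its $G$-equivariance identifies each such section with an $s$-translate of $(\cH^0_\bY\cM)$-sections. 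The main obstacle, I expect, is precisely this reconstruction step: regularity makes the translates $g\bY$ pairwise disjoint but does not make $\bY$ topologically isolated inside $G\bY$ (the orbit may be a compact set in which $\bY$ is a non-isolated point), so one cannot shortcut the argument by restricting to a neighbourhood of $\bY$; instead the regularity hypothesis has to be pushed through the double-coset bookkeeping in the induction formula, and doing this while keeping track of the completed tensor products, the inverse limits over $H$, the coadmissibility of $\cH^0_\bY\cM$, and the naturality of the unit and counit is where the real work lies.
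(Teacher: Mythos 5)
The paper does not prove this statement: the entire "proof" given is the single citation \cite[Theorem A]{EqDCapTwo}. So there is no in-paper argument to compare your sketch against; what you have written is a proposal for how the cited theorem might be established.

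Your architecture is plausible and you locate the role of regularity correctly (for $s \notin P$, regularity gives $\bY \cap s\bY = \emptyset$, so the $HsP$-summand contributes nothing supported on $\bY$ --- though note that "$s\bY \subseteq \bU\setminus\bY$" should read "$\bU \cap s\bY \subseteq \bU\setminus\bY$", since $s\bY$ need not lie in $\bU$). But the heart of your argument --- that $\cH^0_\bY$ applied to $\ind_P^G\cN$ picks out exactly the trivial $(H,P)$-coset summand and that this summand is $\cN(\bU)$ --- glosses over the real difficulty. At a fixed level $H$ the trivial-coset summand is $\w\cD(\bU,H)\wotimes{\w\cD(\bU,H\cap P)}\cN(\bU)$, which is strictly larger than $\cN(\bU)$; more seriously, as a subsheaf it is \emph{not} supported on $\bY$. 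To test support you must restrict to opens $\bU' \subseteq \bU\setminus\bY$ with correspondingly smaller $\bU'$-small groups $H' \subsetneq H$, and under this refinement the coset $HP$ splits into finer cosets $H'hP$ with $h \in H\setminus P$, whose contributions $[h]\cN(h^{-1}\bU')$ can be nonzero because $h\bY$ may meet $\bU'$ even though $\bY$ does not --- your argument treats only representatives $s \in P$ and misses these $h$. So the subsheaf of sections supported on $\bY$ is cut out by simultaneously killing non-trivial-coset contributions at \emph{every} level, and one has to prove this carves out precisely the image of the unit $\alpha_\cN \colon \cN \hookrightarrow \ind_P^G\cN$; that is not a "standard base-change statement". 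Combined with the two gaps you flag yourself --- coadmissibility of $\cH^0_\bY\cM$, and the counit isomorphism $\ind_P^G\cH^0_\bY\cM \to \cM$ --- the distance between your sketch and an actual proof is substantial, and these are exactly the points where the content of the cited Theorem A lies.
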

\begin{proof} \cite[Theorem A]{EqDCapTwo}.
\end{proof}

Now let $\cN\in  \cC_{\bX /P}$. In the following, we first construct a certain morphism 
\[\alpha_{\cN}: \cN\rightarrow  \ind_P^G(\cN)\] 
in $\Frech(P - \cD_\bX)$ which is natural in $\cN$.
Let $\bU\in \bX_w(\cT)$. As we have recalled above,
$$ \ind_P^G(\cN)(\bU)= \invlim\limits\bigoplus\limits_{Z\in H\setminus G / P} \lim\limits_{s\in Z}\w\cD(\bU,H) \underset{\w\cD(\bU,H\cap ^sP)}{\w\otimes} [s]\cN(s^{-1}\bU).$$
where the first inverse limit is over all $\bU$-small compact open subgroups $H$ of $G$. For the double class $Z_0=HP$, we have a
$\w\cD(\bU,H\cap P)$-linear map
$$\cN(\bU)\rightarrow \w\cD(\bU,H) \underset{\w\cD(\bU,H\cap P)}{\w\otimes} \cN(\bU)\rightarrow \lim\limits_{s\in Z_0}\w\cD(\bU,H) \underset{\w\cD(\bU,H\cap ^sP)}{\w\otimes} [s]\cN(s^{-1}\bU),$$
where the first map is the canonical map $x\mapsto 1\w\otimes x$ and the second map is the inverse of the canonical isomorphism \cite[2.2.9 (a)]{EqDCapTwo}. This gives a
$\w\cD(\bU,H\cap P)$-linear map $$\cN(\bU)\rightarrow \ind_P^G(\cN)(\bU).$$ If $\bV\subseteq\bU$ is an affinoid subdomain, then this map is compatible with the restriction map $\tau^\bU_\bV$ and therefore extends to a morphism of sheaves
 $\cN\rightarrow  \ind_P^G(\cN)$ on $\bX_w(\cT)$ and then on $\bX$. The morphism lies in $\Frech(P - \cD_\bX)$. This defines $\alpha_{\cN}$.
 
 \vskip5pt
 
There is the obvious restriction functor $\Res^G_P$ from $\Frech(G - \cD_\bX)$ to $\Frech(P - \cD_\bX)$.

\begin{prop}\label{Adjunction}
Let $\cN\in  \cC_{\bX /P}, \cM\in \cC_{\bX /G}$ and $f: \cN\rightarrow \Res^G_P(\cM)$ a morphism in $\Frech(P - \cD_\bX)$.
Then there is a morphism \[\bar{f}: \ind_P^G(\cN)\rightarrow \cM\] in  $\Frech(G - \cD_\bX)$ such that $f=\bar{f}\circ\alpha_{\cN}$. This induces a $K$-linear isomorphism 
\[ \Hom_{\Frech(P - \cD_\bX)} (  \cN,  \Res^G_P(\cM) ) \congs  \Hom_{\Frech(G - \cD_\bX)}  (  \ind_P^G(\cN), \cM )
\]
which is natural in $\cN$ and $\cM$. 
\end{prop}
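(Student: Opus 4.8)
The plan is to read the statement as the assertion that $\ind_P^G$ is left adjoint to $\Res^G_P$, with unit the morphism $\alpha$ just constructed, so that the real content is (i) the construction of $\bar f$, (ii) the factorisation $f=\bar f\circ\alpha_\cN$ together with uniqueness of $\bar f$, and (iii) naturality. The organising principle is that a morphism in $\Frech(G - \cD_\bX)$ with target the coadmissible $G$-equivariant module $\cM$ amounts to a family, compatible in $\bU\in\bX_w(\cT)$ and in the $\bU$-small compact open subgroups $H$ of $G$, of continuous $\w\cD(\bU,H)$-linear, $H$-equivariant maps into $\cM(\bU)$; I will build such a family out of $f$ using the explicit local formula for $\ind_P^G$.

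First I would fix $\bU\in\bX_w(\cT)$ and a $\bU$-small compact open $H$, and for each double coset $Z=HsP\in H\backslash G/P$ form the composite $\phi_s\colon\cN(s^{-1}\bU)\xrightarrow{f_{s^{-1}\bU}}\cM(s^{-1}\bU)\xrightarrow{\,s\,}\cM(\bU)$, where the second map is supplied by the $G$-equivariant structure of $\cM$. Using that $f$ is $\cD_\bX$-linear and $P$-equivariant, and that the equivariant structure of $\cM$ is compatible with its $\cD_\bX$-module structure, I would check that $\phi_s$ is continuous and $\w\cD(\bU,H\cap {}^sP)$-linear for the twisted module structure carried by $[s]\cN(s^{-1}\bU)$. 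The universal property of the completed tensor product then extends $\phi_s$ uniquely to a continuous $\w\cD(\bU,H)$-linear map out of $\w\cD(\bU,H)\wotimes{\w\cD(\bU,H\cap {}^sP)}[s]\cN(s^{-1}\bU)$ into $\cM(\bU)$, the latter viewed as a $\w\cD(\bU,H)$-module by restricting its $G$-equivariant structure along $H$. I would then check that these maps are compatible with the transition isomorphisms of the system indexed by $s\in Z$ — which are the canonical isomorphisms of \cite[2.2.9(a)]{EqDCapTwo} — so that they descend through $\varprojlim_{s\in Z}$; summing over $Z$, and verifying compatibility with the transition maps as $H$ shrinks, with the restriction maps $\tau^\bU_\bV$ for affinoid subdomains $\bV\subseteq\bU$, and with the $G$-action, and finally passing to $\varprojlim_H$ and sheafifying on $\bX_w(\cT)$ and then on $\bX$, produces the desired $\bar f\in\Frech(G - \cD_\bX)$.

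Next I would verify $f=\bar f\circ\alpha_\cN$ by unwinding the definition of $\alpha_\cN$: its local form lands in the summand indexed by $Z_0=HP$ (the case $s=1$) and sends $x$ to $1\w\otimes x$, and since $\phi_1=f_\bU$ this composite is exactly $f$ on each $\bU$. For uniqueness I would argue that the image of $\alpha_\cN$ topologically generates $\ind_P^G(\cN)$ as a $G$-equivariant $\cD_\bX$-module: any elementary tensor $d\w\otimes[s]x$ is $d$ acting on the $s$-translate of $1\w\otimes x=\alpha_\cN(x)$ (taken over $s^{-1}\bU$), finite sums of such are dense, and a morphism in $\Frech(G - \cD_\bX)$ out of $\ind_P^G(\cN)$ is determined by its restriction to this dense subspace by continuity, $\w\cD_\bX$-linearity, $G$-equivariance and compatibility with restriction. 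With this in hand the Hom-isomorphism is formal: $f\mapsto\bar f$ is $K$-linear by construction, injective because $f=\bar f\circ\alpha_\cN$, and surjective because for any $\bar g\in\Hom_{\Frech(G - \cD_\bX)}(\ind_P^G(\cN),\cM)$ the morphism $\bar g\circ\alpha_\cN$ lies in $\Hom_{\Frech(P - \cD_\bX)}(\cN,\Res^G_P(\cM))$ ($\alpha_\cN$ being $P$-equivariant) and is sent back to $\bar g$ by uniqueness; naturality in $\cN$ comes from that of $\alpha$, and naturality in $\cM$ is immediate from the construction of $\bar f$.

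The hard part will be the bookkeeping in the construction step — checking simultaneously all the compatibilities across the nested limits $\varprojlim_H$, $\varprojlim_{s\in Z}$ and the direct sum over double cosets — and, within that, the genuinely analytic point, namely the continuity of $\bar f$ for the Fréchet topologies on $\ind_P^G(\cN)$ and on $\cM$. The density/topological-generation claim underlying the uniqueness statement is the other place where the non-algebraic nature of the categories must be handled with care; once these two points are settled, everything else is routine diagram-chasing.
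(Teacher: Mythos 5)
Your proposal is correct and follows essentially the same route as the paper: the paper likewise defines $\bar f$ locally on a small pair $(\bU,H)$ via the composite $\cN(s^{-1}\bU)\xrightarrow{f}\cM(s^{-1}\bU)\to\cM(\bU)$ (the second map coming from the equivariant structure of $\cM$), extends it $\w\cD(\bU,H)$-linearly across the completed tensor product in each double coset, assembles over cosets, $H$ and restriction maps, and obtains the bijection by observing that the inverse of $f\mapsto\bar f$ is precomposition with $\alpha_\cN$. Your extra density/topological-generation argument for uniqueness just makes explicit what the paper leaves implicit in asserting that precomposition with $\alpha_\cN$ is a two-sided inverse.
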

\begin{proof} Suppose that $(\bU,H)$ is small. Let $Z\in H\setminus G / P$ and $s\in Z$.
There is the $\w\cD(\bU, H \cap ^sP)$-linear  morphism
$$ [s]s_*\cN(\bU)=\cN(s^{-1}\bU)\stackrel{f}{\longrightarrow} \cM(s^{-1}\bU)\longrightarrow \cM(\bU)$$ where the second map equals $g^\cM(\bU)^{-1}$ for $g=s^{-1}$. The map extends to a $\w\cD(\bU, H)$-linear morphism
$$\w\cD(\bU,H) \underset{\w\cD(\bU, H \cap ^sP)}{\w\otimes} [s]\cN(s^{-1}\bU)\rightarrow \cM(\bU)$$ and then to a morphism $\ind_P^G(\cN)(\bU)\rightarrow \cM(\bU)$. This gives the morphism $\bar{f}$. The inverse to the map $f\mapsto \bar{f}$ is given by precomposing a morphism with $\alpha_{\cN}$. 
\end{proof}
We deduce the right exactness of geometric induction (although this will not be used in the following). 
\begin{cor}
The functor $ \ind_P^G: \cC_{\bX /P} \rightarrow  \cC_{\bX /G}$ is right-exact.
\end{cor}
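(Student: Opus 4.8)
The plan is to deduce this at once from the adjunction established in Proposition \ref{Adjunction}, using the general principle that a functor between abelian categories which admits a right adjoint is right-exact. Recall that $\cC_{\bX/P}$ and $\cC_{\bX/G}$ are abelian categories, realised as full subcategories of $\Frech(P-\cD_\bX)$ and $\Frech(G-\cD_\bX)$ respectively, that by construction $\ind_P^G$ sends $\cC_{\bX/P}$ into $\cC_{\bX/G}$, and that $\Res^G_P$ sends $\cC_{\bX/G}$ into $\cC_{\bX/P}$.

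First I would observe that, because these inclusions of categories are full, the natural $K$-linear isomorphism of Proposition \ref{Adjunction} restricts to a natural isomorphism
\[\Hom_{\cC_{\bX/P}}(\cN,\Res^G_P\cM)\congs\Hom_{\cC_{\bX/G}}(\ind_P^G\cN,\cM)\]
for all $\cN\in\cC_{\bX/P}$ and $\cM\in\cC_{\bX/G}$; that is, $\ind_P^G$ is left adjoint to $\Res^G_P$ as functors between these abelian categories. Then I would invoke the standard fact that a left adjoint preserves all colimits, in particular cokernels: concretely, given a short exact sequence $0\to\cN'\to\cN\to\cN''\to 0$ in $\cC_{\bX/P}$, the object $\cN''$ is the cokernel of $\cN'\to\cN$, so $\ind_P^G$ carries this to an exact sequence $\ind_P^G\cN'\to\ind_P^G\cN\to\ind_P^G\cN''\to 0$ in $\cC_{\bX/G}$. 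This is exactly right-exactness of $\ind_P^G$.

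The only point requiring a little care — and hence the main (mild) obstacle — is checking that the adjunction bijection of Proposition \ref{Adjunction}, which is stated at the level of the Fréchet categories, genuinely descends to the coadmissible equivariant categories. This amounts to confirming that $\cC_{\bX/P}\subseteq\Frech(P-\cD_\bX)$ and $\cC_{\bX/G}\subseteq\Frech(G-\cD_\bX)$ are full subcategories and that both $\ind_P^G$ and $\Res^G_P$ preserve coadmissibility, all of which is part of the foundational setup recalled above and in \cite{EqDCapTwo}. Granting this, the conclusion is immediate; alternatively one could argue directly from the local formula for $\ind_P^G$ using right-exactness of the completed tensor product over Fréchet--Stein algebras and exactness of the relevant inverse limits and direct sums over double cosets, but the adjunction argument is cleaner.
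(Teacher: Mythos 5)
Your high-level strategy — use the adjunction of Proposition~\ref{Adjunction} plus ``left adjoints preserve colimits'' and Yoneda — is the same as the paper's, but your execution hides a step that is in fact the crux of the matter. You assume that the adjunction bijection of Proposition~\ref{Adjunction} descends to a genuine adjunction between the abelian categories $\cC_{\bX/P}$ and $\cC_{\bX/G}$, and you assert (attributing it to the ``foundational setup'' and \cite{EqDCapTwo}) that $\Res^G_P$ sends $\cC_{\bX/G}$ into $\cC_{\bX/P}$. But the paper is deliberately careful \emph{not} to claim this: the restriction functor is introduced only as a functor $\Res^G_P \colon \Frech(G-\cD_\bX) \to \Frech(P-\cD_\bX)$, and Proposition~\ref{Adjunction} itself only asserts natural isomorphisms of $\Hom$-spaces computed in the Fr\'echet categories. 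Whether $\Res^G_P$ preserves coadmissibility is a nontrivial question: for a closed cocompact subgroup $P$ with $\dim P < \dim G$, the ring $\w\cD(\bU,H)$ is not a finitely generated $\w\cD(\bU,H\cap P)$-module (the ranks of the crossed-product levels $\hK{U(\pi^n\cL)}\rtimes_{H_n}H$ over $\hK{U(\pi^n\cL)}\rtimes_{H_n\cap P}(H\cap P)$ are unbounded in $n$), so there is no reason that coadmissibility transfers along restriction, and nothing in \cite{EqDCapTwo} is being cited for this. Without it, your statement that $\ind_P^G$ is a left adjoint \emph{between the coadmissible categories} does not follow from Proposition~\ref{Adjunction}, and the invocation of the abstract fact that left adjoints preserve cokernels does not apply.

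The paper's proof is engineered precisely to avoid this point. It works with the opposite functors $L=(\ind_P^G)^{\op}$, $R=(\Res^G_P)^{\op}$ and, given an exact sequence $0\to\cN_1\to\cN_0\to\cN_2$ in $\cC_{\bX/P}^{\op}$, directly verifies that
\[
0\to\Hom_{\Frech(P-\cD_\bX)^{\op}}(R(\cM),\cN_1)\to\Hom(R(\cM),\cN_0)\to\Hom(R(\cM),\cN_2)
\]
is exact, \emph{without} assuming $R(\cM)$ is coadmissible. The substantive input there is topological: a morphism $\cN_0\to R(\cM)$ in $\Frech(P-\cD_\bX)$ that vanishes on the coadmissible subsheaf $\cN_2$ factors set-theoretically through the coadmissible quotient $\cN_1$, and the factored map is automatically continuous because the canonical topology on local sections of a coadmissible quotient is the quotient topology. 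Only then does one apply Proposition~\ref{Adjunction}, use coadmissibility of $\cM$ and of $L(\cN_i)=\ind_P^G\cN_i$ (not of $R(\cM)$), and conclude by Yoneda in $\cC_{\bX/G}^{\op}$. To repair your argument you would either need to supply a proof that $\Res^G_P$ preserves coadmissibility for cocompact closed $P$, or make explicit the continuity/quotient-topology step that replaces it, as the paper does.
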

\begin{proof} This is a variant of the standard argument of deducing exactness properties from an adjoint pair of functors. 
Let $L= (\ind_P^G)^{\op}: \cC_{\bX /P}^{\op}\rightarrow \cC_{\bX /G}^{\op}$ be the opposite functor and similarly $R=  (\Res^G_P)^{\op}$. 
We will show that $L$ is left-exact. Let therefore $0\rightarrow \cN_1\rightarrow \cN_0\rightarrow \cN_2$ be an exact sequence in  
 $\cC_{\bX /P}^{\op}$ and let $\cM\in \cC_{\bX /G}^{\op}$. Then 
\begin{equation}\label{eq-adjoint1} 0\rightarrow \Hom(R(\cM),\cN_1) \rightarrow \Hom(R(\cM),\cN_0)\rightarrow \Hom(R(\cM),\cN_2)
\end{equation}
is an exact sequence of abelian groups. Indeed, if a morphism $f:R(\cM)\rightarrow \cN_0$ maps to zero in 
  $\Hom(R(\cM),\cN_2)$, then the morphism $\cN_0\rightarrow R(\cM)$ in $\Frech(P - \cD_\bX)$ factors through the quotient $\cN_1$ of $\cN_0$. Since the canonical topology on local sections over open affinoids $\bU\in \bX_w(\cT)$ of the coadmissible module $\cN_1$ is the quotient topology, the induced morphism $\cN_1\rightarrow R(\cM)$ is continuous, i.e. lies in $\Frech(P - \cD_\bX)$. Its opposite is then a preimage of $f$ in the sequence (\ref{eq-adjoint1}). This shows the exactness of (\ref{eq-adjoint1}) in the middle and the exactness on the left is clear.
  By Proposition \ref{Adjunction} we obtain an exact sequence
$$  0\rightarrow \Hom(\cM,L(\cN_1)) \rightarrow \Hom(\cM,L(\cN_0))\rightarrow \Hom(\cM,L(\cN_2))$$
for any $\cM\in  \cC_{\bX /G}^{\op}$.
Since $\cM$ is coadmissible, one has  $$\Hom_{\Frech(G - \cD_\bX)^{\op}}(\cM,L(\cN_i))=\Hom_{ \cC_{\bX /G}^{\op}}(\cM,L(\cN_i))$$
for all $i$ and so the Yoneda lemma in the abelian category $\cC_{\bX /G}^{\op}$
implies that 
 $$0\rightarrow L(\cN_1) \rightarrow L(\cN_0) \rightarrow L(\cN_2)$$
  is exact. Hence $L: \cC_{\bX /P}^{\op}\rightarrow  \cC_{\bX /G}^{\op}$ is left-exact.
 \end{proof}
 \begin{prop}\label{prop_iwasawa} 
 Let $\cN\in \cC_{\bX /P}$. Suppose that there exists an open subgroup $G_0\subset G$ such that 
 $G_0P=G$. Let $P_0=G_0\cap P$. There is a natural isomorphism in $\cC_{\bX /G_0}$
 $$\ind_{P_0}^{G_0}(\Res^{P}_{P_0}\cN)\congs \Res^G_{G_0} \ind_P^G(\cN).$$
 \end{prop}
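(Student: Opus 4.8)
The plan is to build the isomorphism locally on $\bX_w(\cT)$ from the explicit formula for geometric induction recalled above, and then check that it glues and is natural. Fix $\bU \in \bX_w(\cT)$ and a $\bU$-small compact open subgroup $H$ of $G$; since $G_0$ is open, we may and do choose $H \subseteq G_0$. By definition
\[
\ind_P^G(\cN)(\bU) = \varprojlim_H \bigoplus_{Z \in H\backslash G / P} \varprojlim_{s \in Z} \w\cD(\bU,H) \wotimes{\w\cD(\bU, H\cap {}^sP)} [s]\cN(s^{-1}\bU),
\]
while $\ind_{P_0}^{G_0}(\Res^P_{P_0}\cN)(\bU)$ is given by the same formula with $G$ replaced by $G_0$ and $P$ by $P_0 = G_0 \cap P$. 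The key observation, purely group-theoretic, is that since $G = G_0 P$ and $H \subseteq G_0$, the inclusion $G_0 \hookrightarrow G$ induces a bijection $H\backslash G_0 / P_0 \congs H\backslash G / P$: every double coset $HgP$ with $g = g_0 u$, $g_0 \in G_0$, $u \in P$, equals $Hg_0P$, and if $Hg_0 P = Hg_0' P$ with $g_0, g_0' \in G_0$ then $g_0' = h g_0 u$ forces $u = g_0^{-1}h^{-1}g_0' \in G_0 \cap P = P_0$, so $Hg_0P_0 = Hg_0'P_0$. Moreover one can choose representatives $s \in Z$ lying in $G_0$, and for such $s$ one has $H \cap {}^sP = H \cap {}^sP_0$ (as $s \in G_0$ and $H \subseteq G_0$, so $H \cap {}^sP \subseteq G_0 \cap {}^sP = {}^s(G_0 \cap P) = {}^sP_0$) and $s^{-1}\bU$ is the same subdomain whether we work in $G$ or $G_0$; since $\cN(s^{-1}\bU)$ only depends on $\cN$ as a $P$-module through its restriction to $P_0$ on that piece, each term in the two direct sums is literally identical. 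This gives, for each $H \subseteq G_0$, an isomorphism of the $H$-level terms, compatible with the transition maps as $H$ shrinks (those only involve further subgroups, all still contained in $G_0$), hence an isomorphism $\ind_{P_0}^{G_0}(\Res^P_{P_0}\cN)(\bU) \congs \ind_P^G(\cN)(\bU)$.

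Next I would verify that this isomorphism is compatible with restriction maps $\tau^\bU_\bV$ for affinoid subdomains $\bV \subseteq \bU$, which is immediate since the identifications above are functorial in $\bU$; therefore it defines an isomorphism of sheaves on $\bX_w(\cT)$, and hence on $\bX$ by the standard extension procedure. One then checks that it is $\w\cD_\bX$-linear (clear, as the $H$-level terms are isomorphic as $\w\cD(\bU,H)$-modules) and $G_0$-equivariant: the $G_0$-action on $\ind_P^G(\cN)$ restricted via $\Res^G_{G_0}$ and the intrinsic $G_0$-action on $\ind_{P_0}^{G_0}(\Res^P_{P_0}\cN)$ are both described by the same formulas permuting the double cosets and twisting the $\w\cD(\bU,H)$-factors, which agree under the bijection $H\backslash G_0/P_0 \congs H\backslash G/P$. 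Naturality in $\cN$ is then automatic, since every step is defined by canonical maps.

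The main obstacle, such as it is, is bookkeeping rather than conceptual: one must be careful that the transition maps in the inverse limit over $\bU$-small subgroups $H$ — and the "gluing" isomorphisms \cite[2.2.9(a)]{EqDCapTwo} identifying the $s$-components for varying $s \in Z$ — are genuinely the same on both sides, not merely abstractly isomorphic. This comes down to checking that the cofinal system of $\bU$-small $H$ in $G$ can be taken inside $G_0$ (true since $G_0$ is open) and that the coherence isomorphisms of \cite{EqDCapTwo} are natural with respect to the inclusion $G_0 \hookrightarrow G$, which follows from their construction. Once this is in place, the desired natural isomorphism in $\cC_{\bX/G_0}$ follows, and coadmissibility on both sides is preserved because $\ind$ lands in the respective categories $\cC_{\bX/G_0}$ by \cite{EqDCapTwo}.
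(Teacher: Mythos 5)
Your proposal is correct and follows essentially the same route as the paper: both compare the explicit local formulas for induction over a cofinal system of $\bU$-small subgroups $H\subseteq G_0$, use $G=G_0P$ to identify $H\backslash G_0/P_0$ with $H\backslash G/P$ (the paper phrases this via $G_0/P_0\simeq G/P$ as $G_0$-sets, you argue the double-coset bijection directly, with the same content), note that representatives can be taken in $G_0$ so that $H\cap {}^sP=H\cap {}^sP_0$ and the summands coincide, and then check compatibility with restriction maps and the local Fr\'echet topologies.
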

 \begin{proof} Let $\cN_0=\Res^{P}_{P_0}\cN$ and 
 $\cM_0=\Res^G_{G_0} \ind_P^G(\cN)$. It suffices to construct a bijective morphism of $G_0$-equivariant sheaves of $\cD_\bX$-modules on $\bX_w(\cT)$
 $$\ind_{P_0}^{G_0}(\cN_0)\congs \cM_0$$
compatible with local Fr\'echet topologies. 
Let $\bU\in \bX_w(\cT)$. Then 
$$\ind_{P_0}^{G_0}(\cN_0)(\bU)=\varprojlim_H \bigoplus\limits_{Z\in H\setminus G_0 / P_0}\;
\varprojlim_{s\in Z}\; \w\cD(\bU,H) \underset{\w\cD(\bU, H \cap ^sP)}{\w\otimes} [s]\cN_0(s^{-1}\bU),$$
where the limit runs through all open $\bU$-small subgroups $H\subseteq G_0$. On the other hand, $$ \cM_0(\bU)=\varprojlim_H \bigoplus\limits_{Z\in H\setminus G / P}\;
\varprojlim_{s\in Z}\; \w\cD(\bU,H) \underset{\w\cD(\bU, H \cap ^sP)}{\w\otimes} [s]\cN(s^{-1}\bU),$$ where we may again assume, since $G_0$ is open in $G$, that the limit runs through $\bU$-small subgroups $H$ contained in $G_0$ .  The equality $G_0P=G$ implies $G_0/P_0\simeq G/P$ as left $G_0$-sets, whence $H\setminus G_0 / P_0\simeq H\setminus G / P$. The latter isomorphism thus induces a bijective morphism 
$$\ind_{P_0}^{G_0}(\cN_0)(\bU)\congs \cM_0(\bU)$$
compatible with local Fr\'echet topologies. It is also compatible with the restriction maps, if $\bV\subset\bU$ is an affinoid subdomain in 
 $\bX_w(\cT)$ and induces the desired isomorphism $\ind_{P_0}^{G_0}(\cN_0)\simeq \cM_0$. \end{proof}
 
 \subsection{Induction and side-changing}\label{IndSide}
 
 The aim of this subsection is to show that induction commutes with the side-changing operations.
 Recall from \cite[3.1.15]{EqDCapTwo} that the functors $\Omega_\bX\otimes (-)$ and 
 ${\mathcal Hom} (\Omega_\bX,-)$ are mutually inverse equivalences of categories 
 between $\cC_{\bX /G}$ and ${^r\cC}_{\bX /G}$. Denote by ${^r\ind}_P^G$ the right module version of the induction functor \cite[2.2]{EqDCapTwo} from right $P$-equivariant $\cD_\bX$-modules to right $G$-equivariant $\cD_\bX$-modules.
\begin{thm} \label{thm_IndSide} Let $\cN\in \cC_{\bX /P}$. There is a natural isomorphism in 
${^r\cC}_{\bX /G}$

\[ {^r\ind}_P^G(\Omega_\bX\otimes \cN)\congs \Omega_\bX\otimes \ind_P^G\cN.\]
\end{thm}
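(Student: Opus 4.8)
The plan is to reduce the statement to a local computation on $\bX_w(\cT)$, where both sides of the desired isomorphism are described by explicit formulas, and to check that the side-changing twist by $\Omega_\bX$ intertwines with the inverse-limit/direct-sum expression defining geometric induction. First I would fix $\bU\in\bX_w(\cT)$ and a $\bU$-small compact open subgroup $H\subseteq G$. By definition,
\[\ind_P^G(\cN)(\bU)=\varprojlim_H\bigoplus_{Z\in H\backslash G/P}\varprojlim_{s\in Z}\w\cD(\bU,H)\wotimes{\w\cD(\bU,H\cap {}^sP)}[s]\cN(s^{-1}\bU),\]
and similarly for ${^r\ind}_P^G$ with right modules. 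The key point is that tensoring with $\Omega_\bX(\bU)$ on the left converts the left $\w\cD(\bU,H)$-module $\w\cD(\bU,H)\wotimes{\w\cD(\bU,H\cap{}^sP)}[s]\cN(s^{-1}\bU)$ into the right module $\w\cD(\bU,H)\wotimes{\w\cD(\bU,H\cap{}^sP)}[s]\bigl(\Omega_\bX(s^{-1}\bU)\otimes\cN(s^{-1}\bU)\bigr)$. This rests on the standard side-changing identity $\Omega\otimes_{\cO}(\cD\otimes_{\cD'}M)\cong \cD\otimes_{\cD'}(\Omega\otimes_{\cO}M)$ for a pair of rings of differential operators $\cD'\subseteq\cD$ acting compatibly on $\Omega$ — here in the completed, equivariant setting of \cite[3.1.15]{EqDCapTwo} — together with the fact that $s$ acts on $\Omega_\bX$ compatibly, so the twist $[s]$ is preserved.

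The main steps, in order, are: (i) recall the explicit side-changing isomorphism $\Omega_\bX(\bU)\otimes_{\cO(\bU)}(-)$ at the level of a single completed tensor product $\w\cD(\bU,H)\wotimes{\w\cD(\bU,H\cap{}^sP)}(-)$, verifying it is continuous and bijective (this is the analogue in $\w\cD$-modules of the classical computation, and the completed tensor product behaves well because $\Omega_\bX(\bU)$ is a free $\cO(\bU)$-module of rank one, so tensoring with it is exact and commutes with completions); (ii) check this isomorphism is $H$-equivariant and compatible with the $[s]$-twists, so it passes to the inner inverse limit over $s\in Z$ and to the direct sum over $Z\in H\backslash G/P$; (iii) pass to the outer inverse limit over $\bU$-small $H$ to get a bijection ${^r\ind}_P^G(\Omega_\bX\otimes\cN)(\bU)\cong\Omega_\bX(\bU)\otimes\ind_P^G(\cN)(\bU)$; (iv) check naturality in $\cN$ and compatibility with restriction maps $\tau^\bU_\bV$ for $\bV\subseteq\bU$ an affinoid subdomain, so that the local isomorphisms glue to an isomorphism of sheaves on $\bX_w(\cT)$ and hence on $\bX$; (v) verify the resulting isomorphism respects the $G$-equivariant structures, i.e. lies in ${^r\cC}_{\bX/G}$. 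Since both $\ind_P^G\cN$ and $\Omega_\bX\otimes\ind_P^G\cN$ are coadmissible, the gluing and the identification of topologies are automatic once the local statement is in place.

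The step I expect to be the main obstacle is (i)–(ii): making the side-changing isomorphism genuinely compatible with the completed tensor product $\w\otimes$ and the twists $[s]$ simultaneously. One must be careful that $\Omega_\bX$ carries compatible left actions of all the relevant completed equivariant distribution algebras $\w\cD(\bU,H)$ and $\w\cD(\bU,H\cap{}^sP)$ — in the right-module world the left $\cO$-module structure on $\Omega_\bX$ together with the Lie derivative action is what converts left to right — and that these are exactly the structures used in \cite[2.2]{EqDCapTwo} to define ${^r\ind}_P^G$. Concretely, the check is that the canonical isomorphism $\Omega_\bX(\bU)\otimes_{\cO(\bU)}\w\cD(\bU,H)\cong \w\cD(\bU,H)$ as $(\w\cD(\bU,H),\w\cD(\bU,H))$-bimodules (where the right side has its natural right structure and the $\cO(\bU)$-twisted left structure) is compatible with base change along $\w\cD(\bU,H\cap{}^sP)\hookrightarrow\w\cD(\bU,H)$; this is the rigid-analytic, completed analogue of the classical fact, and once it is set up the rest of the argument is formal bookkeeping with direct sums, inverse limits, and restriction maps, exactly parallel to the construction of $\alpha_\cN$ and Proposition \ref{Adjunction} above.
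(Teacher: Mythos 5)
Your proposal follows essentially the same route as the paper's proof: localize to $\bU\in\bX_w(\cT)$, use the explicit double-coset formula for $\ind_P^G$, show that $\Omega(\bU)\otimes_{\cO(\bU)}(-)$ commutes with the completed base change $\w\cD(\bU,H\cap{}^sP)\to\w\cD(\bU,H)$, identify $[s]\bigl(\Omega(s^{-1}\bU)\otimes\cN(s^{-1}\bU)\bigr)$ with $\Omega(\bU)\otimes[s]\cN(s^{-1}\bU)$, and then pass to the inner limits, direct sums, outer limit over $H$, and restriction maps. The key step (i)--(ii) that you flag is exactly where the paper works: it verifies the isomorphism $\bigl(\Omega(\bU)\otimes_{\cO(\bU)}Q\bigr)\wotimes{\w\cD(\bU,H\cap{}^sP)}\w\cD(\bU,H)\cong\Omega(\bU)\otimes_{\cO(\bU)}\bigl(\w\cD(\bU,H)\wotimes{\w\cD(\bU,H\cap{}^sP)}Q\bigr)$ level-by-level on Fr\'echet--Stein presentations via the analogues of \cite[3.1.13--3.1.14]{EqDCapTwo}, which is the precise form of the freeness/continuity argument you sketch.
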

\begin{proof} It suffices to construct an isomorphism between ${^r\ind}_P^G(\Omega_\bX\otimes \cN)$ and $\Omega_\bX\otimes \ind_P^G\cN$ as sheaves of right equivariant $\cD_\bX$-modules on $\bX_w(\cT)$. Let $\bU\in \bX_w(\cT)$. According to \cite[2.2.12]{EqDCapTwo},
$$ \ind_P^G(\cN)(\bU)= \invlim\limits\bigoplus\limits_{Z\in H\setminus G / P} \lim\limits_{s\in Z}\w\cD(\bU,H) \underset{\w\cD(\bU,H\cap ^sP)}{\w\otimes} [s]\cN(s^{-1}\bU).$$
where the first inverse limit is over all $\bU$-small compact open subgroups $H$ of $G$. 

Fix $H$ and $s\in Z$ as above and set $Q:= [s]\cN(s^{-1}\bU)$. Consider Fréchet-Stein
presentations \begin{center}
$\w\cD(\bU,H\cap ^sP)\simeq \varprojlim_n S_n$ and $\wideparen{\mathcal{D}}(\bU,H)\simeq \varprojlim_n T_n$
\end{center} 
with noetherian Banach algebras $S_n$ and $T_n$. 
Using a version of  \cite[3.1.13]{EqDCapTwo}, the two right $S_{n}$-linear morphisms 
\begin{center}$\Omega(\bU)\otimes_{\cO(\bU)} S_{n}\rightarrow  \Omega(\bU)\otimes_{\cO(\bU)} T_n$
 \hskip5pt and \hskip5pt $\Omega(\bU)\oslash_{\cO(\bU)} S_{n}\rightarrow  \Omega(\bU)\oslash_{\cO(\bU)} T_n$
\end{center} 
(using the notation of loc.cit.) coming from functoriality are actually isomorphic. 
Let $Q_n=S_n\otimes_S Q$. 
As in the proof of \cite[3.1.14]{EqDCapTwo}, the right $T_n$-linear map 
$$\theta_{Q_n}: \big(\Omega(\bU) \otimes_{\cO(\bU)} Q_n\big){\otimes}_{S_n}T_n  \congs 
\Omega(\bU) \otimes_{\cO(\bU)} \big(T_n {\otimes}_{S_n} Q_n\big)$$
given by $\theta_{Q_n}((\omega\otimes m)\otimes r)=(\omega \otimes 1\otimes m)r$
is an isomorphism. It is compatible in $n$ and induces an isomorphism $$\theta_Q: \big(\Omega(\bU) \otimes_{\cO(\bU)} Q\big)\underset{\w\cD(\bU,H\cap ^sP)}{\w\otimes} \w\cD(\bU,H) \congs 
\Omega(\bU) \otimes_{\cO(\bU)} \big(\w\cD(\bU,H) \underset{\w\cD(\bU,H\cap ^sP)}{\w\otimes} Q\big)$$
of right $\w\cD(\bU,H)$-modules. Next, we may identify canonically 
\begin{center} 
$ [s]\big(\Omega(s^{-1}\bU) \otimes_{\cO(s^{-1}\bU)}\cN(s^{-1}\bU)\big)$
and 
$ \Omega(\bU) \otimes_{\cO(\bU)} [s]\cN(s^{-1}\bU)$
\end{center}
as right $\w\cD(\bU,H\cap ^sP)$-modules. Passing to the limit in $s\in Z$, taking the direct sum over $Z$ and passing finally to the limit over $H$ yields therefore an isomorphism  
 $$\big({^r\ind}_P^G(\Omega_\bX\otimes \cN)\big)(\bU)\congs \big(\Omega_\bX\otimes \ind_P^G\cN\big)(\bU)$$
 of right $\w\cD(\bU,H)$-modules. It is compatible with restriction maps and yields the desired isomorphism of sheaves on $\bX_w(\cT)$.
 \end{proof}

\subsection{Induction and duality}\label{IndDual} The aim of this subsection is to prove the following theorem. Recall from \cite[Definition 5.9]{VuThesis} the full subcategory $\cC^{\rm wh}_{\bX /G}$ of $\cC_{\bX /G}$ consisting of {\it weakly holonomic} modules. If Bernstein's inequality holds in $\cC_{\bX /G}$ in the sense of \cite[5.2]{VuThesis}, then by \cite[Definition 5.14]{VuThesis}, there is a duality functor $\mathbb{D}_G$ on $\cC^{\rm wh}_{\bX /G}$.
\begin{thm} \label{thm_IndDual} Assume Bernstein's inequality holds
in $\cC_{\bX /P}$ and $\cC_{\bX /G}$. Let $\cN\in  \cC^{\rm wh}_{\bX /P}$. There is a natural isomorphism in $\cC^{\rm wh}_{\bX /G}$
\[\mathbb{D}_G(\ind_P^G\cN)\congs \ind_P^G(\mathbb{D}_P\cN).\]
\end{thm}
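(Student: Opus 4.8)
The conceptual picture is that induction along the cocompact subgroup $P$ behaves like a proper direct image, for which duality commutes with the functor; so the plan is to prove the isomorphism by a local computation, using Theorem \ref{thm_IndSide} to dispose of the side-changing twists that enter the definition of the duality functor. Recall that on weakly holonomic modules $\mathbb{D}_G$ is, up to the side-changing equivalences $\Omega_\bX \otimes (-)$ and ${\mathcal Hom}(\Omega_\bX, -)$, the transpose functor built from $\Ext^d_{\w\cD_\bX}(-, \w\cD_\bX)$ with $d = \dim \bX$, and that Bernstein's inequality forces the lower $\Ext$-sheaves to vanish, so this is a module-level and not merely a derived construction; the same holds over $P$. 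One begins by noting that $\ind_P^G$ sends weakly holonomic modules to weakly holonomic modules, so that both sides of the asserted isomorphism are defined. Since $\ind_P^G$ commutes with $\Omega_\bX \otimes (-)$ by Theorem \ref{thm_IndSide}, it then suffices to produce a natural isomorphism intertwining $\ind_P^G$ with the right-module transpose functor.

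The plan for this is to argue locally on a small pair $(\bU, H)$, reusing the set-up of the proof of Theorem \ref{thm_IndSide}. Fixing $H$ and $s$ in a double coset $Z \in H \setminus G / P$, put $S := \w\cD(\bU, H \cap {}^sP)$, $T := \w\cD(\bU, H)$, $Q := [s]\cN(s^{-1}\bU)$, and take Fr\'echet--Stein presentations $S \simeq \varprojlim_n S_n$, $T \simeq \varprojlim_n T_n$ by noetherian Banach algebras, with $Q_n := S_n \otimes_S Q$. The essential point is a base-change isomorphism
\[
\mathbb{D}_T\big(T \wotimes{S} Q\big) \ \congs\ T \wotimes{S} \mathbb{D}_S(Q)
\]
for the relevant local (equivariant) duality functors, possibly up to a dualizing twist. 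To obtain it one checks that $T_n$ is flat --- in fact topologically free, by a completed Poincar\'e--Birkhoff--Witt argument along the directions of $\fr{h}$ transverse to $\fr{h} \cap {}^s\fr{p}$ --- over $S_n$, and that $Q_n$ is finitely presented over the noetherian ring $S_n$; the transpose base change is then the standard Hom--tensor interchange at the Banach level, compatible in $n$, and passing to the inverse limit (using coadmissibility, the projectivity of the Fr\'echet--Stein towers, and the concentration of the duality in a single degree) yields the displayed isomorphism, analogous to the isomorphism $\theta_Q$ in the proof of Theorem \ref{thm_IndSide}. One then passes to the inverse limit over $s \in Z$, the direct sum over $Z$, and the inverse limit over the $\bU$-small $H$, identifies the $[s]$-action on $s^{-1}\bU$ exactly as in the proof of Theorem \ref{thm_IndSide}, and sheafifies over $\bX_w(\cT)$; the result is natural in $\cN$, compatible with restriction maps, and carries the $G$-equivariant structure correctly, so that after conjugating by $\Omega_\bX \otimes (-)$ and ${\mathcal Hom}(\Omega_\bX, -)$ it produces the claimed isomorphism $\mathbb{D}_G(\ind_P^G\cN) \congs \ind_P^G(\mathbb{D}_P\cN)$ in $\cC^{\rm wh}_{\bX /G}$.

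The main obstacle is the base-change isomorphism itself --- controlling the transpose functor through the completed tensor products $T \wotimes{S}(-)$ and the nested inverse limits defining $\ind_P^G$. Because $P$ is merely cocompact, the subgroup $H \cap {}^sP$ need not be open in $H$, so $T$ is not a finite module over $S$; it is precisely the Banach-level flatness of $T_n$ over $S_n$, together with the finite presentation of $Q_n$, that makes the interchange go through, and one must verify that the limit over $n$ produces no higher derived contributions --- this is where the hypothesis that Bernstein's inequality holds in both $\cC_{\bX /P}$ and $\cC_{\bX /G}$ (hence the concentration of both $\mathbb{D}_P$ and $\mathbb{D}_G$ in degree $d$) is essential. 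The remaining point is bookkeeping: the dualizing and modular twists produced by the $[s]$-action and by the change of group $H \cap {}^sP \subseteq H$ must be matched against the side-changing equivalence of Theorem \ref{thm_IndSide}, so that the isomorphism that survives is the untwisted one asserted in the statement.
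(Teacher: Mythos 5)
Your proposal follows essentially the same route as the paper: reduce via Theorem \ref{thm_IndSide} to an isomorphism of the right-module $\Ext^d$-sheaves, decompose $(\ind_P^G\cN)(\bU)$ over the finitely many $(H,P)$-double cosets, prove an $\Ext$-base-change isomorphism along $\w\cD(\bU,H\cap {}^sP)\to\w\cD(\bU,H)$ at each Banach level of a Fr\'echet--Stein presentation, commute the twist $[s]$ with $\Ext$, and then assemble over $s$, over $H$, and over $\bX_w(\cT)$ before side-changing back. The paper does exactly this, quoting the proof of \cite[Lemma 2.5.3]{EqDCapTwo} and \cite[Lemma 8.4]{ST} for the base-change step.

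The one substantive defect is your justification of the key flatness input. You propose to show that $T_n$ is ``topologically free over $S_n$ by a completed Poincar\'e--Birkhoff--Witt argument along the directions of $\fr{h}$ transverse to $\fr{h}\cap{}^s\fr{p}$''; but this misreads the rings: for a closed subgroup $J\leq H$, the algebra $\w\cD(\bU,J)$ is built from the \emph{same} completed enveloping algebra of a Lie lattice in the full tangent sheaf $\cT(\bU)$ as $\w\cD(\bU,H)$ --- the two differ only in the group entering the crossed product, not in any ``Lie-algebra directions''. The correct (and simpler) reason is group-theoretic: at each Banach level $T_n=\hK{U(\pi^n\cL)}\rtimes_{H_n}H$ is finitely generated free, on both sides, over $S_n=\hK{U(\pi^n\cL)}\rtimes_{H_n\cap{}^sP}(H\cap{}^sP)$, with basis a set of coset representatives (compare Lemma \ref{RmDm}); this is precisely what the proof of \cite[Lemma 2.5.3]{EqDCapTwo} provides, and \cite[Lemma 8.4]{ST} then handles the passage from the Banach-level $\Ext$-interchange to coadmissible modules over the Fr\'echet--Stein limits. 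With that substitution your argument goes through and no ``dualizing or modular twist'' survives, exactly as in the paper. A smaller point: Bernstein's inequality is not what prevents ``higher derived contributions in the limit over $n$''; it is needed only so that $\mathbb{D}_P$ and $\mathbb{D}_G$ are defined (and involutive) on weakly holonomic objects, i.e.\ so that both sides of the statement make sense --- the base-change isomorphism itself is an identity of $d$-th $\Ext$ groups and needs no concentration hypothesis.
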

\begin{proof} Let $\cM=\ind_P^G\cN$. Let $d=\dim\bX$ and denote by ${^r\mathcal{E}}^d_G(\cM)$ the right $G$-equivariant $\cD_\bX$-module on $\bX_w(\cT)$, equal to the $d$-th Ext sheaf of $\cM$ (denoted by $E^d(\cM)$ in \cite[Theorem 4.25/29]{VuThesis}). In particular, $\mathbb{D}_G={\mathcal Hom} (\Omega_\bX,-)\circ {^r\mathcal{E}}^d_G$, with the side-changing functor ${\mathcal Hom} (\Omega_\bX,-).$

In view of Thm. \ref{thm_IndSide}, it suffices to construct a bijective morphism of right equivariant $\cD_\bX$-modules on $\bX_w(\cT)$
$${^r\mathcal{E}}_G^d(\cM)\congs {^r\ind}_P^G( {^r\mathcal{E}}_P(\cN)),$$
compatible with local Fr\'echet topologies, where ${^r\ind}_P^G$ denotes the induction functor for right modules, as in the preceding subsection. Let $\bU\in \bX_w(\cT)$ and let $H$ be a $\bU$-small subgroup of $G$. Fix a system $s_1,...,s_m$ of representatives for the double cosets in $H\setminus G / P$. On the one hand,
$$\Ext^d_{\wideparen{\mathcal{D}}(\bU,H)}(\mathcal{M}(\bU), \wideparen{\mathcal{D}}(\bU,H))\simeq \bigoplus\limits_{i=1}^m \Ext^d_{\wideparen{\mathcal{D}}(\bU,H)}(
\w\cD(\bU,H) \underset{\w\cD(\bU, H \cap ^{s_i}P)}{\w\otimes} [s_i]\cN(s_i^{-1}\bU), \wideparen{\mathcal{D}}(\bU,H)). $$
Using the proof of \cite[Lemma $2.5.3$]{EqDCapTwo} applied to the morphism $\wideparen{\mathcal{D}}(\bU,{}^{s_i}P\cap H) \longrightarrow \wideparen{\mathcal{D}}(\bU,H)$, \cite[Lemma 8.4]{ST} and the fact that the twisting functor $[s_i]$ commutes with Ext groups, we see that this is isomorphic to
$$ \bigoplus\limits_{i=1}^m\Ext^d_{\w\cD(\bU, H^{s_i} \cap P)}(
\cN(s_i^{-1}\bU),\w\cD(\bU, H^{s_i} \cap P))[s_i]\underset{\w\cD(\bU, H \cap ^{s_i}P)}{\w\otimes} \w\cD(\bU,H).$$
By definition of ${^r\mathcal{E}}^d_P$, the latter module is canonically isomorphic to 
$$ \bigoplus\limits_{i=1}^m {^r\mathcal{E}}^d_P(\cN)(s_i^{-1}\bU)[s_i]\underset{\w\cD(\bU, H \cap ^{s_i}P)}{\w\otimes} \w\cD(\bU,H) \simeq  ({^r\ind}_P^G {^r\mathcal{E}}^d_P(\cN))(\bU).$$ 
This isomorphism is right $\w\cD(\bU,H)$-linear and therefore is compatible with the Fr\'echet topologies. Moreover, it is compatible with variation in $H$. Hence, taking the limit over all $\bU$-small subgroups $H$ of $G$ and recalling from \cite[Def. 4.12]{VuThesis} that
$${^r\mathcal{E}}_G^d (\mathcal{M})(\bU)= \varprojlim_H \Ext^d_{\wideparen{\mathcal{D}}(\bU,H)}(\mathcal{M}(\bU),\wideparen{\mathcal{D}}(\bU,H)),$$
we obtain a bijective morphism 
\[{^r\mathcal{E}}_G^d (\mathcal{M})(\bU)\congs  (\ind_P^G {^r\mathcal{E}}^d_P(\cN))(\bU).\]

It is compatible with the restriction maps if $\bV\subset\bU$ is an affinoid subdomain in 
 $\bX_w(\cT)$ and induces the desired isomorphism  ${^r\mathcal{E}}_G^d(\cM)\simeq \ind_P^G {^r\mathcal{E}}^d_P(\cN)$.
\end{proof}
\subsection{Duality and pushforward}\label{DualPush} 
Denote by $\Hol(\cD_{\bX})$ the $\cD_\bX$-modules of minimal dimension, cf. \cite[7.2]{DCapThree}. Let $\mathbb{D}^{\rm cl}$ denote the classical duality on $\Hol(\cD_{\bX})$. 
Note that $\cD_\bX$ has a natural $G$-equivariant structure, so we may speak about $G$-equivariant $\cD_\bX$-modules. Our first lemma is well-known, but we could not find a reference. 
\begin{lem}\label{lem_func}
Let $R,R'$ be two rings and $f: R\rightarrow R'$ a ring isomorphism. Let $M$ and $M'$ be a module over $R$ and $R'$ respectively. Let $i\geq 0$. Any $R$-linear map $M\rightarrow M'$ induces a right $R$-linear map 
$\Ext^{i}_{R'}(M',R')\rightarrow   \Ext^{i}_{R}(M,R).$ \end{lem}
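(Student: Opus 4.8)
The plan is to reduce this to a standard piece of homological algebra by using the ring isomorphism $f$ to identify left $R'$-modules with left $R$-modules, and then to invoke naturality of $\Ext$. First I would introduce the restriction-of-scalars functor $f^*$ along $f\colon R\to R'$, which is an equivalence of categories from $R'$-modules to $R$-modules (with inverse given by restriction along $f^{-1}$), sending $M'$ to the abelian group $M'$ with $R$-action $r\cdot m' = f(r)m'$. Under this identification, an $R$-linear map $M\to M'$ is exactly an $R$-linear map $M\to f^*M'$ in the ordinary sense. Likewise, $f^*$ carries $R'$ (as a left $R'$-module) to an $R$-module, and in fact an $(R',R')$-bimodule structure on $R'$ transports to an $(R,R)$-bimodule structure on $f^*R'$; since $f$ is an isomorphism, $f^*R'\cong R$ as $(R,R)$-bimodules (via $f^{-1}$).

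Next I would observe that, because $f^*$ is an exact equivalence of module categories (it is visibly exact, as it does not change underlying abelian groups), it induces isomorphisms on Ext: for any $R'$-modules $A', B'$ one has a natural isomorphism $\Ext^i_{R'}(A',B')\cong \Ext^i_R(f^*A', f^*B')$, obtained by transporting a projective resolution of $A'$ to a projective resolution of $f^*A'$. Applying this with $A'=M'$ and $B'=R'$ gives $\Ext^i_{R'}(M',R')\cong \Ext^i_R(M, f^*R')\cong \Ext^i_R(M,R)$ as abelian groups, where the second isomorphism uses the bimodule identification $f^*R'\cong R$ and functoriality of $\Ext$ in the second variable. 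The right $R$-module structures on both sides come precisely from the right multiplication action of $R'$ on $R'$ (transported to $R$ via $f^{-1}$) and the right multiplication action of $R$ on $R$, which correspond under this identification; so the isomorphism is right $R$-linear.

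Finally, for the map induced by a given $R$-linear map $\varphi\colon M\to M'$: viewing $\varphi$ as an $R$-linear map $M\to f^*M'$, contravariant functoriality of $\Ext^i_R(-, f^*R')$ yields a right $R$-linear map $\Ext^i_R(f^*M', f^*R')\to \Ext^i_R(M, f^*R')$; composing with the identifications above produces the desired right $R$-linear map $\Ext^i_{R'}(M',R')\to \Ext^i_R(M,R)$. The main point to be careful about — though it is routine rather than deep — is bookkeeping of the two sides: one must check that the right $R$-action on $\Ext^i_{R'}(M',R')$ that is intended is the one coming from the right $R'$-structure on $R'$ pulled back along $f$ (equivalently, pushed forward along $f^{-1}$), and that under $f^*$ this matches the usual right $R$-action on $\Ext^i_R(M,R)$. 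I expect no genuine obstacle here; the whole statement is a formal consequence of the fact that restriction of scalars along a ring isomorphism is an exact equivalence compatible with the bimodule structures, together with the two-variable functoriality of $\Ext$.
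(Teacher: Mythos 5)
Your proposal is correct and follows essentially the same route as the paper: view $M'$ as an $R$-module by restriction of scalars along $f$, transport a projective resolution of $M'$ over $R'$ to one over $R$ to get the isomorphism $\Ext^i_{R'}(M',R')\cong\Ext^i_R(M',R)$ (with the right module structures matched via $f$), and then apply contravariant functoriality of $\Ext^i_R(-,R)$ to the $R$-linear map $M\to M'$. The only blemish is the momentary slip in your middle paragraph where $\Ext^i_R(M,f^*R')$ appears in place of $\Ext^i_R(f^*M',f^*R')$, which your final paragraph corrects.
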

\begin{proof}
By functoriality, the map in question induces a $R$-linear map 
$\Ext^{i}_R(M',R)\rightarrow   \Ext^{i}_{R}(M,R).$ 
A projective resolution $P_\bullet\rightarrow M'$ as $R'$-module remains a projective resolution of $M$ as $R$-module. The map of complexes
$\Hom_R(P_\bullet,R)\rightarrow \Hom_{R'}(P_\bullet,R'), F\mapsto f\circ F$
induces a bijection $\Ext^{i}_{R}(M',R)\simeq \Ext^{i}_{R'}(M',R')$. 
Combining the inverse of this map with the first map gives the result.
\end{proof}
\begin{prop}\label{prop_G_str} If $\cM\in \Hol(\cD_{\bX})$ is $G$-equivariant, then its dual 
$\mathbb{D}^{\rm cl}\cM$ has a natural $G$-equivariant structure.  
\end{prop}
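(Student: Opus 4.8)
The plan is to promote the classical duality isomorphism to a $G$-equivariant one by checking that, for each $g \in G$, the $\cO_{\bX}$-linear (and $\cD_{\bX}$-semilinear) isomorphism $g^{\cM}\colon g^*\cM \congs \cM$ coming from the given $G$-equivariant structure on $\cM$ induces a corresponding isomorphism $g^*(\mathbb{D}^{\rm cl}\cM) \congs \mathbb{D}^{\rm cl}\cM$, and that these are compatible with composition (the cocycle condition) and with the identity. Concretely, $\mathbb{D}^{\rm cl}\cM = \mathcal{H}om(\Omega_{\bX}, \mathcal{E}^d(\cM))$ where $\mathcal{E}^d(\cM)$ is the $d$-th Ext sheaf of $\cM$ into $\cD_{\bX}$ (a right $\cD_{\bX}$-module), $d = \dim\bX$, and the side-changing $\mathcal{H}om(\Omega_{\bX},-)$ is manifestly $G$-equivariant because $\Omega_{\bX}$ carries its natural $G$-equivariant structure (as in \cite[3.1.15]{EqDCapTwo}). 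So the real content is to equip the right $\cD_{\bX}$-module $\mathcal{E}^d(\cM)$ with a $G$-equivariant structure, compatibly with the one on $\cD_{\bX}$.

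The key step uses Lemma \ref{lem_func}. Since $\cD_{\bX}$ is $G$-equivariant, for each $g$ there is a ring isomorphism $g\colon \cD_{\bX} \to g_*\cD_{\bX}$ (locally, on $\bX_w(\cT)$, a map $\w\cD(\bU) \to \w\cD(g^{-1}\bU)$ twisted by $g$), and the structure isomorphism $g^{\cM}\colon \cM \to g_*\cM$ is semilinear over it. Applying Lemma \ref{lem_func} with $R = \cD(\bU)$, $R' = \cD(g^{-1}\bU)$, $M = \cM(\bU)$, $M' = \cM(g^{-1}\bU)$, and the map $g^{\cM}(\bU)^{-1}$ (or its inverse, depending on the direction one fixes), we obtain a right $\cD(\bU)$-linear map
\[\Ext^d_{\cD(g^{-1}\bU)}(\cM(g^{-1}\bU), \cD(g^{-1}\bU)) \longrightarrow \Ext^d_{\cD(\bU)}(\cM(\bU), \cD(\bU)),\]
i.e. a map $\mathcal{E}^d(\cM)(g^{-1}\bU) \to \mathcal{E}^d(\cM)(\bU)$, semilinear over $g$. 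Sheafifying over $\bX_w(\cT)$ and checking compatibility with restriction maps $\tau^{\bU}_{\bV}$ (which follows from the naturality in Lemma \ref{lem_func} and the corresponding compatibility already built into the $G$-structure on $\cM$ and $\cD_{\bX}$) gives an isomorphism $g^{\mathcal{E}^d\cM}\colon \mathcal{E}^d(\cM) \to g_*\mathcal{E}^d(\cM)$ of right $\cD_{\bX}$-modules. One then verifies that $g \mapsto g^{\mathcal{E}^d\cM}$ satisfies the cocycle identity $(gh)^{\mathcal{E}^d\cM} = h_*(g^{\mathcal{E}^d\cM}) \circ h^{\mathcal{E}^d\cM}$ and is the identity for $g = 1$; this is inherited from the corresponding identities for $g^{\cM}$ and the $G$-structure on $\cD_{\bX}$ via the functoriality built into Lemma \ref{lem_func} and its proof (a projective resolution over one ring stays projective over the isomorphic ring, and composing two such transports corresponds to transporting along the composite isomorphism). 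Finally, applying the $G$-equivariant functor $\mathcal{H}om(\Omega_{\bX}, -)$ yields the desired $G$-equivariant structure on $\mathbb{D}^{\rm cl}\cM$.

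One should also note that $\mathbb{D}^{\rm cl}\cM$ again lies in $\Hol(\cD_{\bX})$ by the classical theory \cite[7.2]{DCapThree}, so the statement makes sense. The main obstacle I expect is purely bookkeeping: verifying that all the transported maps glue correctly over $\bX_w(\cT)$ and satisfy the cocycle condition. This requires being careful about the direction of the structure isomorphisms (contravariance of $\Ext$ in the first variable means a covariant $G$-action on $\cM$ produces what looks like a contravariant action on the Ext sheaves, but this is exactly matched by the contravariance of the side-changing functor $\mathcal{H}om(\Omega_{\bX},-)$, so the two contravariances cancel and one recovers a genuine $G$-equivariant structure on $\mathbb{D}^{\rm cl}\cM$). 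No new analytic input is needed beyond what is already in \cite{DCapThree} and \cite{EqDCapTwo}; the argument is a formal consequence of Lemma \ref{lem_func} applied locally and glued.
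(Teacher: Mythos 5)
Your proposal is correct and follows essentially the same route as the paper: apply Lemma \ref{lem_func} locally on affinoid subdomains to the structure isomorphisms $g^{\cD}$ and $g^{\cM}$ to equip the Ext sheaf ${\mathcal E}^d(\cM)$ with a $G$-equivariant right $\cD_{\bX}$-module structure, then pass through the side-changing functor $\mathcal{H}om(\Omega_{\bX},-)$ using the natural $G$-equivariant structure on $\Omega_{\bX}$. The paper leaves the cocycle verification and the direction-of-arrows bookkeeping implicit, which you spell out; this is a harmless expansion, not a different argument.
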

\begin{proof} 
Let $d=\dim\bX$, let 
$\bU\subset\bX$ be an affinoid subdomain and $g\in G$. Applying \ref{lem_func} to the ring isomorphism
$g^{\mathcal{D}}:\mathcal{D}(\bU)\rightarrow \mathcal{D}(g\bU)$, the modules $\cM(\bU)$ and $\cM(g\bU)$ and the map $g^{\cM}: \cM(\bU)\rightarrow \cM(g\bU)$ yields a $\mathcal{D}(\bU)$-linear map $$\Ext^d_{\mathcal{D}(\bU)}(\cM(\bU), \mathcal{D}(\bU))\rightarrow \Ext^d_{\mathcal{D}(g\bU)}(\cM(g\bU), \mathcal{D}(g\bU)).$$ This defines on 
 $\Ext^d_{\mathcal{D}_\bX}(\cM, \mathcal{D}_\bX)$ the structure of a $G$-equivariant right $\mathcal{D}_\bX$-module. Applying the side-changing functor ${\mathcal Hom} (\Omega_\bX,-)$ and using the natural $G$-equivariant structure on $\Omega_\bX$ produces a $G$-equivariant structure on $\mathbb{D}^{\rm cl}\cM$.
\end{proof}
\vskip5pt 
Now let $i: \bY\subset\bX$ be a smooth Zariski closed subvariety. 
Denote by $i_+: \mathcal{C}_{\bY}\rightarrow\mathcal{C}_{\bX}$ the direct image
and by $\mathbb{D}$ the duality on $\mathcal{C}^{\rm wh}_{\bX}.$ Note that $\wideparen{\mathcal{D}}_\bX$ has a natural $G$-equivariant structure, so we may speak about $G$-equivariant $\wideparen{\mathcal{D}}_\bX$-modules. 
\begin{prop} \label{prop_SelfDual} 
The module $i_+\cO_\bY$
is self-dual, i.e one has a $\wideparen{\mathcal{D}}_\bX$-linear isomorphism $$\mathbb{D}i_+\cO_\bY\simeq i_+\cO_\bY.$$ If $i_+\cO_\bY$ is $G$-equivariant, then so is $\mathbb{D}i_+\cO_\bY$ and the latter isomorphism is $G$-equivariant.
\end{prop}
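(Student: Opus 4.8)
The plan is to reduce the statement to its counterpart for finite-order (classical) $\cD$-modules. The module $i_+\cO_\bY$ is the image under the extension functor $E_\bX$ of the classical pushforward $i_+^{\rm cl}\cO_\bY$ of the holonomic $\cD_\bY$-module $\cO_\bY$: indeed $E_\bY(\cO_\bY) = \w\cD_\bY\otimes_{\cD_\bY}\cO_\bY = \cO_\bY$, and $i_+$ is compatible with $E$ along the closed embedding $i$, i.e. $i_+\circ E_\bY \cong E_\bX\circ i_+^{\rm cl}$. Moreover $i_+^{\rm cl}\cO_\bY$ lies in $\Hol(\cD_\bX)$, since its characteristic variety is the conormal bundle of $\bY$, which has minimal dimension; hence $i_+\cO_\bY$ is weakly holonomic and $\mathbb{D}i_+\cO_\bY$ makes sense. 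Finally $E_\bX$ intertwines the classical duality $\mathbb{D}^{\rm cl}$ on $\Hol(\cD_\bX)$ with $\mathbb{D}$: concretely, for a coherent $\cD_\bX$-module one computes the defining top-degree Ext sheaves from a common local finite free $\cD_\bX$-resolution and uses the faithful flatness of $\w\cD(\bU)$ over $\cD(\bU)$ from \cite[Theorem B]{DCapThree}. Granting these compatibilities, it suffices to establish a $\cD_\bX$-linear isomorphism $\mathbb{D}^{\rm cl}(i_+^{\rm cl}\cO_\bY) \cong i_+^{\rm cl}\cO_\bY$, natural enough to carry a $G$-equivariant structure.

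I would prove the classical statement in two steps. First, $\mathbb{D}^{\rm cl}_\bY\cO_\bY\cong\cO_\bY$: this is local on $\bY$ and, in local coordinates, follows from the Spencer (Koszul) resolution of $\cO_\bY$ by free $\cD_\bY$-modules, which produces a canonical identification of the top-degree Ext sheaf of $\cO_\bY$ with the right $\cD_\bY$-module $\Omega_\bY$; applying the side-changing functor ${\mathcal Hom}(\Omega_\bY,-)$ then returns $\cO_\bY$. Second, $\mathbb{D}^{\rm cl}$ commutes with pushforward along the smooth closed embedding $i$, that is $\mathbb{D}^{\rm cl}_\bX\circ i_+^{\rm cl}\cong i_+^{\rm cl}\circ\mathbb{D}^{\rm cl}_\bY$ on holonomic modules --- the rigid-analytic counterpart of the classical identity $i_+ = i_!$ for a closed embedding. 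This may be cited from \cite[\S 7.2]{DCapThree}, or verified directly: working in local coordinates with $\bY = \{x_{c+1}=\dots=x_d = 0\}$, one has $i_+^{\rm cl}\cM \cong \cM\otimes_K K[\partial_{c+1},\dots,\partial_d]$, which carries an explicit Koszul resolution over $\cD_\bX$ from which $\mathcal{E}^\bullet_{\cD_\bX}(i_+^{\rm cl}\cM)$ is computed at once. Combining the two steps gives $\mathbb{D}^{\rm cl}_\bX i_+^{\rm cl}\cO_\bY \cong i_+^{\rm cl}\mathbb{D}^{\rm cl}_\bY\cO_\bY\cong i_+^{\rm cl}\cO_\bY$, and applying $E_\bX$ yields the desired $\w\cD_\bX$-linear isomorphism $\mathbb{D}i_+\cO_\bY\cong i_+\cO_\bY$.

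For the equivariant refinement, note first that if $i_+\cO_\bY$ is $G$-equivariant then its support $\bY$ --- which equals $\Supp i_+^{\rm cl}\cO_\bY$ by Proposition \ref{prop-supportpreservedB} --- is $G$-stable; hence $\cO_\bY$ carries its canonical $G$-equivariant structure, $i_+^{\rm cl}\cO_\bY\in\Hol(\cD_\bX)$ is $G$-equivariant, and $i_+\cO_\bY = E_\bX(i_+^{\rm cl}\cO_\bY)$ inherits a natural $G$-equivariant structure since $E_\bX$ commutes with the $g$-actions. By Proposition \ref{prop_G_str}, $\mathbb{D}^{\rm cl}(i_+^{\rm cl}\cO_\bY)$ is then $G$-equivariant, and since each isomorphism above is built by a functorial construction (Ext sheaves of canonical resolutions, side-changing, the base-change map for $i_+^{\rm cl}$), the composite $\mathbb{D}^{\rm cl}i_+^{\rm cl}\cO_\bY\cong i_+^{\rm cl}\cO_\bY$, and hence its image under $E_\bX$, are $G$-equivariant. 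I expect the main obstacle to be the second step of the classical statement: making the commutation of $\mathbb{D}^{\rm cl}$ with closed pushforward precise in the rigid-analytic framework, so that the local identifications glue to a global isomorphism and remain compatible with the $G$-action --- the $E_\bX$-transfer and the equivariance bookkeeping being then routine.
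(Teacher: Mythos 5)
Your proposal follows essentially the same route as the paper: identify $i_+\cO_\bY$ with $E_\bX(i_+^{\rm cl}\cO_\bY)$ using $E_\bY\cO_\bY\simeq\cO_\bY$ and the compatibility of $i_+$ with the extension functors, invoke the intertwining of $E_\bX$ with the classical and weakly holonomic dualities, reduce to self-duality of $i_+^{\rm cl}\cO_\bY$ under $\mathbb{D}^{\rm cl}$ (which the paper handles by arguing as in \cite[2.6.9]{HTT}, where you spell out the Spencer/Koszul computation), and then transport the $G$-equivariant structure through the classical isomorphism as in Proposition \ref{prop_G_str}. The argument is correct and matches the paper's proof in structure; your extra detail on the classical duality step and the support argument for equivariance are harmless elaborations rather than a different method.
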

\begin{proof} 
Let $\cB_{\bY\mid\bX}:=i_+\cO_\bY$. Similarly, let $\cB^{\rm cl}_{\bY\mid\bX}:=i^{\rm cl}_+\cO_\bY$, where $i^{\rm cl}_+$ denotes the classical push-forward functor from  $\Hol(\cD_\bY)$ to $\Hol(\cD_{\bX})$. Then $\cB^{\rm cl}_{\bY\mid\bX}$ is self-dual with respect to the classical duality on $\Hol(\cD_{\bX})$, arguing as \cite[2.6.9]{HTT}. Since $i_+$ commutes with the extension functors 
$E_\bX$ and $E_\bY$, cf. \cite[7.3]{DCapThree}, and since $E_\bY\cO_\bY\simeq \cO_\bY$, one has $\cB_{\bY\mid\bX}\simeq E_\bX\cB^{\rm cl}_{\bY\mid\bX}$. Moreover, the extension functor $E_\bX$ intertwines the dualities on $\Hol(\cD_\bX)$ and $\mathcal{C}^{\rm wh}_{\bX}$, cf. \cite[3.1]{DCapThree}. Hence
$$\mathbb{D}\cB_{\bY\mid\bX}\simeq E_\bX  \mathbb{D}^{\rm cl}\cB^{\rm cl}_{\bY\mid\bX}\simeq E_\bX\cB^{\rm cl}_{\bY\mid\bX}\simeq \cB_{\bY\mid\bX}.$$
Now suppose that $\cB_{\bY\mid\bX}$ has a $G$-equivariant structure. The existence of a $G$-equivariant structure on $\mathbb{D}\cB_{\bY\mid\bX}$ follows very similarly to \ref{prop_G_str}, using the $G$-equivariant structure $\{g^{\wideparen{\mathcal{D}}}: g\in G \}$.
As we have just seen, the isomorphism $\mathbb{D}\cB_{\bY\mid\bX}\simeq \cB_{\bY\mid\bX}$ comes by extension from the classical isomorphism \cite[2.7.2]{HTT}. This makes it possible to verify $G$-equivariance by direct inspection.
\end{proof}

Now assume that $\bY$ is $G$-stable. We then have the $G$-equivariant push-forward functor $i_+^G: \mathcal{C}_{\bY/G}\rightarrow \mathcal{C}_{\bX/G}$. 

\begin{prop} \label{prop_resG1} 
As $\wideparen{\mathcal{D}}_\bX$-modules
$$\Res^G_1i_+^G\cO_\bY\simeq i_+\cO_\bY,$$
where the functor $\Res^G_1$ forgets the $G$-equivariant structure.
\end{prop}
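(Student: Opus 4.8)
The plan is to establish the isomorphism section by section over the site $\bX_w(\cT)$, using the explicit local description of the equivariant direct image from \cite{EqDCapTwo}. Fix $\bU\in\bX_w(\cT)$ and a $\bU$-small compact open subgroup $H$ of $G$; then $i_+^G\cN(\bU)$ is built by taking, over a system of double-coset representatives for $H\setminus G/G_\bY$ and then over all such $H$, twisted tensor products of a transfer bimodule with translates of $\cN(\bU\cap\bY)$, all relative to the crossed-product algebras $\w\cD(\bU,H)$ and $\w\cD(\bU\cap\bY,H\cap G_\bY)$. Applying $\Res^G_1$ amounts to remembering only the underlying $\w\cD(\bU)$-module, and the first step is to check that, after forgetting the $G$-structure, this datum collapses to the Kashiwara-type presentation $\w\cD_{\bX\leftarrow\bY}\w\otimes_{\w\cD_\bY}\cN$ of the ordinary direct image $i_+\cN$, locally over $\bU$. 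Here the smoothness of $\bY$ is what identifies the relevant transfer part with $\w\cD_{\bX\leftarrow\bY}$ (locally $\w\cD_\bX$ modulo the left ideal generated by the vanishing ideal of $\bY$, up to the relative canonical twist), exactly as in the non-equivariant theory.

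With the local isomorphisms in hand, the remaining step is routine bookkeeping: one checks compatibility with the restriction maps of $\bX_w(\cT)$ and with the local Fréchet topologies, so that they glue to an isomorphism of $\w\cD_\bX$-modules $\Res^G_1 i_+^G\cN\congs i_+\cN$, natural in $\cN\in\cC_{\bY/G}$, and one specialises to $\cN=\cO_\bY$ with its tautological $G$-equivariant structure, for which $\Res^G_1\cO_\bY=\cO_\bY$. Alternatively, one can bypass the double-coset computation altogether: both $\Res^G_1 i_+^G\cO_\bY$ and $i_+\cO_\bY$ are coadmissible $\w\cD_\bX$-modules supported on $\bY$, hence lie in the essential image of the non-equivariant Kashiwara equivalence $i_+\colon\cC_\bY\congs\cC^\bY_\bX$; so it suffices to apply its quasi-inverse $i^{!}$ and observe that $i^{!}\Res^G_1 i_+^G\cO_\bY\simeq\Res^G_1 i^{!,G}i_+^G\cO_\bY\simeq\Res^G_1\cO_\bY\simeq i^{!}i_+\cO_\bY$, where the first isomorphism uses that $\Res^G_1$ commutes with $i^{!}$ and the last that $i^{!,G}$ is a quasi-inverse to $i_+^G$, both part of the equivariant Kashiwara package of \cite{EqDCapTwo}.

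The main obstacle is the same in either approach, and it is conceptual rather than computational: one must pin down that the equivariant direct image of \cite{EqDCapTwo} is set up so that its underlying $\w\cD_\bX$-module is \emph{exactly} the ordinary direct image $i_+$, i.e. that the only extra information carried by $i_+^G$ is the equivariant-cum-continuity structure. In the first approach this is the recognition of the collapsed local sections; in the second it is the compatibility of $\Res^G_1$ with $i^{!}$ and with the Kashiwara quasi-inverse. Once that identification is secured, the proposition follows formally, and in particular one recovers $\Res^G_1 i_+^G\cO_\bY\simeq i_+\cO_\bY$ as claimed.
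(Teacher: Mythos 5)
Your proposal correctly identifies what has to be shown — that after forgetting the $G$-action, the underlying $\w\cD_\bX$-module of $i_+^G\cO_\bY$ is the non-equivariant pushforward $i_+\cO_\bY$ — but it never actually establishes this. You even say so yourself at the end: you defer the ``main obstacle'' (the collapse at the level of local sections) and then assert that ``once that identification is secured, the proposition follows formally.'' That identification \emph{is} the proposition: both functors $i_+^G$ and $i_+$ are built from Fr\'echet--Stein presentations of the same type but over different Banach algebras (crossed products $\hK{U(\pi^n \cL)}\rtimes_{H_n} H$ versus plain completions $\hK{U(\pi^n\cL)}$), so that the equality of underlying coadmissible modules is a nontrivial cancellation. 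The paper settles it on a suitable basis $\cB$ by an explicit Banach-level comparison $N_n\otimes_{S_n}T_n/IT_n = N_n\otimes_{W_n}U_n/IU_n$, invoking \cite[3.3.6]{EqDCapTwo}, then passing to the limit and side-changing. A proof that does not carry out this crossed-product cancellation (or cite the result that does) has a hole exactly where the mathematics lives.

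Two further points. First, your local description of $i_+^G(\cN)(\bU)$ as a sum over double-coset representatives $H\backslash G/G_\bY$ is the definition of the \emph{induction} functor $\ind_P^G$, not of the equivariant Kashiwara pushforward $i_+^G$. In the present proposition $\bY$ is assumed $G$-stable, so $G_\bY = G$ and there is no Mackey decomposition at all; $i_+^G$ is defined directly via the crossed-product presentations mentioned above. Conflating these two functors is a genuine misreading. Second, your alternative route through the two Kashiwara equivalences relies on commuting $\Res^G_1$ past $i^!$ and $i^{!,G}$; neither is among the stated facts in \cite{EqDCapTwo}, and establishing it requires the same Banach-level input. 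It also presupposes that $\Res^G_1 i_+^G\cO_\bY$ is a \emph{coadmissible} $\w\cD_\bX$-module, which is itself part of what must be checked. So the alternative argument is not a shortcut; it is the same gap in different clothing.
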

\begin{proof}
We may check this on the basis $\cB$ for $\bX$ described in \cite[Def. 4.4.1]{EqDCapTwo}.
Let $\bU\in\cB$ with $\bU\cap\bY\neq\emptyset$ given by an ideal $I\subset\cO(\bU)$ and choose a corresponding basis 
$\partial_1,...,\partial_d$ of a free $\cA$-Lie lattice $\cL$ of $\cT(\bU)$, for some affine formal model $\cA\subset\cO(\bU)$, and let $H$ be $\bU$-good, cf. \cite[Lem. 4.4.2]{EqDCapTwo}. Let $\cI= I\cap \cA$ and denote by $\cN$ the quotient $\cN_\cL(\cI)/\cI\cL$. Choose a good chain $H_n$ in $H$ for $\mathcal{L}$. Then put
\begin{center}
$S:=\wideparen{\mathcal{D}}(\bU\cap\bY,H)\simeq \varprojlim_n S_n$ with $S_n:= W_n\rtimes_{H_n} H$ and $W_n:=\widehat{U(\pi^n\mathcal{N})}_K$
\end{center}
and
\begin{center}
$T:=\wideparen{\mathcal{D}}(\bU,H)\simeq \varprojlim_n T_n$ with 
$T_n:= U_n\rtimes_{H_n} H$ and $U_n=\widehat{U(\pi^n(\mathcal{L}))}_K.$
\end{center}
Write the coadmissible right $S$-module $N:=\Omega_\bY(\bU\cap\bY)$ as $N=\varprojlim_n N_n$ with finitely generated right $S_n$-modules. By construction of the direct image $i^G_{+,r}$ for right modules, the coadmissible $T$-module 
$M:=(i_{+,r}^{G}\Omega_\bY)(\bU)$ admits the presentation $M=\varprojlim_n M_n$ where
$M_n=N_n\otimes_{S_n} T_n/ IT_n$. But according to \cite[3.3.6]{EqDCapTwo} and its proof, one has 
$$ N_n\otimes_{S_n} T_n/ IT_n=N_n\otimes_{W_n} U_n/ IU_n,$$
compatibly in $n$. Passing to the limit over $n$ yields 
$$ (i_{+}^G\cO_\bY)(\bU)=({\mathcal Hom} (\Omega_\bX,-)\circ i_{+,r}^G\Omega_\bY)(\bU)\simeq ({\mathcal Hom} (\Omega_\bX,-)\circ i_{+,r}\Omega_\bY)(\bU)=i_{+}\cO_\bY(\bU).$$ 
as $\wideparen{\mathcal{D}}(\bU)$-modules. 
This isomorphism is compatible with restriction maps relative to the inclusion of an affinoid subdomain $\bV\subset\bU$ in $\cB$ and induces the asserted isomorphism 
\[\Res^G_1 i_{+}^G\cO_\bY\simeq i_{+}\cO_\bY.  \qedhere\]
\end{proof}
Before we come to the main result in this subsection, we need an auxiliary result on Ext groups over crossed product rings. For crossed product rings and their basic properties, we refer to \cite{Pass}. Let $T=R*G$ be a crossed product ring and let $M$ be a left $T$-module. We will consider the two Ext groups $\Ext^{i}_R(M,R)$ and $\Ext^{i}_T(M,T)$ for any $i\geq 0$.

\vskip5pt

Let $\bar{g}\in T$ for some $g\in G$. We may apply \ref{lem_func}
to the ring isomorphism $R\rightarrow R, r\mapsto \bar{g}r\bar{g}^{-1}$ and the map 
$M\rightarrow M, m\mapsto \bar{g}m$, which is linear relative to this isomorphism. This produces a $R$-semilinear map $$\bar{g}: \Ext^{i}_R(M,R)\longrightarrow  \Ext^{i}_R(M,R).$$ 
For example, if $i=0$ and $\lambda\in \Hom_R(M,R)$, then $\lambda\bar{g}\in \Hom_R(M,R)$ is given as
$m\mapsto \bar{g}^{-1}\lambda(\bar{g}m)\bar{g}$. On the other hand, the Ext group $\Ext^{i}_T(M,T)$ is naturally a right $T$-module, which induces an action of $\bar{g}$ by right multiplication on this group for any $g\in G$. Now the left-version of \cite[Lemma 5.4]{ArdBro2007} gives a canonical isomorphism 
$$\Ext^{i}_T(M,T)\simeq \Ext^{i}_R(M,R)$$ as right $R$-modules. 
\begin{lem} \label{lem_crossed_prod2}Let $T=R*G$ be a crossed product ring for some group $G$ and let $M$ be a left $T$-module.  For any $i\geq 0$, the above isomorphism as right $R$-modules 
$$\Ext^{i}_T(M,T)\simeq \Ext^{i}_R(M,R)$$
intertwines the actions of $\bar{g}$ on both sides, for any $g\in G$.
\end{lem}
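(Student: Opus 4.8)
The plan is to verify the statement on a projective resolution of $M$, by showing that the comparison morphism of complexes underlying the isomorphism of \cite[Lemma 5.4]{ArdBro2007} already intertwines the two $\bar{g}$-actions; once the definitions are unwound this comes down to a one-line identity in $T$. First I would fix a resolution $P_\bullet\to M$ by free left $T$-modules. Since $T=\bigoplus_{h\in G}\bar{h}R=\bigoplus_{h\in G}R\bar{h}$ is free both as a left and as a right $R$-module, restriction of scalars turns $P_\bullet\to M$ into a free resolution of left $R$-modules, so $\Ext^i_T(M,T)$ and $\Ext^i_R(M,R)$ are the cohomology of $\Hom_T(P_\bullet,T)$ and $\Hom_R(P_\bullet,R)$ respectively. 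I would recall from the proof of the left version of \cite[Lemma 5.4]{ArdBro2007} that the isomorphism $\Ext^i_T(M,T)\simeq\Ext^i_R(M,R)$ is induced by the morphism of complexes $\Phi\colon\Hom_T(P_\bullet,T)\to\Hom_R(P_\bullet,R)$, $f\mapsto p\circ f$, where $p\colon T\to R$ is the $(R,R)$-bimodule retraction of $R\hookrightarrow T$ onto the identity component; this $\Phi$ is right $R$-linear and bijective on every finitely generated free left $T$-module, hence induces the asserted isomorphism on cohomology.

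Next I would make both $\bar{g}$-actions explicit at this level. The right $T$-module structure on $\Ext^i_T(M,T)$ is induced by post-composition with right multiplication by $\bar{g}$ on $T$, i.e.\ by $f\mapsto(x\mapsto f(x)\bar{g})$ on $\Hom_T(P_\bullet,T)$. On the other side, since $\bar{g}$ is a unit of $T$ and the differentials of $P_\bullet$ are $T$-linear, left multiplication by $\bar{g}$ is a cochain endomorphism of $P_\bullet$ lifting $\beta_g\colon M\to M$, $m\mapsto\bar{g}m$; feeding this lift into the two-step construction of \ref{lem_func} --- applied to the automorphism $c_g\colon r\mapsto\bar{g}r\bar{g}^{-1}$ of $R$ and the $c_g$-semilinear map $\beta_g$ --- shows that the $\bar{g}$-action on $\Ext^i_R(M,R)$ is induced by $\varphi\mapsto(x\mapsto\bar{g}^{-1}\varphi(\bar{g}x)\bar{g})$ on $\Hom_R(P_\bullet,R)$, consistently with the $i=0$ formula recalled above.

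It then remains to compare. For $f\in\Hom_T(P_n,T)$ and $x\in P_n$ one has $\Phi(f\bar{g})(x)=p(f(x)\bar{g})$, while applying the transported action to $\Phi(f)$ gives $\bar{g}^{-1}\Phi(f)(\bar{g}x)\bar{g}=\bar{g}^{-1}p(f(\bar{g}x))\bar{g}=\bar{g}^{-1}p(\bar{g}f(x))\bar{g}$ by $T$-linearity of $f$. Thus $\Phi$ is $\bar{g}$-equivariant as soon as $\bar{g}\,p(t\bar{g})=p(\bar{g}t)\,\bar{g}$ holds for every $t\in T$. Since $p$ is an $(R,R)$-bimodule retraction, this is checked directly from the crossed-product relations $\bar{h}r={}^{h}\!r\,\bar{h}$ and $\bar{h}\bar{k}=\tau(h,k)\overline{hk}$: both sides pick out a single (twisted) $\overline{g^{-1}}$-coefficient of $t$, and the identity collapses to associativity in $T$ together with $\bar{g}^{-1}\bar{g}=\bar{g}\bar{g}^{-1}=1$. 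Passing to cohomology yields the lemma.

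The one genuinely delicate point is the middle step: \ref{lem_func} produces the $\bar{g}$-action on $\Ext^i_R(M,R)$ as a composite of a change-of-ring isomorphism and a functoriality map, both built from a chosen lift of the merely semilinear map $\beta_g$, so some care is needed to extract the compact formula $\varphi\mapsto(x\mapsto\bar{g}^{-1}\varphi(\bar{g}x)\bar{g})$ used above; once that is in place, the remaining verification is exactly the short bimodule computation just indicated.
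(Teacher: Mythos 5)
Your proof is correct and follows essentially the same route as the paper: both arguments reduce to an explicit Hom/cochain-level description of the comparison isomorphism from \cite[Lemma 5.4]{ArdBro2007} and verify $\bar g$-equivariance by a direct crossed-product computation. The only difference is direction: the paper factors the forward map as the adjunction $\lambda\mapsto F_\lambda$ followed by $\alpha(f)=\sum_{h\in G}\bar h^{-1}f(\bar h)$ and checks each piece separately, whereas you verify the inverse map ``post-compose with the $(R,R)$-bimodule projection $p$'' via the single identity $\bar g\,p(t\bar g)=p(\bar g t)\bar g$; since $p\circ(-)$ is indeed the two-sided inverse of $\lambda\mapsto\sum_{h}\bar h^{-1}\lambda(\bar h\,\cdot)$ (a one-line telescoping computation), you are checking equivariance of the same isomorphism, and your cochain formula $\varphi\mapsto\bigl(x\mapsto\bar g^{-1}\varphi(\bar g x)\bar g\bigr)$ for the action on $\Ext^i_R(M,R)$ agrees with the one produced by Lemma \ref{lem_func}.
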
 
\begin{proof} Let $g\in G$. By construction of the isomorphism \cite[Lemma 5.4]{ArdBro2007} we may use a projective resolution of the $T$-module $M$ (which is then also a projective resolution of $M$ as $R$-module) to reduce the verification of the intertwining property for $\bar{g}$ to the case $i=0$. Next, let us review the construction of the isomorphism 
$$\Hom_T(M,T)\simeq \Hom_R(M,R)$$ as right $R$-modules from loc.cit.
First, $\Hom_R(T,R)$ is a $(T,R)$-bimodule, where the left $T$-module structure is given as 
$(t_0f)(t)=f(tt_0)$ for any $t_0,t\in T$ and $f\in\Hom_R(T,R)$.
By tensor-hom adjunction, we have the isomorphism of right $R$-modules
$$ F: \Hom_R(M,R)\longrightarrow \Hom_T(M,\Hom_R(T,R)),\lambda\mapsto F_\lambda.$$
Here, the map $F_\lambda(m)\in \Hom_R(T,R)$, for any $m\in M$, is given as $t\mapsto \lambda(tm)$. Furthermore, the map 
$$\alpha: \Hom_R(T,R)\longrightarrow T, f\mapsto \sum_{g\in G} \bar{g}^{-1}f(\bar{g})$$
is a $(T,R)$-bimodule isomorphism. Combining $F$ and $\alpha$ yields the canonical isomorphism 
$$\Hom_T(M,T)\simeq \Hom_R(M,R)$$ as right $R$-modules. To see that this latter isomorphism intertwines the $\bar{g}$-actions, it suffices to show that both $F$ and $\alpha$ intertwine the $\bar{g}$-actions in a suitable sense. To start with, $\Hom_R(T,R)$ has a right $\bar{g}$-action, as we have just seen above, whence $\Hom_T(M,\Hom_R(T,R))$ has a right $\bar{g}$-action through right multiplication. To show that $F$ intertwines these actions reduces to show that for fixed $m\in M$, one has $F_{\lambda\bar{g}}(m)=(F_{\lambda}\bar{g})(m)$ as functions on $T$. But 
for $t\in T$, 
$$F_{\lambda\bar{g}}(m)(t)=(\lambda\bar{g})(tm)=\bar{g}^{-1}\lambda(\bar{g}tm)\bar{g}=
\bar{g}^{-1}(F_{\lambda}(m)(\bar{g}t))\bar{g}=(F_{\lambda}(m)\bar{g})(t)=(F_{\lambda}\bar{g})(m)(t).$$
In a final step, we show that $\alpha$ intertwines the $\bar{g}$-actions, i.e.
$\alpha_{f\bar{g}}=\alpha(f)\bar{g}$. Now $\bar{g}\bar{h}=r_h\overline{gh}$ for some $r_h\in R$, whence $\bar{h}=\bar{g}^{-1}r_h\overline{gh}$, which gives
\[ \begin{array}{lllllll}  \alpha_{f\bar{g}}&= &\sum_{h\in G} \bar{h}^{-1}(f\bar{g})(\bar{h}) &= &\sum_{h\in G} \bar{h}^{-1}\bar{g}^{-1}f(\bar{g}\bar{h})\bar{g}&= &\sum_{h\in G} \bar{h}^{-1}\bar{g}^{-1}f(r_h\overline{gh})\bar{g}\\ \; \\
&& & &&= &\sum_{h\in G} \overline{gh}^{-1}f(\overline{gh})\bar{g}\\ \; \\
& & & &&= &\alpha(f)\bar{g}.
\end{array}\]
\end{proof}

If Bernstein's inequality holds in $\cC_{\bX /G}$, then there is the duality functor $\mathbb{D}_G$ on $\mathcal{C}^{\rm wh}_{\bX/G}$.
\begin{thm} \label{prop_SelfGDual} 
Assume that Bernstein's inequality holds
in $\cC_{\bX /G}$. Then $i_+^G\cO_\bY$
is self-dual, i.e $\mathbb{D}_Gi_+^G\cO_\bY\simeq i_+^G\cO_\bY$.
\end{thm}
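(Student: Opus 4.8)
The plan is to reduce the $G$-equivariant statement to the non-equivariant self-duality already established in Proposition \ref{prop_SelfDual}, by checking that the canonical isomorphism $\mathbb{D} i_+\cO_\bY \simeq i_+\cO_\bY$ upgrades to an isomorphism of $G$-equivariant modules. The first step is to record that $\cB_{\bY\mid\bX} := i_+^G\cO_\bY$ carries a $G$-equivariant structure by hypothesis, and that by Proposition \ref{prop_resG1} its underlying $\w\cD_\bX$-module is $i_+\cO_\bY$; so by Proposition \ref{prop_SelfDual} we already have a (non-equivariant) isomorphism $\mathbb{D}\cB_{\bY\mid\bX}\simeq\cB_{\bY\mid\bX}$. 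The task is to see that, after equipping $\mathbb{D}_G\cB_{\bY\mid\bX}$ with its canonical $G$-equivariant structure, this isomorphism becomes $G$-equivariant.

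Next I would unwind the construction of $\mathbb{D}_G$ locally. On a small pair $(\bU,H)$, the module $\cB_{\bY\mid\bX}(\bU)$ is a module over the Fréchet--Stein algebra $\w\cD(\bU,H) = \varprojlim_n T_n$, and each $T_n$ is a crossed product $U_n \rtimes_{H_n} H$ in the sense recalled before Lemma \ref{lem_crossed_prod2}. The equivariant dual ${^r\mathcal{E}}^d_G$ is computed as $\varprojlim_n \Ext^d_{T_n}(\cB_{\bY\mid\bX}(\bU), T_n)$, while the non-equivariant dual uses $\Ext^d$ over the smaller ring $\w\cD(\bU)=\varprojlim_n U_n$. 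Lemma \ref{lem_crossed_prod2} applied to $T_n = U_n * H$ (more precisely to the relevant finite-level crossed product) gives a canonical isomorphism $\Ext^d_{T_n}(M,T_n)\simeq \Ext^d_{U_n}(M,U_n)$ of right $U_n$-modules which intertwines the $\bar g$-actions; passing to the limit over $n$ and then over $H$ identifies ${^r\mathcal{E}}^d_G(\cB_{\bY\mid\bX})$ with ${^r\mathcal{E}}^d(\cB_{\bY\mid\bX})$ compatibly with the $G$-actions. Applying the side-changing functor ${\mathcal Hom}(\Omega_\bX,-)$, which is $G$-equivariant, then identifies $\mathbb{D}_G\cB_{\bY\mid\bX}$ with $\mathbb{D}\cB_{\bY\mid\bX}$ as $G$-equivariant $\w\cD_\bX$-modules.

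Finally, I would combine this with Proposition \ref{prop_SelfDual}: the self-duality isomorphism there was built by extending the classical self-duality $\mathbb{D}^{\rm cl}\cB^{\rm cl}_{\bY\mid\bX}\simeq\cB^{\rm cl}_{\bY\mid\bX}$ via $E_\bX$, and the last part of that proposition already verified that it is $G$-equivariant once both sides carry $G$-equivariant structures. So chaining the $G$-equivariant identification $\mathbb{D}_G\cB_{\bY\mid\bX}\simeq\mathbb{D}\cB_{\bY\mid\bX}$ with the $G$-equivariant isomorphism $\mathbb{D}\cB_{\bY\mid\bX}\simeq\cB_{\bY\mid\bX}$ of Proposition \ref{prop_SelfDual} yields $\mathbb{D}_G i_+^G\cO_\bY\simeq i_+^G\cO_\bY$, as desired.

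I expect the main obstacle to be bookkeeping the $G$-equivariant structures through all the identifications: specifically, checking that the twisting maps $g^{\w\cD}$, $g^\cM$ used to define the equivariant structure on ${^r\mathcal{E}}^d_G$ match, under the crossed-product description, exactly the $\bar g$-actions to which Lemma \ref{lem_crossed_prod2} applies, and that this compatibility is preserved in the limit over the Fréchet--Stein tower and over the small subgroups $H$. The underlying mathematical content is light once Lemma \ref{lem_crossed_prod2} and Proposition \ref{prop_SelfDual} are in hand; the work is in making the local crossed-product picture interface cleanly with the sheaf-theoretic equivariant formalism.
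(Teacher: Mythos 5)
Your proposal is correct and follows the same route as the paper's proof: reduce to the non-equivariant self-duality of Proposition \ref{prop_SelfDual} via the identification $\Res^G_1 i_+^G\cO_\bY\simeq i_+\cO_\bY$ from Proposition \ref{prop_resG1}, identify ${^r\mathcal{E}}_G^d$ with ${^r\mathcal{E}}^d$ level by level on the Fréchet--Stein tower $T_n = U_n\rtimes_{H_n}H$ using Lemma \ref{lem_crossed_prod2}, pass to the limit, and apply side-changing. The only point worth being a bit more precise about is that the crossed-product lemma is applied with $T_n$ viewed as a crossed product of $U_n$ (or rather $U_n\rtimes_{H_n}H_n$) by the finite group $H/H_n$, as you yourself flagged in parentheses.
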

\begin{proof} Let $d=\dim\bX$. As in the proof of \ref{prop_resG1}
, we work over the basis $\cB$ for $\bX$ described in \cite[Def. 4.4.1]{EqDCapTwo}. Let $\bU\in\cB$ and let $H$ be $\bU$-good. 
We first construct an isomorphism
$$ \Ext^d_{\wideparen{\mathcal{D}}(\bU,H)}(i_+^G\cO_\bY(\bU), \wideparen{\mathcal{D}}(\bU,H))\simeq 
\Ext^d_{\wideparen{\mathcal{D}}(\bU)}(i_+^G\cO_\bY(\bU), \wideparen{\mathcal{D}}(\bU))$$
in ${\rm Frech}(H-\wideparen{\mathcal{D}}(\bU))$, where the $H$-action on the right-hand side is induced by the given $H$-action on $i_+^G\cO_\bY(\bU)$ and the $H$-action on 
$\wideparen{\mathcal{D}}(\bU)$. The choice of $\bU$ comes with a basis 
$\partial_1,...,\partial_d$ of a free $\cA$-Lie lattice $\cL$ of $\cT(\bU)$, for some affine formal model $\cA\subset\cO(\bU)$. Choose a good chain $H_n$ in $H$ for $\mathcal{L}$. 
We use the notation developed in the preceding proof: 
\begin{center}
$T:=\wideparen{\mathcal{D}}(\bU,H)\simeq \varprojlim_n T_n$ with 
$T_n:= U_n\rtimes_{H_n} H$ and $U_n=\widehat{U(\pi^n(\mathcal{L}))}_K$
\end{center}
and 
\begin{center}
$M:=(i_{+}^{G}\cO_\bY)(\bU)=\varprojlim_n M_n\in\cC_T$ 
\end{center}
with finitely generated $T_n$-modules $M_n$. Note that $U:=\varprojlim_n U_n=\wideparen{\mathcal{D}}(\bU)$. Every $M_n$ is a finitely generated $U_n$-module and according to Proposition \ref{prop_resG1}, $M=\varprojlim_n M_n$ is a Fr\'echet-Stein presentation for $M$, viewed as a coadmissible $U$-module. 
Now 
$$\Ext^d_{T_n}(M_n, T_n)\simeq \Ext^d_{U_n}(M_n, U_n)
$$
as right $(H-U_n)$-modules, cf. \ref{lem_crossed_prod2}. 
Since everything is compatible with variation in $n$, one obtains an isomorphism  
$\Ext^d_{T}(M, T)\simeq \Ext^d_{U}(M, U)$
in ${\rm Frech}(H-\w\cD(U))$, as claimed. The isomorphism 
$$\Ext^d_{\wideparen{\mathcal{D}}(\bU,H)}(i_+^G\cO_\bY(\bU), \wideparen{\mathcal{D}}(\bU,H))\simeq 
\Ext^d_{\wideparen{\mathcal{D}}(\bU)}(i_+^G\cO_\bY(\bU), \wideparen{\mathcal{D}}(\bU))$$
is compatible with the restriction maps arising from an inclusion of an affinoid subdomain $\bV\subset\bU$ in $\cB$. We obtain a bijection 
$$ {^r\mathcal{E}}_G^d(i_+^G\cO_\bY)\simeq {^r\mathcal{E}}^d(\Res^G_1 i^G_+\cO_\bY)$$
in $\cC_\bX$, which is $G$-equivariant with respect to the induced $G$-structure on the right-hand side. Applying the side-changing functor ${\mathcal Hom} (\Omega_\bX,-)$ together with \ref{prop_resG1} and \ref{prop_SelfDual}, yields a bijection
$$ \mathbb{D}_Gi_+^G\cO_\bY\simeq \mathbb{D}(\Res^G_1 i^G_+\cO_\bY)\simeq \Res^G_1 i^G_+\cO_\bY,$$
which is $G$-equivariant with respect to the induced $G$-structure on the right-hand side. 
But this means $ \mathbb{D}_Gi_+^G\cO_\bY\simeq i^G_+\cO_\bY$
in ${\rm Frech}(G-\cD_\bX)$.
\end{proof}

\subsection{The ring $\wUg{P}$}\label{subsec_rings}
We now place ourselves in the setting of \cite[\S 6.2]{EqDCap}. 
In particular, we suppose given an affine algebraic group $\G$ of finite type over $K$ and a continuous group homomorphism $G\rightarrow \G(K)$. We write $\fr{g}=\Lie(\G)$ and suppose that the center of $\fr{g}$ is trivial. 





\vskip5pt

The functor $\wUg{-}$ from \cite[6.2.11]{EqDCap} can be evaluated on any closed subgroup $H$ of $G$; in particular we have at our disposal the associative $K$-algebra 
\[ \wUg{P}\]
which is equal to $\wUg{H}\otimes_{K[H]} K[P]$ for any choice of compact open subgroup $H$ of $P$. 

\vskip5pt 

As a first basic result, we prove the following double coset decompositions. 

\vskip5pt

\begin{prop}\label{decomp} Suppose that there is an open subgroup $G_0$ in $G$ such that $G=G_0 P$. Let $P_0=G_0\cap P$. Let $H\subset G_0$ be a compact open subgroup. Then
\be \item  As $(\wUg{H},\wUg{P_0})$-bimodules, one has the decomposition
\[\wUg{G_0}=\underset{Z\in H\setminus G_0 / P_0}{\bigoplus} \wUg{H}Z\wUg{P_0}.\]
\item As $(\wUg{H},\wUg{P})$-bimodules, one has the decomposition \[\wUg{G}=\bigoplus\limits_{Z\in H\setminus G / P} \wUg{H}Z\wUg{P}.\]
\ee
\end{prop}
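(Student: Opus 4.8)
The plan is to reduce the decomposition of $\wUg{G}$ (part (b)) to the decomposition of $\wUg{G_0}$ (part (a)) via the hypothesis $G = G_0P$, and to prove (a) by a direct generators-and-relations argument using the defining presentation $\wUg{G_0} = \wUg{H} \otimes_{K[H]} K[G_0]$. First I would recall that, by the very definition in \cite[6.2.11]{EqDCap}, for any choice of compact open subgroup $H \subseteq G_0$ one has $\wUg{G_0} = \wUg{H} \otimes_{K[H]} K[G_0]$ as a $(\wUg{H}, K[G_0])$-bimodule, and similarly $\wUg{P} = \wUg{H \cap P} \otimes_{K[H\cap P]} K[P]$ with $H \cap P$ compact open in $P = P_0 \cup (P\setminus P_0)$; here $H \cap P = H \cap P_0$ since $H \subseteq G_0$. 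Since $K[G_0] = \bigoplus_{gP_0} K[H]\, g\, K[P_0]$ decomposes as a $(K[H], K[P_0])$-bimodule indexed by the double cosets $H\backslash G_0 / P_0$ — this is the elementary double coset decomposition of a group algebra — tensoring up along $K[H] \to \wUg{H}$ on the left gives
\[
\wUg{G_0} = \wUg{H}\otimes_{K[H]} K[G_0] = \bigoplus_{Z \in H\backslash G_0/P_0} \wUg{H}\otimes_{K[H]} K[H]\,Z\,K[P_0] = \bigoplus_{Z} \wUg{H}\, Z\, K[P_0]
\]
as $(\wUg{H}, K[P_0])$-bimodules. The point that needs care is promoting the right $K[P_0]$-module structure on each summand to a right $\wUg{P_0}$-module structure: one must check that $\wUg{H}\,Z\,K[P_0]$, viewed inside $\wUg{G_0}$, is stable under right multiplication by $\wUg{P_0}$ and equals $\wUg{H}\,Z\,\wUg{P_0}$. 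For this I would use that $\wUg{P_0} = \wUg{H\cap P_0}\otimes_{K[H\cap P_0]} K[P_0]$ together with the conjugation relation: for $g \in G_0$, conjugation by $g$ carries $\fr{g}$-elements to $\fr{g}$-elements (compatibly with the $G$-action on $\fr{g}$ coming from $G \to \G(K)$), so $Z \cdot \wUg{H\cap P_0} \subseteq \wUg{{}^{Z}(H\cap P_0)}\cdot Z \subseteq \wUg{G_0}\cdot Z$; combined with $Z\,K[P_0] \subseteq \bigcup Z\,K[P_0]$ this shows $\wUg{H}\,Z\,\wUg{P_0} = \wUg{H}\,Z\,K[P_0]$ as subsets of $\wUg{G_0}$, which is exactly the $Z$-summand. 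The directness of the sum is inherited from the directness at the group-algebra level because $\wUg{H}$ is free (hence flat) as a right $K[H]$-module.

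For part (b), I would choose $H \subseteq G_0$ compact open as in the statement; then $H$ is also compact open in $G$, and $\wUg{G} = \wUg{G_0}\otimes_{K[G_0]} K[G]$ by the analogous compatibility of $\wUg{-}$ with the inclusion $G_0 \subseteq G$. The hypothesis $G = G_0P$ gives a bijection of $(H, P)$-double coset spaces $H\backslash G_0 / P_0 \;\xrightarrow{\sim}\; H\backslash G / P$, induced by the inclusion $G_0 \hookrightarrow G$; indeed surjectivity is immediate from $G = G_0P$, and injectivity follows from $P \cap G_0 = P_0$. Applying $-\otimes_{K[G_0]} K[G]$ (equivalently $-\otimes_{K[P_0]}K[P]$ on each summand, using $K[G] = K[G_0]\otimes_{K[P_0]}K[P]$) to the decomposition of part (a) and matching up the double coset indices through this bijection then yields
\[
\wUg{G} = \bigoplus_{Z \in H\backslash G/P} \wUg{H}\,Z\,\wUg{P}
\]
as $(\wUg{H}, \wUg{P})$-bimodules, as desired.

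The main obstacle I anticipate is not the combinatorics of double cosets but the bookkeeping needed to see that each $Z$-summand is genuinely a $\wUg{P}$-submodule equal to $\wUg{H}\,Z\,\wUg{P}$ — i.e. controlling how right multiplication by $\wUg{P_0}$ (and by $K[P]$) interacts with the left $\wUg{H}$-factor through the conjugation action on $\fr{g}$, and ensuring this does not spill across double cosets. This is precisely the kind of verification carried out in the analogous sheaf-theoretic setting in \cite[2.2.9]{EqDCapTwo} and in the Frechet–Stein computations above (e.g. Proposition \ref{prop_iwasawa}), so I would model the argument on those: work with the explicit generators $\fr{g} \cup G_0$ of $\wUg{G_0}$, push every element into the normal form "element of $\wUg{H}$, times a coset representative $Z$, times an element of $\wUg{P_0}$" using the commutation relations $g\,\xi = (g\cdot\xi)\,g$ for $g \in G_0$, $\xi \in \fr{g}$, and observe that the $H$–$P_0$ double coset of the middle factor is an invariant of this rewriting.
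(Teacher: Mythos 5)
Your overall strategy --- tensoring the double coset decomposition $K[G_0]=\bigoplus_Z K[H]\,Z\,K[P_0]$ up along $K[H]\hookrightarrow\wUg{H}$ --- is essentially the same as the paper's, and the first chain of equalities giving the internal direct sum $\bigoplus_Z\wUg{H}\,Z\,K[P_0]$ is sound. However, the freeness of $\wUg{H}$ over $K[H]$ that you invoke there is neither obvious nor needed: directness follows already from the ambient identification $\wUg{G_0}\cong\wUg{H}\otimes_{K[H]}K[G_0]$ together with the fact that tensor products commute with direct sums.

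The genuine gap is in the upgrade of $K[P_0]$ to $\wUg{P_0}$ on the right of each summand. Your conjugation step gives $Z\cdot\wUg{H\cap P_0}\subseteq\wUg{{}^{Z}(H\cap P_0)}\cdot Z\subseteq\wUg{G_0}\cdot Z$, but this lands in $\wUg{G_0}\cdot Z$ and hence only yields $\wUg{H}\,Z\,\wUg{P_0}\subseteq\wUg{G_0}\,Z\,K[P_0]$, which destroys all control over the double coset; the remark ``combined with $Z\,K[P_0]\subseteq\bigcup Z\,K[P_0]$'' carries no content. The trouble is that ${}^Z(H\cap P_0)$ need not be a subgroup of $H$, so $\wUg{{}^Z(H\cap P_0)}\not\subseteq\wUg{H}$. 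The paper instead exploits a much finer conjugation-invariance: in the definition of $\wUg{G_0}$ as $\invlim\hK{U(\cL)}\rtimes_N G_0$ over pairs $(\cL,N)$, every Lie lattice $\cL$ is required to be \emph{$G_0$-stable}; hence conjugation by $s\in G_0$ fixes $\hK{U(\cL)}$ on the nose (not merely $\fr{g}$ up to automorphism), and at each level $(\cL,N)$ one has
\[
\bigl(\hK{U(\cL)}\rtimes_N H\bigr)\cdot s\cdot\bigl(\hK{U(\cL)}\rtimes_{N\cap P_0}P_0\bigr)
=\bigl(\hK{U(\cL)}\rtimes_N H\bigr)\cdot s\cdot K[P_0]
\]
since $s\cdot\hK{U(\cL)}=\hK{U(\cL)}\cdot s$. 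The paper therefore runs the argument level-by-level (restricting to pairs with $N\subseteq H_{\cL}$) and only passes to the inverse limit at the end, where directness is settled via $K[G_0]=\bigoplus_s K[H]\,s\,K[P_0]$ and $\hK{U(\cL)}\rtimes_N G_0=\bigl(\hK{U(\cL)}\rtimes_N H\bigr)\otimes_{K[H]}K[G_0]$. To repair your proposal you must at minimum invoke the $G_0$-stability of $\cL$ explicitly, which in practice amounts to reproducing this level-wise argument. Your plan for part (b) via $K[G]=K[G_0]\otimes_{K[P_0]}K[P]$ is workable once (a) is in place, though the paper gets there more directly by computing $\sum_s\wUg{H}\,s\,\wUg{P}=\wUg{G_0}\cdot P=\wUg{G}$ inside $\wUg{G}$, with directness again inherited from the level-wise picture.
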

\begin{proof} Because of $G=G_0P$, we may choose a system of representatives $S\subseteq G_0$ for the $(H,P_0)$-double cosets in $G_0$ which is, at the same time, a system of representatives for the $(H,P)$-double cosets in $G$. Recall that

\[\wUg{G_0} = \underset{(\cL,N)\in\cJ(G_0)}{\invlim\limits}{} \hK{U(\cL)} \rtimes_N G_0\]
where $\cJ(G_0)$ denotes the set of all pairs $(\cL, N)$, where $\cL$ is an $G_0$-stable Lie lattice in $\fr{g}$ and $N$ is an open subgroup of $(G_0)_{\cL}$ which is normal in $G_0$. For each pair $(\cL, N)$, the group $N_P:=P\cap N$ is an open subgroup of $(P_0)_{\cL}$ which is normal in $P_0$. Consider a pair $(\cL, N)$ with the additional property $N\subseteq H_{\cL}$. Then for any $s \in S$,
\[ \big( \hK{U(\cL)}\rtimes_N H \big).s.\big( \hK{U(\cL)}\rtimes_{N_P} P_0 \big) =
\big( \hK{U(\cL)}\rtimes_N H \big).s.P_0 \]
and so, because of $\cup_{s\in S} HsP_0=G_0$, the natural inclusion

\begin{equation}\label{Decomposition} \sum_{s\in S}  \big( \hK{U(\cL)}\rtimes_N H \big).s.\big( \hK{U(\cL)}\rtimes_{N_P} P_0 \big)\longrightarrow  \hK{U(\cL)}\rtimes_N G_0\end{equation}
is bijective. On the other hand,
\[ \big( \hK{U(\cL)}\rtimes_N H \big).s.P_0 = \big( \hK{U(\cL)}\rtimes_N H \big)\utimes{K[H]} K[H].s.K[P_0]  \]
and
\[\hK{U(\cL)}\rtimes_N G_0= \big( \hK{U(\cL)}\rtimes_N H \big)\utimes{K[H]} K[G_0].\] Since $K[G_0]=\oplus_{s\in S} K[H].s.K[P_0]$, the sum on the left-hand side of formula (\ref{Decomposition}) is direct. Taking inverse limits over the cofinal set of all pairs $(\cL, N)$ satisfying additionally $N\subseteq H_{\cL}$ yields the decomposition
\[ \bigoplus\limits_{s\in S} \wUg{H}.s.\wUg{P_0}=\wUg{G_0}.\]
This proves (a). Working inside $\wUg{G}$ 
we obtain from this
\[   \sum_{s\in S} \wUg{H}.s.\wUg{P}= \sum_{s\in S} \wUg{H}.s.\wUg{P_0}.P=\wUg{G_0}.P=\wUg{G}.\] By a similar argument as above, this sum is direct. This gives (b).
\end{proof}

\begin{prop} $\wUg{H}$ is a Fr\'echet-Stein algebra for any compact open subgroup $H$ of $P$.
\end{prop}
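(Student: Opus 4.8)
The statement to be proved is that $\wUg{H}$ is a Fr\'echet--Stein algebra for any compact open subgroup $H$ of $P$. The plan is to exhibit an explicit Fr\'echet--Stein presentation coming from the defining presentation $\wUg{H} = \varprojlim_{(\cL,N)} \hK{U(\cL)} \rtimes_N H$ recalled in the proof of Proposition \ref{decomp}, and to verify the two Fr\'echet--Stein axioms: that each term in the limit is a (left and right) noetherian Banach algebra, and that the transition maps are (right, resp. left) flat with dense image. Concretely, since $H$ is compact open, it is topologically finitely generated, so one can choose a cofinal sequence of pairs $(\cL_n, N_n) \in \cJ(H)$ with $\cL_{n+1} \subseteq \cL_n$ and $N_n$ of index $p^{?}$ decreasing, giving $\wUg{H} \cong \varprojlim_n \big(\hK{U(\cL_n)} \rtimes_{N_n} H\big)$ as a countable inverse limit of Banach algebras.

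First I would recall that $\hK{U(\cL_n)}$ is the $K$-Banach algebra completion of the enveloping algebra of the Lie lattice $\cL_n$; by the theory of affinoid enveloping algebras (as in \cite{DCapOne}), and using the hypothesis that $\fr g$ has trivial centre plays no essential role here, each $\hK{U(\cL_n)}$ is a (two-sided) noetherian Banach algebra, and the natural maps $\hK{U(\cL_{n+1})} \to \hK{U(\cL_n)}$ are flat on both sides with dense image — this is exactly the Fr\'echet--Stein structure on $\w{U}(\fr g)$-type algebras established in the literature. Next I would pass to the crossed product: $\hK{U(\cL_n)} \rtimes_{N_n} H$ is a crossed product of $\hK{U(\cL_n)}$ by the finite group $H/N_n$ (after identifying $\hK{U(\cL_n)}\rtimes_{N_n}H = \hK{U(\cL_n)} \rtimes_{N_n} N_n \rtimes (H/N_n) = \widehat{U(\cL_n)\rtimes N_n}_K \rtimes (H/N_n)$, or more directly a crossed product of the Banach completion by the finite group). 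Since crossed products of noetherian rings by finite groups are noetherian, each $T_n := \hK{U(\cL_n)} \rtimes_{N_n} H$ is noetherian; and a crossed product construction is flat over the base ring, so flatness and density of the transition maps $T_{n+1} \to T_n$ follow from the corresponding properties of $\hK{U(\cL_{n+1})} \to \hK{U(\cL_n)}$ together with the observation that the crossed products are built over compatible finite quotients of $H$ (one must check the quotients $H/N_n$ are used compatibly, which is why one arranges $N_{n+1} \subseteq N_n$).

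The main technical point — and the step I expect to be the genuine obstacle — is the careful bookkeeping with the groups: the pairs $(\cL_n, N_n)$ must be chosen so that simultaneously $\cL_{n+1}\subseteq\cL_n$, $N_{n+1} \subseteq N_n \cap (H)_{\cL_{n+1}}$, and each $N_n$ is normal in $H$, so that the crossed products $\hK{U(\cL_n)}\rtimes_{N_n} H$ are genuinely defined and the transition maps are ring homomorphisms forming an inverse system whose limit is $\wUg{H}$. One then needs that these pairs are cofinal in $\cJ(H)$, which follows from the fact that $H$ has a neighbourhood basis of the identity consisting of open normal subgroups, combined with the fact that $\cL$-stable Lie lattices shrink cofinally. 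Granting this, the verification of the two axioms reduces, term by term, to the already-known Fr\'echet--Stein property of the completed enveloping algebras and the elementary fact that finite crossed products preserve noetherianity and (faithful) flatness. I would present the argument by first fixing the cofinal system, then invoking the known Fr\'echet--Stein structure on the $\varprojlim_n \hK{U(\cL_n)}$ side, and finally transporting it across the finite crossed product, citing \cite{Pass} for the crossed-product facts and the relevant results of \cite{DCapOne,DCapThree} for the enveloping-algebra side.
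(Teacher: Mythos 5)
The paper does not actually prove this proposition from scratch: it simply cites \cite[6.2.9]{EqDCap}, which establishes the Fr\'echet--Stein property of $\wUg{H}$ for compact open $H$ in that earlier paper. So your proposal is an attempt to reconstruct that cited proof, and while the overall shape of your argument (present $\wUg{H}$ as a countable inverse limit of noetherian Banach crossed products, then verify flatness and density of the transitions) is the right skeleton, the crucial step is not actually carried out.

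The genuine obstacle is not, as you suggest, the cofinality bookkeeping for the pairs $(\cL_n, N_n)$ --- that is handled mechanically by the fact that $\cJ(H)$ is directed and $H$ has a neighbourhood basis of open normal subgroups. The real work is the flatness of the transition maps $T_{n+1}\to T_n$ for $T_n := \hK{U(\cL_n)}\rtimes_{N_n}H$ when $N_{n+1}\subsetneq N_n$, and this does \emph{not} follow the way you claim. You write that flatness of $T_{n+1}\to T_n$ ``follow[s] from the corresponding properties of $\hK{U(\cL_{n+1})}\to\hK{U(\cL_n)}$ together with the observation that the crossed products are built over compatible finite quotients,'' but that is not a deduction: knowing that $T_n$ is free (hence flat) over $\hK{U(\cL_n)}$ and that $\hK{U(\cL_n)}$ is flat over $\hK{U(\cL_{n+1})}$ only gives that $T_n$ is flat over $\hK{U(\cL_{n+1})}$, a \emph{smaller} ring than $T_{n+1}$; flatness over a subring never implies flatness over a larger intermediate ring in general. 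The correct argument has to factor $T_{n+1}\to T_n$ through $\hK{U(\cL_n)}\rtimes_{N_{n+1}}H$, show that $\hK{U(\cL_n)}\rtimes_{N_{n+1}}H\cong \hK{U(\cL_n)}\utimes{\hK{U(\cL_{n+1})}}T_{n+1}$ as $(\hK{U(\cL_n)},T_{n+1})$-bimodules so that base change gives flatness of the first leg, and then separately show that the ``collapsing'' map $\hK{U(\cL_n)}\rtimes_{N_{n+1}}H\to\hK{U(\cL_n)}\rtimes_{N_n}H$ is an isomorphism (this is the compatibility statement built into the definition of the crossed products in \cite{EqDCap} for pairs $(\cL,N)$ with $N\subseteq H_{\cL}$). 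Neither step is automatic from the finite-crossed-product facts in \cite{Pass}. If you want to present this argument in detail rather than cite it, that two-step factorisation is what needs to be written out.
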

\begin{proof} This follows from \cite[6.2.9]{EqDCap}.
\end{proof}

\subsection{Localization} 

We now place ourselves in the setting of \cite[Theorem 7.4.8]{ArdWad2023b}. 
In particular, we keep all the hypothesis of the preceding subsection and suppose the following additional hypothesis. Let $\G_0$ be a connected, split semisimple affine algebraic group scheme over $o_K$ and let $\G=\G_0\otimes K$ be its generic fibre. 
We fix a closed smooth Borel $o_K$-subgroup scheme $\B_0$ of $\G_0$ and we set 
$\B=\B_0\otimes K$. Let $\X_0=\G_0/\B_0, \X=\G/\B$ and $\bX=\X^{\rm an}$.

\begin{prop}\label{prop_comp} The algebra $\wUg{P}$ acts on $\bX$ compatibly with $P$.
\end{prop}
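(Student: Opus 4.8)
The plan is to reduce the statement to a purely local and algebraic assertion about the rings $\hK{U(\cL)} \rtimes_N G_0$ acting on affinoid subdomains of $\bX$, and then to invoke the compatibility of the functor $\wUg{-}$ with the localization theorem of \cite[Theorem 7.4.8]{ArdWad2023b}. Concretely, recall that ``$\wUg{P}$ acts on $\bX$ compatibly with $P$'' means that for every admissible open $\bU \in \bX_w(\cT)$ stable under a compact open subgroup $H \subseteq P$, there is a natural $K$-algebra homomorphism $\wUg{H} \to \w\cD(\bU, H)$ intertwining the relevant $H$-actions, compatibly with restriction maps $\bV \subseteq \bU$ and with the crossed-product decompositions. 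First I would reduce to the case $P = H$ compact open: using Proposition \ref{decomp}(a) together with the identity $\wUg{P} = \wUg{H} \otimes_{K[H]} K[P]$ and the analogous crossed-product decomposition of $\w\cD(\bU, P)$ over $\w\cD(\bU, H)$, a compatible action of $\wUg{H}$ extends uniquely and functorially to one of $\wUg{P}$ by base change along $K[H] \to K[P]$, exactly as in the bimodule bookkeeping of that proposition.

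Next, for $H$ compact open in $P$ (hence in $G$, via $G \to \G(K)$), I would appeal directly to the hypotheses of \cite[Theorem 7.4.8]{ArdWad2023b}, which we have placed ourselves in: that setting already furnishes the action of $\wUg{G_0}$-type algebras on $\bX=\X^{\mathrm{an}}$ through the Beilinson--Bernstein picture on the flag variety, i.e. a morphism from $\wUg{H}$ into the sheaf of twisted differential operators, and hence into $\w\cD(\bU, H)$ for $\bU$ in a suitable basis $\cB$ of $H$-stable affinoids. The content to check is then that this morphism is (i) well-defined on the inverse limit $\wUg{H} = \invlim_{(\cL,N)} \hK{U(\cL)} \rtimes_N H$, which follows because the Lie-lattice filtration on $\fr{g}$ maps compatibly into the Lie-lattice filtration defining $\w\cD(\bU, H)$ as in \cite{DCapOne}; (ii) compatible with the $H$-equivariant structures $\{g^{\w\cD}\}$, which is immediate from functoriality of the analytification $\X \rightsquigarrow \bX$ and the fact that $G \to \G(K)$ is a group homomorphism; and (iii) compatible with restriction to affinoid subdomains $\bV \subseteq \bU$, which holds since both the source (a constant sheaf on $\bX_w(\cT)$) and the target restrict compatibly.

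The main obstacle I anticipate is point (i): matching the Fréchet--Stein presentation $\wUg{H} \simeq \invlim_{(\cL,N) \in \cJ(H)} \hK{U(\cL)} \rtimes_N H$ with a cofinal system of ``small'' pairs $(\bU, H)$ adapted to a good chain $H_n$ and a free $\cA$-Lie lattice $\cL_\bU$ of $\cT(\bU)$, so that the algebra map $\hK{U(\cL)} \to \hK{U(\cL_\bU)} \rtimes_{H_n} H$ is defined at each finite level and passes to the limit. This is precisely the kind of level-by-level compatibility established in \cite[\S 6.2]{EqDCap} and \cite[Theorem 7.4.8]{ArdWad2023b} for the flag variety, so I would organize the argument to quote those level-$n$ statements and then simply take $\invlim_n$. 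Finally, I would note that the resulting action is canonical: any two choices of $H$ give compatible maps by the uniqueness clauses in \cite[6.2.11]{EqDCap}, so the action glues over the basis $\cB$ to give the asserted compatible action of $\wUg{P}$ on all of $\bX$.
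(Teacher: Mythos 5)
Your proposal is correct in outline but far more elaborate than the paper's own proof, which consists of the single citation \cite[6.4.4]{EqDCap}: that reference is precisely where the compatibility of $\wUg{-}$ with the analytic flag variety is established (the maps $\hK{U(\cL)}\rtimes_N H \to \w\cD(\bU,H)$ at each Banach level coming from the algebraic $\G$-action on $\X$, passage to the inverse limit, equivariance, and compatibility with restriction), so your sketch is essentially a reconstruction of the cited result. Two pieces of your scaffolding should be corrected. First, the reduction from $P$ to a compact open subgroup $H$ via Proposition \ref{decomp}(a) and ``base change along $K[H]\to K[P]$'' is both unnecessary and misapplied: that proposition requires the Iwasawa-type hypothesis $G=G_0P$ and concerns double-coset/bimodule decompositions of $\wUg{G}$, whereas the notion of a compatible action of $\wUg{P}$ on $\bX$ only involves the infinitesimal $\fr{g}$-action together with the compact open subgroups of $P$ and the $P$-translation maps $g^{\w\cD}$ — all of which are already present in the $(\fr{g},G)$-structure on $\bX$ — so the statement for the closed subgroup $P$ requires no extension step of the kind you describe. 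Second, \cite[Theorem 7.4.8]{ArdWad2023b} is invoked as the source of the action, but in the paper it only fixes the ambient setting (split semisimple $\G_0$ over $o_K$, $\bX=(\G/\B)^{\rm an}$); the level-by-level matching of Lie lattices and the resulting compatible homomorphisms $\wUg{H}\to\w\cD(\bU,H)$ — the content you defer in your point (i) — is exactly \cite[6.4.4]{EqDCap}, which is what the paper quotes. With the citation redirected accordingly your argument is fine, but it buys nothing beyond the one-line proof in the paper.
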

\begin{proof} This follows from \cite[6.4.4]{EqDCap}.
\end{proof}
According to these results, we have the category $\cC_{\wUg{P}}$ of coadmissible $\wUg{P}$-modules and the localization functor \cite[3.6.8]{EqDCap}
$$\Loc^{\wUg{P}}_\bX: \cC_{\wUg{P}} \rightarrow \cC_{\bX/P}.$$ Let $\cC_{\wUg{P},0}$ be the full subcategory of $\cC_{\wUg{P}}$ consisting of modules $M$ satisfying $\fr{m}_0 M=0$ for the maximal ideal $\fr{m}_0=Z(\fr{g})\cap U(\fr{g})\fr{g}$ of the center $Z(\fr{g})$ of $U(\fr{g})$. 

\begin{thm}\label{thm-localizationP} The functor $\Loc^{\wUg{P}}_\bX$ induces an equivalence of categories $$\cC_{\wUg{P},0}\congs \cC_{\bX/P}.$$

A quasi-inverse is given by the global sections functor $H^0(\bX,-)$.
\end{thm}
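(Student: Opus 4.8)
The statement is a relative Beilinson--Bernstein localisation theorem for the parabolic $P$, and the natural approach is to reduce it to the known non-equivariant (resp. $G$-equivariant for the full flag variety) localisation theorems together with the double-coset decompositions of Proposition \ref{decomp}. Concretely, the ingredients already available are: the Fr\'echet--Stein structure on $\wUg{H}$ for compact open $H \subseteq P$; the fact that $\wUg{P}$ acts on $\bX$ compatibly with $P$ (Proposition \ref{prop_comp}); the localisation functor $\Loc^{\wUg{P}}_\bX$ and its adjoint-like companion $H^0(\bX,-)$; and -- crucially -- the underlying localisation equivalence for compact open subgroups $H$ of $P$ (this is \cite[Theorem 7.4.8]{ArdWad2023b}, which is the hypothesis under which we are working). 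So first I would record the equivalence $\cC_{\wUg{H},0} \congs \cC_{\bX/H}$ for a suitable compact open $H \le P$, and then bootstrap from $H$ up to $P$ using $P = H \cdot P$ trivially and the bimodule decompositions.

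\textbf{Key steps.} First, fix a compact open subgroup $H$ of $P$ and invoke the localisation equivalence $\Loc^{\wUg{H}}_\bX \colon \cC_{\wUg{H},0}\congs \cC_{\bX/H}$ with quasi-inverse $H^0(\bX,-)$; this is the base case provided by \cite[Theorem 7.4.8]{ArdWad2023b}. Second, observe that both sides of the desired equivalence are built from the $H$-level data by "extension of equivariance along $H \hookrightarrow P$": on the representation-theoretic side $\wUg{P} = \wUg{H}\otimes_{K[H]} K[P]$, so a coadmissible $\wUg{P}$-module is the same as a coadmissible $\wUg{H}$-module together with a compatible $P$-action extending the $H$-action; on the geometric side, by the very definition of $\cC_{\bX/P}$, an object is a $\cD_\bX$-module with compatible $P$-equivariant structure, which restricts to an object of $\cC_{\bX/H}$ with extra $P$-descent data. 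Third, check that $\Loc^{\wUg{P}}_\bX$ is compatible with restriction of equivariance, i.e. the square relating $\Loc^{\wUg{P}}_\bX$, $\Loc^{\wUg{H}}_\bX$ and the two forgetful functors $\Res^P_H$ commutes up to natural isomorphism; this follows from the construction of the localisation functor in \cite[3.6.8]{EqDCap} together with $\wUg{P} = \wUg{H}\otimes_{K[H]}K[P]$, since localisation is defined sheaf-theoretically by the same base-change formula. Fourth, deduce that $\Loc^{\wUg{P}}_\bX$ is fully faithful and essentially surjective: fully faithfulness because a morphism of $\wUg{P}$-modules is an $H$-equivariant morphism commuting with the $P$-action, and $\Loc^{\wUg{H}}_\bX$ is fully faithful on the underlying data and transports the $P$-action faithfully; essential surjectivity because any $\cM \in \cC_{\bX/P}$ restricts to some $\cM_0 \in \cC_{\bX/H}$ which is $\Loc^{\wUg{H}}_\bX$ of a module $M_0 = H^0(\bX,\cM_0) \in \cC_{\wUg{H},0}$, and the $P$-equivariant structure on $\cM$ equips $M_0 = H^0(\bX,\cM)$ with a compatible $P$-action, making it a $\wUg{P}$-module $M$ with $\Loc^{\wUg{P}}_\bX M \simeq \cM$. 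Fifth, identify the image: the condition $\fr{m}_0 M = 0$ cutting out $\cC_{\wUg{P},0}$ is imposed precisely because the flag variety $\bX = \X^{\rm an}$ carries $\cD_\bX$ rather than a twisted ring, so it matches the analogous condition at the $H$-level; conclude that the quasi-inverse is $H^0(\bX,-)$, which on objects agrees with the $H$-level quasi-inverse and transports the equivariance.

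\textbf{Main obstacle.} The routine bookkeeping is the compatibility of all functors with the Fr\'echet topologies and with restriction to affinoids $\bU \in \bX_w(\cT)$; this is tedious but follows the pattern already used repeatedly above (e.g. in the proofs of Propositions \ref{prop_iwasawa}, \ref{prop_resG1} and Theorem \ref{prop_SelfGDual}). The genuinely delicate point, and the one I expect to be the crux, is the identification of the essential image and the verification that $H^0(\bX,-)$ lands in $\cC_{\wUg{P},0}$ and is a genuine quasi-inverse at the level of $P$-equivariant structures: one must check that the $P$-action on global sections $H^0(\bX,\cM)$ is continuous and coadmissible as a $\wUg{P}$-module (not merely $\wUg{H}$-coadmissible with an abstract $P$-action), which requires knowing that $H^0(\bX,-)$ commutes with the relevant limits and that the central character condition $\fr{m}_0$ is automatically satisfied -- this last point uses that $\bX$ is the full flag variety and the infinitesimal action of $\fr{g}$ factors through $U(\fr{g})/\fr{m}_0 U(\fr{g})$ on any $\cD_\bX$-module, exactly as in the $H$-level statement of \cite[Theorem 7.4.8]{ArdWad2023b}.
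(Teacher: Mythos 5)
Your outline is workable, but it takes a genuinely different route from the paper, whose entire proof of this statement is the citation \cite[6.4.9]{EqDCap}: that localization theorem is proved there for an arbitrary $p$-adic Lie group equipped with a continuous homomorphism to $\G(K)$ acting compatibly on $\bX$, so it applies verbatim to the non-compact group $P$ and no reduction to compact open subgroups is needed at this point --- exactly as for Theorem \ref{thm-localizationg}, which is quoted from the same source. Your bootstrap from a compact open $H\leq P$ essentially re-derives, inside this paper, the passage from compact to general groups that is already carried out in \cite{EqDCap}; what it buys is a transparent explanation of why the statement reduces to the compact case and why the central character condition $\fr{m}_0M=0$ is the right one, but the steps you wave through as bookkeeping are precisely the content of the cited proof: that a coadmissible $\wUg{P}$-module is the same datum as a coadmissible $\wUg{H}$-module with a compatible semilinear $P$-action, that $\Res^P_H$ and the two localization functors intertwine these structures through the inverse limits over $\bU$-small subgroups, and above all that the $P$-action on $H^0(\bX,\cM)$ is continuous and makes it coadmissible over $\wUg{P}$ rather than merely over $\wUg{H}$ with an abstract group action. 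One correction of attribution: \cite[Theorem 7.4.8]{ArdWad2023b} is invoked in this subsection only to fix the geometric setting (split semisimple $\G_0$ over $o_K$, $\bX=(\G/\B)^{\rm an}$); the base-case equivalence for compact open subgroups that your argument needs is not taken from there but is itself contained in \cite[6.4.9]{EqDCap}, so if you insist on the bootstrap you should cite that result for the compact case and then supply the extension-of-equivariance verifications in detail, whereas citing it for the group $P$ directly finishes the proof in one line.
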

\begin{proof}
This follows from \cite[6.4.9]{EqDCap}.
\end{proof}

Similarly, given $s\in G$, there is the parabolic subgroup $^{s}P=sPs^{-1}$ and the category  $\cC_{\wUg{^{s}P}}$. In analogy to \cite[2.2.4]{EqDCapTwo} there is a twisting functor
$$\cC_{\wUg{P}} \rightarrow \cC_{\wUg{^{s}P}}, \quad M\mapsto [s]M$$ where $[s]M=\{[s]m: m\in M\}$ equals $M$ as abelian group and receives an $\wUg{^{s}P}$-action via the ring isomorphism ${\w s}^{-1}: \wUg{^{s}P}\congs  \wUg{P}$ which is induced by the conjugation automorphism $g\mapsto s^{-1}g s$ of $G$.

\begin{lem}\label{twist}
Let $s\in G$ and $M\in \cC_{\wUg{P}}$ and $H$ a compact open subgroup of $G$. There is a canonical $\wUg{H}$-linear isomorphism
\[ \wUg{H}\wotimes{ \wUg{H\cap ^{s}P}} [s]M \congs \wUg{H}s\wUg{P}\wotimes{\wUg{P}} M.\]
\end{lem}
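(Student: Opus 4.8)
The plan is to construct the asserted isomorphism directly from the double coset decomposition of $\wUg{G}$ proved in Proposition \ref{decomp}, combined with the defining property of the twisting functor $[s]$. I would begin by unravelling the right-hand side. By Proposition \ref{decomp}(b), $\wUg{G}$ decomposes as a direct sum of the $(\wUg{H},\wUg{P})$-bimodules $\wUg{H}Z\wUg{P}$ over the double cosets, and for the particular double coset $Z=HsP$ containing $s$ the summand $\wUg{H}s\wUg{P}$ is a $(\wUg{H},\wUg{P})$-bimodule. Tensoring with $M$ over $\wUg{P}$ gives the left $\wUg{H}$-module on the right of the claimed isomorphism; concretely its elements are finite sums $\sum_i u_i s v_i \otimes m_i$ with $u_i \in \wUg{H}$, $v_i \in \wUg{P}$, $m_i \in M$.

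Next I would write down the obvious candidate map. Because $\wUg{H}s\wUg{P}$ is a free left $\wUg{H}$-module — indeed it is the image of $\wUg{H} \otimes_{K[H]} K[H]sK[P]$, and $s$ can be used to transport the $\wUg{H\cap {}^sP}$-module structure — one has a natural isomorphism of left $\wUg{H}$-modules $\wUg{H}s\wUg{P} \cong \wUg{H} \otimes_{\wUg{H\cap {}^sP}} [s]\wUg{P}$, where the tensor product over $\wUg{H\cap {}^sP}$ makes sense because the ring isomorphism $\w s^{-1}$ identifies $\wUg{H\cap {}^sP}$ with a subalgebra $\wUg{{}^{s^{-1}}H\cap P}$ of $\wUg{P}$ acting on the left of $M$ on the right side and via $s$-conjugation on the left side. (Here I should be careful about which side the intersection group sits on: the statement writes $H\cap {}^sP$, and since $\w s^{-1}$ carries ${}^sP$ to $P$ and $H$ to ${}^{s^{-1}}H$, the subalgebra $\wUg{H\cap {}^sP}$ of $\wUg{H}$ maps isomorphically under $\w s^{-1}$ onto the subalgebra $\wUg{{}^{s^{-1}}H\cap P}$ of $\wUg{P}$, which is exactly the subalgebra acting on $M$ needed to form both tensor products.) Granting this, I would tensor the isomorphism $\wUg{H}s\wUg{P} \cong \wUg{H} \otimes_{\wUg{H\cap {}^sP}} [s]\wUg{P}$ on the right with $M$ over $\wUg{P}$ and use associativity of the (completed) tensor product together with $[s]\wUg{P} \otimes_{\wUg{P}} M \cong [s]M$ to obtain $\wUg{H}s\wUg{P}\otimes_{\wUg{P}} M \cong \wUg{H} \otimes_{\wUg{H\cap {}^sP}} [s]M$, as desired. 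The explicit formula for the map should be $u \otimes [s]m \mapsto us \otimes m$, with inverse $usv \otimes m \mapsto u \otimes [s](vm)$; checking these are mutually inverse and $\wUg{H}$-linear is a direct verification on elements.

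The main obstacle I anticipate is not the algebraic bookkeeping but making the completed tensor products behave correctly: $\wUg{H}$, $\wUg{P}$ and the relevant intersection algebras are Fréchet--Stein, not noetherian Banach algebras, so "$\otimes$" here is the completed tensor product $\w\otimes$ and associativity/compatibility statements must be justified at the level of the defining Fréchet--Stein presentations rather than naively. Concretely, I would fix a cofinal system of pairs $(\cL,N)$ as in the proof of Proposition \ref{decomp} with $N \subseteq H_\cL$, establish the corresponding isomorphism at each finite level $\hK{U(\cL)}\rtimes_N(\cdot)$ — where all the rings are noetherian Banach algebras and the ordinary tensor product suffices, the level-$n$ statement being an elementary consequence of the decomposition $K[G_0] = \oplus_s K[H]sK[P_0]$ and the transport-of-structure via $s$ — check compatibility with the transition maps as $(\cL,N)$ varies, and then pass to the inverse limit, exactly mirroring the endgame of the proof of Proposition \ref{decomp}. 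The twisting functor $[s]$ is compatible with each level by construction of the ring isomorphism $\w s^{-1}$, so this poses no additional difficulty. Once the finite-level isomorphisms and their compatibility are in place, taking $\invlim$ and invoking coadmissibility of $M$ (so that $M = \invlim M_n$ with $M_n$ finitely generated over the level-$n$ algebra) yields the claimed canonical $\wUg{H}$-linear isomorphism.
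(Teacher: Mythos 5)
Your proposal is correct and follows essentially the same route as the paper: the explicit rule $a\,\w\otimes\,[s]m\mapsto as\,\w\otimes\, m$ with inverse $asb\,\w\otimes\, m\mapsto a\,\w\otimes\,[s]bm$ is exactly the isomorphism the paper writes down, citing the proof of \cite[2.2.10]{EqDCapTwo} for the completed-tensor bookkeeping. Your extra scaffolding (the bimodule identification of the summand $\wUg{H}s\wUg{P}$ from Proposition \ref{decomp} and the level-wise Fr\'echet--Stein verification using coadmissibility of $M$) just spells out the details that the paper delegates to that citation.
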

\begin{proof}
As in the proof of \cite[2.2.10]{EqDCapTwo}. The rule $ a \w\otimes [s]m\mapsto as\w\otimes m$ defines a $\wUg{H}$-linear isomorphism
\[ \varphi_s :  \wUg{H}\wotimes{ \wUg{H\cap ^{s}P}} [s]M \congs \wUg{H}s\wUg{P}\wotimes{\wUg{P}} M\]
whose inverse is given by $asb\w\otimes m \mapsto a\w\otimes[s]b m$.
\end{proof}

\begin{prop}\label{compatwist}
Let $s\in G$ and $M\in \cC_{\wUg{P}}$. The conjugation automorphism $s^{-1}$ induces a canonical isomorphism in $\cC_{\bX/^{s}P}$
\[ \Loc_{\bX}^{\wUg{^sP}}([s]M)\congs [s]s_* \Loc_{\bX}^{\wUg{P}}(M).\]
\end{prop}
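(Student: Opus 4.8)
The plan is to reduce the assertion to the compatibility of the localization functor with the twisting functor on double-cosets that was already packaged in Lemma \ref{twist}, together with the behaviour of $\Loc$ on the sheaf level under the conjugation automorphism. First I would record what has to be checked: both sides are objects of $\cC_{\bX/{}^sP}$, so by the construction of $\cC_{\bX/P}$ via local sections over $\bU\in\bX_w(\cT)$ it is enough to produce, for each such $\bU$ and each $\bU$-small compact open subgroup $H$, a $\w\cD(\bU,H)$-linear isomorphism between the $H$-level local sections of $\Loc_{\bX}^{\wUg{{}^sP}}([s]M)$ and of $[s]s_*\Loc_{\bX}^{\wUg{P}}(M)$, natural in $\bU$ and compatible with restriction to affinoid subdomains $\bV\subset\bU$ and with variation in $H$, and to check it respects the equivariant structure. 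The equivariance check is where the conjugation automorphism $s^{-1}$ enters: the $H$-equivariant structure on $[s]s_*\cM$ (for $\cM=\Loc_{\bX}^{\wUg{P}}(M)$) is defined so that the action of $h\in H$ goes through the action of $s^{-1}hs\in{}^{s^{-1}}H$ on $\cM$, exactly mirroring how the $\wUg{{}^sP}$-action on $[s]M$ is defined via ${\w s}^{-1}\colon\wUg{{}^sP}\congs\wUg{P}$.

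Concretely, I would unwind the definition of $\Loc_{\bX}^{\wUg{P}}$ from \cite[3.6.8]{EqDCap}: its local sections over $(\bU,H)$ are obtained by base-changing $M$ along the natural map $\wUg{P}\to$ (the relevant completed crossed-product ring over $\bU$), i.e. essentially $\w\cD(\bU,H')\,\w\otimes_{\wUg{H'}}M$ for an appropriate compact open $H'\subseteq P$, assembled over double cosets in $H'\backslash\!\!\;$ (something)$\;/P$. Applying this description to ${}^sP$ and to $[s]M$, and using that the conjugation isomorphism $g\mapsto s^{-1}gs$ identifies the indexing double-coset sets for ${}^sP$ with those for $P$ (shifted by $s$), one lands on precisely the map $\varphi_s$ of Lemma \ref{twist} at each stage: $a\w\otimes[s]m\mapsto as\w\otimes m$. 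Patching these $\varphi_s$ together over the double cosets, over affinoid subdomains, and over the cofinal system of $(\bU,H)$ gives the required sheaf isomorphism. The pushforward $s_*$ is accounted for because translating $\bU$ by $s^{-1}$ (the source of the $s^{-1}\bU$ appearing in the induction/localization formulae) is exactly what converts sections of $\Loc_{\bX}^{\wUg{P}}(M)$ over $s^{-1}\bU$ into sections of $s_*\Loc_{\bX}^{\wUg{P}}(M)$ over $\bU$.

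The main obstacle I expect is purely bookkeeping rather than conceptual: matching up the two indexing conventions (double cosets for $H\cap{}^sP$ inside $H$ versus $H$-orbits of cosets of $P$, cf. the two sides of Lemma \ref{twist}) and verifying that $\varphi_s$ is simultaneously compatible with (i) the transition maps in the Fréchet–Stein presentations defining $\wUg{{}^sP}$ and $\wUg{P}$, (ii) the restriction maps $\tau^\bU_\bV$ of the sheaves, and (iii) the $H$-equivariant structures on both sides. Point (iii) is the substantive one: one must check that the diagram expressing $H$-equivariance of $[s]s_*\cM$ — which by definition routes $h\in H$ through $s^{-1}hs$ acting on $\cM$ over $s^{-1}\bU$ — corresponds under $\varphi_s$ exactly to the $H$-equivariant structure inherited by $\Loc_{\bX}^{\wUg{{}^sP}}([s]M)$ from the ${}^sP$-equivariance of $\Loc$. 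This follows by direct inspection once one writes ${\w s}^{-1}$ explicitly on generators, using that ${\w s}^{-1}$ is induced by the very automorphism $g\mapsto s^{-1}gs$, so it is a matter of chasing the identity $\varphi_s(h\cdot x) = (s^{-1}hs)\cdot\varphi_s(x)$ through the definitions; the argument is parallel to \cite[2.2.10]{EqDCapTwo} and to the proof of Lemma \ref{twist}, so no new ideas are needed beyond careful diagram-chasing.
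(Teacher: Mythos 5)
Your general plan — work locally over $\bU\in\bX_w(\cT)$, unwind the presheaf description of $\Loc$, and transport everything through the conjugation automorphism $s^{-1}$ — is the right starting point, and it matches what the paper does. But your concrete execution contains a genuine error. You describe the local sections of $\Loc^{\wUg{P}}_\bX(M)$ as ``assembled over double cosets in $H'\backslash(\text{something})/P$,'' and then claim that the resulting isomorphism is the map $\varphi_s$ of Lemma~\ref{twist}. Neither is correct: the presheaf underlying the localization functor has \emph{no} double-coset decomposition. Its sections are simply
\[\cP^{\wUg{P}}_\bX(M)(\bU)=\invlim_{H'} \w\cD(\bU,H') \underset{\wUg{H'}}{\w\otimes} M,\]
an inverse limit over $\bU$-small subgroups $H'\subseteq P$, with a single $\w\otimes$ and no direct sum. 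Double-coset sums enter only for $\ind_P^G$ (via the Mackey decomposition) and for $\Loc^{\wUg{G}}_\bX$ of an induced module, which is where Lemma~\ref{twist} actually gets used (in the proof of Proposition~\ref{prop-compatible}); you appear to have conflated the two. As a consequence, the map $\varphi_s\colon a\w\otimes[s]m\mapsto as\w\otimes m$, which is an isomorphism of $\wUg{H}$-modules with $\wUg{H}$-module targets $\wUg{H}\w\otimes_{\wUg{H\cap{}^sP}}[s]M$ and $\wUg{H}s\wUg{P}\w\otimes_{\wUg{P}}M$, lives in the wrong category and over the wrong base ring to produce the required isomorphism of $\w\cD(\bU,H\cap{}^sP)$-modules.

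The actual argument is shorter and does not invoke Lemma~\ref{twist} at all. Over $\bU$ one has
\[\big(\Loc_{\bX}^{\wUg{^{s}P}} [s]M\big)(\bU) = \w\cD(\bU,H\cap {}^sP) \wotimes{\wUg{H\cap {}^sP}} [s]M,\qquad \big([s]s_* \Loc^{\wUg{P}}(M)\big)(\bU)= \w\cD(s^{-1}\bU,H^s\cap P) \wotimes{\wUg{H^s\cap P}} M,\]
where the latter carries its $\w\cD(\bU,H\cap{}^sP)$-module structure through the \emph{ring} isomorphism ${\w s}^{-1}\colon \w\cD(\bU,H\cap {}^sP)\congs \w\cD(s^{-1}\bU,H^s\cap P)$ induced by conjugation and by $s^{-1}\colon\bU\to s^{-1}\bU$. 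Since ${\w s}^{-1}$ restricts to $\wUg{H\cap{}^sP}\congs\wUg{H^s\cap P}$, it induces the $(\w\cD(\bU,H\cap{}^sP),\wUg{H^s\cap P})$-bimodule isomorphism $\w\cD(\bU,H\cap ^{s}P) \wotimes{\wUg{H\cap ^{s}P},s^{-1}} \wUg{H^s\cap P}\congs \w\cD(s^{-1}\bU,H^s\cap P)$, and hence the desired $\w\cD(\bU,H\cap{}^sP)$-linear isomorphism between local sections; compatibility with the restriction maps then gives the sheaf isomorphism. Your final paragraph's equivariance check is well-aimed but it attaches to the wrong map. The lesson is that the twisting is implemented by conjugating the coefficient rings, not by a right multiplication by $s$ inside a double-coset summand.
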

\begin{proof} Let $\bU\in \bX_w(\cT)$ and let $H$ be a $\bU$-small subgroup of $G$.
Then \[\big(\Loc_{\bX}^{\wUg{^{s}P}} [s]M\big)(\bU) = \w\cD(\bU,H\cap {}^sP) \wotimes{\wUg{H\cap ^{s}P}} [s]M\]
and \[\big( [s]s_* \Loc^{\wUg{P}}(M)\big)(\bU)= \w\cD(s^{-1}\bU,H^s\cap P) \wotimes{\wUg{H^s\cap P}} M.\]
The lattes receives its $\w\cD(\bU,H\cap ^{s}P)$-module structure from the ring isomorphism
\[{\w s}^{-1}: \w\cD(\bU,H\cap ^{s}P)\congs \w\cD(s^{-1}\bU,H^s\cap P)\]
induced from $s^{-1}.$ Since the latter induces the obvious isomorphism \[\w\cD(\bU,H\cap ^{s}P) \wotimes{\wUg{H\cap ^{s}P},s^{-1}} \wUg{H^s\cap P}\congs \w\cD(s^{-1}\bU,H^s\cap P)\]
as $(\w\cD(\bU,H\cap^{s} P), \wUg{H^s\cap P})$-bimodules, it induces a canonical $\w\cD(\bU,H\cap ^{s}P)$-linear isomorphism
\[ s^{-1}: \big(\Loc_{\bX}^{\wUg{^{s}P}} [s]M\big)(\bU)\congs \big( [s]s_* \Loc^{\wUg{P}}(M)\big)(\bU).\]
This is compatible with restriction maps and establishes then the required isomorphism $\Loc^{\wUg{^{s}P}}([s]M)\cong [s]s_* \Loc^{\wUg{P}}(M)$.
\end{proof}

We finally establish a simple compatibility between the functors $\Loc^{\wUg{P}}_\bX$ and $\Loc^{\wUg{G}}_\bX$. Let $M\in\cC_{\wUg{G}}$ and $N\in \cC_{\wUg{P}}$ and let $f: N\rightarrow M$ be a continuous $\wUg{P}$-linear map.
Recall that $\Loc^{\wUg{G}}_\bX(M)$ is the unique sheaf on $\bX$ whose restriction to $\bX_w(\cT)$ equals the presheaf $\cP^{\wUg{G}}_\bX(M)$ \cite[3.5.12]{EqDCap}. Here,
\[\cP^{\wUg{G}}_\bX(M)(\bU) = \invlim \w\cD(\bU,H) \underset{\wUg{H}}{\w\otimes} M\]
for $\bU\in  \bX_w(\cT)$ where, in the inverse limit, $H$ runs over all the $\bU$-small subgroups of $G$
\cite[3.5.3]{EqDCap}. In this case, $H\cap P$ runs over a cofinal subset of all the $\bU$-small subgroups of $P$ and we similarly have
\[\cP^{\wUg{P}}_\bX(N)(\bU)=\invlim \w\cD(\bU,H\cap P) \underset{\wUg{H\cap P}}{\w\otimes} N .\] The natural map
\[ N\longrightarrow \w\cD(\bU,H) \underset{\wUg{H}}{\w\otimes} M, x \to 1\w\otimes f(x)\]
is $\wUg{H\cap P}$-linear and extends to a $\w\cD(\bU,H\cap P)$-linear map
\[\w\cD(\bU,H\cap P) \underset{\wUg{H\cap P}}{\w\otimes} N\longrightarrow  \w\cD(\bU,H) \underset{\wUg{H}}{\w\otimes} M.\] This defines a morphism of sheaves \[ \cP^{\wUg{P}}_\bX(N)\longrightarrow \cP^{\wUg{G}}_\bX(M) \]
on $\bX_w(\cT)$ which extends then to a morphism
\begin{equation}\label{Loc(f)}
\Loc(f): \Loc^{\wUg{P}}_\bX(N)\longrightarrow \Loc^{\wUg{G}}_\bX(M)
\end{equation}
in $\Frech(P - \cD_\bX)$.
\section{Irreducibility of certain induced equivariant $\cD$-modules}
Let $K$ be a non-Archimedean complete field of mixed characteristic $(0,p)$.

\subsection{Some general results from rigid analytic geometry}
Our first Lemma is presumably well-known, but we could not find a reference.
\begin{lem} \label{kerval} Let $\bU$ be an affinoid variety over $K$ and let $\bY = V(I)$ be a Zariski closed subset of $\bU$, cut out by an ideal $I$ of $\cO(U)$. Then $p \in \sP(\bU)$ lies in the closure $\overline{\bY}$ of $\bY$ in $\sP(\bU)$ if and only if $\ker(p)$ contains $I$.
\end{lem}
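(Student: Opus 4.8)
The plan is to work directly with the description of the Huber space $\sP(\bU)$ in terms of continuous valuations. Recall that a point $p\in\sP(\bU)$ is given by an equivalence class of continuous multiplicative seminorms on $\cO(\bU)$; its kernel $\ker(p)=\{f\in\cO(\bU): |f|_p=0\}$ is a prime ideal, and the topology on $\sP(\bU)$ has a basis of opens of the form $\widetilde{\bV}$ for $\bV$ a rational subdomain, where $\widetilde{\bV}=\{p: \bV\in p\}$. A convenient reformulation is that the closure $\overline{\bY}$ of $\bY$ in $\sP(\bU)$ is $\sP(\bU)\setminus\widetilde{\bU\setminus\bY}$ by \cite[2.1.4]{EqDCapTwo}, so the statement is equivalent to: $p\notin\widetilde{\bU\setminus\bY}$ if and only if $I\subseteq\ker(p)$. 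I would set $\bW=\bU\setminus\bY$, which is the Zariski open complement of the vanishing locus of $I$, hence an admissible open covered by the rational subdomains $\bU(f^{-1})=\{|f|\geq 1\}$ (after suitable rescaling) as $f$ ranges over a finite generating set of $I$.

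First I would prove the easy implication. Suppose $I\not\subseteq\ker(p)$, so there is $f\in I$ with $|f|_p\neq 0$. Then for a sufficiently small rational subdomain $\bV$ of the form $\bV=\bU(\tfrac{c}{f})=\{|f|\geq |c|\}$ for an appropriate nonzero $c\in K$ with $|c|<|f|_p$, we have $p\in\widetilde{\bV}$ and $\bV\subseteq\bW=\bU\setminus\bY$ (since $f$ vanishes on $\bY$, any point of $\bV$ lies off $\bY$). Hence $p\in\widetilde{\bW}$, i.e. $p\notin\overline{\bY}$. Equivalently, its contrapositive: $p\in\overline{\bY}\implies I\subseteq\ker(p)$.

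For the converse I would argue as follows. Suppose $I\subseteq\ker(p)$; I must show $p\in\overline{\bY}$, i.e. every basic open $\widetilde{\bV}$ containing $p$ meets $\widetilde{\bY}$, equivalently $\bV\cap\bY\neq\emptyset$ for every rational subdomain $\bV$ with $p\in\widetilde{\bV}$. Because $I\subseteq\ker(p)$, the seminorm $|\cdot|_p$ factors through a continuous multiplicative seminorm on the affinoid quotient $\cO(\bU)/I=\cO(\bY)$ (using that $\bY=V(I)$ carries an affinoid structure, e.g. its reduced structure, cf. \cite[Prop.~9.5.3.4]{BGR}), so $p$ is in the image of the closed immersion $\sP(\bY)\hookrightarrow\sP(\bU)$. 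Say $p$ is the image of $q\in\sP(\bY)$. Now if $\bV$ is a rational subdomain of $\bU$ with $p\in\widetilde{\bV}$, then the defining inequalities of $\bV$ are satisfied at $q$, so $q\in\widetilde{\bV\cap\bY}$ in $\sP(\bY)$; in particular $\bV\cap\bY$ is a nonempty rational subdomain of $\bY$ (nonempty because it has a point, namely the Berkovich-type point underlying $q$, or more elementarily because a nonempty admissible open of an affinoid is nonempty as a rigid space and rigid points are dense in $\sP$). Hence $p\in\overline{\bY}$.

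The main obstacle is purely a matter of bookkeeping: one must be careful that "$\bV\cap\bY\neq\emptyset$ as a subset of $\sP(\bU)$" really does force $\bV\cap\bY\neq\emptyset$ as a rigid subspace, and conversely; this is exactly the content of the equivalence of categories $\cM\mapsto\tilde\cM$ recalled in Subsection \ref{Support} together with the density of classical rigid points in the Huber space, so no genuine difficulty arises. An alternative, slightly slicker route avoiding the quotient affinoid: observe directly that $\overline{\bY}=\bigcap_{f\in I}\{p:|f|_p=0\}$, where $\supseteq$ is the easy implication above applied to each $f$, and $\subseteq$ follows because $\{p:|f|_p=0\}$ is closed and contains $\bY$; then $\bigcap_{f\in I}\{p:|f|_p=0\}=\{p:I\subseteq\ker(p)\}$ is immediate. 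I would likely present this second argument as the clean proof, since it reduces everything to the single observation that $p\mapsto|f|_p$ is continuous and vanishes identically on $\bY$ for $f\in I$.
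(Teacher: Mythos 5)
Your main argument (factor the point through the quotient and use density of classical points) is correct in outline and genuinely different from the paper's proof. The paper works entirely inside the prime-filter description of $\sP(\bU)$: for the direction $p\in\overline{\bY}\Rightarrow I\subseteq\ker(p)$ it uses the admissible coverings $\{\bY_n,\bZ_n\}$ of $\bU$ and primality of the filter to force $\|f_i\|_p\leq|\pi^n|$ for all $n$, and for the converse it argues by contradiction using Kiehl's result that the Laurent domains $\bU((f_j/\pi^n)^{-1})$ admissibly cover $\bU\setminus\bY$ together with the prime-filter property (p4'), so that some $f_j$ would become a unit in $\cO_p$. You instead prove the hard direction by observing that $I\subseteq\ker(p)$ lets $p$ be viewed as a point of $\sP(\bY)$ and that every rational neighbourhood of such a point meets the classical points of $\bY$; this is a clean valuation-theoretic route, at the cost of invoking the closed immersion $\sP(\bY)\hookrightarrow\sP(\bU)$ and the fact that rational subdomains form a basis of $\sP(\bU)$ (which, in the filter picture, is again property (p4'), so the two proofs are not far apart in what they ultimately rest on). Your ``easy'' direction is essentially the contrapositive of the paper's first paragraph, done one generator at a time.

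However, the ``slicker'' second route that you say you would actually present has a genuine gap. In the proposed identity $\overline{\bY}=\bigcap_{f\in I}\{p:|f|_p=0\}$, both of your justifications establish only the inclusion $\overline{\bY}\subseteq\bigcap_{f\in I}\{p:|f|_p=0\}$: the observation that each set $\{p:|f|_p=0\}$ is closed and contains $\bY$ gives exactly that inclusion, and the ``easy implication'' is its contrapositive, so it gives the same inclusion again. The reverse inclusion --- that any $p$ with $I\subseteq\ker(p)$ is a limit of classical points of $\bY$ --- is precisely the nontrivial direction of the lemma and is not a formal consequence of continuity of $p\mapsto|f|_p$; it is exactly where the paper needs Kiehl's admissibility statement and (p4'), and where your first route needs the factorization through $\sP(\bY)$ and the density of classical points there. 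So the short version is circular as written; keep your first argument (or the paper's filter argument) for that direction.
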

\begin{proof} We recall from \cite[Theorem 4]{SchVdPut} that $p$ corresponds uniquely to a valuation $\val(p) = (\fp, V)$ on $A := \cO(U)$. We write here $\ker(p) =: \fp$; it is a prime ideal of $A$, and $V$ is a certain valuation ring in the field of fractions $k_p$ of $A/\fp$. Using \cite[Lemma 2.1.11(c)]{EqDCapTwo}, we see that we have to show that $\bU \backslash \bY \notin p$ if and only if $\ker(p) \supseteq I$.

Suppose that $\bU \backslash \bY \notin p$. By definition of $\ker(p) = \fp$, we have to show that $I$ maps to zero under the restriction map $A \to \cO(\bV)$ whenever $\bV \in p$. Choose a finite set of generators $f_1,\ldots,f_r$ such that $I = Af_1 + \cdots + Af_r$ and define, for each $n \geq 0$,
\[ \bY_n := \bigcap\limits_{j=1}^r \bU(f_j/\pi^n) \qmb{and} \bZ_n := \bigcup\limits_{j=1}^r \bU((f_j/\pi^n)^{-1}).\]
Then for each $n \geq 0$, $\{\bY_n, \bZ_n\}$ is an admissible covering of $\bU$ by special subsets. Since $p$ is a filter on the admissible open subsets of $\bU$, we see that $\bY_n\in p$ or $ \bZ_n\in p$. However if $\bZ_n\in p$ then $\bU \backslash \bY\in p$ because $\bZ_n \subseteq \bU \backslash \bY$ for any $n \geq 0$. Since we're assuming that $\bU \backslash \bY \notin p$, we see that $\bY_n \in p$ for all $n \geq 0$. Now fix $i=1,\cdots, r$ and consider the norm $||f_i||_p$ of $f_i$ in the local ring $\cO_p$, \cite[p.6]{SchVdPut}. By definition, we have $||f_i||_p = \inf\limits_{\bV \in p} ||f_i||_{\bV}$, where the infimum runs over all affinoid subdomains $\bV$ of $\bU$ contained in $p$. Since $\bY_n \in p$ for all $n \geq 0$, we see that $||f_i||_p \leq ||f_i||_{\bY_n} = |\pi^n| \qmb{for all} n \geq 0$. This shows that $||f_i||_p = 0$ for all $i=1,\cdots, r$, and therefore $||f||_p = 0$ for all $f \in I$. Since $\ker(p) = \{f \in A : ||f||_p = 0\}$, we conclude that $I \subseteq \ker p$.

Conversely, suppose that $I \subseteq \ker p$. Then as we saw above, $||f||_p = 0$ for all $f \in I$. Suppose for a contradiction that $\bU \backslash \bY \in p$. By \cite[Folgerung 1.3]{Kiehl_dR}, the covering $\{\bU((f_j/\pi^n)^{-1}) : n \geq 0, j=1,\cdots, r\}$ of $\bU \backslash \bY$ is admissible. Hence, using \cite[page 4, (p4')]{SchVdPut}, we see that $p$ must contain $\bU((f_j/\pi^n)^{-1})$ for some $n \geq 0$ and some $j=1,\cdots, r$. But then $f_j$ maps to a unit in $\cO(\bU((f_j/\pi^n)^{-1}))$. Since the map $A \to \cO_p$ factors through this algebra, we see that $f_j$ maps to a unit in $\cO_p$. But then $||f_j||_p$ cannot be zero, which is the required contradiction.
\end{proof}

\begin{lem} \label{SHY} Let $H$ be a compact $p$-adic Lie group acting continuously on the affinoid variety $\bU$, and let $\bS$ and $\bT$ be two Zariski closed subsets of $\bU$ such that $\bS \subseteq H\bT$. Then for every irreducible component $\bS'$ of $\bS$ there exists $h \in H$ such that $\bS' \subseteq h \bT$, and hence $\dim \bS \leq \dim \bT$.
\end{lem}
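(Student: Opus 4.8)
The plan is to reduce at once to the first assertion: every irreducible component $\bS'$ of $\bS$ satisfies $\bS' \subseteq h\bT$ for some $h \in H$. Granting this, each $h \in H$ acts on $\bU$ by a rigid-analytic automorphism, so $\dim(h\bT) = \dim\bT$, and $\dim\bS = \max_{\bS'}\dim\bS' \leq \dim\bT$. So fix an irreducible component $\bS'$; it is Zariski closed in $\bU$ and, with its reduced structure, an irreducible reduced affinoid (it corresponds to a minimal, hence prime, ideal over $I(\bS)$). I must produce $h \in H$ with $\bS' \subseteq h\bT$.

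The argument will take place in the Huber space $\sP(\bU)$, which is a spectral space, using the dictionary between Zariski closed subsets of $\bU$ and their closures in $\sP(\bU)$. For Zariski closed $\bY \subseteq \bU$ write $\overline{\bY}$ for its closure in $\sP(\bU)$. By Lemma~\ref{kerval} one has $\overline{\bY} = \{p \in \sP(\bU) : I(\bY) \subseteq \ker p\}$; this is a closed subset of $\sP(\bU)$ (the complement of the open set $\widetilde{\bU \setminus \bY}$, cf. \cite[2.1.4]{EqDCapTwo}), the assignment $\bY \mapsto \overline{\bY}$ is inclusion-preserving and reflects inclusions (intersecting with the dense set of classical points, where $\mathfrak{m}_x \in \overline{\bY}$ if and only if $x \in \bY$, recovers $\bY_1 \subseteq \bY_2$), and each $h \in H$ induces a homeomorphism of $\sP(\bU)$ taking $\overline{\bT}$ to $\overline{h\bT}$.

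The heart of the proof is the claim that $E := \bigcup_{h \in H} h\overline{\bT}$ is closed in $\sP(\bU)$. I would prove this by a tube-lemma argument: the set $W := \{(h,p) \in H \times \sP(\bU) : h^{-1}p \in \overline{\bT}\}$ is closed, being the preimage of the closed set $\overline{\bT}$ under the continuous map $(h,p) \mapsto h^{-1}p$ (the composite of inversion on $H$ with the continuous action of $H$ on $\sP(\bU)$); and since $H$ is compact the projection $H \times \sP(\bU) \to \sP(\bU)$ is a closed map, so $E = \pi_2(W)$ is closed. This is the step that uses compactness of $H$ in an essential way, and I expect it to be the main obstacle — in particular it rests on the fact that the given action of $H$ on $\bU$ induces a continuous action on $\sP(\bU)$.

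To conclude: $\overline{\bS'} = \sP(\bS')$ for the irreducible reduced affinoid $\bS'$, hence is irreducible; being a closed subspace of the spectral space $\sP(\bU)$ it is spectral, in particular sober, so it has a generic point $\eta$ with $\overline{\{\eta\}} = \overline{\bS'}$. From $\bS' \subseteq \bS \subseteq H\bT$ one gets, on classical points, $\bS' \subseteq E$, hence $\overline{\bS'} \subseteq \overline{E} = E$, so $\eta \in h\overline{\bT}$ for some $h \in H$; since $h\overline{\bT}$ is closed it then contains $\overline{\{\eta\}} = \overline{\bS'}$, i.e. $\overline{\bS'} \subseteq \overline{h\bT}$, and the dictionary yields $\bS' \subseteq h\bT$, as desired. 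Apart from the closedness of $E$, the only ingredients are this spectral-space/rigid-analytic dictionary (developed in the excerpt, notably Lemma~\ref{kerval} and Lemma~\ref{lem-zariskiclosedalg}), soberness of spectral spaces, and elementary point-set topology.
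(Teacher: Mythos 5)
Your overall skeleton (pass to the Huber space $\sP(\bU)$, use compactness of $H$ to see that $H\overline{\bT}$ is closed, then pin down a single translate containing $\bS'$) is the same as the paper's, but your concluding step contains a genuine error. You claim that $\overline{\bS'}=\sP(\bS')$ is an irreducible topological space and hence has a generic point $\eta$ with $\overline{\{\eta\}}=\overline{\bS'}$. This is false: the topology on $\sP(\bS')$ has as a basis \emph{all} sets $\tilde{\bV}$ for $\bV$ admissible open in $\bS'$, and a prime filter containing two admissible opens contains their intersection, so two disjoint nonempty affinoid subdomains of $\bS'$ (which exist as soon as $\dim\bS'>0$, e.g.\ two disjoint subdiscs of a curve) give disjoint nonempty open subsets of $\sP(\bS')$. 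So $\sP(\bS')$ is essentially never irreducible, there is no such $\eta$ (for instance, in the unit disc the Gauss point does not specialize to any classical point), and your passage from $\overline{\bS'}\subseteq\bigcup_{h}h\overline{\bT}$ to $\overline{\bS'}\subseteq h\overline{\bT}$ for a single $h$ has no justification — which is precisely the nontrivial content of the lemma, since a priori $\bS'$ could be covered by infinitely many translates without lying in one.

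The paper's proof repairs exactly this point by working with kernels rather than topological genericity: writing $\fp$ for the prime ideal cutting out the irreducible $\bS'$, one chooses \emph{any} valuation $(\fp,V)$ on $\operatorname{Frac}(\cO(\bU)/\fp)$ and lets $p\in\sP(\bU)$ be the corresponding prime filter from \cite[Theorem 4]{SchVdPut}, so that $\ker(p)=\fp$ by construction. Lemma \ref{kerval} then gives $p\in\overline{\bS'}$, the compactness input ($\overline{H\bT}=H\overline{\bT}$, which the paper cites from \cite[Corollary 2.1.16, Lemma 2.1.11(c)]{EqDCapTwo} and which plays the role of your tube-lemma claim — note your sketch still presupposes joint continuity of $H\times\sP(\bU)\to\sP(\bU)$, which you do not prove) yields $p\in\overline{h\bT}$ for some $h$, and Lemma \ref{kerval} applied in the \emph{reverse} direction gives $h\cdot J\subseteq\ker(p)=\fp$, whence $\bS'=V(\fp)\subseteq V(hJ)=h\bT$. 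So your argument can be saved by replacing the generic-point step with this choice of a point of prescribed kernel together with the two-way use of Lemma \ref{kerval}; as written, it does not prove the lemma.
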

\begin{proof} Without loss of generality we may assume that $\bS$ is already irreducible. 
Hence by \cite[Lemma 2.5.13(b)]{EqDCapTwo},
 $\cO(\bS) = \cO(\bU) / \fp$ is an integral domain, and the ideal $\fp$ of $\cO(\bU)$ of functions vanishing on $\bS$ is prime. Pick any valuation $(\fp, V)$ on the field of fractions of $\cO(\bS)$ in the sense of \cite[page 4]{SchVdPut} and let $p \in \sP(\bU)$ be the corresponding prime filter given by \cite[Theorem 4]{SchVdPut}. Since $\bS$ is cut out in $\bU$ by $\fp = \ker(p)$ by construction, Lemma \ref{kerval} tells us that $p \in \overline{\bS}$. 

Since $\bS \subseteq H\bT$ by assumption and since $H$ is compact, we can now use \cite[Corollary 2.1.16 and Lemma 2.1.11(c)]{EqDCapTwo} to see that $p \in \overline{\bS} \subseteq \overline{H\bT} = H \overline{\bT}$. Hence we can find $h \in H$ such that $p \in h \overline{\bT} = \overline{h \bT}$. Using Lemma \ref{kerval} again, we conclude that $\ker(p) \supseteq h \cdot J$ where $J$ is the ideal of functions vanishing on $\bT$. Hence $\bS = V(\ker(p)) \subseteq V(h\cdot J) = h V(J) = h \bT$, and therefore $\dim \bS \leq \dim h \bT = \dim \bT$ as required.
\end{proof}
\begin{lem}\label{Lem_connected} Let $f: \bX\rightarrow \bY$ be a surjective morphism of rigid $K$-analytic spaces. Assume that $\bY$ admits an admissible open covering by connected affinoids $\bY_i, i\in I$ such that $f^{-1}(\bY_i)$ is connected for all $i$. If $\bY$ is connected, then $\bX$ is connected.
\end{lem}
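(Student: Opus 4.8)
The plan is to argue by contradiction: assume $\bX$ is disconnected and produce from this a disconnection of $\bY$. So suppose $\bX = \bU \sqcup \bV$ with $\bU,\bV$ non-empty admissible open subsets and $\{\bU,\bV\}$ an admissible covering of $\bX$. Since $\{\bY_i\}_{i\in I}$ is an admissible covering of $\bY$, the family $\{f^{-1}(\bY_i)\}_{i\in I}$ is an admissible covering of $\bX$; restricting the decomposition $\bX = \bU\sqcup\bV$ to each member shows that $\{\,\bU\cap f^{-1}(\bY_i),\ \bV\cap f^{-1}(\bY_i)\,\}$ is an admissible covering of $f^{-1}(\bY_i)$ by two disjoint admissible opens.

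Next I would invoke connectedness of $f^{-1}(\bY_i)$: for each $i$, one of these two open subsets is all of $f^{-1}(\bY_i)$, and since $f$ is surjective we have $f^{-1}(\bY_i)\ne\emptyset$ whenever $\bY_i\ne\emptyset$, so exactly one of them is. After discarding any empty members $\bY_i$ from the covering, this yields a partition $I = I_{\bU} \sqcup I_{\bV}$, where $i\in I_{\bU}$ precisely when $f^{-1}(\bY_i)\subseteq\bU$. I then set $\bY_{\bU} := \bigcup_{i\in I_{\bU}}\bY_i$ and $\bY_{\bV} := \bigcup_{i\in I_{\bV}}\bY_i$; each is an admissible open subset of $\bY$, being the union of a subfamily of an admissible covering, and $\{\bY_{\bU},\bY_{\bV}\}$ is itself an admissible covering of $\bY$ since it is refined by $\{\bY_i\}_{i\in I}$.

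I would then check that $\bU = f^{-1}(\bY_{\bU})$ and $\bV = f^{-1}(\bY_{\bV})$: using that $\{f^{-1}(\bY_i)\}_{i\in I}$ covers $\bX$ and that $\bU\cap f^{-1}(\bY_i)$ equals $f^{-1}(\bY_i)$ for $i\in I_{\bU}$ and is empty for $i\in I_{\bV}$, one gets $\bU = \bigcup_{i\in I_{\bU}}f^{-1}(\bY_i) = f^{-1}(\bY_{\bU})$, and symmetrically for $\bV$. In particular $\bY_{\bU}$ and $\bY_{\bV}$ are both non-empty, because $\bU$ and $\bV$ are and the preimage of the empty set is empty.

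It remains to show $\bY_{\bU}\cap\bY_{\bV}=\emptyset$. If not, this intersection is a non-empty admissible open subset of $\bY$ and hence contains a point $y$, which lies in $\bY_i$ for some $i\in I_{\bU}$ and in $\bY_j$ for some $j\in I_{\bV}$; then $f^{-1}(y)\subseteq f^{-1}(\bY_i)\cap f^{-1}(\bY_j)\subseteq\bU\cap\bV=\emptyset$, contradicting the surjectivity of $f$. Therefore $\bY = \bY_{\bU}\sqcup\bY_{\bV}$ is an admissible disconnection of the connected space $\bY$ --- a contradiction --- and so $\bX$ must be connected. I do not expect a genuine obstacle in this argument; the only points requiring some care are the bookkeeping facts that a subfamily of an admissible covering is again an admissible covering of its union and that a covering refined by an admissible one is admissible, together with the systematic use of surjectivity of $f$ to ensure that the relevant preimages are non-empty.
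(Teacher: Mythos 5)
Your argument is correct, but it takes a genuinely different route from the paper's. The paper proceeds constructively via the chain characterisation of connectedness: since $\bY$ is connected, any two members $\bY_i$, $\bY_{i'}$ of the admissible covering can be joined by a finite chain of pairwise overlapping $\bY_j$'s; pulling such a chain back along $f$ (where surjectivity of $f$ guarantees that the overlaps $\bX_k\cap\bX_{k+1}=f^{-1}(\bY_{i_k}\cap\bY_{i_{k+1}})$ are non-empty) and then refining within each connected $f^{-1}(\bY_{i_k})$ produces a chain of connected affinoids linking any two points of $\bX$. You instead argue by contradiction: a disconnection $\bX=\bU\sqcup\bV$ forces each connected $f^{-1}(\bY_i)$ wholly into one side, and surjectivity of $f$ then shows that the induced decomposition $\bY=\bY_{\bU}\cup\bY_{\bV}$ is disjoint, disconnecting $\bY$. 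The two arguments are dual in spirit --- the paper pushes connectedness forward along chains, you pull disconnectedness back --- and yours has the merit of working directly with the definition rather than invoking the chain characterisation as a black box. One presentational point worth tightening: establish the disjointness $\bY_{\bU}\cap\bY_{\bV}=\emptyset$ \emph{before} asserting that $\bY_{\bU}$ and $\bY_{\bV}$ are admissible open. Once disjointness is in hand, $\bY_{\bU}\cap\bY_i$ equals $\bY_i$ or $\emptyset$ for every $i$, so property $(G_1)$ of the strong $G$-topology gives admissibility of $\bY_{\bU}$ at once, and $(G_2)$ gives admissibility of the covering $\{\bY_{\bU},\bY_{\bV}\}$; this avoids leaning on the slightly slippery general assertion that an arbitrary subfamily of an admissible covering has admissible union. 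Since the disjointness argument uses only surjectivity of $f$ and none of the admissibility claims, reordering costs nothing.
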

\begin{proof} Assume that $\bY$ is connected. Then any $\bY_{i}$ can be linked up to any $\bY_{i'}$ by some finite chain of $\bY_j's$, i.e. there are $\bY_{i_1}=\bY_i,\bY_{i_2},...,\bY_{i_n}=\bY_{i'}$ with 
$\bY_{i_k}\cap\bY_{i_{k+1}}\neq\emptyset$. Let now $x,x'\in \bX$. Put $y=f(x),y'=f(x')$
and choose $\bY_{i}$ containing $y$ and $\bY_{i'}$ containing $y'$. Then pick a chain 
$\bY_{i_1},...,\bY_{i_n}$ as above. Let $\bX_{k}=f^{-1}(\bY_{i_k})$, so that $x\in\bX_1$ and $x'\in\bX_n$.
Choose a sequence of points $z_0=x, z_1,z_2,...,z_{n-1},z_n=x'$ in $\bX$ such that $z_k\in \bX_{k}\cap\bX_{{k+1}}$ for $1\leq k\leq n-1$. Since each $\bX_{k}$ is connected, one may link up $z_{k-1}$ and $z_{k}$ by a sequence of connected affinoid opens in $\bX_k$. Varying $k$, this  produces a link between $x$ and $x'$ by connected affinoid opens in $\bX$. Hence $\bX$ is connected.
\end{proof}

\begin{prop} \label{prop3} Let $H$ be a compact $p$-adic Lie group acting continuously on the affinoid variety $\bU$, and let $\bZ$ be a Zariski closed subset of $\bU$. Then
\[ \bV := \bU \hspace{0.2cm} \backslash \hspace{0.2cm} H \cdot \bZ\]
is an admissible open subspace of $\bU$: we can find a countable increasing admissible covering $\{\bV_n : n \geq 0\}$ of $\bV$ by $H$-stable affinoid subdomains of $\bU$.\end{prop}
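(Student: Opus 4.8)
The plan is to realise $H\cdot\bZ$ as a nested intersection of \emph{finite} unions of $H$-translates of open tubes around $\bZ$, and then to take the $\bV_n$ to be the complements of these finite unions.

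First I would reduce to the case $\bZ = V(f_1,\dots,f_r)$ for finitely many $f_j\in\cO(\bU)$ with $\|f_j\|_\bU\le 1$ (possible after rescaling). For $n\ge 0$ set $\bT_n^\circ := \{x\in\bU : |f_j(x)| < |\pi^n|\text{ for all }j\}$ and $\bT_n := \{x\in\bU : |f_j(x)|\le|\pi^n|\text{ for all }j\}$, so that $\bT_{n+1}\subseteq\bT_n^\circ\subseteq\bT_n$, each $\bT_n=\bigcap_j\bU(f_j/\pi^n)$ is a rational subdomain of $\bU$, and $\bigcap_n\bT_n = \bZ$. The crucial compactness input is this: since $h\mapsto(h\cdot f_1,\dots,h\cdot f_r)$ is a continuous map from $H$ to $\cO(\bU)^r$ and $H$ is compact, for each fixed $n$ the relation ``$\|h\cdot f_j - h'\cdot f_j\|_\bU < |\pi^n|$ for all $j$'' is an \emph{equivalence} relation on $H$ — the ultrametric inequality being essential for transitivity — whose classes are open, so there are only finitely many of them; moreover $h\sim_n h'$ forces $h\bT_n^\circ = h'\bT_n^\circ$. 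Choosing a finite set $R_n\subseteq H$ of representatives we get $\bigcup_{h\in H}h\bT_n^\circ = \bigcup_{h\in R_n}h\bT_n^\circ$. I would then define
\[\bV_n := \bU\setminus\bigcup_{h\in H}h\bT_n^\circ = \bigcap_{h\in R_n}\;\bigcup_{j=1}^r h\bigl(\bU((f_j/\pi^n)^{-1})\bigr),\]
and distributing the intersection over the unions exhibits $\bV_n$ as a finite union of $H$-translates of rational subdomains of $\bU$; it is $H$-stable by the first description, and since $\bT_{n+1}^\circ\subseteq\bT_n^\circ$ the sequence $(\bV_n)$ is increasing.

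Next I would check $\bigcup_n\bV_n = \bV$, i.e.\ $H\bZ = \bigcap_n\bigcup_{h\in H}h\bT_n^\circ$. The inclusion ``$\subseteq$'' is clear because $\bZ\subseteq\bT_n^\circ$ for every $n$. For ``$\supseteq$'', given $x$ in the right-hand side, pick for each $n$ an $h_n\in H$ with $h_n^{-1}x\in\bT_n^\circ\subseteq\bT_n$; by compactness of $H$ a subsequence $h_{n_k}$ converges to some $h\in H$, and by continuity of the action $h_{n_k}^{-1}x\to h^{-1}x$. For each fixed $m$ we have $h_{n_k}^{-1}x\in\bT_m$ once $n_k\ge m$, and $\bT_m$ is closed in $\bU$ (the functions $|f_j|$ being continuous), so $h^{-1}x\in\bT_m$; hence $h^{-1}x\in\bigcap_m\bT_m = \bZ$ and $x\in H\bZ$.

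The main obstacle is the admissibility of the covering $\{\bV_n\}$ of $\bV$ (which simultaneously shows $\bV$ is admissible open). By the axioms for admissible coverings it suffices to show that any morphism $\varphi\colon\bW\to\bU$ from an affinoid $\bW$ with $\varphi(\bW)\subseteq\bV$ factors through some $\bV_n$. If not, for each $n$ choose $w_n\in\bW$ and $h_n\in H$ with $\varphi(w_n)\in h_n\bT_n^\circ$, and pass to a subsequence with $h_n\to h$ in $H$. Comparing the elements $w\mapsto f_j(h^{-1}\varphi(w))$ and $w\mapsto f_j(h_n^{-1}\varphi(w))$ of $\cO(\bW)$ via continuity of the action shows that $|f_j(h^{-1}\varphi(w_n))|\to 0$ for every $j$ simultaneously; taking a cluster point $\omega$ of $(w_n)$ in the quasi-compact space $\sP(\bW)$ and using continuity of the norms, one gets $f_j\circ h^{-1}\circ\varphi\in\ker(\omega)$ for all $j$, so by Lemma~\ref{kerval} the point $\omega$ lies in the closure in $\sP(\bW)$ of $\varphi^{-1}(h\bZ)$. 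In particular $\varphi^{-1}(h\bZ)\neq\emptyset$, so $\varphi(\bW)$ meets $h\bZ\subseteq H\bZ$, contradicting $\varphi(\bW)\subseteq\bV$. The delicate points throughout are passing between the ``continuity'' of the $H$-action and honest continuity on $\bU$, $\bW$ and their Huber spaces, and making sure the finite sets $R_n$ genuinely capture all of $\bigcup_{h\in H}h\bT_n^\circ$.
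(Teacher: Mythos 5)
Your overall strategy is the same as the paper's (remove $H$-translates of tubes around $\bZ$, use compactness of $H$ plus continuity of the action on $\cO(\bU)$ to reduce to finitely many translates, then verify admissibility through the factorization criterion for affinoid test maps), but as written the argument has two genuine gaps relative to the stated Proposition.

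First, your $\bV_n=\bigcap_{h\in R_n}\bigcup_{j}h\bU((f_j/\pi^n)^{-1})$ is, after distributing, a finite union of finite \emph{intersections} of translated Laurent domains (not of single translates, as you say), i.e.\ a finite union of affinoid subdomains --- and such a union is in general \emph{not} an affinoid subdomain. Already for trivial $H$ and $\bZ$ the origin in the bidisc, your $\bV_1=\{|x|\geq|\pi|\}\cup\{|y|\geq|\pi|\}$ has ring of functions $K\langle x,y\rangle$ by a Hartogs-type Laurent-series computation, so it cannot be an affinoid subdomain of the bidisc. The Proposition asserts that the $\bV_n$ are $H$-stable \emph{affinoid} subdomains, and this is what is used later (in $\S$\ref{LocCohIndMod} one evaluates the Banach-algebra sheaves on $\bV_n$ and needs $\bV_n$ to be a $\pi^{k_n}\cL$-admissible affinoid subdomain). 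The paper's proof secures affinoidness by removing $H\cdot\bZ_n$ with $\bZ_n:=\bigcup_{j}\{|f_j|<|\pi^n|\}$, the \emph{union} over $j$ rather than your intersection $\bT_n^{\circ}$: passing to complements then gives a finite intersection of Laurent domains, which is again Laurent. These are genuinely different sets, so your construction proves a weaker statement (an admissible exhaustion by $H$-stable quasi-compact special subsets), not the one claimed.

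Second, the admissibility step is not justified as written. That $\omega\in\sP(\bW)$ is a cluster point of $(w_{n_k})$ only says that every member $\bV'$ of the prime filter $\omega$ contains infinitely many $w_{n_k}$; since $\|g\|_{\omega}=\inf_{\bV'\in\omega}\|g\|_{\bV'}$ is an infimum of sup-norms over members of the filter, this yields lower bounds, not the upper bound $\|f_j\circ h^{-1}\circ\varphi\|_{\omega}=0$ that you need, and ``continuity of the norms'' is not a quotable fact here. The conclusion you want is true, but it needs an argument --- and in fact a much simpler one bypasses $\sP(\bW)$ and Lemma \ref{kerval} entirely: if $\varphi^{-1}(h\bZ)=\emptyset$, then the elements $g_j:=\varphi^{*}(h\cdot f_j)$ generate the unit ideal of $\cO(\bW)$, so writing $1=\sum_j a_jg_j$ gives $\max_j|g_j(w)|\geq 1/\max_j\|a_j\|_{\bW}$ for all $w\in\bW$, contradicting $\max_j|g_j(w_{n_k})|\to 0$. (The paper argues differently again: for each fixed $h$ it invokes Kiehl's Folgerung 1.3 to get admissibility of the increasing covering of the Zariski-open complement of $h\bZ$, factors $\varphi$ through one piece, and then runs a second compactness argument over $H$ via the open subgroups $H_{n(h)}$ to find one $n$ working for all $h$ simultaneously.) So your route can be repaired, but the affinoidness of the $\bV_n$ and the norm estimate at $\omega$ are real gaps, not cosmetic ones.
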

\begin{proof} Let $A:=\cO(\bU)$ and let $I\subseteq A$ be an ideal such that $\bZ=V(I)$. 
Choose a finite set of generators $f_1,\ldots,f_r$ such that $I=Af_1+\cdots+Af_r$. 
For any $n\geq 0$ and $j = 1,\ldots,r$, let $$\bZ_n^{(j)}:=\{x\in\bU: | f_j(x) | < |\pi^n|\}, \quad \bZ_n:=\bigcup\limits_{j=1}^r \bZ^{(j)}_n \qmb{and} \bV_n:=\bU\hspace{0.2cm}\backslash \hspace{0.2cm}H\cdot \bZ_n.$$
We claim that the $H$-stable subset $\bV_n$ of $\bU$ is an affinoid subdomain of $\bU$. To see this, note that complement of $\bZ_n^{(j)}$ in $\bU$ is the Laurent domain $\bU((f_j/\pi^n)^{-1})$, defined by $| f_j/\pi^n | \geq 1$. Its stabilizer $H_{j,n}$ in $H$ (which coincides with the stabiliser in $H$ of 
$\bZ_n^{(j)}$) is therefore an open subgroup of $H$, by continuity of the $H$-action on $\bU$.
Let $$H_n:=\bigcap_{j=1}^r \;H_{j,n},$$ an open subgroup of $H$, stabilizing each $\bZ^j_n$ and therefore also $\bZ_n$. Since $H$ is compact, we can find finitely many elements $h_{n,1},\ldots,h_{n,m} \in H$ 
such that 
$$H= h_{n,1}H_n\cup \cdot\cdot\cdot \cup h_{n,m}H_n.$$ It follows that
$$H\cdot \bZ_n = \bigcup\limits_{j=1}^r \;H\cdot \bZ_n^{(j)}= \bigcup\limits_{j=1}^r\bigcup\limits_{i=1}^m h_{n,i}H_n\cdot \bZ_n^{(j)}=\bigcup\limits_{j=1}^r\bigcup\limits_{i=1}^m h_{n,i}\bZ_n^{(j)}.$$ Passing to complements gives 
$$\bV_n= \bU\hspace{0.1cm} \backslash \hspace{0.1cm}H\cdot \bZ_n = \bigcap\limits_{j=1}^r\bigcap\limits_{i=1}^m h_{n,i}\bU((f_j/\pi^n)^{-1}).$$
Since
$h_{n,i}\bU((f_j/\pi^n)^{-1})$ equals the Laurent domain 
$\{ x\in\bU: |f_j( h_{n,i}^{-1}x) |\geq | \pi^n | \}$ and since Laurent domains are stable under finite intersections \cite[Prop. 1.6.14]{BoschFRG}, we recognize $\bV_n$ as a Laurent domain in $\bU$.

We claim further that given a morphism of affinoids $f:\bW\rightarrow \bU$ with $f(\bW)\subset \bV$, there exists $n\geq 0$ such that $f(\bW)\subseteq \bV_n$. Indeed, for any fixed $h\in H$, and any $n\geq 0$,
the same argument as above shows that $\bU \backslash h\bZ_n$ is a Laurent domain in $\bU$. Moreover, 
$$\bU\backslash  h\bZ=\bigcup\limits_{n=0}^\infty \;\bU \backslash h\bZ_n$$ 
is an increasing admissible open covering of the Zariski open subspace $\bU \backslash h\bZ$ of $\bU$, cf.
\cite[Folgerung 1.3]{Kiehl_dR}. By admissibility \cite[Def. 1.10.4]{BoschFRG}, for each $h\in H$ there is $n=n(h)$ such that 
$$ f(\bW) \subseteq \;\bU \backslash h\bZ_{n(h)}.$$
Now $hH_{n(h)}$ is an open subset of $H$ containing $h$. By the compactness of $H$, there are therefore $h_1,\ldots,h_m \in H$ such that 
$$ H=h_1H_{n(h_1)}\cup \cdot\cdot\cdot \cup h_mH_{n(h_m)}.$$ 
We set $n:=\max\limits_{i=1}^m n(h_i)$. Now let $h\in H$ and choose $i$ such that $h\in h_i H_{n(h_i)}$. Then 
$$ h\bZ_n \subseteq h\bZ_{n(h_i)}\subseteq h_iH_{n(h_i)}\cdot \bZ_{n(h_i)}=h_i \bZ_{n(h_i)}$$
which means that 
$$ H\cdot \bZ_n \subseteq h_1\bZ_{n(h_1)}\cup \cdot\cdot\cdot \cup h_m\bZ_{n(h_m)}.$$ 
Passing to complements yields 
$$f(\bW)\subseteq  \bU \backslash \big( h_1\bZ_{n(h_1)}\cup \cdot\cdot\cdot \cup h_m\bZ_{n(h_m)}\big) \subseteq \bU\hspace{0.1cm} \backslash \hspace{0.1cm}H\cdot \bZ_n = \bV_n,$$
as claimed. 
We conclude \cite[Def. 1.10.4]{BoschFRG} that the covering $\{\bV_n : n \geq 0\}$ of $\bV$ is admissible and that $\bV$ is an admissible open in $\bU$. 
\end{proof}

\subsection{Local cohomology of the induced module}\label{LocCohIndMod}
We keep all notation from the preceding subsection. 
We assume here that $(\bU,H)$ is small and $H$ is a uniform pro-$p$ group, with a closed, isolated subgroup $J$ of $H$. Because $(\bU,H)$ is small, we can find an affine $H$-stable formal model $\cA$ in $\cO(\bU)$, and an $H$-stable free $\cA$-Lie lattice $\cL$ in $\cT(\bU)$. We fix a good chain $(H_\bullet)$ for $\cL$ in the sense of \cite[Definition 3.3.3]{EqDCap}.  By shrinking it further, we may assume that there is an increasing sequence of integers $(e_m)$ such that
\[ H_m = H^{p^{e_m}} \qmb{for all} m \geq 0.\]
For each $m \geq 0$, we define $J_m := J \cap H_m$ and introduce the $K$-Banach algebras 
\[ D_m := \hK{U(\pi^m \cL)} \rtimes_{J_m} J\qmb{and} R_m := \hK{U(\pi^m \cL)} \rtimes_{H_m} H.\]
Using \cite[Lemma 3.3.4]{EqDCap} we then have
\[\w\cD(\bU,J) = \invlim D_m \qmb{and}\w\cD(\bU,H) = \invlim R_m.\]
There is a natural map of $K$-Banach algebras $D_m \to R_m$. Because the group $J/J_m$ is canonically isomorphic to $JH_m / H_m$, we will identify $D_m$ with its image in $R_m$, which is equal to the sub-crossed product
\[ D_m = \hK{U(\pi^m \cL)} \rtimes_{H_m} JH_m \hookrightarrow R_m = \hK{U(\pi^m \cL)} \rtimes_{H_m} H.\]
Because $H$ is a uniform pro-$p$ group and $J$ is a closed isolated subgroup, we can find a minimal topological generating set $\{g_1,\cdots,g_d\}$ for $H$ such that $\{g_{r+1},\cdots,g_d\}$ is a minimal topological generating set for $J$. Recall \cite[\S 2.2, equation (3)]{EqDCap} that $\gamma(g)$ denotes the image of any $g \in H$ in any crossed product that we consider here, such as $D_m$ or $R_m$. We define $b_i := \gamma(g_i)-1 \in R_m$ if $1 \leq i \leq d$; then for each $\alpha \in \mathbb{N}^d$ we have the element
\[ \mathbf{b}^\alpha := b_1^{\alpha_1} b_2^{\alpha_2} \cdots b_d^{\alpha_d} \in R_m.\]
Let $\mathbb{N}_m := \{n \in \mathbb{N} : n < p^{e_m}\}$.
\begin{lem} \label{RmDm} Let $m \geq 0$. Then $\{\mathbf{b}^\alpha : \alpha \in \mathbb{N}_m^r\}$ is a basis for $R_m$ as a right $D_m$-module.
\end{lem}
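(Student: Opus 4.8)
The plan is to deduce the statement from two freeness facts over the Banach algebra $A := \hK{U(\pi^m\cL)}$: first, that $\{\mathbf{b}^\alpha : \alpha\in\mathbb{N}^d,\ \alpha_i<p^{e_m}\text{ for all }i\}$ is a basis of $R_m$ as a right $A$-module, and second, that $\{b_{r+1}^{\beta_{r+1}}\cdots b_d^{\beta_d} : 0\le\beta_j<p^{e_m}\}$ is a basis of $D_m$ as a right $A$-module. Granting these, the Lemma follows by regrouping a direct sum: writing a multi-index $\alpha\in\{0,\dots,p^{e_m}-1\}^d$ as $(\alpha',\alpha'')$ with $\alpha'$ its first $r$ and $\alpha''$ its last $d-r$ entries, one has $\mathbf{b}^\alpha = (b_1^{\alpha_1}\cdots b_r^{\alpha_r})(b_{r+1}^{\alpha_{r+1}}\cdots b_d^{\alpha_d})$, so
\[ R_m = \bigoplus_{\alpha',\,\alpha''} (b_1^{\alpha_1}\cdots b_r^{\alpha_r})(b_{r+1}^{\alpha_{r+1}}\cdots b_d^{\alpha_d})A = \bigoplus_{\alpha'}(b_1^{\alpha_1}\cdots b_r^{\alpha_r})\Big(\bigoplus_{\alpha''}(b_{r+1}^{\alpha_{r+1}}\cdots b_d^{\alpha_d})A\Big) = \bigoplus_{\alpha'\in\mathbb{N}_m^r}\mathbf{b}^{\alpha'} D_m, \]
the last equality by the second freeness fact; directness of the outer sum and freeness of each summand $\mathbf{b}^{\alpha'}D_m$ over $D_m$ both follow formally from directness of the double sum (any relation $\mathbf{b}^{\alpha'}d=0$ expands through the $A$-basis of $D_m$). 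Since $\mathbf{b}^{\alpha'}$ for $\alpha'\in\mathbb{N}_m^r$ is exactly $\mathbf{b}^\alpha$ for $\alpha\in\mathbb{N}_m^r$ in the Lemma's notation, this is what is wanted.

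To prove the first freeness fact, I would begin from the fact that, since $H_m\trianglelefteq H$, the crossed product $R_m = \hK{U(\pi^m\cL)}\rtimes_{H_m} H$ is free as a right $A$-module on $\{\gamma(h) : h\in T\}$ for any set $T$ of coset representatives of $H_m$ in $H$ (this is built into the construction of the crossed product, cf. \cite[\S 3.3]{EqDCap}). Because $H$ is uniform with basis $g_1,\dots,g_d$ and $H_m = H^{p^{e_m}}$ consists precisely of the elements whose coordinates of the first kind all lie in $p^{e_m}\mathbb{Z}_p$, we may take $T = \{g_1^{\lambda_1}\cdots g_d^{\lambda_d} : 0\le\lambda_i<p^{e_m}\}$. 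Iterating the crossed-product relations, $\gamma(g_1)^{\lambda_1}\cdots\gamma(g_d)^{\lambda_d}$ differs from $\gamma(g_1^{\lambda_1}\cdots g_d^{\lambda_d})$ by a unit of $A$, so $\{\gamma(g_1)^{\lambda_1}\cdots\gamma(g_d)^{\lambda_d} : 0\le\lambda_i<p^{e_m}\}$ is again a right $A$-basis of $R_m$; and the binomial expansion $\gamma(g_i)^{\lambda_i} = \sum_k \binom{\lambda_i}{k} b_i^k$ (legitimate since it involves only integer powers of the single unit $\gamma(g_i)=1+b_i$) rewrites this basis in terms of the $\mathbf{b}^\alpha$ via an $\{0,\dots,p^{e_m}-1\}^d\times\{0,\dots,p^{e_m}-1\}^d$ matrix with entries in $\mathbb{Z}$ that is unitriangular for the componentwise partial order, hence invertible over $\mathbb{Z}$. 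This establishes the first fact. The second is proved identically, replacing $H$ by $JH_m$: here one uses that $J$, being closed and isolated in $H$, is uniform with basis $g_{r+1},\dots,g_d$ and satisfies $J_m = J\cap H^{p^{e_m}} = J^{p^{e_m}}$, so that $\{g_{r+1}^{\mu_{r+1}}\cdots g_d^{\mu_d} : 0\le\mu_j<p^{e_m}\}$ is a set of coset representatives of $H_m$ in $JH_m$ under $J\hookrightarrow JH_m$; note that $b_{r+1},\dots,b_d$ do lie in $D_m$ because $g_{r+1},\dots,g_d\in J\subseteq JH_m$.

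The main obstacle I anticipate is bookkeeping rather than conceptual: one must keep straight (i) that the crossed product $R_m$ is free over $A$ on $\gamma$ of an \emph{arbitrary} transversal, which is what licenses the choice of the uniform-coordinate transversal; (ii) that replacing $\gamma(g_1^{\lambda_1}\cdots g_d^{\lambda_d})$ by the ordered product of powers $\gamma(g_1)^{\lambda_1}\cdots\gamma(g_d)^{\lambda_d}$ only changes things by units of $A$, hence does not disturb the $A$-module decomposition; and (iii) that the final change of basis to the $\mathbf{b}^\alpha$ is governed by an \emph{integer} unitriangular matrix, so the non-commutativity of $A$ is irrelevant. Once these are in hand, both the regrouping in the first paragraph and the asserted $D_m$-linear independence are automatic.
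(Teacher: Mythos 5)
Your argument is correct and is essentially the paper's own: the paper notes that $\{g_1^{k_1}\cdots g_r^{k_r} : k_i \in \mathbb{N}_m\}$ maps to a transversal of $JH_m/H_m$ in $H/H_m$ and then cites \cite[Lemma 7.8]{DDMS}, which is exactly the transversal-plus-unitriangular binomial change-of-basis mechanism (from group-like elements $\gamma(g_i)$ to the $b_i=\gamma(g_i)-1$) that you carry out by hand, together with the standard uniform-group facts you invoke for $H$, its isolated subgroup $J$, and $J\cap H^{p^{e_m}}=J^{p^{e_m}}$. Routing the bookkeeping through right $\hK{U(\pi^m\cL)}$-bases of both $R_m$ and $D_m$ and then regrouping, rather than reading off a right $D_m$-basis directly from the transversal of $JH_m$ in $H$, is only a cosmetic reorganization of the same proof.
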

\begin{proof} Note that $\{g_1^{k_1}\cdots g_r^{k_r} : k_1,\cdots,k_r \in \mathbb{N}_m\} \subset H$ maps to a complete set of coset representatives for $JH_m/H_m$ in $H/H_m$. Use \cite[Lemma 7.8]{DDMS}.
\end{proof}
We fix $\cN$ in $\cC_{\bU/J}$ so that $\cN(\bU)$ is a coadmissible $\w\cD(\bU,J)$-module. Writing
\[N_m := D_m \utimes{\w\cD(\bU,J)} \cN(\bU) \qmb{for all} m \geq 0,\]
we then see that 
\[\cN(\bU) \cong \invlim N_m. \]
If $\cM = \ind_J^H \cN$ denotes the induced module, then by \cite[Lemma 2.3.6]{EqDCapTwo}, we have
\[\cM(\bU) = \w\cD(\bU,H) \wotimes{\w\cD(\bU,J)} \cN(\bU)  = \invlim R_m \utimes{D_m} N_m.\]
We now introduce a Zariski closed subset $\bZ$ of $\bU$ and define
\[ \bV := \bU \hspace{0.2cm} \backslash \hspace{0.2cm} H \cdot \bZ.\]
We fix the countable increasing admissible covering $\{\bV_n : n \geq 0\}$ given by Proposition \ref{prop3} until the end of $\S \ref{LocCohIndMod}$. For each $n \geq 0$, using \cite[Lemma 7.6(b)]{DCapOne}, we can choose a non-negative integer $k_n$ such that $\bV_n$ is $\pi^{k_n} \cL$-admissible. We may also assume that the $k_n$'s form an increasing sequence. Using \cite[Definition 4.3.8]{EqDCap}, we form the following $K$-Banach algebras for each $m \geq k_n$:
\[ D_{m,n} := ( \hK{\sU(\pi^m \cL)} \rtimes_{J_m} J )(\bV_n) \hookrightarrow R_{m,n} :=( \hK{\sU(\pi^m \cL)}  \rtimes_{H_m} H )(\bV_n).\]
For fixed $n$ and varying $m \geq k_n$, these give Fr\'echet-Stein presentations for $\w\cD(\bV_n, J)$ and $\w\cD(\bV_n,H)$-respectively, by \cite[Proposition 4.4.2(a)]{EqDCap}:
\[ \w\cD(\bV_n,J) \cong \lim\limits_{\stackrel{\longleftarrow}{m \geq k_n}}D_{m,n} \qmb{and} \w\cD(\bV_n,J) \cong \lim\limits_{\stackrel{\longleftarrow}{m \geq k_n}} R_{m,n}.\]
\begin{lem} \label{RRDD} $R_{m,n} \cong R_m \utimes{D_m} D_{m,n}$ as $(R_m, D_{m,n})$-bimodules if $n \geq 0$ and $m \geq k_n$. 
\end{lem}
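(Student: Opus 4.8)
The plan is to exhibit the single set $\{\mathbf{b}^\alpha : \alpha \in \mathbb{N}_m^r\}$ as a right module basis simultaneously for $R_{m,n}$ over $D_{m,n}$ and for $R_m \utimes{D_m} D_{m,n}$ over $D_{m,n}$, and then to check that the obvious multiplication map identifies these two descriptions.

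First I would record the crossed-product descriptions of the localised algebras. Since $m \geq k_n$, the subdomain $\bV_n$ is $\pi^m\cL$-admissible, and it is $H$-stable by Proposition \ref{prop3}; so by \cite[Definition 4.3.8]{EqDCap} we may regard $R_{m,n}$ as the crossed product $A \rtimes_{H_m} H$ and $D_{m,n}$ as its sub-crossed product $A \rtimes_{H_m} JH_m$, where $A := \hK{\sU(\pi^m\cL)}(\bV_n)$ and where we use $J/J_m \cong JH_m/H_m$ exactly as in the identification of $D_m$ inside $R_m$ over $\bU$ recorded above. Restriction of sections along $\bV_n \hookrightarrow \bU$ is a ring homomorphism $\varphi\colon R_m \to R_{m,n}$ that fixes $\gamma(g)$ for every $g \in H$ and maps $D_m$ into $D_{m,n}$; in particular $\varphi(b_i) = \gamma(g_i)-1$ is the element ``$b_i$'' of $R_{m,n}$, so $\varphi(\mathbf{b}^\alpha) = \mathbf{b}^\alpha$ for all $\alpha$.

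Next I would rerun the argument that proves Lemma \ref{RmDm}. Because $H/H_m$ is finite and $\{g_1^{k_1}\cdots g_r^{k_r} : k_i \in \mathbb{N}_m\}$ maps onto a complete set of coset representatives for $JH_m/H_m$ in $H/H_m$, the cited input \cite[Lemma 7.8]{DDMS}, applied now to the sub-crossed product $D_{m,n} \subseteq R_{m,n}$, shows verbatim that $\{\mathbf{b}^\alpha : \alpha \in \mathbb{N}_m^r\}$ is a basis for $R_{m,n}$ as a right $D_{m,n}$-module — the coefficient ring $A$ enters only as the ring of scalars and is irrelevant to this combinatorial statement. On the other side, Lemma \ref{RmDm} gives $R_m = \bigoplus_{\alpha \in \mathbb{N}_m^r} \mathbf{b}^\alpha D_m$ as a right $D_m$-module, whence $R_m \utimes{D_m} D_{m,n} = \bigoplus_{\alpha \in \mathbb{N}_m^r} \mathbf{b}^\alpha \otimes D_{m,n}$ is free over $D_{m,n}$ with basis $\{\mathbf{b}^\alpha \otimes 1\}$.

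Finally, the map $R_m \times D_{m,n} \to R_{m,n}$, $(r,d) \mapsto \varphi(r)\,d$, is balanced over $D_m$ since $\varphi(D_m) \subseteq D_{m,n}$, hence factors through an $(R_m, D_{m,n})$-bimodule homomorphism $\Phi\colon R_m \utimes{D_m} D_{m,n} \to R_{m,n}$ with $\Phi(\mathbf{b}^\alpha \otimes 1) = \mathbf{b}^\alpha$. As $\Phi$ carries the $D_{m,n}$-basis $\{\mathbf{b}^\alpha \otimes 1\}$ bijectively onto the $D_{m,n}$-basis $\{\mathbf{b}^\alpha\}$, it is an isomorphism, and a fortiori an isomorphism of $(R_m, D_{m,n})$-bimodules. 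The only genuine work is the second step's bookkeeping: reading off $R_{m,n}$ and $D_{m,n}$ as crossed products compatibly with $\varphi$ from \cite[Definition 4.3.8]{EqDCap}, and checking that \cite[Lemma 7.8]{DDMS} is insensitive to the coefficient ring. Granting these, the statement is formal.
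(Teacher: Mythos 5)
Your argument is correct and is essentially the paper's own proof, which is stated in one line as ``Same idea as in the proof of Lemma \ref{RmDm}'': both sides are free right $D_{m,n}$-modules on the basis $\{\mathbf{b}^\alpha : \alpha \in \mathbb{N}_m^r\}$, matched by the natural multiplication/restriction map. You have simply written out the bookkeeping (crossed-product description of $R_{m,n}$ and $D_{m,n}$ via \cite[Definition 4.3.8]{EqDCap}, the coefficient-ring-independence of the \cite[Lemma 7.8]{DDMS} basis argument, and the balanced map $\Phi$) that the paper leaves implicit.
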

\begin{proof} Same idea as in the proof of Lemma \ref{RmDm}.
\end{proof}

Let $N_{m,n} := D_{m,n} \utimes{D_m} N_m$ for each $m \geq k_n$; using Lemma \ref{RRDD} we then have
\begin{equation}\label{eq: NVnMVn}\cN(\bV_n) = \lim\limits_{\stackrel{\longleftarrow}{m \geq k_n}} N_{m,n}  \qmb{and} \cM(\bV_n) = \lim\limits_{\stackrel{\longleftarrow}{m \geq k_n}} R_m \utimes{D_m} N_{m,n}.\end{equation}
\begin{defn} Let $m \geq 0$.
\be \item Define $n(m) := \max\{ n \geq 0 : k_n \leq m \}$. 
\item Define $N'_m := N_{m,n(m)}$.\ee
\end{defn}
Note that for each $m \geq 0$, $k_{n(m)} \leq m \leq m+1$ implies that $n(m) \leq n(m+1)$. Hence the $N'_m$ form a projective system.

\begin{lem}\label{resNM} The restriction maps $\cN(\bU) \to \cN(\bV)$ and $\cM(\bU) \to \cM(\bV)$ fit into the following commutative diagrams:
\[ \xymatrix{ \cN(\bU) \ar[rr] \ar[d]_{\cong} && \cN(\bV) \ar[d]^{\cong} \\ \invlim N_m \ar[rr] && \invlim N'_m }\qmb{and} \xymatrix{ \cM(\bU) \ar[r] \ar[d]_{\cong} & \cM(\bV) \ar[d]^{\cong} \\ \invlim \bigoplus\limits_{\alpha \in \mathbb{N}^r_m} \mathbf{b}^\alpha \otimes N_m \ar[r] & \invlim \bigoplus\limits_{\alpha \in \mathbb{N}^r_m} \mathbf{b}^\alpha \otimes N'_m }\]
\end{lem}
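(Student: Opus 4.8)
The plan is to check commutativity of the two diagrams by unwinding all the identifications made just above the statement. For the left-hand diagram, recall that $\cN(\bU) \cong \invlim N_m$ where $N_m = D_m \wotimes{\w\cD(\bU,J)} \cN(\bU)$, and that $\cN(\bV) \cong \invlim N'_m$ where $N'_m = N_{m,n(m)} = D_{m,n(m)} \utimes{D_m} N_m$. The restriction morphism $\cN(\bU) \to \cN(\bV)$ is, by construction of the sheaf $\cN$ on $\bX_w(\cT)$ and the Fr\'echet--Stein presentations of \cite[Proposition 4.4.2(a)]{EqDCap}, induced levelwise by the maps $N_m \to N_{m,n(m)}$, $x \mapsto 1 \otimes x$, coming from the $K$-Banach algebra map $D_m \to D_{m,n(m)}$; compatibility of these maps with the transition maps of the two projective systems follows from the fact that $n(m) \leq n(m+1)$, so that restriction from $\bV_{n(m+1)}$ to $\bV_{n(m)}$ is compatible with the identification $\cN(\bV_n) \cong \lim_{m \geq k_n} N_{m,n}$ of \eqref{eq: NVnMVn}. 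Passing to the inverse limit yields the left-hand square.

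For the right-hand diagram, the argument is the same once one inserts the module decompositions supplied by Lemmas \ref{RmDm} and \ref{RRDD}. Indeed $\cM(\bU) = \invlim R_m \utimes{D_m} N_m$, and by Lemma \ref{RmDm} the right $D_m$-module $R_m$ is free on the basis $\{\mathbf{b}^\alpha : \alpha \in \mathbb{N}_m^r\}$, so $R_m \utimes{D_m} N_m \cong \bigoplus_{\alpha \in \mathbb{N}_m^r} \mathbf{b}^\alpha \otimes N_m$. On the $\bV_n$ side, \eqref{eq: NVnMVn} gives $\cM(\bV_n) = \lim_{m \geq k_n} R_m \utimes{D_m} N_{m,n}$, and using Lemma \ref{RRDD} (which identifies $R_{m,n}$ with $R_m \utimes{D_m} D_{m,n}$) together with Lemma \ref{RmDm} again, $R_m \utimes{D_m} N_{m,n} \cong \bigoplus_{\alpha \in \mathbb{N}_m^r} \mathbf{b}^\alpha \otimes N_{m,n}$. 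Specialising to $n = n(m)$ and recalling $N'_m = N_{m,n(m)}$ identifies $\cM(\bV)$ with $\invlim \bigoplus_{\alpha \in \mathbb{N}_m^r} \mathbf{b}^\alpha \otimes N'_m$. Under these identifications the restriction map $\cM(\bU) \to \cM(\bV)$ is induced levelwise by $\id_{R_m} \utimes{D_m}(N_m \to N'_m)$, which in terms of the $\mathbf{b}^\alpha$-decomposition is simply $\bigoplus_\alpha \mathbf{b}^\alpha \otimes (N_m \to N'_m)$; this makes the right-hand square commute.

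The one point that needs genuine care — and which I expect to be the main obstacle — is checking that the chosen right $D_m$-basis $\{\mathbf{b}^\alpha\}$ of $R_m$ is \emph{simultaneously} a right $D_{m,n}$-basis of $R_{m,n}$ under the isomorphism of Lemma \ref{RRDD}, so that the two horizontal decompositions are compatible with the vertical maps and the restriction morphism really is ``diagonal'' in the $\alpha$-index. This comes down to the fact that the same coset representatives $\{g_1^{k_1}\cdots g_r^{k_r} : k_i \in \mathbb{N}_m\}$ for $JH_m/H_m$ in $H/H_m$ are used on both $\bU$ and $\bV_n$, i.e. the localisation functor $(-)(\bV_n)$ applied to $\hK{\sU(\pi^m\cL)} \rtimes_{H_m} H$ does not disturb the crossed-product group data — this is exactly what the proof of Lemma \ref{RRDD} records. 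Granting that, everything else is a routine diagram chase through the defining formulas, carried out levelwise in $m$ and then passed to the limit, using that the transition maps of all systems in sight are built from the same Banach-algebra restriction maps $D_m \to D_{m,n}$, $R_m \to R_{m,n}$.
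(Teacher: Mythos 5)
Your plan is correct and follows essentially the same route as the paper's proof: cover $\bV$ by the $\bV_n$, insert the Fr\'echet--Stein presentations from (\ref{eq: NVnMVn}), and swap the two nested inverse limits so that the double limit over $n$ and $m \geq k_n$ collapses to a single limit over $m$ along the diagonal $n = n(m)$, at which point $N'_m$ and $\bigoplus_{\alpha} \mathbf{b}^\alpha \otimes N'_m$ appear by definition, and the squares commute because every identification is induced by restriction. The basis compatibility you flag at the end is not a separate thing to verify: once Lemma \ref{RRDD} rewrites $R_{m,n} \utimes{D_{m,n}} N_{m,n}$ as $R_m \utimes{D_m} N_{m,n}$ in (\ref{eq: NVnMVn}), every subsequent direct-sum decomposition uses only the freeness of $R_m$ over $D_m$ from Lemma \ref{RmDm}, so the question of a $D_{m,n}$-basis for $R_{m,n}$ never enters.
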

\begin{proof} Since $\{\bV_n : n \geq 0\}$ is an admissible covering of $\bV$ and since $\cN$ is a sheaf, the restriction maps induce an isomorphism
\[ \cN(\bV) \tocong \lim\limits_{\stackrel{\longleftarrow}{n \geq 0}} \cN(\bV_n).\]
Applying the first formula in (\ref{eq: NVnMVn}) and swapping the order of limits, we have
\[\cN(\bV) \tocong \lim\limits_{\stackrel{\longleftarrow}{n \geq 0}} \cN(\bV_n) = \lim\limits_{\stackrel{\longleftarrow}{n \geq 0}} \lim\limits_{\stackrel{\longleftarrow}{m \geq k_n}} N_{m,n} = \lim\limits_{\stackrel{\longleftarrow}{m \geq 0}} \lim\limits_{\stackrel{\longleftarrow}{n : k_n \leq m}} N_{m,n} = \lim\limits_{\stackrel{\longleftarrow}{m \geq 0}} N'_m. \]
Similarly, using the second formula in (\ref{eq: NVnMVn}) together with Lemma \ref{RmDm}, we have
\[\cM(\bV) \quad \cong \quad \lim\limits_{\stackrel{\longleftarrow}{m \geq 0}} R_m \utimes{D_m} N'_m = \lim\limits_{\stackrel{\longleftarrow}{m \geq 0}} \bigoplus\limits_{\alpha \in \mathbb{N}^r_m} \mathbf{b}^\alpha \otimes N'_m.\]
The result follows. \end{proof}
\begin{thm} \label{LCIndM} Let $H$ be a uniform pro-$p$ group with closed subgroup $J$. Suppose that $(\bU,H)$ is small. Let $\bZ$ be a Zariski closed subset of $\bU$ and let $\bV = \bU \backslash H \bZ$.  Suppose that $\cN \in \cC_{\bU/J}$ is such that the restriction map $\cN(\bU) \to \cN(\bV)$ is injective, and let $\cM := \ind_J^H \cN$. Then the restriction map $\cM(\bU) \to \cM(\bV)$ is injective as well.
\end{thm}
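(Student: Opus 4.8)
The plan is to use Lemma~\ref{resNM} to turn both restriction maps into explicit maps of inverse systems, compute the kernel levelwise, and then reduce the vanishing of that kernel to the hypothesis by a dévissage along the degree filtration of the crossed products $R_m$.

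First, by Lemma~\ref{resNM} the restriction map $\cN(\bU)\to\cN(\bV)$ is identified with $\invlim_m(N_m\to N'_m)$, and the restriction map $\cM(\bU)\to\cM(\bV)$ with $\invlim_m$ of the maps $\bigoplus_{\alpha\in\mathbb{N}^r_m}\mathbf{b}^\alpha\otimes N_m\to\bigoplus_{\alpha\in\mathbb{N}^r_m}\mathbf{b}^\alpha\otimes N'_m$ obtained from $N_m\to N'_m$ on each summand; here we are using that $R_m\utimes{D_m}N_m=\bigoplus_{\alpha}\mathbf{b}^\alpha\otimes N_m$ because $\{\mathbf{b}^\alpha:\alpha\in\mathbb{N}^r_m\}$ is a $D_m$-basis of $R_m$ (Lemma~\ref{RmDm}). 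Writing $C_m:=\ker(N_m\to N'_m)$, a $D_m$-submodule of $N_m$, the kernel of the $m$-th map is $\bigoplus_{\alpha}\mathbf{b}^\alpha\otimes C_m=R_m\utimes{D_m}C_m$ since a finite direct sum is exact. As $\invlim$ is left exact we obtain
\[ \ker\!\big(\cM(\bU)\to\cM(\bV)\big)=\invlim_m\big(R_m\utimes{D_m}C_m\big),\qquad \invlim_m C_m=\ker\!\big(\cN(\bU)\to\cN(\bV)\big)=0, \]
the last equality being the hypothesis. It therefore remains to prove the implication: if $\invlim_m C_m=0$ then $\invlim_m\big(R_m\utimes{D_m}C_m\big)=0$.

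For this I would filter $R_m$ by the $(D_m,D_m)$-sub-bimodules $F^kR_m:=\sum_{|\alpha|\le k}\mathbf{b}^\alpha D_m$, so that $R_m=F^{k(m)}R_m$ with $k(m):=r(p^{e_m}-1)$ and $\mathrm{gr}^kR_m$ is $D_m$-free on $\{\mathbf{b}^\alpha:|\alpha|=k,\ \alpha\in\mathbb{N}^r_m\}$. The key observation is that the transition map $\theta_m\colon R_{m+1}\to R_m$ does not raise degree: it fixes each $\gamma(g_i)$, hence fixes $\mathbf{b}^\alpha$ for $\alpha\in\mathbb{N}^r_m$, while for the remaining monomials the binomial expansion of $b_i^{p^{e_m}}=(\gamma(g_i)-1)^{p^{e_m}}$ has its top term $\gamma(g_i)^{p^{e_m}}=\gamma(g_i^{p^{e_m}})$ lying in the degree-zero part $\hK{U(\pi^m\cL)}\subseteq D_m$ (since $g_i^{p^{e_m}}\in H_m$), so that $b_i^{p^{e_m}}\in F^{p^{e_m}-1}R_m$ and hence $\theta_m(F^kR_{m+1})\subseteq F^kR_m$ for all $k$. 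Tensoring with $C_\bullet$ exhibits $(R_m\utimes{D_m}C_m)_m$ as the increasing union of the subsystems $(F^kR_m\utimes{D_m}C_m)_m$, with graded pieces $\big(\bigoplus_{|\alpha|=k}\mathbf{b}^\alpha\otimes C_m\big)_m$; for $m$ large enough that $p^{e_m}>k$ the index set $\{\alpha:|\alpha|=k\}$ lies in $\mathbb{N}^r_m$ and is preserved by $\theta_m$, so on this cofinal tail the transition maps act componentwise and $\invlim_m\big(\mathrm{gr}^kR_m\utimes{D_m}C_m\big)=\bigoplus_{|\alpha|=k}\invlim_m C_m=0$. A dévissage on $k$ (left exactness of $\invlim$ applied to $0\to F^{k-1}\to F^k\to\mathrm{gr}^k\to0$, base case $F^0R_m=D_m$ so $F^0R_m\utimes{D_m}C_m=C_m$) then gives $\invlim_m\big(F^kR_m\utimes{D_m}C_m\big)=0$ for every $k$.

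Finally I would pass from finite degree to the whole module. Given $z=(z_m)\in\invlim_m\big(R_m\utimes{D_m}C_m\big)$: since $\theta_m$ is degree-non-increasing, $\deg z_m$ is non-decreasing in $m$; if it is bounded by some $K$ then $z$ already lies in the subsystem $\invlim_m\big(F^KR_m\utimes{D_m}C_m\big)=0$, hence $z=0$. The remaining possibility $\deg z_m\to\infty$ must be ruled out, and this is where the Banach-space structure enters: the transition maps are continuous for the natural norms on the modules $R_m\utimes{D_m}C_m$, and one shows that the contribution to the image $z_{m_0}\in R_{m_0}\utimes{D_{m_0}}C_{m_0}$ coming from the part of $z_m$ of degree $>k$ becomes negligible as $k\to\infty$, uniformly enough in $m$ to force $z_{m_0}=0$ for every $m_0$; alternatively one can invoke the vanishing of the first derived inverse limit for the coadmissible systems at hand. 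This reconciliation of $\invlim_m$ with the increasing union of its finite-degree subsystems is the only genuinely delicate point of the proof; everything preceding it is formal manipulation of the inverse limits furnished by Lemma~\ref{resNM}.
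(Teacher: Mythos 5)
The first half of your argument is the same as the paper's: via Lemma \ref{resNM} both restriction maps become maps of inverse systems, the level-$m$ kernel is $\bigoplus_{\alpha\in\N^r_m}\mathbf{b}^\alpha\otimes C_m$ with $C_m=\ker(N_m\to N'_m)$, and left exactness of $\varprojlim$ together with the hypothesis gives $\varprojlim_m C_m=0$; so everything hinges on showing $\varprojlim_m\big(R_m\otimes_{D_m}C_m\big)=0$. But there are two genuine gaps. First, all the machinery you invoke from \S\ref{LocCohIndMod} --- the generating set $g_1,\dots,g_d$ of $H$ adapted to $J$, Lemma \ref{RmDm}, and hence Lemma \ref{resNM} itself --- is set up under the standing hypothesis that $J$ is a closed \emph{isolated} subgroup of the uniform group $H$, whereas the theorem only assumes $J$ closed. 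For a non-isolated closed subgroup no minimal topological generating set of $J$ extends to one of $H$ (its generators may lie in the Frattini subgroup of $H$), so the basis $\{\mathbf{b}^\alpha\}$ and your displayed decompositions simply do not exist. The paper devotes the entire second half of its proof to this case: it replaces $J$ by its isolator $\widetilde{J}$, identifies $\ind_J^H\cN$ with $\ind_{\widetilde{J}}^H(\ind_J^{\widetilde{J}}\cN)$ by transitivity, and transfers the injectivity hypothesis from $\cN$ to $\ind_J^{\widetilde{J}}\cN$ using that $K[\widetilde{J}]$ is a free right $K[J]$-module of finite rank. Your proposal omits this reduction entirely.

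Second, in the case you do treat, the argument stops exactly at its crux. You rightly note that the transition map $R_{m+1}\to R_m$ re-expands the monomials $\mathbf{b}^\beta$ with $\beta\notin\N^r_m$ (through $b_i^{p^{e_m}}=\gamma(g_i^{p^{e_m}})+\text{lower order terms}$), so a compatible family could a priori have components of unbounded degree; your d\'evissage only disposes of bounded-degree families, because $\varprojlim$ does not commute with the increasing union $R_m\otimes_{D_m}C_m=\bigcup_K F^KR_m\otimes_{D_m}C_m$. Neither of the two fixes you gesture at is carried out or clearly available: no norms on $R_m\otimes_{D_m}C_m$ are specified for the ``negligible contribution'' estimate, and the subsystems $\big(F^KR_m\otimes_{D_m}C_m\big)_m$ are not coadmissible systems in any evident sense, so a $\varprojlim^1$-vanishing cannot just be invoked. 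In addition, the structural facts your filtration requires --- that each $F^kR_m$ is a $(D_m,D_m)$-sub-bimodule and that arbitrary products and left multiplications (not only the pure powers $b_i^{p^{e_m}}$) do not raise degree --- are asserted without proof and depend on the uniformity of $H$. By contrast, the paper uses no filtration at all: in the isolated case it argues componentwise, observing that for each fixed $\alpha\in\N^r_m$ the $\alpha$-components of a compatible family $(v(s))_{s\ge m}$ form a compatible family in $(\cK_s)_{s\ge m}$, whose limit vanishes (the interaction with the new indices $\beta\notin\N^r_m$ --- precisely the phenomenon you are fighting --- is what that componentwise reading has to absorb). So while your extra care about the re-expansion is not misplaced, your route does not reach a proof: it is missing the closed-to-isolated reduction and leaves the unbounded-degree case, the only genuinely delicate point, open.
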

\begin{proof}  We first deal with the special case where the closed subgroup $J$ is \emph{isolated}, and we use Lemma \ref{resNM} and its notation. So let $\cK_m$ be the kernel of the map $N_m\rightarrow N'_m$ for any $m\geq 0$. The first diagram in the lemma, together with left-exactness of the projective limit, implies $\invlim \cK_m=0$, since the restriction map $\cN(\bU) \to \cN(\bV)$ is injective by hypothesis. 
Define \[\cW_{m}:=\bigoplus\limits_{\alpha \in \mathbb{N}^r_m} \mathbf{b}^\alpha \otimes \cK_m.\] By the second diagram in the lemma, the projective limit 
$\invlim \cW_m$ computes the kernel of the restriction map $\cM(\bU) \to \cM(\bV)$. Hence, it suffices to show that this latter projective limit vanishes. 

To start with, let $f_m$ be the transition map $\cK_{m+1}\rightarrow \cK_m$. For each $\alpha\in\N^r_m\subset \N^r_{m+1}$, the transition map $\cW_{m+1}\rightarrow \cW_m$ sends the direct summand ${\bf b}^{\alpha}\otimes \cK_{m+1}$ to ${\bf b}^{\alpha}\otimes \cK_m$ via the map ${\bf b}^{\alpha}\otimes f_m$.
Now let $(v(m))_{m\geq 0}\in \invlim \cW_m$. Write 
$$v(m)=\sum_{\alpha\in\N^r_m} {\bf b}^{\alpha}\otimes v(m)_\alpha$$
with some $v(m)_\alpha\in\cK_m$.
Now fix $m \geq 0$ and take $\alpha\in \N^r_m$. By the observation above, $$ f_{m+1}(v(m+1)_\alpha)=v(m)_\alpha$$ and, more generally, 

$$ f_{m+k}(v(m+k)_\alpha)=v(m+k-1)_\alpha$$
for any $k\geq 1$. So 
$$ ( v(m)_\alpha, v(m+1)_\alpha, v(m+2)_\alpha,\ldots) \in \lim\limits_{\stackrel{\longleftarrow}{s \geq m}} \cK_s=0.$$
In particular, we have $v(m)_\alpha=0$. This holds for any $\alpha\in \N^r_m$, whence
$v(m)=0$. This shows $(v(m))_{m\geq 0}=0$ and $\invlim\cW_m=0$.

Returning to the general case, we let $\widetilde{J} := \{g \in H : g^{p^n} \in J$ for some $n \geq 0\}$. Using \cite[Proposition 7.15(i)]{DDMS}, we see that $\widetilde{J}$ is a closed isolated subgroup of $H$ containing $J$ as an open subgroup. Let $\widetilde{\cN} := \ind_J^{\widetilde{J}} \cN \in \cC_{\bU/\widetilde{J}}$; then using \cite[Theorem B(c)]{EqDCap}, \cite[Lemma 2.3.6]{EqDCapTwo} and \cite[Corollary 7.4]{DCapOne}, we see that $\cM$ is isomorphic to $\ind_{\widetilde{J}}^H \widetilde{\cN}$ in $\cC_{\bU/H}$. Since $\widetilde{J}$ is isolated in $H$ by construction, the special case handled above shows that it is enough to show that the restriction map $\widetilde{\cN}(\bU) \to \widetilde{\cN}(\bV)$ is injective. Since $J$ has \emph{finite index} in $\widetilde{J}$, the left-handed version of \cite[Proposition 3.4.10(a)]{EqDCap} shows that the natural map
\[ K[\widetilde{J}] \underset{K[J]}{\otimes}{} \w\cD(\bU,J) \to \w\cD(\bU,\widetilde{J})\]
is an isomorphism. It follows that the maps
\[ K[\widetilde{J}] \underset{K[J]}{\otimes}{} \cN(\bU) \to \widetilde{\cN}(\bU) \qmb{and} K[\widetilde{J}] \underset{K[J]}{\otimes}{} \cN(\bV) \to \widetilde{\cN}(\bV)\]
are isomorphisms as well. Since $K[\widetilde{J}]$ is a free right $K[J]$-module, the injectivity of $\widetilde{\cN}(\bU) \to \widetilde{\cN}(\bV)$ now follows easily from the given injectivity of $\cN(\bU) \to \cN(\bV)$.
\end{proof}

\subsection{The proof of irreducibility}\label{proofirred}
We will work in the following axiomatic setting.
\begin{setup} \label{AbsHyps}\,
\begin{itemize} 
\item $\bX$ is a connected, smooth, rigid $K$-analytic variety,
\item $G$ is a {\it compact} $p$-adic Lie group acting continuously on $\bX$,
\item $\bY$ is a connected Zariski closed subset of $\bX$,
\item $\bZ \subset \bY$ is a Zariski closed subset of $\bY$ with $\dim \bZ < \dim \bY$,
\item $\cN \in \cC_{\bX/P}^{\bY}$ where $P := G_{\bY}$ is the stabilizer of $\bY$ in $G$,
\item $\cM = \ind_P^G \cN \in \cC_{\bX/G}$.
\end{itemize}
\end{setup}

\begin{defn} \label{DefSigma} We define $\Sigma := \bX \backslash G\bZ$.
\end{defn}

\begin{lem} \label{SigmaAdm} $\Sigma$ is an admissible open subset of $\bX$.
\end{lem}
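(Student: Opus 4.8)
The plan is to check admissibility of $\Sigma$ locally, using that a subset of a rigid space is admissible open as soon as its intersection with every member of an admissible covering is admissible open. So the first step is to choose a convenient covering: since the $G$-action on $\bX$ is continuous, $\bX$ admits an admissible covering $\{\bU_i\}_{i\in I}$ by affinoid subdomains $\bU_i\in\bX_w(\cT)$, each of which is stable under some open subgroup $H_i\le G$ (this is part of the basic setup of continuous $G$-actions, cf.\ the construction of $\cC_{\bX/G}$ in \cite{EqDCapTwo}); because $G$ is compact, each $H_i$ then has finite index in $G$. It will suffice to prove that $\Sigma\cap\bU_i=\bU_i\setminus G\bZ$ is admissible open in the affinoid $\bU_i$, for each $i$.

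Fix such a pair $(\bU,H)=(\bU_i,H_i)$. The second, and key, step is the elementary observation that the set $S:=\{g\in G:g\bZ\cap\bU\ne\emptyset\}$ is a union of left cosets of $H$: if $g\in S$ and $h\in H$, then, using $h\bU=\bU$, one has $hg\bZ\cap\bU=h(g\bZ)\cap h\bU=h(g\bZ\cap\bU)\ne\emptyset$, so $Hg\subseteq S$. As $[G:H]<\infty$ we may write $S=Hg_1\cup\dots\cup Hg_m$ for finitely many $g_1,\dots,g_m\in G$ (and if $S=\emptyset$ then $G\bZ\cap\bU=\emptyset$ and $\Sigma\cap\bU=\bU$ is trivially admissible open). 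For each $j$ the set $\bZ_j:=g_j\bZ\cap\bU$ is Zariski closed in $\bU$, since $g_j\bZ$ is Zariski closed in $\bX$, and the same computation as above yields
\[ G\bZ\cap\bU \;=\; \bigcup_{g\in S}(g\bZ\cap\bU)\;=\;\bigcup_{j=1}^m\ \bigcup_{h\in H} h\,(g_j\bZ\cap\bU)\;=\;\bigcup_{j=1}^m H\bZ_j .\]

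The final step is to invoke Proposition \ref{prop3}: applied to the continuous action of the compact group $H$ on the affinoid $\bU$ and to the Zariski closed subset $\bZ_j\subseteq\bU$, it shows that $\bU\setminus H\bZ_j$ is admissible open in $\bU$ for each $j$. Hence $\Sigma\cap\bU=\bU\setminus\bigcup_{j=1}^m H\bZ_j=\bigcap_{j=1}^m(\bU\setminus H\bZ_j)$ is a finite intersection of admissible opens, so it is admissible open in $\bU$; letting $\bU$ run through the covering $\{\bU_i\}$ then shows that $\Sigma$ is admissible open in $\bX$. I do not expect a genuine obstacle here: the real work is already contained in Proposition \ref{prop3}, and the only points to get right are the bookkeeping of the second step — exhibiting $G\bZ\cap\bU$ as a \emph{finite} union of sets of the form $H\cdot(\text{Zariski closed subset of }\bU)$ — together with the (standard) fact that admissibility is local on an admissible covering.
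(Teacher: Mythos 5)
Your proof is correct and follows the same route as the paper's: reduce via property (G1) to a member $\bU$ of an admissible affinoid covering stable under an open, hence finite-index, subgroup $H$ of the compact group $G$, decompose $G\bZ\cap\bU$ using the finiteness of $[G:H]$ into $H$-translates of Zariski closed subsets of $\bU$, and invoke Proposition \ref{prop3}. The only cosmetic difference is at the very end: instead of appealing to stability of admissible opens under finite intersection, the paper (through its Lemma \ref{lemma1}) notes that your $\bigcup_j H\bZ_j$ equals $H\bigl(\bigcup_j\bZ_j\bigr)$ with $\bigcup_j\bZ_j$ again Zariski closed in $\bU$, so that Proposition \ref{prop3} applies in a single stroke.
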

\begin{proof} Let $\{\bX_i\}_{i\in I}$ be an admissible affinoid covering of $\bX$. For the admissibility of $\Sigma$, it suffices, according to property $(G_1)$ of the $G$-topology on $\bX$, (in the terminology of \cite[Definition 9.3.1/4(i)]{BGR}) to show that each 
$\bX_i\cap\Sigma$ is admissible open in $\bX$. 

Let $H_i$ be an $\bX_i$-small open subgroup of $G_{\bX_i}$. According to Lemma \ref{lemma1} below, one has $\bX_i\cap\Sigma= \bX_i \backslash H_i\bZ_{\bX_i,H_i}$ for some Zariski-closed $\bZ_{\bX_i,H_i}$ in $\bX_i$. 
By Proposition \ref{prop3},  $\bX_i \backslash H_i\bZ_{\bX_i,H_i}$ is admissible open in $\bX_i$, hence in $\bX$.
\end{proof}

\begin{lem} \label{lemma1} Let $\bU$ be an affinoid subdomain of $\bX$ and let $H$ be an open subgroup of $G_{\bU}$. Then there exists a Zariski closed subset $\bZ_{\bU,H}$ of $\bU$ such that $\bU \cap G\bZ = H \bZ_{\bU,H}$ and $\dim \bZ_{\bU,H} \leq \dim \bZ$, and therefore $\bU \cap \Sigma = \bU \backslash H \bZ_{H,\bU}$.
\end{lem}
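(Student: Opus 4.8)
The plan is to reduce the infinite union $G\bZ = \bigcup_{g\in G} g\bZ$ to a \emph{finite} union of $H$-orbits of Zariski closed subsets of $\bU$, exploiting that $H$ has finite index in the compact group $G$. Fix coset representatives $1 = g_1,g_2,\ldots,g_n$ with $G = \bigsqcup_{i=1}^n Hg_i$. Each $g_i$ acts on $\bX$ as a rigid-analytic automorphism, so each $g_i\bZ$ is Zariski closed in $\bX$, and therefore $\bU \cap g_i\bZ = V(I_i)$ for an ideal $I_i \subseteq \cO(\bU)$. I would then define
\[ \bZ_{\bU,H} := \bigcup_{i=1}^n \big(\bU \cap g_i\bZ\big) = V\Big(\bigcap_{i=1}^n I_i\Big),\]
which is Zariski closed in $\bU$, being a finite union of Zariski closed subsets.

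It remains to identify $\bU \cap G\bZ$ with $H\bZ_{\bU,H}$. The key bookkeeping point is that for $h \in H$ one has $h(\bU \cap g\bZ) = h\bU \cap hg\bZ = \bU \cap hg\bZ$, since $H \subseteq G_\bU$; summing over a right coset gives $\bigcup_{g\in Hg_i}(\bU \cap g\bZ) = H\cdot(\bU \cap g_i\bZ)$. Taking the union over $i = 1,\ldots,n$ yields $\bU \cap G\bZ = \bigcup_{g\in G}(\bU \cap g\bZ) = \bigcup_{i=1}^n H(\bU \cap g_i\bZ) = H\bZ_{\bU,H}$, and $H\bZ_{\bU,H} \subseteq \bU$ because $H\bU = \bU$. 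The asserted identity $\bU \cap \Sigma = \bU \setminus H\bZ_{\bU,H}$ is then immediate from $\Sigma = \bX \setminus G\bZ$. For the dimension estimate, $g_i$ being an automorphism of $\bX$ gives $\dim g_i\bZ = \dim\bZ$, while $\bU \cap g_i\bZ$ is an admissible open subspace of $g_i\bZ$, so $\dim(\bU \cap g_i\bZ) \leq \dim\bZ$; since $\bZ_{\bU,H}$ is a finite union of these, $\dim\bZ_{\bU,H} \leq \dim\bZ$.

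I do not anticipate a serious obstacle: the argument is essentially a finite-index reduction together with the observation that a finite union of Zariski closed subsets of an affinoid is again Zariski closed. The two points to handle with a little care are the left/right coset bookkeeping (so that each piece is genuinely an $H$-orbit and the union collapses to $H\bZ_{\bU,H}$), and the fact, which I am using, that the stabilizer $G_\bU$ of an affinoid subdomain under a continuous action of a compact $p$-adic Lie group is open in $G$ — this is what makes $[G:H]$ finite and hence the representative set $\{g_i\}$ finite.
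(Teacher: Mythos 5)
Your proof is correct and takes essentially the same approach as the paper's: both reduce the potentially infinite union $\bU\cap G\bZ$ to a finite union of $H$-translates of Zariski closed pieces $\bU\cap g_i\bZ$ using the coset bookkeeping $h(\bU\cap g\bZ)=\bU\cap hg\bZ$ for $h\in H$. The only cosmetic difference is that the paper decomposes $G$ into finitely many $(H,G_\bZ)$-double cosets rather than plain right $H$-cosets, and both reductions rest on precisely the openness of $G_\bU$ in the compact group $G$ that you correctly flag at the end.
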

\begin{proof} Let $s_1,...,s_n$ be the representatives for the $(H,G_\bZ)$-double cosets in $G$.
Then $G=\coprod\limits_{i=1}^n Hs_i G_\bZ$, so $G\bZ=\bigcup\limits_{i=1}^n Hs_i \bZ$. Since $\bU$ is $H$-stable, one finds
\[ \bU\cap G\bZ=\bigcup\limits_{i=1}^n \bU\cap Hs_i\bZ=\bigcup\limits_{i=1}^n H(\bU\cap s_i\bZ)=H\bZ_{\bU,H} ,\]
where $\bZ_{\bU,H}:=\bigcup\limits_{i=1}^n \bU\cap s_i\bZ$ is Zariski-closed in $\bU$ with  $\dim \bZ_{\bU,H} \leq \dim \bZ$. 
\end{proof}

Next, we introduce the following conditions on our data $(\bX,G,\bY,\bZ,\cN)$:
\begin{setup}\label{axioms}\,
\begin{enumerate}[{(}A{)}]
\item $(\bX, \bY, G)$ satisfies the LSC from \cite[Definition 2.5.6]{EqDCapTwo},
\item $\bigcup\limits_{\stackrel{g,h \in G}{g\bY \neq h\bY}} g\bY \cap h\bY \subseteq G \bZ$,
\item $\bY \cap \Sigma$ is connected,
\item $\cN$ is {\it locally simple}, i.e. $\cN_{|\bU}$ is a simple object in $\cC_{\bU/P_{\bU}}$whenever $\bU \in \bX_w(\cT)$ is connected and $\bU \cap \bY$ is connected and non-empty,
\item $\bY \subseteq \Supp(\tilde{\cN})$, and
\item $\cN$ is \emph{weakly holonomic} in the sense of \cite[Def. 5.9]{VuThesis}: $\cN \in \cC_{\bX/P}^{\wh}$.
\end{enumerate}
\end{setup}
Recall from \cite[Definition 3.4.6(a)]{EqDCap} that $\bX_w(\cT)$ denotes the set of affinoid subdomains $\bU$ of $\bX$ such that $\cT(\bU)$ admits a free $\cA$-Lie lattice for some affine formal model $\cA$ in $\cO(\bU)$. 

Our goal will be to prove Theorem \ref{MainResult} below. We assume, until the end of $\S \ref{proofirred}$, that $(\bX,G,\bY,\bZ)$  satisfy Conditions (A),(B) and (C), and that $\cN$ satisfies Conditions (D,E,F).
\begin{lem} \label{regGorbit} The $G$-orbit of $\Sigma \cap \bY$ in $\Sigma$ is regular.
\end{lem}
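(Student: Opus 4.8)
The plan is to reduce the regularity statement entirely to Condition~(B). First I would observe that $G\bZ=\bigcup_{g\in G}g\bZ$ is a $G$-stable subset of $\bX$, so its complement $\Sigma$ is $G$-stable and $\bY':=\Sigma\cap\bY$ is a subset of the $G$-set $\Sigma$; moreover $Z_{\bY'}$ and $G_{\bY'}$ coincide whether one computes them inside $\Sigma$ or inside $\bX$, precisely because $\Sigma$ is $G$-stable. So the task is to prove $Z_{\bY'}=G_{\bY'}$, and since $\bY'$ will be seen to be nonempty, the inclusion $G_{\bY'}\subseteq Z_{\bY'}$ is automatic; only $Z_{\bY'}\subseteq G_{\bY'}$ needs an argument.

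For that inclusion I would take $g\in G$ with $g\bY'\cap\bY'\neq\emptyset$. Using $g\Sigma=\Sigma$ one has $g\bY'=g\bY\cap\Sigma$, hence
\[ g\bY'\cap\bY' \;=\; (g\bY\cap\bY)\cap\Sigma \;=\; (g\bY\cap\bY)\setminus G\bZ .\]
Since this set is nonempty, $g\bY\cap\bY$ is not contained in $G\bZ$; by Condition~(B), applied with $h=1$, this is possible only if $g\bY=\bY$, i.e. $g\in P=G_\bY$. But then $g\bY'=g\bY\cap\Sigma=\bY\cap\Sigma=\bY'$, so $g\in G_{\bY'}$. This gives $Z_{\bY'}\subseteq G_{\bY'}$, and therefore the $G$-orbit of $\bY'$ in $\Sigma$ is regular.

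It remains to record that $\bY'=\bY\setminus G\bZ$ is nonempty. Otherwise $\bY\subseteq G\bZ$; choosing an affinoid subdomain $\bU$ of $\bX$ meeting an irreducible component $\bY_0$ of $\bY$ of maximal dimension $\dim\bY$, together with a compact open subgroup $H$ of $G_\bU$ (possible since $G$ is compact), Lemma~\ref{lemma1} gives $\bU\cap G\bZ=H\bZ_{\bU,H}$ with $\dim\bZ_{\bU,H}\leq\dim\bZ<\dim\bY$; then $\bU\cap\bY\subseteq H\bZ_{\bU,H}$, so Lemma~\ref{SHY} forces $\dim(\bU\cap\bY)<\dim\bY$, contradicting the fact that $\bU\cap\bY_0$ is a nonempty admissible open in the irreducible variety $\bY_0$ and hence has dimension $\dim\bY$. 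I do not anticipate a genuine obstacle in this argument: once $\Sigma$ is recognised as $G$-stable, regularity of $\bY'$ in $\Sigma$ is an immediate formal consequence of the self-intersection hypothesis~(B), and the only point needing input from rigid geometry is the non-vanishing of $\bY'$, which is handled by Lemmas~\ref{lemma1} and~\ref{SHY}.
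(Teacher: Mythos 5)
Your proof is correct and follows essentially the same route as the paper's: the heart of the matter in both is that Condition~(B) forces any intersection of distinct $G$-translates of $\bY$ into $G\bZ$, which is removed from $\Sigma$. The paper phrases regularity as ``distinct translates of $\Sigma\cap\bY$ are disjoint,'' which makes the argument three lines long and needs no nonemptiness check; you phrase it as $Z_{\Sigma\cap\bY}=G_{\Sigma\cap\bY}$, which is equivalent except when $\Sigma\cap\bY=\emptyset$, and so you are obliged to rule that case out. Your dimension-count argument for $\Sigma\cap\bY\neq\emptyset$ via Lemmas~\ref{lemma1} and~\ref{SHY} is sound, and in fact is just a weakened version of the paper's subsequent Lemma~\ref{YcapSigmaDense} (density of $\bY\cap\Sigma$ in $\bY$), which is needed elsewhere anyway. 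So the extra work you did is not wasted conceptually, but it is not strictly necessary for this particular lemma if one adopts the ``disjoint translates'' formulation of regularity.
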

\begin{proof} According to \cite[Definition 1.2.2]{EqDCapTwo}, we have to show that any two distinct $G$-translates of $\Sigma \cap \bY$ in $\Sigma$ have empty intersection. Because $\Sigma = \bX \backslash G \bZ$ is $G$-stable, we have $g(\Sigma \cap \bY) = \Sigma \cap g \bY$ for any $g \in G$. Suppose that $g(\Sigma \cap \bY) \neq h(\Sigma \cap \bY)$ for some $g,h \in G$. Then $\Sigma \cap g\bY \neq \Sigma \cap h \bY$ and hence $g \bY \neq h \bY$. Condition (B) now implies that $g\bY \cap h \bY \subseteq G \bZ$ and hence $g(\Sigma \cap \bY) \cap h(\Sigma \cap \bY) = \Sigma \cap (g\bY \cap h \bY) \subseteq \Sigma \cap G\bZ = \emptyset$.  
\end{proof}

\begin{lem} \label{simpleNonSigma} $\cN_{|\Sigma}$ is a simple object in $\cC_{\Sigma/P}$.
\end{lem}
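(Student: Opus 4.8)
The plan is to deduce simplicity of $\cN_{|\Sigma}$ from \emph{local} simplicity (condition (D)) by a connectedness argument on $\bY \cap \Sigma$, which is connected by condition (C). First I would record two preliminary facts. Since $\cN$ is supported on $\bY$ we have $\Supp(\tilde\cN) \subseteq \overline{\bY}$, and condition (E) gives $\overline{\bY} \subseteq \Supp(\tilde\cN)$; hence $\Supp(\tilde\cN) = \overline{\bY}$, and Proposition \ref{prop-zariskiclosedsupport} shows that for every admissible open $\bV \subseteq \bX$ one has $\cN_{|\bV} = 0$ if and only if $\bV \cap \bY = \emptyset$. Secondly, using the LSC condition (A) together with the admissibility of $\Sigma$ (Lemma \ref{SigmaAdm}), I would fix an admissible covering $\{\bU_i\}_{i \in I}$ of $\Sigma$ by connected affinoids $\bU_i \in \bX_w(\cT)$ with $\bU_i \subseteq \Sigma$ and each $\bU_i \cap \bY$ either empty or connected and non-empty; for the latter kind, $\cN_{|\bU_i}$ is a nonzero simple object of $\cC_{\bU_i / P_{\bU_i}}$ by condition (D).

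Now let $0 \neq \cM' \subseteq \cN_{|\Sigma}$ be a subobject in $\cC_{\Sigma/P}$; the goal is $\cM' = \cN_{|\Sigma}$. Restriction $\cC_{\Sigma/P} \to \cC_{\bU_i/P_{\bU_i}}$ is exact, so $\cM'_{|\bU_i}$ is a subobject of $\cN_{|\bU_i}$, hence equals $0$ or $\cN_{|\bU_i}$ (by simplicity when $\bU_i \cap \bY \neq \emptyset$, and trivially — both being $0$ — when $\bU_i \cap \bY = \emptyset$). Set $I_+ := \{i : \cM'_{|\bU_i} = \cN_{|\bU_i} \neq 0\}$, $I_- := I \setminus I_+$, and $\Sigma_\pm := \bigcup_{i \in I_\pm} \bU_i$, admissible open subsets of $\Sigma$ with $\cM'_{|\Sigma_+} = \cN_{|\Sigma_+}$ and $\cM'_{|\Sigma_-} = 0$. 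The crucial point is that $\bY \cap \Sigma_+$ and $\bY \cap \Sigma_-$ are disjoint: a point $x$ in both would lie in some $\bU_i$ with $i \in I_+$ and some $\bU_j$ with $j \in I_-$, and then on $\bU_i \cap \bU_j$ the sheaf $\cM'$ both agrees with $\cN$ (via $\bU_i$) and vanishes (via $\bU_j$), so $\cN_{|\bU_i \cap \bU_j} = 0$, while $(\bU_i \cap \bU_j) \cap \bY \ni x$ is non-empty, contradicting the first paragraph. Since $\{\bU_i \cap \bY : \bU_i \cap \bY \neq \emptyset\}$ is an admissible covering of $\bY \cap \Sigma$ refining $\{\bY \cap \Sigma_+, \bY \cap \Sigma_-\}$, the latter is itself an admissible covering of the connected space $\bY \cap \Sigma$ by two disjoint pieces, so one of them is empty. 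If $\bY \cap \Sigma_+ = \emptyset$ then $I_+ = \emptyset$ (any $i \in I_+$ has $\emptyset \neq \bU_i \cap \bY \subseteq \Sigma_+ \cap \bY$), so $\cM'_{|\bU_i} = 0$ for all $i \in I = I_-$, forcing $\cM' = 0$, which is excluded. Hence $\bY \cap \Sigma_- = \emptyset$; then for $i \in I_-$ we have $\bU_i \cap \bY = \emptyset$, so $\cM'_{|\bU_i} = 0 = \cN_{|\bU_i}$, while $\cM'_{|\bU_i} = \cN_{|\bU_i}$ for $i \in I_+$ by definition, and therefore $\cM' = \cN_{|\Sigma}$. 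Finally $\cN_{|\Sigma} \neq 0$, because $\bY \cap \Sigma$ is non-empty — by (C), or directly since $\bY \not\subseteq G\bZ$ as $\dim(G\bZ \cap \bY) \leq \dim \bZ < \dim \bY$ by Lemmas \ref{lemma1} and \ref{SHY} — so $\cN_{|\Sigma} \neq 0$ by the first paragraph. Thus $\cN_{|\Sigma}$ is simple.

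The only real difficulties I anticipate are bookkeeping ones rather than conceptual: verifying that the LSC condition genuinely supplies an admissible covering of $\Sigma$ (and not merely of $\bX$) by connected $\bU \in \bX_w(\cT)$ with $\bU \cap \bY$ connected or empty; that the restriction functors $\cC_{\Sigma/P} \to \cC_{\bU/P_{\bU}}$ are exact, so that subobjects restrict to subobjects; and the standard facts that an arbitrary union of members of an admissible covering is admissible open and that a covering refined by an admissible covering is admissible. The conceptual heart — the disjoint admissible decomposition $\bY \cap \Sigma = (\bY \cap \Sigma_+) \sqcup (\bY \cap \Sigma_-)$, forced by local simplicity together with the property that $\cN$ vanishes precisely off $\bY$, and then ruled out by connectedness of $\bY \cap \Sigma$ — is short.
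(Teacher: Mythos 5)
Your proof is correct and follows essentially the same route as the paper's: partition the affinoid cover of $\Sigma$ according to whether the candidate subobject vanishes or coincides with $\cN$ on each piece, show the two resulting parts of the cover are disjoint over $\bY \cap \Sigma$, and finish by invoking connectedness from condition (C). The only technical difference is that to rule out $\cN_{|\bU_i \cap \bU_j} = 0$ when $\bU_i \cap \bU_j \cap \bY \neq \emptyset$ you invoke condition (E) via your preliminary support computation, whereas the paper instead produces a further connected affinoid $\bU_3 \subseteq \bU_i \cap \bU_j$ with $\bU_3 \cap \bY$ connected and non-empty via \cite[Lemma 2.5.16]{EqDCapTwo} and applies condition (D) once more; both are legitimate under the standing hypotheses of $\S\ref{proofirred}$.
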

\begin{proof} Suppose that $\cN'$ is a subobject of $\cN_{|\Sigma}$ in $\cC_{\Sigma/P}$. Fix a $\Sigma_w(\cT)$-covering $\cU$ of $\Sigma$ consisting of connected affinoid subdomains. By applying \cite[Lemma 2.5.16]{EqDCapTwo}, we may refine the covering to assume that each $\bU \cap \bY$ is either connected or empty for $\bU\in\cU$. Now we define
\[ \cU_1 := \{\bU \in \cU : \cN'_{|\bU} = 0\} \qmb{and} \cU_2 := \{\bU \in \cU : \cN'_{|\bU} = \cN_{|\bU}\}.\]
We restrict the admissible covering $\cU$ to $\bY \cap \Sigma$. This gives us an admissible covering $\cU' := \{\bU \cap \bY : \bU \in \cU\}$ of $\bY \cap \Sigma$ (possibly containing the empty set), together with its subsets $\cU'_1$ and $\cU'_2$ which are defined analogously. We will now show that 
\[ \cU' = \cU'_1 \cup \cU'_2, \qmb{and} \bigcup \cU'_1 \cap \bigcup \cU'_2 = \emptyset.\]

Suppose that $\bU \in \cU \backslash \cU_1$. Then $0 \neq \cN'_{|\bU} \leq \cN_{|\bU}$ implies that $\bU \cap \bY \neq \emptyset$ because $\cN \in \cC^{\bY}_{\bX/P}$ by assumption. Hence $\bU \cap \bY$ is connected by the first paragraph of the proof. By Lemma \ref{SigmaAdm}, $\bU$ lies in $\Sigma_w(\cT) \subset \bX_w(\cT)$. By condition (D), $\cN_{|\bU}$ is a simple object in $\cC_{\bU/P_{\bU}}$, so $\cN'_{|\bU} = \cN_{|\bU}$ and hence $\bU \in \cU_2$. We have shown that $\cU = \cU_1 \cup \cU_2$, and hence \emph{a fortiori}, $\cU' = \cU'_1 \cup \cU'_2$.

Now take $\bU_1 \in \cU_1$ and $\bU_2 \in \cU_2$ and suppose for a contradiction that $\bU_1 \cap \bU_2 \cap \bY \neq \emptyset$. By applying \cite[Lemma 2.5.16]{EqDCapTwo} again, we can choose a non-empty connected affinoid subdomain $\bU_3$ of $\bU_1 \cap \bU_2$ such that $\bU_3 \cap \bY$ is also connected and non-empty.  Then $\cN_{|\bU_3}$ is simple and hence non-zero by condition (D). Hence $\cN|_{\bU_1\cap\bU_2}$ is also non-zero. However $\cN'_{|\bU_1} = 0$ because $\bU_1 \in \cU_1$ and $\cN'_{|\bU_2} = \cN_{|\bU_2}$ because $\bU_2 \in \cU_2$, so 
\[ 0 = (\cN'_{|\bU_1})|_{\bU_1 \cap \bU_2} = \cN'|_{\bU_1 \cap \bU_2} = (\cN'_{|\bU_2})|_{\bU_1 \cap \bU_2} = (\cN_{|\bU_2})|_{\bU_1 \cap \bU_2} = \cN|_{\bU_1 \cap \bU_2} \]
and we have a contradiction. Hence $\bigcup \cU'_1 \cap \bigcup \cU'_2 = \emptyset$ as required. Note that this also implies that $\cU'_1 \cap \cU'_2$ is either empty, or is equal to $\{\emptyset\}$. 

Now $\cU' \backslash \{\emptyset\}$ is still an admissible covering of $\bY \cap \Sigma$, and $\cU' \backslash \{\emptyset\}$ is the disjoint union of $\cU'_1 \backslash \{\emptyset\}$ and $\cU'_2 \backslash \{\emptyset\}$. Since $\bY \cap \Sigma$ is connected by condition (C), we deduce that $\cU'_1 = \{\emptyset\}$ or $\cU'_2 = \{\emptyset\}$ by \cite[p. 108, equation ($\ast$)]{BoschFRG}.

Suppose that $\cU'_1 = \{\emptyset\}$. This means that for every $\bU \in \cU$,  $\cN'_{|\bU} = 0$ implies that $\bU \cap \bY = \emptyset$. In other words, whenever $\bU \cap \bY \neq \emptyset$ with $\bU \in \cU$, we have $\cN'_{|\bU} \neq 0$. Condition (D) then shows that $\cN'_{|\bU} = \cN_{|\bU}$ for every such $\bU$, since then $\bU \cap \bY$ is also connected by the first paragraph of the proof. On the other hand, if $\bU \cap \bY = \emptyset$ with $\bU \in \cU$, then $\cN'_{|\bU} \leq \cN_{|\bU} = 0$ so $\cN'_{|\bU} = \cN_{|\bU}$. Hence $\cN' = \cN$, as $\cU$ is an admissible covering of $\Sigma$.

Suppose that $\cU'_2 = \{\emptyset\}$. This means that for every $\bU \in \cU$,  $\cN'_{|\bU} = \cN_{|\bU}$ implies that $\bU \cap \bY = \emptyset$. In other words, whenever $\bU \cap \bY \neq \emptyset$ with $\bU \in \cU$, we have $\cN'_{|\bU} \neq \cN_{|\bU}$. Condition (D) then shows that $\cN'_{|\bU} = 0$ for every such $\bU$, since then $\bU \cap \bY$ is also connected by the first paragraph of the proof. Since $\cN'_{|\bU} = 0$ whenever $\bU \cap \bY = \emptyset$, we see that $\cN' = 0$ as $\cU$ is an admissible covering of $\Sigma$.
\end{proof}

\begin{lem} \label{YcapSigmaDense} $\bY \cap \Sigma$ is dense in $\bY$ in the classical topology on $\bX$.
\end{lem}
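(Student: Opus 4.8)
The plan is to reduce the statement to a local one on affinoid subdomains of $\bX$ and then to exploit the dimension inequality $\dim\bZ<\dim\bY$ via Lemmas \ref{lemma1} and \ref{SHY}.

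Since the affinoids $\bU$ with $\bU\in\bX_w(\cT)$ form an admissible covering of $\bX$, the sets $\bU\cap\bY$ form an admissible covering of $\bY$ by reduced affinoids, and, density in the classical topology being a property that can be tested on an admissible affinoid covering, it suffices to fix such a $\bU$ with $\bU\cap\bY\neq\emptyset$ and show that $\bU\cap\bY\cap\Sigma$ is classically dense in $\bU\cap\bY$. As $G$ is compact we may choose an open subgroup $H\subseteq G$ with $H\bU=\bU$. By Lemma \ref{lemma1}, $\bU\cap G\bZ=H\bZ_{\bU,H}$ for a Zariski closed subset $\bZ_{\bU,H}\subseteq\bU$ with $\dim\bZ_{\bU,H}\le\dim\bZ$, so that $\bU\cap\bY\cap\Sigma=(\bU\cap\bY)\setminus H\bZ_{\bU,H}$.

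I claim no irreducible component $\bS'$ of $\bU\cap\bY$ is contained in $H\bZ_{\bU,H}$: if one were, Lemma \ref{SHY}, applied in the affinoid $\bU$ with the compact group $H$, with $\bS=\bS'$ and $\bT=\bZ_{\bU,H}$, would give $\dim\bS'\le\dim\bZ_{\bU,H}\le\dim\bZ<\dim\bY$, which is impossible because every irreducible component of $\bY$, hence of $\bU\cap\bY$, has dimension $\dim\bY$. Therefore, for each component $\bS'$ of $\bU\cap\bY$ the set $\bS'\cap\Sigma=\bS'\setminus H\bZ_{\bU,H}$ is a non-empty admissible open subset of the irreducible reduced affinoid $\bS'$; by the rigid-analytic identity principle such a subset is classically dense in $\bS'$. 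Taking the union over the finitely many components shows that $\bU\cap\bY\cap\Sigma$ is classically dense in $\bU\cap\bY$, and patching over the covering completes the proof.

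The crux is the dimension estimate in the last paragraph: one needs Lemma \ref{lemma1} to write the globally defined intersection $\bU\cap G\bZ$ as a single orbit $H\bZ_{\bU,H}$ of a Zariski closed set of dimension $\le\dim\bZ$ under a \emph{compact} group, and then Lemma \ref{SHY} (whose proof, via generic valuations and Lemma \ref{kerval}, does the real work) to push the dimension bound from $\bZ_{\bU,H}$ onto the components of $\bU\cap\bY$; one must also use that $\bY$ is of pure dimension $\dim\bY$, so that no component of $\bY$ can be absorbed into $G\bZ$. The remaining ingredients — that a non-empty admissible open subset of an irreducible reduced affinoid is classically dense, and that classical density is local for admissible affinoid coverings — are standard and routine.
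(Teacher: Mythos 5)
Your overall plan (localise, use Lemma \ref{lemma1} to replace $\bU \cap G\bZ$ by $H\bZ_{\bU,H}$, then get a dimension bound from Lemma \ref{SHY}) follows the paper, but the final density step is a genuine gap. The ``identity principle'' you invoke is false as stated: a non-empty admissible open subset of an irreducible reduced affinoid need \emph{not} be dense in the classical topology --- in the closed unit disc $\mathrm{Sp}\, K\langle x\rangle$, which is irreducible and reduced, the subdisc $\{|x| \le |\pi|\}$ is a non-empty admissible open that is disjoint from the classical-open annulus $\{|x| = 1\}$. Density does hold for non-empty \emph{Zariski}-open subsets of an irreducible affinoid, but $\bS' \setminus H\bZ_{\bU,H}$ is not Zariski open: $H\bZ_{\bU,H}$ is in general an infinite union of Zariski closed translates and is not Zariski closed. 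What your component argument via Lemma \ref{SHY} actually gives is only that $\bS' \setminus H\bZ_{\bU,H}$ is non-empty, and non-emptiness does not imply classical density. To get density you would have to show that no non-empty affinoid subdomain $\bW$ of $\bU \cap \bY$ is contained in $H\bZ_{\bU,H}$; such a $\bW$ is not Zariski closed in $\bU$, so Lemma \ref{SHY} cannot be applied to it directly. The paper avoids this exactly by quantifying over the test neighbourhoods themselves: it fixes $y \in \bY \cap G\bZ$, lets $\bU$ be an \emph{arbitrary} affinoid subdomain containing $y$, assumes for contradiction that $\bU \cap \bY \subseteq G\bZ$, and applies Lemmas \ref{lemma1} and \ref{SHY} with $\bS = \bU \cap \bY$, which \emph{is} Zariski closed in that small $\bU$; the resulting contradiction $\dim \bY \le \dim \bZ$ shows every classical neighbourhood of $y$ meets $\bY \cap \Sigma$, which is precisely the density statement.

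A secondary issue: you use that $\bY$ is of pure dimension $\dim \bY$ (``every irreducible component of $\bY$ has dimension $\dim\bY$''), but equidimensionality is not among the hypotheses --- Hypothesis \ref{AbsHyps} only requires $\bY$ to be connected. At the corresponding point the paper instead uses connectedness of $\bY$ to assert the single equality $\dim(\bU \cap \bY) = \dim \bY$, with no statement about individual components; if you recast your argument in the paper's form (arbitrary small test affinoids around a given point of $\bY \cap G\bZ$), that one equality is all you need, and the component-wise purity assumption disappears.
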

\begin{proof} Let $\bU$ be an affinoid subdomain of $\bX$ containing $y \in \bY \backslash (\bY \cap \Sigma) = \bY \cap G\bZ$. It will be enough to show that $\bU \cap (\bY \cap \Sigma) \neq \emptyset$, so suppose for a contradiction that $\bU \cap \bY \subseteq \bY \cap G \bZ$. Choose an open subgroup $H$ of $G_{\bU}$. Then by Lemma \ref{lemma1}, $\bU \cap G\bZ \subseteq H \bZ_{\bU,H}$ for some Zariski closed subset $\bZ_{\bU,H}$ of $\bU$ with $\dim \bZ_{\bU,H} \leq \dim \bZ$, and hence $\bU \cap \bY \subseteq \bU \cap G\bZ \subseteq H\bZ_{\bU,H}$. Now Lemma \ref{SHY} implies that $\dim \bU \cap \bY \leq \dim \bZ_{\bU,H}$. 

By Hypothesis \ref{AbsHyps}, $\bY$ is connected, so $\dim \bY = \dim \bU \cap \bY$ and therefore $\dim \bY \leq \dim \bZ$. This contradicts Hypothesis \ref{AbsHyps}.
\end{proof}

\begin{cor} \label{StabYcapSigma} We have $G_{\bY \cap \Sigma} = P$. 
\end{cor}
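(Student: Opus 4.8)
The plan is to prove the two inclusions $P \subseteq G_{\bY\cap\Sigma}$ and $G_{\bY\cap\Sigma} \subseteq P$ separately, using that $\Sigma$ is $G$-stable together with the density statement of Lemma \ref{YcapSigmaDense}.

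The inclusion $P = G_{\bY} \subseteq G_{\bY\cap\Sigma}$ is immediate. Since $\Sigma = \bX \setminus G\bZ$ is $G$-stable, we have $g(\bY \cap \Sigma) = g\bY \cap g\Sigma = g\bY \cap \Sigma$ for every $g \in G$; hence if $g\bY \subseteq \bY$, then $g(\bY \cap \Sigma) = g\bY \cap \Sigma \subseteq \bY \cap \Sigma$, so $g \in G_{\bY\cap\Sigma}$.

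For the reverse inclusion, take $g \in G_{\bY\cap\Sigma}$; the goal is to show $g\bY \subseteq \bY$, which gives $g \in G_{\bY} = P$. Using $G$-stability of $\Sigma$ again, the hypothesis $g(\bY\cap\Sigma) \subseteq \bY\cap\Sigma$ reads $g\bY \cap \Sigma \subseteq \bY \cap \Sigma \subseteq \bY$. The key input is Lemma \ref{YcapSigmaDense}: $\bY \cap \Sigma$ is dense in $\bY$ for the classical topology on $\bX$. Since the action of $G$ on $\bX$ is continuous with continuous inverses, each $g$ acts as a homeomorphism of $\bX$ in the classical topology, so $g\bY \cap \Sigma = g(\bY\cap\Sigma)$ is dense in $g\bY$. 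Now $\bY$ is Zariski closed, hence closed in the classical topology, and it contains $g\bY\cap\Sigma$; passing to classical closures gives $g\bY = \overline{g\bY\cap\Sigma} \subseteq \bY$, as wanted.

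I do not expect any genuine obstacle here: the statement reduces cleanly to the density already established in Lemma \ref{YcapSigmaDense}, the $G$-stability of $\Sigma$, and the fact that $G$ acts by homeomorphisms. The only point requiring a little care is to stay consistently in the \emph{classical} topology throughout, so that "dense'' and "closed'' interact correctly — which is exactly the form in which Lemma \ref{YcapSigmaDense} is stated.
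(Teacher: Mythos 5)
Your proof is correct and follows essentially the same route as the paper: the easy inclusion via $G$-stability of $\Sigma$, and the reverse inclusion by combining the density of $\bY\cap\Sigma$ in $\bY$ (Lemma \ref{YcapSigmaDense}) with continuity of the $G$-action and the classical closedness of the Zariski closed set $\bY$. You merely spell out explicitly (taking classical closures and using that $g$ is a homeomorphism) what the paper's proof states more tersely.
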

\begin{proof} $P$ stabilises $\bY$ by Hypothesis \ref{AbsHyps}, so it also stabilises $\bY \cap \Sigma$ because $\Sigma$ is $G$-stable. Hence $P \leq G_{\bY \cap \Sigma}$. On the other hand, if $g \in G$ preserves $\bY \cap \Sigma$, then since $G$ acts continuously on $\bX$, $g$ must preserve the closure of $\bY \cap \Sigma$ in $\bX$ in the classical topology. Hence $g \bY \subseteq \bY$ by Lemma \ref{YcapSigmaDense} and $g \in P = G_{\bY}$.
\end{proof}

\begin{prop} \label{simpleMSigma} $\cM_{|\Sigma}$ is a simple object in $\cC_{\Sigma/G}$.
\end{prop}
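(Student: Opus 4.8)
The plan is to deduce this from the induction equivalence of \cite[Theorem A]{EqDCapTwo} applied to the triple $(\Sigma, \bY \cap \Sigma, G)$, transporting the simplicity of $\cN_{|\Sigma}$ established in Lemma \ref{simpleNonSigma}. First I would dispose of the two bookkeeping points. Since $\Sigma$ is a $G$-stable admissible open subspace of $\bX$ by Lemma \ref{SigmaAdm}, restriction to $\Sigma$ is defined on coadmissible equivariant modules, and the local description of $\ind_P^G$ recalled in $\S\ref{IndRes}$ shows immediately that $\cM_{|\Sigma} \cong \ind_P^G(\cN_{|\Sigma})$ in $\cC_{\Sigma/G}$: for $\bU \in \Sigma_w(\cT) \subseteq \bX_w(\cT)$ and $s \in G$ one has $s^{-1}\bU \subseteq \Sigma$, so every term $\w\cD(\bU,H)\wotimes{\w\cD(\bU,H\cap {}^sP)}[s]\cN(s^{-1}\bU)$ only involves $\cN_{|\Sigma}$. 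The same local description shows that, $\cN$ being supported on $\bY$, the module $\cM = \ind_P^G\cN$ is supported on $G\bY$. Since $\Sigma$ is $G$-stable we have $G\bY \cap \Sigma = G(\bY \cap \Sigma)$, so $\cN_{|\Sigma} \in \cC^{\bY \cap \Sigma}_{\Sigma/P}$ and $\cM_{|\Sigma} \in \cC^{G(\bY\cap\Sigma)}_{\Sigma/G}$.

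Next I would check the hypotheses of the induction equivalence for $(\Sigma, \bY \cap \Sigma, G)$. The $G$-orbit of $\bY \cap \Sigma$ in $\Sigma$ is regular by Lemma \ref{regGorbit}; its stabiliser in $G$ equals $P$ by Corollary \ref{StabYcapSigma}, and $P$ is automatically cocompact in $G$ because $G$ is compact; and the (LSC) condition for $(\Sigma, \bY\cap\Sigma,G)$ should be inherited from Condition (A), since an admissible covering of $\bX$ witnessing (LSC) for $(\bX,\bY,G)$ restricts to an admissible covering of the admissible open $\Sigma$ witnessing (LSC) for $(\Sigma, \bY\cap\Sigma,G)$. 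Granting this, \cite[Theorem A]{EqDCapTwo} (the general form of Theorem \ref{inductionequiv}, valid under (LSC) without assuming quasi-compactness or separatedness) provides an equivalence of categories $\ind_P^G \colon \cC^{\bY\cap\Sigma}_{\Sigma/P} \congs \cC^{G(\bY\cap\Sigma)}_{\Sigma/G}$, with quasi-inverse the functor $\cH^0_{\bY\cap\Sigma}$ of sections supported on $\bY\cap\Sigma$.

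To conclude, I would use that both $\cC^{\bY\cap\Sigma}_{\Sigma/P} \subseteq \cC_{\Sigma/P}$ and $\cC^{G(\bY\cap\Sigma)}_{\Sigma/G} \subseteq \cC_{\Sigma/G}$ are full subcategories closed under subobjects (a submodule of a module supported on a set $S$ is again supported on $S$), so that simplicity inside these subcategories coincides with simplicity in the ambient categories. Hence $\cN_{|\Sigma}$, being simple in $\cC_{\Sigma/P}$ by Lemma \ref{simpleNonSigma}, is simple in $\cC^{\bY\cap\Sigma}_{\Sigma/P}$; applying the equivalence $\ind_P^G$ (which is exact and preserves simple objects) gives that $\ind_P^G(\cN_{|\Sigma}) \cong \cM_{|\Sigma}$ is simple in $\cC^{G(\bY\cap\Sigma)}_{\Sigma/G}$, hence in $\cC_{\Sigma/G}$. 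Concretely: if $\cM'$ is a nonzero subobject of $\cM_{|\Sigma}$ in $\cC_{\Sigma/G}$, it is again supported on $G(\bY\cap\Sigma)$, so $0 \neq \cH^0_{\bY\cap\Sigma}(\cM') \subseteq \cH^0_{\bY\cap\Sigma}(\cM_{|\Sigma}) = \cN_{|\Sigma}$ forces $\cH^0_{\bY\cap\Sigma}(\cM') = \cN_{|\Sigma}$ and therefore $\cM' = \ind_P^G\cH^0_{\bY\cap\Sigma}(\cM') = \cM_{|\Sigma}$.

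The step I expect to be the main obstacle is the verification that the precise hypotheses of \cite[Theorem A]{EqDCapTwo} are met on $\Sigma$ — above all that the (LSC) condition descends from $\bX$ to the admissible open $\Sigma$, and that $\Sigma$ being neither quasi-compact nor separated (and $\bY\cap\Sigma$ being reducible in general) causes no difficulty — since the genuinely geometric input, namely regularity of the orbit and the computation of its stabiliser, has already been isolated in Lemmas \ref{regGorbit} and Corollary \ref{StabYcapSigma}.
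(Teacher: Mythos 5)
Your argument is correct and matches the paper's proof in essence: both verify the three hypotheses of the induction equivalence on $(\Sigma, \bY\cap\Sigma, G)$ — regular orbit via Lemma~\ref{regGorbit}, stabiliser $P$ (cocompact since $G$ is compact) via Corollary~\ref{StabYcapSigma}, and (LSC) descending to the $G$-stable admissible open $\Sigma$ — and then transport the simplicity of $\cN_{|\Sigma}$ from Lemma~\ref{simpleNonSigma}. The one step you flagged as uncertain, descent of (LSC) to $\Sigma$, is indeed the only non-formal point, and the paper handles it by citing \cite[Lemma~2.5.19]{EqDCapTwo}, invoking the induction equivalence in the form \cite[Corollary~2.5.11]{EqDCapTwo} rather than Theorem~A.
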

\begin{proof} We will first verify the conditions of the Induction Equivalence \cite[Corollary 2.5.11]{EqDCapTwo}, applied to the action of $G$ on $\Sigma$ and to its Zariski closed subspace $\bY \cap \Sigma$.

(a) By Corollary \ref{StabYcapSigma} we have $G_{\bY \cap \Sigma} = P$, which is co-compact in $G$ by Hypothesis \ref{AbsHyps}.

(b) This is Lemma \ref{regGorbit}.

(c) By Condition (A), $(\bX, \bY, G)$ satisfies the LSC. Since $\Sigma$ is admissible open in $\bX$ by Lemma \ref{SigmaAdm} and since it is $G$-stable, $(\Sigma, \bY \cap \Sigma, G)$ satisfies the LSC by \cite[Lemma 2.5.19]{EqDCapTwo}.

Hence by \cite[Corollary 2.5.11]{EqDCapTwo}, $\ind_P^G : \cC^{\bY \cap \Sigma}_{\Sigma/P} \to \cC^{G(\bY \cap \Sigma)}_{\Sigma/G}$ is an equivalence of categories. Since $\cN_{|\Sigma}$ is a simple object in $\cC_{\Sigma/P}$ by Lemma \ref{simpleNonSigma}, we conclude that $\cM_{|\Sigma} \cong \ind_P^G (\cN_{|\Sigma})$ is a simple object in $\cC_{\Sigma/G}$ as required.\end{proof}

\begin{lem} \label{H0NalongGZ} We have $\cH^0_{G\bZ}(\cN) = 0$.
\end{lem}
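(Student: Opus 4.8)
The plan is to set $\cK := \cH^0_{G\bZ}(\cN)$, the subobject of $\cN$ in $\cC_{\bX/P}$ consisting of the sections supported on $G\bZ$ (this makes sense since $\Sigma = \bX\setminus G\bZ$ is admissible open by Lemma \ref{SigmaAdm}), and to prove $\cK = 0$. By construction $\cK$ vanishes on $\Sigma$, and since $\cN \in \cC^{\bY}_{\bX/P}$ the same holds of its subobject $\cK$, so $\cK$ also vanishes on $\bX\setminus\bY$. Because $\cK$ is a sheaf on $\bX_w(\cT)$, it is enough to show $\cK|_{\bU} = 0$ for an arbitrary $\bU \in \bX_w(\cT)$; covering such a $\bU$ admissibly by connected affinoid subdomains in $\bX_w(\cT)$ with connected-or-empty trace on $\bY$ via \cite[Lemma 2.5.16]{EqDCapTwo} (exactly as in the proof of Lemma \ref{simpleNonSigma}), and discarding the pieces with empty trace on $\bY$ (where $\cK$ vanishes since it is supported on $\bY$), this reduces us to proving $\cK|_{\bU} = 0$ whenever $\bU \in \bX_w(\cT)$ is connected and $\bU \cap \bY$ is connected and non-empty.

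So I would fix such a $\bU$, set $P_{\bU} := P \cap G_{\bU}$, and argue by contradiction, assuming $\cK|_{\bU} \neq 0$. By Condition (D), $\cN|_{\bU}$ is a simple object of $\cC_{\bU/P_{\bU}}$, so the non-zero subobject $\cK|_{\bU}$ must equal all of $\cN|_{\bU}$; since $\cK$ is supported on $G\bZ$, it follows that $\cN|_{\bU \cap \Sigma} = 0$. On the other hand, $\bU$ is an affinoid subdomain of $\bX$ meeting $\bY$, so by the argument in the proof of Lemma \ref{YcapSigmaDense} it also meets $\bY \cap \Sigma$; picking $z \in \bU \cap \bY \cap \Sigma$ and invoking Condition (E), the stalk $\tilde{\cN}_{\fr{m}_z}$ is non-zero, and as $z$ lies in the admissible open $\bU \cap \Sigma$ this contradicts $\cN|_{\bU \cap \Sigma} = 0$. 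Hence $\cK|_{\bU} = 0$ for every such $\bU$, and therefore $\cK = \cH^0_{G\bZ}(\cN) = 0$.

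The genuinely delicate points are bookkeeping ones rather than conceptual. First, one must know that $\cH^0_{G\bZ}(\cN)$ really is a coadmissible equivariant subobject of $\cN$, so that local simplicity can be applied to it; this should follow from the weak holonomicity of $\cN$ in Condition (F) together with the local description $\bU \cap G\bZ = H\bZ_{\bU,H}$ of Lemma \ref{lemma1}, which exhibits $G\bZ$ locally as a finite union of translates of a Zariski closed set with admissible open complement. Second, the reduction to connected $\bU \in \bX_w(\cT)$ with connected trace on $\bY$ relies once more on \cite[Lemma 2.5.16]{EqDCapTwo}. Once these are in hand, the core of the proof is simply the interplay of local simplicity (D), the support hypothesis (E), and the density statement of Lemma \ref{YcapSigmaDense}.
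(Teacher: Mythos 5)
Your overall strategy is the same as the paper's (reduce to connected $\bU \in \bX_w(\cT)$ with $\bU \cap \bY$ connected and non-empty, use local simplicity to force an all-or-nothing dichotomy, and rule out the ``all'' case by producing a point of $\bU \cap \bY \cap \Sigma$ where Condition (E) forces a non-zero stalk). But there is a genuine gap at the central step, and it is precisely the one you flag and then wave away: to apply Condition (D) to $\cK := \cH^0_{G\bZ}(\cN)$ you need $\cK_{|\bU}$ to be an \emph{object} of $\cC_{\bU/P_{\bU}}$, i.e.\ a coadmissible equivariant $\w\cD$-module. What you actually know is only that $\cK(\bU) = \bigcap_n \ker\big(\cN(\bU) \to \cN(\bV_n)\big)$ is a \emph{closed}, $P_{\bU}$-stable submodule of the coadmissible module $\cN(\bU)$; closed submodules of coadmissible modules over a Fr\'echet--Stein algebra are not known to be coadmissible in general, and neither Condition (F) nor Lemma \ref{lemma1} supplies this. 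Weak holonomicity plays no role in this lemma at all (it is used later, for the duality step in Theorem \ref{MainResult}), and the Kashiwara-type results that make $\cH^0$ along a closed subset well-behaved apply to Zariski closed subsets, whereas $G\bZ$ is only an orbit of one; so ``$\cH^0_{G\bZ}(\cN)$ is a subobject of $\cN$ in $\cC_{\bX/P}$'' is an unproved assertion on which your whole argument rests.

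The paper's proof is structured exactly to avoid this. It uses Proposition \ref{prop3} to write $\bU \setminus G_{\bU}\bZ'$ (with $\bZ' = \bZ_{\bU,G_\bU}$ from Lemma \ref{lemma1}) as an increasing union of $G_{\bU}$-stable affinoids $\bV_n$, so that each kernel $\ker \rho_n$ is a closed, $P_{\bU}$-stable $\w\cD(\bU, H\cap P)$-submodule of $\cN(\bU)$; it then upgrades Condition (D), via \cite[Theorem B(c)]{EqDCap}, to the statement that $\cN(\bU)$ has no non-zero proper \emph{closed} $P_{\bU}$-stable submodules, so simplicity is invoked at the level of sections rather than of subobjects. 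The second point your write-up elides is also handled there: to pass from ``every section of $\cN(\bU)$ dies on $\bU \cap \Sigma$'' to the sheaf-level vanishing $\cN_{|\bU \setminus G_\bU\bZ'} = 0$ (which is what you need before quoting (E) at a point of $\bU\cap\bY\cap\Sigma$), one needs that $\cN(\bV_n)$ is generated by the image of $\cN(\bU)$, i.e.\ the isomorphism $\cN(\bV_n) \cong \w\cD(\bV_n,H)\,\w\otimes_{\w\cD(\bU,H)}\,\cN(\bU)$, again from \cite[Theorem B(c)]{EqDCap}; in your version this is hidden inside the unjustified identification $\cK_{|\bU} = \cN_{|\bU}$ as sheaves. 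Your final step (producing $z \in \bU\cap\bY\cap\Sigma$ via the dimension argument of Lemmas \ref{SHY}/\ref{YcapSigmaDense} and contradicting (E)) is fine and is essentially what the paper does; the proof becomes correct once you replace the subobject $\cH^0_{G\bZ}(\cN)$ by the closed submodules $\ker\rho_n$ and use the module-theoretic form of simplicity as above.
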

\begin{proof} It is enough to show that $\cH^0_{G\bZ}(\cN)_{|\bU} = 0$ for every affinoid subdomain $\bU$ of $\bX$. By applying \cite[Lemma 2.5.16]{EqDCapTwo}, we may assume that $\bU$ is connected and $\bU \cap \bY$ is connected. If $\bU \cap \bY$ is empty, then because $\cN \in \cC_{\bX/P}^{\bY}$ by Hypothesis \ref{AbsHyps}, $\cN_{|\bU} = 0$ and there is nothing to show. So we may assume further that $\bU \cap \bY$ is non-empty.

By Lemma \ref{lemma1}, there is a Zariski closed subset $\bZ' := \bZ_{\bU,G_{\bU}}$ of $\bU$ such that $\bU \cap G \bZ = G_{\bU} \bZ'$ and $\dim \bZ' \leq \dim \bZ$.  Then 
\[\cH^0_{G\bZ}(\cN)(\bU) = H^0_{\bU \cap G\bZ}(\cN_{|\bU}) = H^0_{G_U\bZ'}(\cN_{|\bU}) = \ker\left(\cN(\bU) \to \cN(\bU \backslash G_{\bU}\bZ')\right)\]
and it will be enough to show that this is zero. By Lemma \ref{prop3}, there is an increasing admissible affinoid covering $(\bV_n)_{n=0}^\infty$ of $\bU \backslash G_{\bU}\bZ'$ such that each $\bV_n$ is $G_{\bU}$-stable. Let $\rho_n : \cN(\bU) \to \cN(\bV_n)$ denote the restriction map and suppose for a contradiction that $\ker \rho_n = \cN(\bU)$ for all $n \geq 0$. 

Fix a $\bU$-small subgroup $H$ of $G_{\bU}$. Using \cite[Theorem B(c)]{EqDCap}, the natural map
\[ \w\cD(\bV_n, H) \underset{\w\cD(\bU,H)}{\w\otimes}{} \cN(\bU) \to \cN(\bV_n) \]
is an isomorphism; then since the image of $\cN(\bU)$ in $\cN(\bV_n)$ is zero by assumption, this means that $\cN(\bV_n) = 0$ for all $n \geq 0$. In this case, $\cN_{|\bU \backslash G_{\bU} \bZ'} = 0$. Now, by Lemma \ref{SHY} applied with $H = \bG_{\bU}$, $\bS = \bU \cap \bY$ and $\bT = \bZ'$, we have $\bU \cap \bY \nsubseteq G_{\bU} \bZ'$ because as $\bY$ is connected and $\bU \cap \bY \neq \emptyset$, we have $\dim (\bU \cap \bY) = \dim \bY > \dim \bZ \geq \dim \bZ'$. Choose $y \in (\bU \cap \bY) \backslash G_{\bU}\bZ'$; then on the one hand, the stalk $\tilde{\cN}_y$ is non-zero by Condition (E), and on the other hand it must be zero because $y \in \bU \backslash G_{\bU}\bZ'$ and $\cN_{|\bU \backslash G_{\bU} \bZ'} = 0$. This contradiction shows that $\ker \rho_n \neq \cN(\bU)$ for some $n \geq 0$.

Now, since $\bV_n$ is $G_{\bU}$-stable, the restriction map $\rho_n : \cN(\bU) \to \cN(\bV_n)$ is $\w\cD(\bU, H)$-linear, and hence \emph{a fortiori} $\w\cD(\bU, H \cap P)$-linear. Since $\cN_{|\bU}$ is a simple object in $\cC_{\bU/P_{\bU}}$ by Condition (D), we deduce from \cite[Theorem B(c)]{EqDCap} that the coadmissible $\w\cD(\bU,H\cap P)$-module $\cN(\bU)$ has no non-zero, proper, closed $\w\cD(\bU,H \cap P)$-submodules that are also $P_{\bU}$-stable. Since $\ker \rho_n$ a proper, closed $\w\cD(\bU, H \cap P)$-submodule of $\cN(\bU)$ which is also $P_{\bU}$-stable, we deduce that $\ker \rho_n = 0$. Then for any $m \geq n$, we also have $\ker \rho_m \subseteq \ker \rho_n = 0$ since $\bV_n \subseteq \bV_m$. Finally, as $\bU \backslash G_U\bZ' = \bigcup\limits_{m=n}^\infty \bV_m$, using \cite[Corollary 2.1.5]{EqDCapTwo} we see that
\[\cH^0_{G\bZ}(\cN)(\bU) = \ker\left(\cN(\bU) \to \cN(\bU \backslash G_U\bZ')\right) = \bigcap\limits_{m = n}^\infty \ker(\rho_m : \cN(\bU) \to \cN(\bV_m)) = 0. \qedhere\]
\end{proof}

\begin{lem} \label{H0MalongGZ} We have $\cH^0_{G\bZ}(\cM) = 0$.
\end{lem}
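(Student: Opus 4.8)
The plan is to reduce the assertion to the local injectivity statement Theorem~\ref{LCIndM}, using Lemma~\ref{H0NalongGZ} as the input, via the double coset description of geometric induction. Since $\cH^0_{G\bZ}(\cM)$ is a sheaf, it suffices to show that the restriction map $\cM(\bU)\to\cM(\bV)$ is injective for every $\bU\in\bX_w(\cT)$, where $\bV:=\bU\setminus G\bZ=\bU\cap\Sigma$. So I would fix such a $\bU$ and a $\bU$-small uniform pro-$p$ subgroup $H_0$ of $G_\bU$. By Lemma~\ref{lemma1} we have $\bV=\bU\setminus H_0\bZ_0$ for some Zariski closed $\bZ_0\subseteq\bU$, and Proposition~\ref{prop3} gives an increasing admissible covering $\{\bV_n\}_{n\geq 0}$ of $\bV$ by $H_0$-stable affinoid subdomains of $\bU$; hence $\cM(\bV)=\varprojlim_n\cM(\bV_n)$.

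The main step is to expand $\cM(\bU)=\ind_P^G\cN(\bU)$ and each $\cM(\bV_n)$ by the formula \cite[2.2.12]{EqDCapTwo}, restricting in the inverse limit there to those $\bU$-small subgroups $H\subseteq H_0$ (they are cofinal, uniform pro-$p$, and each $\bV_n$ is $H$-stable). For such an $H$ the restriction maps respect the decomposition indexed by $Z\in H\backslash G/P$, and this index set is independent of $n$; as $\varprojlim$ is left exact and any element of $\cM(\bU)$ is at each level $H$ supported on finitely many $Z$, one obtains
\[ \ker\bigl(\cM(\bU)\to\cM(\bV)\bigr)=\varprojlim_H\ \bigoplus_{Z\in H\backslash G/P}\ker(\varphi_{H,Z}),\]
where $\varphi_{H,Z}$ is the restriction map on the $Z$-summand. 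Choosing $s=s_Z\in Z$ and writing $H^s:=s^{-1}Hs$, the $Z$-summand of the $H$-level of $\cM(\bU)$ is $\w\cD(\bU,H)\wotimes{\w\cD(\bU,H\cap {}^sP)}[s]\cN(s^{-1}\bU)$; since $\bU$, hence $s^{-1}\bU$, is $H^s$-stable, the twisting isomorphisms of \cite[2.2.9, 2.2.10]{EqDCapTwo} together with \cite[Lemma~2.3.6]{EqDCapTwo} identify this with $[s]$ applied to $\bigl(\ind_{H^s\cap P}^{H^s}(\cN_{|s^{-1}\bU})\bigr)(s^{-1}\bU)$, the global sections over the $H^s$-stable affinoid $s^{-1}\bU$ of geometric induction for the compact group $H^s$ acting on $\bX$. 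Running the same identification over each $\bV_n$ and using that $\{s^{-1}\bV_n\}_n$ is an admissible covering of $s^{-1}\bU\setminus G\bZ$ (because $G\bZ$ is $G$-stable), $\varphi_{H,Z}$ becomes $[s]$ applied to the restriction map
\[\bigl(\ind_{H^s\cap P}^{H^s}(\cN_{|s^{-1}\bU})\bigr)(s^{-1}\bU)\ \longrightarrow\ \bigl(\ind_{H^s\cap P}^{H^s}(\cN_{|s^{-1}\bU})\bigr)\bigl(s^{-1}\bU\setminus G\bZ\bigr).\]

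It then remains to see that each such map is injective, which is precisely an application of Theorem~\ref{LCIndM}: $H^s$ is uniform pro-$p$, $H^s\cap P$ is a closed subgroup of it, $(s^{-1}\bU,H^s)$ is small (a $G$-translate of the small pair $(\bU,H)$), and Lemma~\ref{lemma1} applied to $s^{-1}\bU$ and the open subgroup $H^s$ of $G_{s^{-1}\bU}$ produces a Zariski closed $\bZ_\star\subseteq s^{-1}\bU$ with $s^{-1}\bU\setminus G\bZ=s^{-1}\bU\setminus H^s\bZ_\star$. The hypothesis required by Theorem~\ref{LCIndM}, namely that $\cN(s^{-1}\bU)\to\cN(s^{-1}\bU\setminus G\bZ)$ be injective, holds because $\cH^0_{G\bZ}(\cN)=0$ by Lemma~\ref{H0NalongGZ} (and it is vacuous when $s^{-1}\bU\cap\bY=\emptyset$, since then $\cN_{|s^{-1}\bU}=0$). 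Therefore each $\ker(\varphi_{H,Z})$ vanishes, so $\cM(\bU)\to\cM(\bV)$ is injective and $\cH^0_{G\bZ}(\cM)=0$.

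The step I expect to be the main obstacle is the middle one: setting up the double coset expansions of $\cM(\bU)$ and of the $\cM(\bV_n)$ so that they are manifestly compatible, justifying the interchange of $\varprojlim_H$, $\varprojlim_n$ and the (fixed-index) direct sum, and making the twisting isomorphisms of \cite[2.2.9, 2.2.10]{EqDCapTwo} explicit enough that each $Z$-summand genuinely becomes the restriction map of a local $\ind_{H^s\cap P}^{H^s}$ to which Theorem~\ref{LCIndM} applies. The remaining ingredients are routine bookkeeping with Lemma~\ref{lemma1} and Lemma~\ref{H0NalongGZ}.
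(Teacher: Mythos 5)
Your proposal is correct and is essentially the paper's argument: localize to an affinoid $\bU$ with a $\bU$-small uniform subgroup, split the induced module over the finitely many $(H,P)$-double cosets, and reduce the vanishing of each summand's sections supported on $\bU\cap G\bZ$ to Theorem \ref{LCIndM}, with the required injectivity of $\cN(s^{-1}\bU)\to\cN(s^{-1}\bU\setminus G\bZ)$ supplied by Lemma \ref{H0NalongGZ} via Lemma \ref{lemma1}. The only difference is bookkeeping: the paper invokes the Mackey decomposition \cite[Lemma 2.3.7]{EqDCapTwo} as an isomorphism of sheaves $\cM_{|\bU}\cong\bigoplus_i \ind_{H_i}^H(\cN_{i|\bU})$ in $\cC_{\bU/H}$, with twisted coefficients $\cN_i$ and the untranslated pair $(\bU,H)$, which makes compatibility with restriction automatic and avoids both the interchange of limits and the conjugation to $(s^{-1}\bU,H^s)$ that you flag as the main obstacle.
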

\begin{proof} This is again a local statement, so we fix an affinoid subdomain $\bU$ of $\bX$ and a $\bU$-small open subgroup $H$ of $G_{\bU}$. By shrinking $H$ further, we assume that $H$ is uniform pro-$p$. By Lemma \ref{lemma1}, there is a Zariski closed subset $\bZ' := \bZ_{\bU,H}$ of $\bU$ such that $\bU \cap G \bZ = H \bZ'$ and $\dim \bZ' \leq \dim \bZ$. It will be enough to show that $\cH^0_{G\bZ}(\cM)(\bU) = H^0_{H\bZ'}(\cM_{|\bU}) = 0$. 

Recall the Mackey decomposition for $\cM_{|\bU} = (\ind_P^G \cN)_{|\bU}$ from \cite[Lemma 2.3.7]{EqDCapTwo}: choose a set $\{s_1,\cdots,s_m\}$ of representatives for the $(H,P)$-double cosets in $G$, for each $i=1,\cdots,m$ write $H_i := H \cap s_i P s_i^{-1}$ and set $\cN_i = \Res^{s_i P s_i^{-1}}_{H_i} [s_i] s_{i,\ast}\cN$ to be the restriction to $H_i$ of the $s_i$-twist $[s_i]s_{i,\ast}\cN \in \cC_{\bX/s_iPs_i^{-1}}$ of $\cN$; then there is a natural isomorphism in $\cC_{\bU/H}$
\[ \cM_{|\bU} \cong \bigoplus\limits_{i=1}^m \ind_{H_i}^H (\cN_{i|\bU}).\]
Fixing $i=1,\cdots, m$, it will therefore be enough to show that $H^0_{H\bZ'}(\ind_{H_i}^H(\cN_{i|\bU})) = 0$. Using \cite[Corollary 2.1.5]{EqDCapTwo}, we see that
\[ H^0_{H\bZ'}(\cN_{i|\bU}) = \ker\left( \cN_i(\bU) \to \cN_i(\bU \backslash H \bZ')\right) = \ker\left( \cN(s_i^{-1}\bU) \to \cN(s_i^{-1}(\bU \backslash H \bZ'))\right).\]
Since $s_i^{-1}(\bU \backslash H \bZ') = s_i^{-1}(\bU \backslash G \bZ) = s_i^{-1}\bU \backslash G \bZ$, applying \cite[Corollary 2.1.5]{EqDCapTwo} again shows that $H^0_{H\bZ'}(\cN_{i|\bU}) = \cH^0_{G\bZ}(\cN)(s_i^{-1}\bU)$. This last group is zero by Lemma \ref{H0NalongGZ}. Finally we can apply Theorem \ref{LCIndM} to see that $H^0_{H\bZ'}(\ind_{H_i}^H(\cN_{i|\bU})) = 0$ as required.
\end{proof}

\begin{cor} \label{KeyLemma} Assume that $\cN$ satisfies Conditions (D,E), and suppose that $\cM'$ is a subobject of $\cM = \ind_P^G\cN$ in $\cC_{\bX/G}$ such that $\cM'_{|\Sigma} = 0$. Then $\cM' = 0$.
\end{cor}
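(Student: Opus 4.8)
The plan is to identify $\cM'$ with a subsheaf of the local cohomology sheaf $\cH^0_{G\bZ}(\cM)$ and then to invoke Lemma~\ref{H0MalongGZ}, which tells us this sheaf vanishes.

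First I would note that $\bX \setminus \Sigma = G\bZ$ by Definition~\ref{DefSigma}, so the hypothesis $\cM'_{|\Sigma} = 0$ says exactly that $\cM'$ is supported on $G\bZ$ in the sense of \S\ref{Support}. To turn this into an inclusion $\cM' \subseteq \cH^0_{G\bZ}(\cM)$ inside $\cM$, I would argue locally. Regard $\cM'$ as a subsheaf of $\cM$, compatibly with restriction maps and equivariant structures, and fix an affinoid subdomain $\bU$ of $\bX$. By Lemma~\ref{lemma1}, the set $\bU \cap \Sigma = \bU \setminus G\bZ$ equals $\bU \setminus H\bZ_{\bU,H}$ for a suitable open subgroup $H$ of $G_{\bU}$ and a Zariski closed subset $\bZ_{\bU,H} \subseteq \bU$, so it is an admissible open subset of $\bU$ by Proposition~\ref{prop3}. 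Consequently, exactly as in the proof of Lemma~\ref{H0MalongGZ}, we have $\cH^0_{G\bZ}(\cM)(\bU) = \ker\left(\cM(\bU) \to \cM(\bU \cap \Sigma)\right)$ by \cite[Corollary 2.1.5]{EqDCapTwo}.

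Now the restriction map $\cM(\bU) \to \cM(\bU \cap \Sigma)$ carries $\cM'(\bU)$ into $\cM'(\bU \cap \Sigma)$, and the latter group vanishes because $\bU \cap \Sigma$ is an admissible open of $\Sigma$ and $\cM'_{|\Sigma} = 0$. Hence $\cM'(\bU) \subseteq \ker\left(\cM(\bU) \to \cM(\bU \cap \Sigma)\right) = \cH^0_{G\bZ}(\cM)(\bU)$ for every affinoid subdomain $\bU$ of $\bX$, which yields the desired inclusion of subsheaves $\cM' \subseteq \cH^0_{G\bZ}(\cM)$. Finally, Lemma~\ref{H0MalongGZ} shows $\cH^0_{G\bZ}(\cM) = 0$, and therefore $\cM' = 0$.

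As for the main obstacle: there is essentially none left at this stage, since all of the substance has already been absorbed into Lemmas~\ref{H0NalongGZ} and~\ref{H0MalongGZ} (and, further upstream, into the injectivity assertion of Theorem~\ref{LCIndM}). The only point requiring a little care is the identification of $\cH^0_{G\bZ}(\cM)$ with the kernel of a restriction map, which rests on the admissibility of $\bU \cap \Sigma$ provided by Lemmas~\ref{lemma1} and~\ref{prop3}.
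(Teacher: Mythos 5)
Your argument is correct and is essentially the paper's own proof: both rest on Lemma \ref{H0MalongGZ} together with the identification, via admissibility of $\bU\cap\Sigma$ and \cite[Corollary 2.1.5]{EqDCapTwo}, of local cohomology along $G\bZ$ with the kernel of restriction to the complement. The only cosmetic difference is that the paper applies $\cH^0_{G\bZ}$ directly to $\cM'$ (using $\cH^0_{G\bZ}(\cM')\leq\cH^0_{G\bZ}(\cM)=0$), whereas you exhibit $\cM'$ as a subsheaf of $\cH^0_{G\bZ}(\cM)$; these are the same argument rearranged.
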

\begin{proof} Lemma \ref{H0MalongGZ} implies that $\cH^0_{G\bZ}(\cM') \leq \cH^0_{G\bZ}(\cM) = 0$. Hence for any affinoid subdomain $\bU$ of $\bX$, using Lemma \ref{SigmaAdm} and \cite[Corollary 2.1.5]{EqDCapTwo} we have
\[ 0 = \cH^0_{G\bZ}(\cM')(\bU) = \ker\left(\cM'(\bU) \to \cM'(\bU \backslash G\bZ)\right).\]
However $\cM'(\bU \backslash G\bZ) = \cM'(\bU \cap \Sigma) = 0$ by assumption, so $\cM'(\bU) = 0$ as required. \end{proof}

\begin{thm} \label{MainResult} Suppose that $(\bX,G,\bY,\bZ)$ satisfy Conditions (A,B,C), that Bernstein's inequality holds in $\cC_{\bX/G}$ and $\cC_{\bX/P}$ and that $\cN$ and $\mathbb{D}(\cN)$ satisfy Conditions (D,E,F). Then $\cM=\ind_P^G \cN$ is a simple object in $\cC_{\bX/G}$.
\end{thm}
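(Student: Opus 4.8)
The plan is to prove that $\cM$ has no proper non-zero subobject in $\cC_{\bX/G}$; since $\cM_{|\Sigma}$ is simple (Proposition \ref{simpleMSigma}) it is in particular non-zero, so $\cM\neq 0$ and simplicity will follow. The first step is to record the shape of the dual. By Theorem \ref{thm_IndDual}, $\cM=\ind_P^G\cN$ is weakly holonomic and there is a natural isomorphism $\mathbb{D}_G(\cM)\cong \ind_P^G(\mathbb{D}_P\cN)$ in $\cC^{\rm wh}_{\bX/G}$. Since $\cN$ is weakly holonomic and supported on $\bY$, weakly holonomic duality preserves its support, so $\mathbb{D}_P\cN$ again lies in $\cC^\bY_{\bX/P}$; by the hypothesis of the theorem it moreover satisfies Conditions (D,E,F). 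Consequently every result of \S\ref{proofirred}, in particular Proposition \ref{simpleMSigma} and Corollary \ref{KeyLemma}, holds verbatim with $\cN$ replaced by $\mathbb{D}_P\cN$ and $\cM$ replaced by $\ind_P^G(\mathbb{D}_P\cN)$.

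Next I would take a non-zero subobject $\cM'$ of $\cM$ in $\cC_{\bX/G}$; it is weakly holonomic since $\cC^{\rm wh}_{\bX/G}$ is closed under subobjects. Restricting to the admissible open $\Sigma$ (Lemma \ref{SigmaAdm}), $\cM'_{|\Sigma}$ is a subobject of the simple object $\cM_{|\Sigma}$, so either $\cM'_{|\Sigma}=0$ or $\cM'_{|\Sigma}=\cM_{|\Sigma}$. In the first case Corollary \ref{KeyLemma} gives $\cM'=0$, a contradiction; hence $\cM'_{|\Sigma}=\cM_{|\Sigma}$, and the quotient $\cM'':=\cM/\cM'$ --- again weakly holonomic --- satisfies $\cM''_{|\Sigma}=0$.

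The decisive step is to dualise. Applying the exact contravariant functor $\mathbb{D}_G$ to $0\to\cM'\to\cM\to\cM''\to 0$ exhibits $\mathbb{D}_G(\cM'')$ as a subobject of $\mathbb{D}_G(\cM)\cong\ind_P^G(\mathbb{D}_P\cN)$. Since the duality functor commutes with restriction to an admissible open, $\mathbb{D}_G(\cM'')_{|\Sigma}\cong\mathbb{D}_G(\cM''_{|\Sigma})=0$. Now Corollary \ref{KeyLemma}, applied with $\mathbb{D}_P\cN$ in place of $\cN$, forces $\mathbb{D}_G(\cM'')=0$; and since $\mathbb{D}_G$ is involutive on weakly holonomic modules, $\cM''\cong\mathbb{D}_G\mathbb{D}_G(\cM'')=0$, so $\cM'=\cM$. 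This proves that $\cM$ is simple.

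I expect the main obstacle to be the duality bookkeeping rather than any single hard estimate: Corollary \ref{KeyLemma} only controls subobjects that vanish on $\Sigma$, whereas the given subobject $\cM'$ is all of $\cM$ over $\Sigma$, so one is forced to pass to the quotient $\cM/\cM'$ and turn it back into a ``small'' subobject by applying $\mathbb{D}_G$. For this to work one genuinely needs Theorem \ref{thm_IndDual} to identify $\mathbb{D}_G(\ind_P^G\cN)$ with an induced module of the same form $\ind_P^G(\mathbb{D}_P\cN)$, and one needs the symmetric hypothesis on $\mathbb{D}(\cN)$ so that Proposition \ref{simpleMSigma} and Corollary \ref{KeyLemma} may be re-run for it. The one extra thing to verify carefully is that $\mathbb{D}_P\cN$ still lies in $\cC^\bY_{\bX/P}$ and inherits Conditions (D,E,F) --- this uses the preservation of supports under weakly holonomic duality together with the hypothesis placed on $\mathbb{D}(\cN)$ in the theorem statement.
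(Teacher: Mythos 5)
Your proposal is correct and follows essentially the same route as the paper's proof: simplicity of $\cM_{|\Sigma}$ (Proposition \ref{simpleMSigma}), Corollary \ref{KeyLemma} to rule out $\cM'_{|\Sigma}=0$, the identification $\mathbb{D}_G(\cM)\cong\ind_P^G(\mathbb{D}_P\cN)$ from Theorem \ref{thm_IndDual}, and a second application of Corollary \ref{KeyLemma} to $\mathbb{D}_G(\cM'')$ using the hypotheses on $\mathbb{D}(\cN)$, followed by involutivity of $\mathbb{D}_G$. The only (immaterial) difference is that you deduce $\cM''_{|\Sigma}=0$ directly from the simplicity of $\cM_{|\Sigma}$, whereas the paper dualises the exact sequence first and argues with the simplicity of $\mathbb{D}(\cM_{|\Sigma})$.
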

Remark: If $\cN$ satisfies (E,F), then $\mathbb{D}(\cN)$ automatically satisfies (E,F),
since duality preserves the support and weak holonomicity, so the point is condition (D). 
\begin{proof}  Let $\cM'$ be a non-zero subobject of $\cM$ in $\cC_{\bX/G}$; we have to show that $\cM' = \cM$. 
Since $\cN$ is weakly holonomic by Condition (F), we know by \cite[Prop. 6.20]{VuThesis} that $\cM = \ind_P^G \cN$ is also weakly holonomic, and by Theorem \ref{thm_IndDual},
that we have a natural isomorphism
\[\mathbb{D}(\cM) \cong \ind_P^G \mathbb{D}(\cN)\]
in $\cC_{\bX/G}$, if Bernstein's inequality holds in $\cC_{\bX/G}$ and $\cC_{\bX/P}.$
Therefore by \cite[Prop. 5.11]{VuThesis} all terms in the short exact sequence
\[ 0 \to \cM' \to \cM \to \cM'' \to 0\]
are weakly holonomic as well. Applying the exact and contravariant duality functor $\mathbb{D}$ from \cite[Def. 5.14]{VuThesis} gives us another short exact sequence in $\cC_{\bX/G}$:
\[0 \to \mathbb{D}(\cM'') \to \mathbb{D}(\cM) \to \mathbb{D}(\cM') \to 0.\]
Restricting this sequence to $\Sigma$ gives the short exact sequence in $\cC_{\Sigma/G}$
\[0 \to \mathbb{D}(\cM''_{|\Sigma}) \to \mathbb{D}(\cM_{|\Sigma}) \to \mathbb{D}(\cM'_{|\Sigma}) \to 0.\]
Now $\cM_{|\Sigma}$ is simple in $\cC_{\Sigma/G}$ by Proposition \ref{simpleMSigma}; hence $\mathbb{D}(\cM_{|\Sigma})$ is also simple. Also, $\mathbb{D}(\cM'_{|\Sigma}) \neq 0$ because $\cM'_{|\Sigma} \neq 0$ by Corollary \ref{KeyLemma} and $\mathbb{D} \circ \mathbb{D} \cong 1_{\cC_{\bX/G}^{\wh}}$ by \cite[Prop. 5.15]{VuThesis}. Hence $\mathbb{D}(\cM'')_{|\Sigma}=\mathbb{D}(\cM''_{|\Sigma}) = 0$. However $\mathbb{D}(\cN)$ also satisfies Conditions (D,E) by assumption, so Corollary \ref{KeyLemma} applied to the subobject $\mathbb{D}(\cM'')$ of $\mathbb{D}(\cM) \cong \ind_P^G \mathbb{D}(\cN)$ shows that $\mathbb{D}(\cM'') = 0$. Hence $\cM'' = 0$ and $\cM = \cM'$ as required.\end{proof}

 \subsection{The set of self-intersections}\label{sec_self_inter}
 In this subsection, we give a criterion to verify the hypothesis (B) appearing in the list of conditions of the preceding subsection. 
 
 Let $\bX$ be a rigid analytic variety and $G$ a $p$-adic Lie group (possibly non-compact) acting continuously on $\bX$. Let $\bY$ a Zariski closed subset of $\bX$. Recall from $\S$\ref{sec-IntObstr} the stabilizer $G_\bY = \{ g \in G : g \bY \subseteq \bY\}$ of $\bY$ in $G$.\begin{lem}\label{lem-gYstab} Suppose that $\bX$ is quasi-compact. Then $g \bY = \bY$  for every $g \in G_\bY$.\end{lem}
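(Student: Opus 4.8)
The plan is to combine two facts: every element $g\in G$ acts on $\bX$ as an isomorphism of rigid analytic spaces, and a quasi-compact rigid analytic space satisfies the descending chain condition on Zariski closed subsets. Fix $g\in G_\bY$, so $g\bY\subseteq\bY$ by definition of the stabiliser. Applying the powers of $g$ produces a descending chain
\[ \bY \supseteq g\bY \supseteq g^2\bY \supseteq \cdots \supseteq g^n\bY \supseteq \cdots \]
of subsets of $\bX$, each of which is Zariski closed, since $g^n\bY = (g^{-n})^{-1}(\bY)$ and $g^{-n}\colon\bX\to\bX$ is an isomorphism. Granting the descending chain condition, the chain stabilises: there is some $n\geq 1$ with $g^n\bY = g^{n+1}\bY$. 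Applying the automorphism $g^{-n}$ of $\bX$ to both sides then gives $\bY = g\bY$, as desired.

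To justify the descending chain condition I would choose, using quasi-compactness, a finite admissible affinoid covering $\bX = \bX_1\cup\cdots\cup\bX_m$. For each $i$ the affinoid algebra $\cO(\bX_i)$ is Noetherian (see \cite{BGR}), and the Zariski closed subsets of $\bX_i$ are exactly the vanishing loci $V(I)$ of ideals $I\subseteq\cO(\bX_i)$; hence they satisfy the descending chain condition, because a strictly descending chain of vanishing loci corresponds to a strictly ascending chain of radical ideals. Given a descending chain of Zariski closed subsets of $\bX$, intersecting with each $\bX_i$ yields descending chains on the $\bX_i$ which each stabilise; taking the maximum of the finitely many indices at which they stabilise, and using that the $\bX_i$ cover $\bX$, the original chain stabilises as well.

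There is essentially no obstacle beyond this bookkeeping: the whole content of the lemma is the upgrade from $g\bY\subseteq\bY$ to $g\bY=\bY$, which is forced by Noetherianity once $\bX$ is quasi-compact. I would only take care to record the two elementary points that images and preimages of Zariski closed subsets under isomorphisms of rigid spaces are again Zariski closed, and that forming the chain of powers $g^n\bY$ is legitimate because $G$ acts by automorphisms. (Quasi-compactness is genuinely needed here: on a countable disjoint union of points a shift map carries the whole space strictly into itself.)
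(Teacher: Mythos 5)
Your proof is correct and follows essentially the same route as the paper: form the descending chain $\bY \supseteq g\bY \supseteq g^2\bY \supseteq \cdots$, use quasi-compactness (via Noetherianity of affinoid algebras on a finite affinoid cover) to see it stabilises, and apply $g^{-n}$ to conclude $g\bY=\bY$. You merely spell out the descending chain condition argument that the paper leaves implicit.
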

\begin{proof} Suppose that $g \bY \subseteq \bY$ for some $g \in G$. Then the descending chain $\bY \supseteq g \bY \supseteq g^2 \bY \supseteq \cdots $must terminate: this is clear when $\bX$ is affinoid as $\cO(\bX)$ is then a Noetherian ring, and in general it follows from the quasi-compactness of $\bX$. Hence $g^{n+1} \bY = g^n \bY$ for sufficiently large $n$; applying $g^{-n}$ then gives $g \bY = \bY$ as claimed. \end{proof}
Let $S$ be a set of representatives for the double cosets $G_\bY \setminus G \;/ G_\bY$ containing $1\in G$ and define $S^\ast := S \backslash \{1\}$. We write
\[ \bR_v := \bY \cap v \bY \qmb{for every} v \in S, \qmb{and} \bZ :=  \bigcup\limits_{v\in S^\ast} \bR_v .\]\begin{rmk} $\bZ$ is a Zariski closed subset of $\bX$ whenever $G_\bY \setminus G \;/ \;G_\bY$ is finite.
\end{rmk}
\begin{prop}\label{prop-richardson} Suppose that $\bX$ is quasi-compact. Then
\[ \bigcup\limits_{g\in G \setminus G_\bY} \;\bY\cap g\bY=G_\bY\bZ.\]
In particular, $\bY$ has a regular $G$-orbit in $\bX$ if and only if $\bZ=\emptyset$.
\end{prop}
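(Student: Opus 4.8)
The plan is to reduce the asserted identity to an elementary manipulation with the double cosets $G_\bY \backslash G / G_\bY$, the one nontrivial ingredient being Lemma~\ref{lem-gYstab}: because $\bX$ is quasi-compact, every $g \in G_\bY$ satisfies $g\bY = \bY$, and hence for arbitrary $g \in G$ one has $g\bY = \bY$ if and only if $g \in G_\bY$ (in particular $G_\bY$ is a subgroup). I would use this repeatedly below without further comment.

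For the inclusion ``$\subseteq$'', take $g \in G \setminus G_\bY$ and write $g = avb$ with $a, b \in G_\bY$ and $v \in S$; since $g \notin G_\bY$ necessarily $v \neq 1$, so $v \in S^\ast$. Using $a\bY = b\bY = \bY$ one computes
\[ \bY \cap g\bY = \bY \cap av\bY = a\big( a^{-1}\bY \cap v\bY \big) = a(\bY \cap v\bY) = a\bR_v \subseteq G_\bY \bZ, \]
and taking the union over all such $g$ gives one inclusion. For ``$\supseteq$'', take $a \in G_\bY$ and $z \in \bZ$, say $z \in \bR_v = \bY \cap v\bY$ with $v \in S^\ast$, and set $g := av$. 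Then $g\bY \neq \bY$: otherwise $v\bY = a^{-1}\bY = \bY$, forcing $v \in G_\bY$ and hence $v$ to lie in the double coset of $1$, contrary to $v \in S^\ast$. Thus $g \in G \setminus G_\bY$, while $az \in a\bY \cap av\bY = \bY \cap g\bY$. Combining the two inclusions yields the displayed equality.

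For the final assertion, note that $1 \in G_\bY$ gives $\bZ \subseteq G_\bY\bZ$, so $G_\bY\bZ = \emptyset$ if and only if $\bZ = \emptyset$; and since $G_\bY \subseteq Z_\bY$ always (see \S\ref{sec-IntObstr}), the left-hand side is empty precisely when $Z_\bY = G_\bY$, i.e.\ precisely when $G.\bY$ is regular in $\bX$. I expect no serious obstacle: the argument is bookkeeping with double cosets together with Lemma~\ref{lem-gYstab} (which is where quasi-compactness enters), and the only step deserving a moment's care is checking that the element $g = av$ produced in the ``$\supseteq$'' direction really lies outside $G_\bY$, which is exactly the point at which one invokes that $v$ represents a double coset different from that of $1$.
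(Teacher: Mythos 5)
Your proof is correct and follows essentially the same route as the paper: the forward inclusion via the double coset decomposition $g = avb$ together with Lemma~\ref{lem-gYstab}, and the reverse inclusion by checking that $av \notin G_\bY$ for $v \in S^\ast$, so that $a\bR_v \subseteq \bY \cap av\bY$. The only difference is cosmetic (you argue elementwise in the reverse direction and spell out the final equivalence, which the paper declares clear).
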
\begin{proof} Given $g\in G\setminus G_\bY$, there is a unique $v\in S^*$ such that $g\in G_\bY v G_\bY$. Write $g = g_1 v g_2$ for some $g_1, g_2 \in G_{\bY}$. The quasi-compactness assumption on $\bX$ implies that $g_1\bY = \bY = g_2 \bY$ by Lemma \ref{lem-gYstab}. Hence for every $g \in G \setminus G_\bY$ we have
\[ \bY \cap g \bY = \bY \cap g_1 v g_2 \bY = g_1 (\bY \cap v \bY)  = g_1 \bR_v \subseteq G_\bY \bZ\]
which gives the forward inclusion. Using Lemma \ref{lem-gYstab} again, if $h \in G_\bY$ and $w \in S^\ast$ then
\[h \bR_w = h(\bY \cap w \bY) = \bY \cap h w \bY \subseteq \bigcup\limits_{g\in G \setminus G_\bY} \;\bY\cap g\bY,\]
and the reverse inclusion follows. The last sentence is clear.\end{proof}
\begin{cor}\label{cor-richardson} Suppose that $\bX$ is quasi-compact. Then
\[\bigcup\limits_{\stackrel{g,h \in G}{g\bY \neq h\bY}} g\bY \cap h\bY = G \bZ.\]
\end{cor}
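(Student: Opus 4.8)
The plan is to deduce this directly from Proposition \ref{prop-richardson}. The first observation is that, since $\bX$ is quasi-compact, Lemma \ref{lem-gYstab} tells us that $g \in G_\bY$ if and only if $g\bY = \bY$. Consequently, for $g,h \in G$ we have $g\bY = h\bY$ if and only if $h^{-1}g\bY = \bY$, i.e. if and only if $h^{-1}g \in G_\bY$; equivalently, $g\bY \neq h\bY$ if and only if $h^{-1}g \in G \setminus G_\bY$.

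Next I would reindex the union on the left-hand side. Given a pair $(g,h)$ with $g\bY \neq h\bY$, put $k := h^{-1}g \in G \setminus G_\bY$; then
\[ g\bY \cap h\bY = h\big( h^{-1}g\bY \cap \bY\big) = h\big( \bY \cap k\bY \big). \]
Conversely, for any $h \in G$ and any $k \in G \setminus G_\bY$, the pair $(g,h)$ with $g := hk$ satisfies $h^{-1}g = k \notin G_\bY$, hence $g\bY \neq h\bY$. Thus the assignment $(g,h) \mapsto (h, h^{-1}g)$ identifies the indexing set of the left-hand union with all pairs $(h,k) \in G \times (G \setminus G_\bY)$, and therefore
\[ \bigcup\limits_{\stackrel{g,h \in G}{g\bY \neq h\bY}} g\bY \cap h\bY \;=\; \bigcup_{h \in G} h\left( \bigcup_{k \in G \setminus G_\bY} \bY \cap k\bY \right). \]

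Finally I would invoke Proposition \ref{prop-richardson} for the inner union, which gives $\bigcup_{k \in G \setminus G_\bY} \bY \cap k\bY = G_\bY\bZ$, so that the right-hand side equals $\bigcup_{h \in G} h G_\bY \bZ = (G G_\bY)\bZ = G\bZ$, as required. There is no genuine obstacle here: all the substantive content — that quasi-compactness upgrades $g\bY \subseteq \bY$ to $g\bY = \bY$, and the formula for the ``$G_\bY$-small'' union of self-intersections — is already supplied by Lemma \ref{lem-gYstab} and Proposition \ref{prop-richardson}, and the only thing to be careful about is the bookkeeping in the reindexing step.
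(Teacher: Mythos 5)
Your proof is correct and rests on the same two ingredients the paper uses — Lemma \ref{lem-gYstab} to translate $g\bY \neq h\bY$ into $h^{-1}g \notin G_\bY$, and Proposition \ref{prop-richardson} for the inner union — with the only difference being that you package the argument as a single reindexing that establishes equality directly, while the paper proves the two inclusions separately (the reverse one by noting $\bZ$ sits inside the visibly $G$-stable left-hand side). This is essentially the same proof.
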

\begin{proof}
For the forward inclusion, let $g,h \in G$ such that $g\bY \neq h\bY$. Then $g^{-1}h\notin G_\bY$ by Lemma \ref{lem-gYstab}. By Proposition \ref{prop-richardson}, we have
\[g\bY \cap h\bY=g( \bY\cap g^{-1}h\bY)\subseteq G. \bigcup_{g'\in G \setminus G_\bY} \;\bY\cap g'\bY=G.(G_\bY.\bZ_w)=(G.G_\bY).\bZ=G\bZ.\]
Since $\bZ$ is clearly contained in the left-hand side, which is moreover $G$-stable, the reverse inclusion quickly follows.
\end{proof}
For future applications, we single out the following observation, which is a direct consequence of Corollary \ref{cor-richardson}.
\begin{cor}\label{cor_compact_stab} Suppose that $\bX$ is quasi-compact, and that there is a compact subgroup $G_0\subseteq G$ such that 
$G\bZ=G_0\bZ$. Then
$$\bigcup\limits_{\stackrel{g,h \in G_0}{g\bY \neq h\bY}} g\bY \cap h\bY \subseteq G_0 \bZ.$$
\end{cor}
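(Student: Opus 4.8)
The plan is to deduce this directly from Corollary \ref{cor-richardson}, so that essentially no new work is required. First I would observe that any pair $(g,h)$ of elements of $G_0$ is in particular a pair of elements of $G$, and that the condition $g\bY\neq h\bY$ is the same in both cases; hence the index set of the union on the left-hand side of the asserted inclusion is a subset of the index set of the union appearing in Corollary \ref{cor-richardson}. This gives at once the set-theoretic inclusion
\[
\bigcup\limits_{\stackrel{g,h \in G_0}{g\bY \neq h\bY}} g\bY \cap h\bY \;\subseteq\; \bigcup\limits_{\stackrel{g,h \in G}{g\bY \neq h\bY}} g\bY \cap h\bY.
\]

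Next I would apply Corollary \ref{cor-richardson}, which is available because $\bX$ is quasi-compact by hypothesis, to identify the right-hand union with $G\bZ$. Finally I would invoke the standing hypothesis $G\bZ = G_0\bZ$ to rewrite $G\bZ$ as $G_0\bZ$. Chaining these three steps yields
\[
\bigcup\limits_{\stackrel{g,h \in G_0}{g\bY \neq h\bY}} g\bY \cap h\bY \;\subseteq\; G\bZ \;=\; G_0\bZ,
\]
which is exactly the claim. There is no real obstacle here: all the substance is already contained in Corollary \ref{cor-richardson} (and ultimately in the quasi-compactness argument of Lemma \ref{lem-gYstab}), and the only thing worth flagging is that the statement is an inclusion, not an equality — one should not expect the reverse containment in general, since $G_0\bZ$ can contain translates of $\bZ$ that are not themselves self-intersections of $\bY$ under the restricted action of $G_0$.
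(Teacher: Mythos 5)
Your proof is correct and matches the paper's approach: the paper simply states the corollary is a direct consequence of Corollary \ref{cor-richardson}, and your argument (the $G_0$-indexed union is contained in the $G$-indexed union, which equals $G\bZ$ by Corollary \ref{cor-richardson}, which equals $G_0\bZ$ by hypothesis) is precisely that deduction.
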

To conclude, we observe a certain stability of these constructions under inverse images with respect to equivariant surjections. To make this precise, let  $\tilde{\bX}$ be another rigid analytic space with continuous $G$-action and let
$$ f: \tilde{\bX}\longrightarrow\bX$$
be a $G$-equivariant morphism. Let $\tilde{\bY}$ be a Zariski closed subset of $\tilde{\bX}$. Denote by $\tilde{S}, \tilde{\bR}_v$ (for $v\in\tilde{S}$) and $\tilde{\bZ}$ the above notions for the pair $\tilde{\bY}\subset\tilde{\bX}$.

\begin{prop}\label{prop-mor} Let $\tilde{\bY}=f^{-1}(\bY)$. Suppose that 
$f$ is surjective. Then $G_{\tilde{\bY}}=G_\bY$. If 
$\tilde{S}=S$, then $\tilde{\bZ}=f^{-1}(\bZ)$.
If, additionally, $G\bZ=G_0\bZ$ in $\bX$ for some compact subgroup $G_0\subseteq G$, then also $G\tilde{\bZ}=G_0\tilde{\bZ}$ in $\tilde{\bX}$.
\end{prop}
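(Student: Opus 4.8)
The plan is to reduce all three assertions to the elementary behaviour of preimages under a $G$-equivariant surjection, invoking Lemma \ref{lem-elementary} for the first one. First I would observe that $G_{\tilde{\bY}} = G_{\bY}$ is immediate from Lemma \ref{lem-elementary}(b), applied to the surjective $G$-equivariant map $f\colon \tilde{\bX}\to\bX$ and the subset $\bY\subseteq\bX$, since $\tilde{\bY}=f^{-1}(\bY)$ by hypothesis. In particular the double coset spaces $G_{\tilde{\bY}}\setminus G / G_{\tilde{\bY}}$ and $G_{\bY}\setminus G / G_{\bY}$ coincide, so it is legitimate to take $\tilde{S}=S$ as a common set of representatives; note that $1\in S=\tilde{S}$, so that $S^\ast=\tilde{S}^\ast$.

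The key preliminary point, which I would isolate first, is that $G$-equivariance of $f$ yields $g\cdot f^{-1}(T)=f^{-1}(g\cdot T)$ for every subset $T\subseteq\bX$ and every $g\in G$ (indeed $x\in g f^{-1}(T)$ iff $f(g^{-1}x)=g^{-1}f(x)\in T$ iff $x\in f^{-1}(gT)$). Combined with the fact that preimages always commute with arbitrary unions and finite intersections, this says that $f^{-1}$ is compatible with the operations used to build $\bZ$ out of $\bY$. Granting this, for the second assertion I would compute, for each $v\in S^\ast=\tilde{S}^\ast$,
\[ \tilde{\bR}_v = \tilde{\bY}\cap v\tilde{\bY} = f^{-1}(\bY)\cap f^{-1}(v\bY) = f^{-1}(\bY\cap v\bY) = f^{-1}(\bR_v), \]
and then take the union over $v\in S^\ast$ to obtain
\[ \tilde{\bZ} = \bigcup_{v\in S^\ast}\tilde{\bR}_v = \bigcup_{v\in S^\ast}f^{-1}(\bR_v) = f^{-1}\Big(\bigcup_{v\in S^\ast}\bR_v\Big) = f^{-1}(\bZ). \]
(Each $\tilde{\bR}_v$ is Zariski closed in $\tilde{\bX}$ as the preimage of a Zariski closed subset under a morphism.)

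For the third assertion, assume $G\bZ=G_0\bZ$ for a compact subgroup $G_0\subseteq G$. Applying the preliminary observation once more gives $G\tilde{\bZ}=\bigcup_{g\in G}g\cdot f^{-1}(\bZ)=f^{-1}\big(\bigcup_{g\in G}g\bZ\big)=f^{-1}(G\bZ)$, and likewise $G_0\tilde{\bZ}=f^{-1}(G_0\bZ)$; since $G\bZ=G_0\bZ$ by hypothesis we conclude $G\tilde{\bZ}=f^{-1}(G\bZ)=f^{-1}(G_0\bZ)=G_0\tilde{\bZ}$ in $\tilde{\bX}$.

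I do not expect a genuine obstacle here: the whole argument is bookkeeping with preimages. The only step that needs any care is the identity $g\cdot f^{-1}(T)=f^{-1}(g\cdot T)$, which is exactly where $G$-equivariance of $f$ enters (and one also uses that $f$ is a morphism, so that $f^{-1}$ preserves Zariski closedness); everything else is formal.
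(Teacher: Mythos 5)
Your proposal is correct and follows essentially the same route as the paper: Lemma \ref{lem-elementary}(b) for $G_{\tilde{\bY}}=G_{\bY}$, the computation $\tilde{\bR}_v=f^{-1}(\bR_v)$ together with compatibility of $f^{-1}$ with unions for $\tilde{\bZ}=f^{-1}(\bZ)$, and the chain $G\tilde{\bZ}=f^{-1}(G\bZ)=f^{-1}(G_0\bZ)=G_0\tilde{\bZ}$ for the last claim. The only difference is that you spell out the identity $g\cdot f^{-1}(T)=f^{-1}(g\cdot T)$, which the paper uses implicitly.
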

\begin{proof} The equality $G_{\tilde{\bY}}=G_\bY$ follows from Lemma \ref{lem-elementary}(b).
Taking $\tilde{S}=S$, one computes for $v\in S$ that 
$$ \tilde{\bR}_v=\tilde{\bY}\cap v\tilde{\bY}=f^{-1}(\bY)\cap v f^{-1}(\bY)=f^{-1}(\bY\cap v \bY)=f^{-1}(\bR_v).$$ Since $f^{-1}$ commutes with arbitrary unions, this implies $\tilde{\bZ}=f^{-1}(\bZ)$.
If $G\bZ=G_0\bZ$, then 
\[G\tilde{\bZ}=Gf^{-1}(\bZ)=f^{-1}(G\bZ)=f^{-1}(G_0\bZ)=G_0f^{-1}(\bZ)=G_0\tilde{\bZ}.\qedhere\]
\end{proof}

\section{Irreducible equivariant $\cD$-modules for Schubert varieties}

Let $K$ be a non-Archimedean complete field of mixed characteristic $(0,p)$.
We give examples related to classical Schubert varieties, where the axiomatic approach for irreducible induced modules of the previous section applies. 

\subsection{Schubert varieties and their $G$-orbits}
Our basic reference for the following is \cite[chap. 13]{Jantzen}. In this subsection, $K$ could be any field. Let $\G$ be a split connected reductive $K$-group $\G$, with its natural $\G$-action given by conjugating the Borel subgroups of $\G$. Let $G$ be a $p$-adic Lie group with a continuous homomorphism $G\rightarrow\G(K)$. Let $\T\subseteq\B$ be a Borel subgroup in $\G$ containing a split maximal torus $\T$. Let $W$ be the Weyl group of the pair $(\G,\T)$. 
The choice of $\B$ determines a set of simple reflections $s_i$ and a corresponding length function for $W$.
The $\B$-orbits $C_w$ in the full flag variety $\G/\B$ can be indexed by the Weyl elements $w\in W$ and their Zariski closures $\X_w$ give rise to the well-known Schubert varieties. Each $\X_w$ has the structure of a normal projective $K$-variety, usually with singularities. \vskip5pt 

Let $\B\subseteq \P$ be a parabolic subgroup of $\G$. 
Let $W_{\P}\subseteq W$ be the parabolic subgroup of $W$ associated to $\P$ and let $W^{\P}\subseteq W$ be the system of minimal representatives (i.e representatives of  minimal length) for the cosets in $W/W_{\P}$. Denote by $w_{o,\P}\in W_{\P}$ the longest element in $W_{\P}$. 
The products of the form $ww_{o,\P}$ with $w\in W^{\P}$ are then the maximal representatives (i.e. representatives of maximal length) of the cosets in $W/W_{\P}$. The Schubert varieties in the partial flag variety $\G/\P$ are the Zariski closures $\X_{w\P}$  of the $\B$-orbits $\B w\P/\P$ for $w\in W^{\P}$.
There is the surjective $\G$-equivariant morphism $$f: \G/\B\rightarrow \G/\P, \;\; g\B\mapsto g\P.$$
\vskip5pt\begin{prop}\label{prop_schubert} \be
\item One has $f^{-1}(\X_{w\P})=\X_{ww_{o,\P}}$ for any $w\in W^{\P}$.
\item $\X_{ww_{o,\P}}$ is smooth if and only if $\X_{w\P}$ is smooth.
\item $\X_{ww_{o,\P}}$ has a regular $G$-orbit in $\G/\P$ if and only if $\X_{w\P}$ has a regular $G$-orbit in $\G/\P$.
\ee
\end{prop}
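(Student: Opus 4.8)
The plan is to prove part (a) first and then deduce (b) and (c) from it quickly.

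\emph{Part (a).} I would use that $f\colon\G/\B\to\G/\P$ is a smooth, proper, surjective, $\G$-equivariant morphism all of whose fibres are isomorphic to $\P/\B$, an irreducible variety of dimension $\ell(w_{o,\P})$ (the number of positive roots of a Levi subgroup of $\P$); see \cite[Ch.~13]{Jantzen}. Since $f$ is flat with irreducible fibres and $\G/\P$ is an irreducible variety, the preimage $f^{-1}(\X_{w\P})$ is irreducible: the generic point of any of its irreducible components maps to the generic point of $\X_{w\P}$ and is in fact the generic point of the (irreducible) generic fibre. Its dimension is $\dim\X_{w\P}+\ell(w_{o,\P})=\ell(w)+\ell(w_{o,\P})$, and it is reduced, being the preimage of a reduced scheme under a flat morphism with geometrically reduced fibres. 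On the other hand, choosing a representative $\dot w_{o,\P}$ of $w_{o,\P}$ in the Levi subgroup of $\P$ one has $\dot w_{o,\P}\P=\P$, so $f$ maps the Bruhat cell $C_{ww_{o,\P}}$ onto $C_{w\P}$; as $f$ is closed and continuous this gives $f(\X_{ww_{o,\P}})=f(\overline{C_{ww_{o,\P}}})=\overline{C_{w\P}}=\X_{w\P}$, whence $\X_{ww_{o,\P}}\subseteq f^{-1}(\X_{w\P})$. Finally, because $w\in W^{\P}$ the element $ww_{o,\P}$ is the longest representative of the coset $wW_{\P}$, so $\dim\X_{ww_{o,\P}}=\ell(ww_{o,\P})=\ell(w)+\ell(w_{o,\P})$. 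Thus $\X_{ww_{o,\P}}$ is a reduced irreducible closed subvariety of the reduced irreducible variety $f^{-1}(\X_{w\P})$ of the same dimension, so the two coincide. I expect this to be the main obstacle: one must check both that the preimage is irreducible of the expected dimension and that it contains (and is not merely contained in some larger Schubert variety associated to) $\X_{ww_{o,\P}}$, which is exactly where the fibration structure of $f$ and the length additivity $\ell(ww_{o,\P})=\ell(w)+\ell(w_{o,\P})$ enter.

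\emph{Part (b).} By (a), $f$ restricts to a smooth surjective (hence faithfully flat) morphism $\X_{ww_{o,\P}}=f^{-1}(\X_{w\P})\to\X_{w\P}$. Since smoothness over $K$ is equivalent to geometric regularity, I would base change to an algebraic closure of $K$ — which preserves smoothness and surjectivity of this morphism and lets us read ``smooth'' as ``regular''. Then: if $\X_{w\P}$ is regular so is $\X_{ww_{o,\P}}$, being smooth over a regular base; conversely, if $\X_{ww_{o,\P}}$ is regular so is $\X_{w\P}$, by faithfully flat descent of regularity for local rings.

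\emph{Part (c).} The morphism $f$ is a surjective $G$-equivariant map (via $G\to\G(K)$) and $\X_{ww_{o,\P}}=f^{-1}(\X_{w\P})$ by (a); the assertion is then the special case of Lemma \ref{lem-elementary}(c) applied to $f$ with $Y=\X_{w\P}$ and $\tilde Y=\X_{ww_{o,\P}}$.
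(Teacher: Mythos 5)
Your proof is correct, and it reaches the conclusion by a genuinely different route from the paper for parts (a) and (b). The paper disposes of (a) by citing \cite[13.8(2)]{Jantzen}, which moreover asserts that the induced map $\X_{ww_{o,\P}}\to\X_{w\P}$ is a \emph{Zariski-locally trivial} fibration with fibre $\P/\B$; part (b) is then read off directly, since locally the total space is a product of the base with the smooth fibre $\P/\B$. You instead give a self-contained argument for (a) using flatness, going-down, irreducibility of the generic fibre, and a dimension count against length-additivity $\ell(ww_{o,\P})=\ell(w)+\ell(w_{o,\P})$; and for (b) you replace local triviality by the weaker fact that the restriction is smooth surjective and invoke faithfully flat descent of regularity after passing to an algebraic closure (the extra care is warranted since the subsection allows $K$ to be any field). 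Part (c) is identical: both you and the paper apply Lemma \ref{lem-elementary}(c) to the surjective $G$-equivariant map $f$. Your approach buys independence from the precise form of the reference in Jantzen at the cost of more machinery (descent of regularity, behavior of reducedness and irreducibility under flat morphisms with geometrically integral fibres); the paper's is shorter because local triviality is stronger than mere smoothness and makes (b) immediate.
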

\begin{proof} Part (a) is \cite[13.8(2)]{Jantzen} which moreover says that the induced map $\X_{ww_{o,\P}}\rightarrow \X_{w\P}$ is a locally trivial fibration with fiber $\P/\B$. Since $\P/\B$ is smooth, this implies (b). Finally, (c) follows from (a) using Lemma \ref{lem-elementary}(c).
\end{proof}

\begin{cor}\label{cor_schubert} 

 \be 
 \item $\X_{w_{o,\P}}=\P/\B$ is smooth.
 \item The $G$-orbit of $\X_{w_{o,\P}}$ is regular in $\G/\B$.
 \item  the Schubert curves $\X_{s_i}$ have regular $G$-orbits in $\G/\B$.
 \ee
\end{cor}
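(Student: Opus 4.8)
The plan is to reduce all three claims to Proposition \ref{prop_schubert}, specialised so that the Schubert variety in the partial flag variety degenerates to a single point. I would apply that proposition with the minimal-length coset representative $w = 1 \in W^\P$: since $\B \subseteq \P$, the $\B$-orbit $\B \cdot 1 \cdot \P/\P = \B\P/\P$ is just the point $e\P$, so $\X_{1\cdot\P}$ is a smooth point whose $G$-orbit in $\G/\P$ is automatically regular (two $G$-translates of a single point are either equal or disjoint, so $Z_{\X_{1\cdot\P}} = G_{\X_{1\cdot\P}}$). Now $1 \cdot w_{o,\P} = w_{o,\P}$, so Proposition \ref{prop_schubert}(a) gives $\X_{w_{o,\P}} = f^{-1}(\X_{1\cdot\P})$, and the fibration statement recalled in the proof of that proposition exhibits this as a locally trivial fibration over the point $\X_{1\cdot\P}$ with fibre $\P/\B$; hence $\X_{w_{o,\P}} = \P/\B$, which is smooth — this proves (a) (alternatively, smoothness is immediate from Proposition \ref{prop_schubert}(b)). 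Part (b) then follows from Proposition \ref{prop_schubert}(c), applied with $w = 1$ to the surjection $f : \G/\B \to \G/\P$: $\X_{w_{o,\P}}$ has a regular $G$-orbit in $\G/\B$ because $\X_{1\cdot\P}$ has one in $\G/\P$.

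For part (c), given a simple reflection $s_i$ I would take $\P = \P_i$, the unique standard parabolic subgroup containing $\B$ with Weyl group $W_{\P_i} = \{1, s_i\}$. Its longest element is $w_{o,\P_i} = s_i$, so $\X_{s_i} = \X_{w_{o,\P_i}}$ — and part (a), applied to $\P_i$, identifies this with $\P_i/\B$, a curve (whence the name ``Schubert curve''). Applying part (b) with $\P = \P_i$ then shows that $\X_{s_i}$ has a regular $G$-orbit in $\G/\B$.

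I do not expect a genuine obstacle here: all three statements are direct specialisations of the already-established Proposition \ref{prop_schubert}, together with the trivial observation that the $G$-orbit of a single point is regular. The only point requiring a little care is the combinatorial bookkeeping — confirming that $1$ is the minimal-length representative of the coset $W_\P$ in $W/W_\P$ (so that $w_{o,\P}$ is the associated maximal representative, matching the hypotheses of Proposition \ref{prop_schubert}) and that $\X_{1\cdot\P}$ really is a single point rather than merely positive- but low-dimensional, which is immediate from the inclusion $\B \subseteq \P$.
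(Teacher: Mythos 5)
Your proposal is correct and follows essentially the same route as the paper: both apply Proposition \ref{prop_schubert} with $w=1$ (so that $\X_{1\P}=\{e\P\}$ is a point, trivially smooth with regular $G$-orbit) to get (a) and (b), and both obtain (c) as the special case where $\P$ is the minimal parabolic attached to $s_i$. The extra details you supply (the point's orbit being regular, $f^{-1}(e\P)=\P/\B$) are exactly the implicit content of the paper's shorter argument.
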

\begin{proof}
The above proposition applied to $w=1$ gives (a) and (b). Part (c) is the special case where $\P$ is the minimal parabolic associated with $s_i$.
\end{proof}

\begin{rmk} We do not know whether the varieties $\P/\B$ for the standard parabolics $\B\subseteq \P$ appearing in the corollary exhaust all smooth Schubert varieties with regular $G$-orbit in $\X$. 
\end{rmk}

\vskip5pt

\begin{ex}\label{example} We discuss some cases of low dimension for the group $\G={\rm GL}_n$. Let $G={\rm GL}_n(K)$.
Identify $W$ with the symmetric group $S_n$.
Let ${\rm Gr}(d,n)$ be the Grassmannian of $d$-dimensional linear subspaces in affine $n$-space. Let $e_1,\ldots,e_n$ denote the standard basis of the latter space and denote by $\P=\P(d,n-d)\subseteq\G$ the parabolic subgroup equal to the stabilizer of the subspace spanned by $e_1,\ldots,e_d$. 
Then $W_{\P}$ identifies to the subgroup $S_d \times S_{n-d}$ of $S_n$. Moreover, ${\rm Gr}(d,n)=\G/\P$ and we have the surjective morphism $f: \X\rightarrow {\rm Gr}(d,n)$.

\vskip5pt

In the special case $d=1$ the Grassmannian  ${\rm Gr}(1,n)$ is the projective space $\P_K^{n-1}$ of dimension $n-1$. The system $W^{\P}$ identifies with the set $\{1,\ldots,n\}$ via $w\mapsto w(1)$ and the Schubert varieties of  ${\rm Gr}(1,n)$ are all smooth and form a flag of linear subspaces 
\[\X_{1\P} \subset \X_{2\P} \subset \cdot\cdot\cdot \subset \X_{n\P}\]
with $\X_{j\P} \simeq \P_K^{j-1}$ for $j\in\{1,\ldots,n\}$.

\vskip5pt

The first case of a non-regular $G$-orbit for these 
$\X_{j\P}$ appears in the case $n=3$ and $j=2$, i.e. the Schubert divisor $\X_{2\P}$ does not have a regular $G$-orbit in ${\rm Gr}(1,3)$. In fact,
$$\X_{2\P}=\{ [x_1:x_2:x_3]\in\P^2_K \; | \; x_3=0\} \subset \P^2_K $$ 
and its stabilizer $G_{\X_{2\P}}$ in ${\rm GL}_3$ equals therefore the minimal standard parabolic $\P(2,1)$ of all matrices of the form

$$
 \begin{pmatrix}
  a_{1,1} & a_{1,2} &  a_{1,3} \\
  a_{2,1} & a_{2,2} & a_{2,3} \\
 
  0 & 0 & a_{3,3} 
 \end{pmatrix}$$ 
 
in ${\rm GL}_3$. Note that our parabolic $\P=\P(1,2)$ from the beginning equals the "other" minimal standard parabolic of ${\rm GL}_3$. To compute the intersection obstruction we let 
$g\in {\rm GL}_3\setminus \P(2,1)$. Assume $g_{32}\neq 0$
and choose a point $x=[x_1:x_2:0]\in \X_{2\P}$
with $-x_2/x_1=g_{31}/g_{32}$. Then $g_{31}x_1+g_{32}x_2=0$ which means $gx\in \X_{2\P}\cap g\X_{2\P}$. In other words,  $\X_{2\P}\cap g\X_{2\P}\neq \emptyset$. The case where $g_{31}\neq 0$ works similarly. Denoting by 
$P(2,1)$ the $L$-rational points of $\P(2,1)$, we therefore have $$Z_{\X_{2\P}}/ G_{\X_{2\P}}=G/P(2,1).$$
In particular, $\X_{2\P}$ does not have a regular $G$-orbit in ${\rm Gr}(1,3)$. 

\vskip5pt 

Still in the case of ${\rm GL}_3$, let $s_i=(i,i+1)$ and $c=s_1s_2=(123)\in S_3$. The Schubert divisor $\X_c$ of $\X$ equals the inverse image $f^{-1}(\X_{2\P})$ under the map  $f: \X\rightarrow {\rm Gr}(1,3)$. In particular,  $$Z_{\X_c}/ G_{\X_c}=Z_{\X_{2\P}}/ G_{\X_{2\P}}=G/P(2,1)$$
according to \ref{lem-elementary} and hence $\X_c$ does not have a regular $G$-orbit in $\X$. 

\end{ex}

\subsection{Schubert varieties in projective space}
In this subsection, we explain how Schubert varieties in projective space gives rise to 
irreducible equivariant $\cD$-modules. We let $G={\rm GL}_n(L)$, where $L$ is a finite extension of $\Qp$ contained in $K$, and consider $\P_K^{n-1,\rm an}$ with its induced $G$-action. We consider the analytic Schubert varieties 
\[\bX_{1} \subset \bX_{2} \subset \cdot\cdot\cdot \subset \bX_{n}\]
with $\bX_{j}= (\X_{j\P})^{\rm an}$. Let $P_j={\rm Stab}_G(\bX_j)$.
 \begin{thm}\label{thm-SchubProjIrred} Fix $j$ and write $P:=P_j$. Let $i: \bX_j\hookrightarrow \P_K^{n-1,{\rm an}}$ denote the closed embedding. 
 Let $\cN:=i^{P}_{+}\cO_{\bX_j}\in \cC_{\P_K^{n-1,{\rm an}}/P}^{\bX_j}$ be the $P$-equivariant pushforward of the structure sheaf $\cO_{\bX_j}$. Then the induced module
 $\cM:=\ind_P^G \cN$ is an irreducible object in $\cC_{\P_K^{n-1,{\rm an}}/G}$.
  \end{thm}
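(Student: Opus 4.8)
The plan is to verify the hypotheses of the general irreducibility result, Theorem~\ref{MainResult}, for the concrete situation $\bX = \P_K^{n-1,\mathrm{an}}$, $G = {\rm GL}_n(L)$, $\bY = \bX_j$, $P = P_j$, and $\cN = i^P_+\cO_{\bX_j}$. The first task is to identify the set of self-intersections $\bZ$ and check Condition~(B); since $\bX_j$ is a linear subspace of $\P_K^{n-1,\rm an}$ given by the vanishing of the last $n-j$ homogeneous coordinates, its stabilizer $P_j$ is a maximal parabolic, and a direct matrix computation (as already foreshadowed in Example~\ref{example} for the case $n=3$) shows that for $g \in G \setminus P_j$ the intersection $\bX_j \cap g\bX_j$ is the locus where $\bX_j$ meets a smaller linear subspace. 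The double coset set $P_j \backslash G / P_j$ is finite, so by Remark after Proposition~\ref{prop-richardson} the set $\bZ := \bigcup_{v \in S^\ast} \bX_j \cap v\bX_j$ is Zariski closed, has dimension $< j-1 = \dim \bX_j$, and by Corollary~\ref{cor-richardson} satisfies $\bigcup_{g\bY\neq h\bY} g\bY \cap h\bY = G\bZ$. The key point that makes things work here is that $\P_K^{n-1,\rm an}$ is quasi-compact, so Lemma~\ref{lem-gYstab} and Corollary~\ref{cor-richardson} apply; but $G$ is not compact, so I will need to pass to a compact open subgroup. Using that $G = G_0 P_j$ for a suitable maximal compact open $G_0 = {\rm GL}_n(o_L)$ (Iwasawa decomposition) and Proposition~\ref{prop_iwasawa}, together with Corollary~\ref{cor_compact_stab}, I reduce the whole problem to the action of the \emph{compact} group $G_0$ on $\bX = \P_K^{n-1,\rm an}$.

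Next I need to check the remaining conditions for the compact group $G_0$ acting on $\bX$, with $\bY = \bX_j$ and $P_0 = G_0 \cap P_j = {\rm Stab}_{G_0}(\bX_j)$. Condition~(A) (the LSC) holds because $\P_K^{n-1,\rm an}$ is separated and $\bX_j$ is irreducible and quasi-compact (the paper states the LSC holds in exactly this situation). Condition~(B) follows from Corollary~\ref{cor_compact_stab} as just discussed. Condition~(C), the connectedness of $\bX_j \cap \Sigma = \bX_j \setminus G\bZ$, is where geometric input is needed: $\bZ$ is a union of translates of proper linear subspaces of $\bX_j \cong \P_K^{j-1,\rm an}$, so $G\bZ \cap \bX_j$ is a union of proper Zariski-closed subsets; removing a proper Zariski-closed subset (in fact a countable union of such, arising from the covering in Proposition~\ref{prop3}) from the irreducible space $\bX_j$ leaves something connected --- I expect to argue this via Lemma~\ref{Lem_connected} or directly from irreducibility of $\bX_j$ together with the fact that the complement of a proper analytic subset of an irreducible rigid space is connected and dense. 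Condition~(E), $\bY \subseteq \Supp(\tilde{\cN})$, is immediate from $\cN = i^P_+\cO_{\bX_j}$ and Corollary~\ref{cor-supportpreserved}, since the support of the pushforward of the structure sheaf is exactly $\bX_j$. Condition~(F), weak holonomicity, holds because $i_+\cO_{\bX_j}$ is holonomic of minimal dimension (it comes by extension from the classical $\cB_{\bX_j\mid\P}$ via $E_\bX$), and Bernstein's inequality holds in $\cC_{\bX/G_0}$ and $\cC_{\bX/P_0}$ because $\P_K^{n-1,\rm an}$ has a smooth formal model $\P^{n-1}_{o_K}$, as noted in the introduction.

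The remaining and subtlest condition is~(D): local simplicity of $\cN = i^P_+\cO_{\bX_j}$, i.e.\ that $\cN_{|\bU}$ is simple in $\cC_{\bU/P_{0,\bU}}$ whenever $\bU \in \bX_w(\cT)$ is connected with $\bU \cap \bX_j$ connected and non-empty. The plan is to use the equivariant Kashiwara equivalence (from \cite{EqDCapTwo}): $i^{P_{0,\bU}}_+$ induces an equivalence between $\cC_{\bU \cap \bX_j / P_{0,\bU}}$ and the subcategory of $\cC_{\bU/P_{0,\bU}}$ supported on $\bU \cap \bX_j$, so simplicity of $\cN_{|\bU} = i^{P_{0,\bU}}_+(\cO_{\bU \cap \bX_j})$ is equivalent to simplicity of $\cO_{\bU \cap \bX_j}$ in $\cC_{\bU \cap \bX_j / P_{0,\bU}}$. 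Since $\bX_j \cong \P_K^{j-1,\rm an}$ is a rigid-analytic projective space, $\bU \cap \bX_j$ is an affinoid subdomain of it, and $\cO$ on a connected smooth affinoid (with its $\w\cD$-module structure, extended from $\cD$ via $E$) is simple as a $\w\cD$-module --- indeed $\cD_{\bU \cap \bX_j}$ acts transitively in the appropriate sense, and there are no proper non-zero $\w\cD$-submodules of $\cO$ on a connected space. The equivariant refinement, that there are no proper non-zero $P_{0,\bU}$-stable closed $\w\cD$-submodules, follows since any $\w\cD$-submodule of $\cO$ is already $0$ or everything. I should also verify that $\mathbb{D}(\cN)$ satisfies~(D), but by Theorem~\ref{prop_SelfGDual} (self-duality of $i^G_+\cO_\bY$, applicable because Bernstein's inequality holds) $\cN$ is self-dual, so $\mathbb{D}(\cN) \cong \cN$ and there is nothing new to check; the Remark after Theorem~\ref{MainResult} then disposes of~(E,F) for $\mathbb{D}(\cN)$ automatically. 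The main obstacle I anticipate is Condition~(D) together with getting the reduction to a compact group clean: one has to be careful that local simplicity is checked with respect to the \emph{right} stabilizer subgroups $P_{0,\bU}$, and that the Kashiwara equivalence is being applied equivariantly with compatible group actions. Once~(A)--(F) are in place for $(\bX, G_0, \bX_j, \bZ)$ and $\cN$, Theorem~\ref{MainResult} gives that $\ind_{P_0}^{G_0}(\cN)$ is simple in $\cC_{\bX/G_0}$, and Proposition~\ref{prop_iwasawa} identifies $\Res^G_{G_0}\ind_P^G(\cN) \cong \ind_{P_0}^{G_0}(\Res^P_{P_0}\cN)$; since simplicity after restriction to an open subgroup with $G = G_0 P$ implies simplicity over $G$, we conclude that $\cM = \ind_P^G\cN$ is irreducible in $\cC_{\P_K^{n-1,\rm an}/G}$.
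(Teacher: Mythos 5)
Your overall strategy is the same as the paper's (reduce to the compact group $G_0={\rm GL}_n(o_L)$ via Proposition \ref{prop_iwasawa} and verify axioms (A)--(F) of Theorem \ref{MainResult} for the data $(\P_K^{n-1,\rm an},G_0,\bX_j,\bZ,\cN)$, with (D) handled by self-duality plus equivariant Kashiwara), but two of the conditions you treat as routine are exactly where the real work lies, and as written your argument for them has gaps. First, to get condition (B) for the \emph{compact} group $G_0$ you invoke Corollary \ref{cor_compact_stab}, whose hypothesis is $G\bZ=G_0\bZ$ --- and you never verify this. It is not a formal consequence of the Iwasawa decomposition $G=G_0P_j$ (note that Corollary \ref{cor-richardson} only controls intersections of $G$-translates by $G\bZ$, and an element $g^{-1}h\in G_0\setminus P$ decomposes as $p_1vp_2$ with $p_1,p_2\in P$ not in $G_0$, so you cannot conclude $g\bX_j\cap h\bX_j\subseteq G_0\bZ$ without it). The paper's proof of this point is its Lemma \ref{lem-linearlydefined}: each self-intersection $\bR_v=\bX_j\cap v\bX_j$ is cut out by coordinate hyperplanes, hence is a Weyl-group translate of a standard $\bX_{m(v)}$ with $m(v)<j$; taking $v=s_j$ gives $\bR_{s_j}=\bX_{j-1}$, so $G_0\bZ=G_0\bX_{j-1}$, and then the Iwasawa decomposition for the \emph{other} parabolic, $G=G_0P_{j-1}$, yields $G\bZ=G\bX_{j-1}=G_0\bX_{j-1}=G_0\bZ$. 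This is precisely the condition (a) of Theorem \ref{MainResult2} which the paper stresses can fail for general Schubert varieties (e.g.\ for ${\rm GL}_4$ and the Schubert divisor in ${\rm Gr}(2,4)^{\rm an}$), so it cannot be waved through.

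Second, your justification of condition (C) would not go through as stated. The set $\bX_j\cap G\bZ$ is not a proper Zariski-closed subset of $\bX_j$, nor a countable union of such: once one knows $G\bZ=G\bX_{j-1}$, it is the union of \emph{all} $L$-rational hyperplanes in $\bX_j\simeq\P_K^{j-1,\rm an}$, an uncountable (compactly parametrized) family; Proposition \ref{prop3} gives a countable affinoid covering of the \emph{complement}, not a countable Zariski-closed decomposition of the removed set, and Lemma \ref{Lem_connected} (a fibration statement, used in the paper only for the passage from partial to full flag varieties) is not applicable here. The correct argument, which is the one the paper gives, is to identify $\bX_j\cap\Sigma$ with the Drinfeld symmetric space $\P_K^{j-1,\rm an}\setminus\bigcup_{H\in\cH}H$ over the $L$-rational hyperplanes $\cH$ and appeal to its well-known connectedness; "complement of a proper analytic subset of an irreducible space is connected" does not cover this situation. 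The remaining parts of your plan --- (A) via separatedness and quasi-compactness, Bernstein's inequality from good reduction, (D) via self-duality (Theorem \ref{prop_SelfGDual}) and the equivariant Kashiwara theorem applied to $\cN_{|\bU}\simeq i^{P_\bU}_{\bU,+}\cO_{\bU\cap\bX_j}$, (E), (F), and the final descent of simplicity from $G_0$ to $G$ --- match the paper and are fine.
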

  By the equivariant Kashiwara Theorem, \cite[Theorem B]{EqDCapTwo}, we know that $\cN_j$ is an irreducible object in $\cC_{\P_K^{n-1,{\rm an}}/P}^{\bX_j}$. If the $G$-orbit of $\bX_j$ is regular 
 in $\P_K^{n-1,{\rm an}}$, then the induction equivalence, Theorem \ref{inductionequiv}, applies directly, and shows that
 $ \cM$ is an irreducible object in $\cC_{\P_K^{n-1,{\rm an}}/G}$. This is for example the case when $j=1$ or $j=n$.  However, our above example in the case $n=3$ and $j=2$ shows that this is not always the case. Instead, we will apply Theorem \ref{MainResult}. Because of these remarks, we may and will suppose in the following that $2\leq j \leq n-1$. 
 
 We denote by $s_j=(j,j+1)\in W$ the $j$-th elementary transposition. Let $G_0={\rm GL}_n(o_L)$ and $P_0=P\cap G_0$. Theorem \ref{MainResult} will in fact give a stronger result, namely the irreducibilty of $\Res^G_{G_0}\cM$ in the category $\cC_{\P_K^{n-1,{\rm an}}/G_0}$. According to Proposition \ref{prop_iwasawa} we have $\Res^G_{G_0}\cM\simeq \ind_{P_0}^{G_0} (\Res^P_{P_0}\cN)$. 
 
 For simplicity, we will continue to write $\cN$ instead of $\Res^P_{P_0}\cN$ in the following. 
 Let $S\subset W$ be a finite set of representatives containing $1$ for the double cosets 
  $P \setminus G \;/ \; P$. Note that the transposition $s_j$ does not stabilize $\bX_j$. Thus we
may and will suppose that $s_j\in S$. Write $S^*=S\setminus\{ 1\}$. Recall from $\S$\ref{sec_self_inter} the Zariski closed subsets $\bR_v:=\bX_{j}\cap v\bX_{j}$ and 
\[\bZ_{j}:= \bigcup\limits_{v\in S^*} \bR_v\]
of $\P_K^{n-1,{\rm an}}$. We will apply Theorem \ref{MainResult}, to the data $$(\bX := \P_K^{n-1,{\rm an}},G_0,\bX_j,\bZ_j,\cN).$$
 
  \vskip5pt
 In the following, we will verify the axioms (A,B,C,D,E,F) appearing in Theorem \ref{MainResult}.
 
 \vskip5pt
 Since $\P_K^{n-1,{\rm an}}$ is separated and $\bX_j$ is irreducible and quasi-compact, 
 \cite[Corollary 2.5.18]{EqDCapTwo} implies that the triple $(\P_K^{n-1,{\rm an}},\bX_j,G_0)$ satisfies the LSC, whence axiom (A). The verification of (B) and (C) relies on the following lemma. 
\begin{lem}\label{lem-linearlydefined} \,
 \be
 \item For every $v\in S^*$ there is $w=w(v)\in W$ and $m=m(v)$ with $1\leq m < j$, such that ${^wP_m}=wP_mw^{-1}$ equals the stabilizer of $\bR_v$. 
 \item We have $G\bZ_j=G_0\bZ_j$.
 \ee
 \end{lem}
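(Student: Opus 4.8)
The plan is to reduce both claims to elementary combinatorics of coordinate linear subspaces of $\P_K^{n-1,{\rm an}}$, together with the Iwasawa decomposition for ${\rm GL}_n$. View $W = S_n$ inside $G = {\rm GL}_n(L)$ via permutation matrices and recall $\bX_j = \P(\langle e_1,\ldots,e_j\rangle)$. Checking how a permutation acts on homogeneous coordinates gives, for every $v \in W$,
\[ v\bX_j = \P\bigl(\langle e_k : k \in v\{1,\ldots,j\}\rangle\bigr),\]
so for $v \in S$ one has $\bR_v = \bX_j \cap v\bX_j = \P(\langle e_k : k \in I_v\rangle)$ with $I_v := \{1,\ldots,j\} \cap v\{1,\ldots,j\}$; put $m(v) := |I_v|$. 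As $1 \in S$ represents the double coset of all $v$ fixing $\bX_j$ --- equivalently, of those $v$ with $v\{1,\ldots,j\} = \{1,\ldots,j\}$ --- every $v \in S^\ast$ satisfies $I_v \subsetneq \{1,\ldots,j\}$, hence $m(v) < j$. Dropping from $S^\ast$ the $v$ with $\bR_v = \emptyset$ changes neither $\bZ_j$ nor the assertions, so I may and do assume $\bR_v \neq \emptyset$, i.e.\ $m(v) \geq 1$, for all $v \in S^\ast$.

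\emph{Part (a).} For each $v \in S^\ast$ choose a permutation $w = w(v) \in W$ with $w\{1,\ldots,m(v)\} = I_v$. Then $w\bX_{m(v)} = \P(\langle e_k : k \in I_v\rangle) = \bR_v$ by the displayed formula, and therefore
\[ \Stab_G(\bR_v) = \Stab_G\bigl(w\bX_{m(v)}\bigr) = w\,\Stab_G(\bX_{m(v)})\,w^{-1} = wP_{m(v)}w^{-1} = {}^wP_{m(v)},\]
which is the claim with $m := m(v)$ and $1 \leq m < j$.

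\emph{Part (b).} Since $G\bZ_j = \bigcup_{v \in S^\ast} G\bR_v$ and $G_0\bZ_j = \bigcup_{v \in S^\ast} G_0\bR_v$, it suffices to prove $G\bR_v = G_0\bR_v$ for each $v$. Writing $\bR_v = w(v)\bX_{m(v)}$ and using that the permutation matrix $w(v)$ lies in $G_0 = {\rm GL}_n(o_L)$, this reduces to $G\bX_m = G_0\bX_m$ for $m = m(v)$. Here $P_m = \Stab_G(\bX_m)$ is the group of $L$-points of the standard block upper-triangular parabolic of ${\rm GL}_n$ --- a matrix fixes $\langle e_1,\ldots,e_m\rangle$ precisely when its lower-left $(n-m)\times m$ block vanishes --- so $P_m$ contains the standard Borel $B$, and the Iwasawa decomposition ${\rm GL}_n(L) = {\rm GL}_n(o_L)\cdot B$ yields $G = G_0 P_m$. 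Hence $G\bX_m = G_0 P_m\bX_m = G_0\bX_m$, and consequently $G\bZ_j = \bigcup_v G\bX_{m(v)} = \bigcup_v G_0\bX_{m(v)} = G_0\bZ_j$.

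I do not expect a serious obstacle: once the $\bR_v$ are identified as coordinate subspaces the rest is bookkeeping, and the only external input is the Iwasawa decomposition. The two points needing a little care are fixing a consistent convention for the $W$-action on homogeneous coordinates (an error there would distort the formula for $v\bX_j$ and hence both parts), and not forgetting the harmless reduction to nonempty $\bR_v$, which is what makes $m(v) \geq 1$ as required in (a).
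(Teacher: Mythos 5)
Your proof is correct and follows essentially the same strategy as the paper: identify each $\bR_v$ as a coordinate linear subspace (you use the intersection $\{1,\ldots,j\}\cap v\{1,\ldots,j\}$ of spanning sets, the paper the complementary union of vanishing coordinates) and then appeal to the Iwasawa decomposition $G=G_0 B$. Two small presentational differences worth noting: you explicitly discard the $v\in S^\ast$ with $\bR_v=\emptyset$ — a genuine edge case (e.g.\ two skew lines in $\P^3$) that the paper leaves implicit even though it is needed for $m(v)\geq 1$ — and in part (b) you run the Iwasawa argument once per $\bR_v$, whereas the paper first observes $\bR_{s_j}=\bX_{j-1}$ and $\bR_v\subseteq w\bX_{j-1}$ for all $v$, so that $G\bZ_j=G\bX_{j-1}$ and a single application of Iwasawa to $P_{j-1}$ finishes; both routes are equally valid.
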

\begin{proof}  
(a) For each $r = 1,\cdots,n$ let $p_r$ be the coordinate function on $\P_K^{n-1,{\rm an}}$ vanishing on the $r$-th homogeneous coordinate. Then $\bX_j=V(p_n,p_{n-1},\ldots,p_{j+1})$ and therefore $v\bX_j=V(p_{v(n)},\ldots,p_{v(j+1)})$. Let 
 $I_v:=\{n,\ldots,j+1\} \cup \{v(n),\ldots,v(j+1) \}$, so that
 $$ \bR_v=\bX_j\cap v\bX_j=\bigcap\limits_{i\in I_v} V(p_i).$$
Since $v\in S^*$, the set $\bR_v$ is properly contained in $\bX_j$. Now $I_v$ is just some subset of $\{1,\cdots,n\}$ with $n - j <  |I_v| \leq 2(n-j)$, so we can find some $m(v) <  j$ with $|I_v| = n - m(v)$. Then we can find some $w\in W$ such that 
\[w(I_v)=\{n,\ldots,m(v)+1\}.\]
Hence $w\bR_v=\bX_{m(v)}$, and therefore $wP_{m(v)}w^{-1}$ equals the stabilizer of $\bR_v$ in $G$.
 
(b) Using (a), we see that $G_0\bR_v \subseteq G_0\bX_{j-1}$ for any $v\in S^*$, so that $G_0\bZ_j\subseteq G_0\bX_{j-1}$. But if $v=s_j$, then $I_v=\{n,\cdots,j\}$, and hence $\bR_v=\bX_{j-1}$ and  $G_0\bZ_j=G_0 \bX_{j-1}$. By the Iwasawa decomposition  $G=G_0P_{j-1}$, whence 
 $G\bZ_j=G\bX_{j-1}=G_0\bX_{j-1}=G_0\bZ_j$.
 \end{proof}
 
  Now (B) follows from Lemma \ref{lem-linearlydefined}(b) together with and Corollary \ref{cor_compact_stab}. Furthermore, 
  $$\Sigma=\P_K^{n-1,{\rm an}}\setminus G_0\bZ_j=\P_K^{n-1,{\rm an}}\setminus G\bX_{j-1}.$$
  It is rather clear that  $\bX_{j}\cap G\bX_{j-1}$ equals the set $\mathcal{H}$ of all $L$-rational hyperplanes in $\bX_j\simeq \P_K^{j-1,{\rm an}}$, whence
  $$\Sigma\cap \bX_{j}\simeq \P_K^{j-1,{\rm an}}\setminus \bigcup_{H\in \mathcal{H}} H.$$
   This Drinfeld space is well-known to be connected, whence (C). 
 
 Note that Bernstein's inequality holds in $\cC_{\P_K^{n-1,{\rm an}}/G_0}$ and $\cC_{\P_K^{n-1,{\rm an}}/P_0}$, since $\P_K^{n-1,{\rm an}}$ has good reduction, cf. \cite[Cor. 5.8]{VuThesis}.

We start the verification of the remaining axioms (D,E,F) from \ref{axioms} with the observation, cf. Theorem \ref{prop_SelfGDual}, that $\cN\simeq \mathbb{D}(\cN)$. 
Now let $\bU \in \bX_w(\cT)$ be connected with $\bU \cap \bX_j$ connected and non-empty. Let $P_\bU$ be the stabilizer of $\bU \cap \bX_j $ in $P$.
By the local nature of the equivariant pushforward, we have 
$$\cN_{|\bU}\simeq i^{P_\bU}_{\bU,+}\cO_{\bU\cap \bX_j},$$
where $i_{\bU}$ denotes the closed immersion of $\bU\cap\bX_j$ into $\bU$. 
Now $\bU \cap \bX_j$ is connected, so $\cO_{\bU\cap \bX_j}$ is a simple object in $\cC_{\bU\cap\bX_j/P_{\bU}}$ by \cite[Proposition 7.5.1(2)]{ArdWad2023b}. Hence $\cN_{|\bU}$ is a simple object in $\cC_{\bU/P_{\bU}}$ by \cite[Theorem B]{EqDCapTwo}, giving axiom (D) for $\cN$.

 It is clear that $\bX_j = \Supp(\tilde{\cN})$, whence (E).
Finally, again by the compatibilty of equivariant push-forward with duality, $\cN \in \cC_{\P_K^{n-1,{\rm an}}/P_0}$ is weakly holonomic, whence (F). This completes the proof of Theorem \ref{thm-SchubProjIrred}.
\vskip5pt

  
 \subsection{Some cases for the full flag variety}
 Let $L$ be a finite extension of $\Q_p$ contained in $K$ and let $G=\G_L(L)$ for a connected 
 reductive algebraic group $\G_L$ defined over $L$. We suppose that $\G:=\G_L \times K$ is $K$-split and adopt all the notation from 5.1 for the $K$-group $\G$. In particular, $\T\subseteq\B$ denotes a Borel subgroup in $\G$ containing a split maximal torus $\T$ and $W$ denotes the Weyl group of the pair $(\G,\T)$.

\vskip5pt 
Let $\bX=(\G/\B)^{\rm an}$. For a Schubert variety $i: \bX_w\subseteq \bX$ denote by $\bZ_w$ the Zariski closed subset of $\bX$
 from $\S$ \ref{sec_self_inter} corresponding to a finite set of representatives 
 for $G_{\bX_w}\setminus G / \;G_{\bX_w}$. We abbreviate $P_w:=G_{\bX_w}$ in the following. 
\begin{thm}\label{MainResult2} Let $w\in W$. Suppose the following three conditions. 
\be 

\item $G\bZ_{w}=G_0\bZ_{w}$ with $G_0\subset G$ some compact open subgroup such that $G=G_0P_w$.
\item $\bX_w\setminus G\bZ_w$ is connected.
\item $\bX_{w}$ is smooth.
\ee
 Let $\cN:=i^{P_w}_{+}\cO_{\bX_w}\in \cC_{\bX/P_w}^{\bX_w}$ be the $P_w$-equivariant pushforward of the structure sheaf $\cO_{\bX_w}$. Then the induced module
 $\cM:=\ind_{P_w}^G \cN$ is an irreducible object in $\cC_{\bX/G}$.
  \end{thm}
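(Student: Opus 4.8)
The plan is to deduce this from the axiomatic result Theorem \ref{MainResult}, applied not to $G$ but to the compact open subgroup $G_0$ of hypothesis (a), in exactly the way the projective–space case Theorem \ref{thm-SchubProjIrred} was handled. Write $P:=P_w$ and $P_0:=G_0\cap P$; note $P_0=(G_0)_{\bX_w}$, and by locality of the equivariant pushforward $\Res^P_{P_0}\cN$ is again (locally over $\bU\in\bX_w(\cT)$) the pushforward of $\cO_{\bU\cap\bX_w}$, i.e. $\Res^P_{P_0}\cN\simeq i^{P_0}_+\cO_{\bX_w}$ in $\cC^{\bX_w}_{\bX/P_0}$. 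Since $G=G_0P$ by (a), Proposition \ref{prop_iwasawa} gives a natural isomorphism $\Res^G_{G_0}\cM\simeq\ind_{P_0}^{G_0}(\Res^P_{P_0}\cN)$ in $\cC_{\bX/G_0}$. As any non-zero subobject of $\cM$ in $\cC_{\bX/G}$ restricts, on the same underlying sheaf, to a non-zero subobject of $\Res^G_{G_0}\cM$ in $\cC_{\bX/G_0}$, it is enough to prove that $\ind_{P_0}^{G_0}(i^{P_0}_+\cO_{\bX_w})$ is simple in $\cC_{\bX/G_0}$; from here on we write $\cN$ for $i^{P_0}_+\cO_{\bX_w}$.

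The next step is to feed the data $(\bX,G_0,\bX_w,\bZ_w,\cN)$ into Theorem \ref{MainResult}. The structural hypotheses of \ref{AbsHyps} are immediate: $\bX=(\G/\B)^{\rm an}$ is connected and smooth; $G_0$ is compact and acts continuously; and $\bX_w=(\X_w)^{\rm an}$ is a connected Zariski closed subset, since the Schubert variety $\X_w$ is irreducible. The one point needing a word is $\dim\bZ_w<\dim\bX_w$: since $\bX$ is quasi-compact, Lemma \ref{lem-gYstab} gives $v\bX_w\neq\bX_w$ for every $v\in S^\ast$, so each $\bR_v=\bX_w\cap v\bX_w$ is a proper Zariski closed subset of the irreducible space $\bX_w$, forcing $\dim\bR_v<\dim\bX_w$ and hence $\dim\bZ_w<\dim\bX_w$. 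We also record that Bernstein's inequality holds in $\cC_{\bX/G_0}$ and $\cC_{\bX/P_0}$, because $(\G/\B)^{\rm an}$ has good reduction \cite[Cor. 5.8]{VuThesis}.

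It then remains to verify Conditions (A)--(F) of \ref{axioms} for $\cN$ and (D,E,F) for $\mathbb{D}_{P_0}(\cN)$. For (A) we would invoke \cite[Corollary 2.5.18]{EqDCapTwo}, since $\bX$ is separated and $\bX_w$ is irreducible and quasi-compact. Condition (B) is precisely the conclusion of Corollary \ref{cor_compact_stab}, using that $\bX$ is quasi-compact and $G\bZ_w=G_0\bZ_w$ with $G_0$ compact (hypothesis (a)). Condition (C) holds because $\Sigma=\bX\setminus G_0\bZ_w=\bX\setminus G\bZ_w$, so $\bX_w\cap\Sigma=\bX_w\setminus G\bZ_w$, which is connected by hypothesis (b). Condition (D) is where smoothness (hypothesis (c)) enters: for connected $\bU\in\bX_w(\cT)$ with $\bU\cap\bX_w$ connected and non-empty, locality of equivariant pushforward gives $\cN_{|\bU}\simeq i^{(P_0)_\bU}_{\bU,+}\cO_{\bU\cap\bX_w}$ with $\bU\cap\bX_w$ a smooth connected Zariski closed subspace of $\bU$, so $\cO_{\bU\cap\bX_w}$ is simple by \cite[Proposition 7.5.1(2)]{ArdWad2023b} and $\cN_{|\bU}$ is simple by the equivariant Kashiwara Theorem \cite[Theorem B]{EqDCapTwo}. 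Condition (E) is clear: $\cN$ is supported on $\bX_w$ with non-zero stalks there, so $\bX_w=\Supp(\tilde\cN)$; and (F) follows since, arguing as in Proposition \ref{prop_SelfDual} and using that $\bX_w$ is smooth, $\cN\simeq E_\bX\cB^{\rm cl}_{\bX_w\mid\bX}$ is weakly holonomic. Finally, applying Theorem \ref{prop_SelfGDual} with the group $P_0$ to the $P_0$-stable smooth subvariety $\bX_w$ yields $\mathbb{D}_{P_0}(\cN)\simeq\cN$, so $\mathbb{D}_{P_0}(\cN)$ inherits (D), and satisfies (E,F) by the remark after Theorem \ref{MainResult}. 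With all hypotheses of Theorem \ref{MainResult} in place, $\ind_{P_0}^{G_0}\cN$ is simple in $\cC_{\bX/G_0}$, whence $\cM=\ind_{P_w}^G\cN$ is simple in $\cC_{\bX/G}$.

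We do not expect a serious obstacle: the argument is an assembly of results already established, with the genuine content residing in Theorem \ref{MainResult}. The points requiring care are the dimension estimate $\dim\bZ_w<\dim\bX_w$ (which rests on the irreducibility of Schubert varieties together with Lemma \ref{lem-gYstab}) and keeping track of how the three hypotheses are consumed: hypothesis (a) both enables the passage to the compact group $G_0$ via Proposition \ref{prop_iwasawa} and supplies Condition (B) via Corollary \ref{cor_compact_stab}; hypothesis (b) is Condition (C); and hypothesis (c) is the essential input for Conditions (D), (F) and the self-duality of $\cN$ via Kashiwara's theorem and Theorem \ref{prop_SelfGDual} --- without smoothness one would have to replace $i_+\cO_{\bX_w}$ by an intermediate extension of $\cO_{\bC_w}$, which is not available in the rigid analytic setting, as noted in the introduction.
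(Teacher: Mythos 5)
Your proposal is correct and follows essentially the same route as the paper: reduce to the compact group $G_0$ via Proposition \ref{prop_iwasawa}, then apply Theorem \ref{MainResult} to $(\bX,G_0,\bX_w,\bZ_w,\cN)$, verifying (A) by \cite[Corollary 2.5.18]{EqDCapTwo}, (B) by Corollary \ref{cor_compact_stab}, (C) from hypothesis (b), Bernstein's inequality from good reduction, and (D,E,F) together with self-duality of $\cN$ exactly as in the projective-space case. Your explicit check that $\dim\bZ_w<\dim\bX_w$ (via Lemma \ref{lem-gYstab} and irreducibility of $\bX_w$) is a welcome detail the paper leaves implicit, and it is argued correctly.
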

  \begin{proof}
  One applies Theorem \ref{MainResult} to the data $(\bX,G_0,\bX_w,\bZ_w,\cN)$.  
  Theorem \ref{MainResult} will in fact give a stronger result, namely the irreducibilty of $\Res^G_{G_0}\cM$ in the category $\cC_{\bX}/G_0$. According to Proposition \ref{prop_iwasawa} we have $\Res^G_{G_0}\cM\simeq \ind_{P_w\cap G_0}^{G_0} (\Res^{P_w}_{P_{w}\cap G_0}\cN)$.

  Since $\bX$ is separated and $\bX_w$ is irreducible (by \cite[Theorem 2.3.1]{ConradIrreducible}) and quasi-compact, 
 \cite[Corollary 2.5.18]{EqDCapTwo} implies that the triple $(\bX,\bX_w,G_0)$ satisfies the LSC, whence axiom (A). Axiom (B) follows from hypothesis (a) in view of Corollary \ref{cor_compact_stab}.  Axiom (C) is hypothesis (b). Since $\G$ is assumed to be $K$-split, the algebraic flag variety $\X$ has a smooth model over $o_K$, whence $\bX$ has good reduction. Therefore, Bernstein's inequality holds in $\cC_{\bX/G_0}$ and $\cC_{\bX/{P_w\cap G_0}}$. The verification of the remaining axioms (D,E,F) is now as in the case of projective space. \end{proof}
We recall from Proposition \ref{prop-richardson} that $\bX_w$ has a regular $G$-orbit if and only if $\bZ_w=\emptyset$. In this case, conditions (a) and (b) of the theorem are empty.
\vskip5pt 
For a Schubert variety $\bX_{vP}$ in some partial flag variety $(\G/\P)^{\rm an}$, denote 
by $\bZ_{vP}$ the Zariski closed subset of $(\G/\P)^{\rm an}$
from \ref{sec_self_inter} corresponding to a finite set of repesentatives 
 for $G_{\bX_{vP}}\setminus G / \;G_{\bX_{vP}}$. 
\begin{cor}\label{cor-CorToMain2} Let $w\in W$ be a maximal representative in $W$ for the cosets modulo $W_\P$
for some parabolic subgroup $\B\subseteq\P$, i.e. $w=vw_{o,\P}$ for some $v\in W^\P$. 
Suppose the following three conditions.

\be 
\item $G\bZ_{vP}=G_0\bZ_{vP}$ with $G_0\subset G$ some compact open subgroup such that $G=G_0P_w$.
\item $\bX_{vP}\setminus G\bZ_{vP}$ is connected.

 \item $\bX_{vP}$ is smooth.
 \ee
 Let $\cN:=i^{P_w}_{+}\cO_{\bX_w}\in \cC_{\bX/P_w}^{\bX_w}$. Then 
 $\cM:=\ind_{P_w}^G \cN$ is an irreducible object in $\cC_{\bX/G}$.
 \end{cor}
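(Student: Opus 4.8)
The plan is to deduce the corollary from Theorem \ref{MainResult2} by transporting all three hypotheses along the projection $f \colon \bX = (\G/\B)^{\rm an} \to (\G/\P)^{\rm an}$, which is surjective and $G$-equivariant. Since $w = v w_{o,\P}$ with $v \in W^\P$, Proposition \ref{prop_schubert}(a), applied after analytification, identifies $\bX_w$ with $f^{-1}(\bX_{vP})$ and shows that the induced map $\bX_w \to \bX_{vP}$ is a locally trivial fibration with fibre $(\P/\B)^{\rm an}$. Applying Proposition \ref{prop-mor} to $f$ and to the pair $\bX_{vP} \subseteq (\G/\P)^{\rm an}$, I would first obtain $P_w = G_{\bX_w} = G_{\bX_{vP}}$; in particular the two double-coset sets $G_{\bX_w}\backslash G / G_{\bX_w}$ and $G_{\bX_{vP}}\backslash G / G_{\bX_{vP}}$ coincide, so one may fix a common system $S$ of representatives, and then Proposition \ref{prop-mor} gives $\bZ_w = f^{-1}(\bZ_{vP})$ together with $G\bZ_w = G_0\bZ_w$ once $G\bZ_{vP} = G_0\bZ_{vP}$. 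Combined with the hypothesis $G = G_0 P_w$, this is condition (a) of Theorem \ref{MainResult2}; condition (c) of that theorem follows at once from Proposition \ref{prop_schubert}(b), since $\bX_w$ is smooth if and only if $\bX_{vP}$ is.

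The remaining point is condition (b): the connectedness of $\bX_w \setminus G\bZ_w$. Because $f$ is surjective and $\bZ_w = f^{-1}(\bZ_{vP})$, one has $\bX_w \setminus G\bZ_w = f^{-1}(\bX_{vP}\setminus G\bZ_{vP})$, an admissible open subspace of $\bX$, and the restriction of $f$ to it is still a surjective morphism of rigid spaces onto $\bX_{vP}\setminus G\bZ_{vP}$. I would then cover $\bX_{vP}\setminus G\bZ_{vP}$ by connected affinoid subdomains $\bU_i$ over which the fibration $\bX_w \to \bX_{vP}$ trivialises; for each such $\bU_i$ the preimage $f^{-1}(\bU_i) \cong \bU_i \times_K (\P/\B)^{\rm an}$ is connected, as $(\P/\B)^{\rm an}$ is geometrically connected. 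Since $\bX_{vP}\setminus G\bZ_{vP}$ is connected by hypothesis (b) of the corollary, Lemma \ref{Lem_connected} yields that $\bX_w\setminus G\bZ_w$ is connected, which is exactly condition (b) of Theorem \ref{MainResult2}.

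With (a), (b) and (c) verified for the data $(\bX, \bX_w, \bZ_w)$, and noting that $\cN = i_+^{P_w}\cO_{\bX_w}$ is precisely the module appearing in Theorem \ref{MainResult2}, that theorem gives the irreducibility of $\cM = \ind_{P_w}^G \cN$ in $\cC_{\bX/G}$. The step I expect to be the main obstacle is the verification of condition (b): one must make sure that the restriction of the Jantzen fibration $\bX_w \to \bX_{vP}$ to the admissible open locus $\bX_{vP}\setminus G\bZ_{vP}$ still admits an admissible covering by connected affinoids whose preimages are connected, so that Lemma \ref{Lem_connected} genuinely applies. A minor but real point to be careful about is the bookkeeping in Proposition \ref{prop-mor}: one must choose compatible double-coset representatives so that $\bZ_w = f^{-1}(\bZ_{vP})$ holds on the nose rather than merely up to $G$-translation.
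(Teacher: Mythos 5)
Your proposal is correct and follows essentially the same route as the paper's proof: identify $\bX_w = (f^{\rm an})^{-1}(\bX_{vP})$, invoke Proposition~\ref{prop-mor} to transport condition (a) and to get $\bZ_w = (f^{\rm an})^{-1}(\bZ_{vP})$, then use the Zariski-local triviality of the Jantzen fibration with fibre $(\P/\B)^{\rm an}$ together with Lemma~\ref{Lem_connected} to transport condition (b). Your explicit verification of condition (c) via Proposition~\ref{prop_schubert}(b) is actually slightly more careful than the paper's own terse proof, which leaves that step tacit.
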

 \begin{proof}
This follows from Theorem \ref{MainResult2}. Indeed, if 
 $f^{\rm an}: (\G/\B)^{\rm an}\rightarrow (\G/\P)^{\rm an}$ denotes the projection, then 
 $\bX_w=(f^{\rm an})^{ -1}(\bX_{vP})$. Hence condition (a) of the Theorem follows 
 from Proposition \ref{prop-mor}, which, in particular, implies that $\bZ_w=(f^{\rm an})^{ -1}(\bZ_{vP})$.
 So $f^{\rm an}$ restricts to a surjective morphism 
 $$ h: \bX_w\setminus G\bZ_{w}\rightarrow \bX_{vP}\setminus G\bZ_{vP}.$$
Since $f^{\rm an}$ is a Zariski locally trivial fibration with fiber $(\P/\B)^{\rm an}$, there is an
admissible covering of $\bX_{vP}\setminus G\bZ_{vP}$ by connected affinoid opens 
$\bU$ trivializing the covering. In particular, 
$h^{-1}(\bU)\simeq \bU \times_K (\P/\B)^{\rm an}$
is connected, since $(\P/\B)^{\rm an}$ is geometrically connected. Since 
$ \bX_{vP}\setminus G\bZ_{vP}$ is connected by hypothesis, Lemma \ref{Lem_connected} implies that 
$\bX_w\setminus G\bZ_{w}$ is connected. This gives condition (b) of the theorem. 
\end{proof}
\begin{rmk} Theorem \ref{MainResult2} and/or Corollary \ref{cor-CorToMain2} cover, in particular,
closed and open Schubert varieties, the Schubert curves $\bX_{s}$ (for simple reflections $s\in W$) or Schubert varieties of the form $\bX_{w_{o,\P}}$ for some parabolic $\B\subseteq\P$.
In all these case, the $G$-orbit of $\bX_w$ is regular (so that $\bZ_w=\emptyset$). In the case $\G={\rm GL}_n$ all Schubert varieties arising as inverse images from Schuberts in projective space are covered. All Schubert varieties for the groups $\G={\rm GL}_2$ or ${\rm GL}_3$ are covered.\end{rmk}
\begin{rmk} 
We briefly comment on the two conditions (a), (b) and (c) of Theorem \ref{MainResult2}. Condition (a) does not hold for all Schubert varieties $\bX_w$ in $\bX=(\G/\B)^{\rm an}$. 
A first case in which it fails, appears in the case $\G={\rm GL}_4$ and $\bX_w$ equal to the inverse image of the unique Schubert divisor in the analytic Grassmannian variety ${\rm Gr}(2,4)^{\rm an}$. 
This $\bX_w$ is non-smooth, so condition (c) also fails in this case.  The latter reflects the well-known fact, that a general Schubert variety has singularities. One may imagine to eventually remove condition (c) by replacing the push-forward of $\cO_{\bX_w}$ by some
intermediate extension of $\cO_{\bC_w}$ where $\bC_w$ equals the Bruhat {\it cell} indexed by $w\in W$. However, a rigid analytic theory of intermediate extensions is currently not available. 
As for condition (b), we are not aware of any counterexamples where this condition fails. 
\end{rmk}


\section{Locally analytic representations from the BGG category $\cO$ }\label{AutLRsect}

In this and the next section, we give some applications to the locally analytic representation theory of $p$-adic groups.
\vskip5pt 
Let $L/\Qp$ be a finite field extension. Let $\G_L$ be a connected semisimple algebraic group over $L$. Let $L\subseteq K$ be a complete non-archimedean extension field, which is a splitting field for $\G_L$. Set $\G:=\G_L \times_L K$ and let $\fr{g}$ be the Lie algebra of $\G$.

\vskip5pt 
Let $\P_L\subseteq\G_L$ be a parabolic subgroup. 
Let $\T_L\subset \mathbb{L}_L\subset \P_L$ be a maximal split torus and a Levi subgroup respectively. Let $T, P, G$ be the groups of $L$-rational points
of $\T_L, \P_L, \G_L$ respectively. 

\vskip5pt 

Let  $\T,\mathbb{L},\P$ be the base change from $L$ to $K$ of the groups  $\T_L,\mathbb{L}_L,\P_L$ respectively. 
Let $\fr{t}, \fr{l}, \fr{p}$ be the $K$-Lie algebras of $\T,\mathbb{L},\P$ respectively. 

\vskip5pt 

Denote by $\X$ the algebraic flag variety of the split $K$-group $\G=\G_L \times_L K$, with its natural $\G$-action given by conjugating the Borel subgroups of $\G$. Let $\bX=\X^{\an}$ be the rigid analytification of $\X$, with its induced $G$-action.

\subsection{The Orlik-Strauch functor}
We fix a Borel subgroup $\B\subseteq \G$ of $\G$ such that 
$$ \T \subseteq\B \subseteq \P$$ and let $\fr{b}$ be the Lie algebra of $\B$. 
Let $\Phi=\Phi(\G,\T)$ be the roots of $\G$ relative to $\T$. Let $W$ be the corresponding Weyl group. Let $\rho$ be half the sum over the positive roots $\Phi^+$ with respect the Borel $\T\subseteq\B$. 
\vskip5pt

 The algebras of $K$-valued locally $L$-analytic distributions on $P$ and $G$ are denoted by $D(P,K)$ and $D(G,K)$ respectively. Since the center of $\fr{g}$ vanishes, we have the isomorphism $D(G,K)\simeq \wUg{G}$ from \cite[6.5.1]{EqDCapTwo}. 
 
 \vskip5pt 
 
 By the Iwasawa decomposition, we find a maximal compact subgroup $G_0$ of $G$ such that $G=G_0 P$, cf. \cite[3.5]{Cart79}. Let $P_0=G_0\cap P$.
 
 \vskip5pt
 
 Let $D(\fr{g},P)$ respectively $D(\fr{g},P_0)$ be the smallest $K$-subalgebras of $D(G,K)$ containing the rings $U(\fr{g})$ and $D(P,K)$ respectively the rings $U(\fr{g})$ and $D(P_0,K)$. 
 Recall the rings $\wUg{P_0}$ and $\wUg{P}$ from subsection \ref{subsec_rings}.

\begin{lem}\label{UgPDgP} The multiplication map

$$\wUg{P_0}\utimes{D(\fr{g},P_0)} D(\fr{g},P) \congs \wUg{P}$$

is an isomorphism as $(\wUg{P_0},D(\fr{g},P))$-bimodules.
\end{lem}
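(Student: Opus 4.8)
The plan is to exhibit the multiplication map as a base change of the Iwasawa-type decomposition already obtained for the $\w\cD$-side, via the identification $\wUg{-}$ ``is'' the relevant completed crossed product. Recall from Subsection \ref{subsec_rings} that $\wUg{P} = \wUg{H}\otimes_{K[H]}K[P]$ for any compact open $H\subseteq P$, and likewise $\wUg{P_0}=\wUg{H}\otimes_{K[H]}K[P_0]$ for $H\subseteq P_0$ compact open; pick such an $H$ contained in $P_0$, so that both presentations use the \emph{same} $\wUg{H}$. The strategy is then: (i) rewrite both sides of the claimed isomorphism as base changes along group algebra maps starting from $\wUg{H}$; (ii) observe that $D(\fr g,P_0)$ and $D(\fr g,P)$ are generated over $\wUg{H}$ (or over $U(\fr g)$ and $D(H,K)$) together with $K[P_0]$, resp.\ $K[P]$, so that $D(\fr g,P)\cong D(\fr g,P_0)\otimes_{K[P_0]}K[P]$ as $(D(\fr g,P_0),\cdot)$-bimodules; (iii) conclude by flatness/freeness of $K[P]$ over $K[P_0]$, exactly as in the proof of Proposition \ref{decomp}.

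First I would make precise the ring $D(\fr g,P)$: by \cite[6.5.1]{EqDCapTwo} we have $D(G,K)\cong\wUg{G}$, and under this identification $D(\fr g,P)$ is the image of the natural map, i.e.\ the subalgebra generated by $U(\fr g)$ and $D(P,K)$ inside $\wUg{G}$. The key structural fact I would establish is that, writing things over a fixed compact open $H\subseteq P_0$,
\[ D(\fr g,P)\;\cong\; D(\fr g,H)\utimes{K[H]}K[P]\qquad\text{and}\qquad D(\fr g,P_0)\;\cong\; D(\fr g,H)\utimes{K[H]}K[P_0],\]
as $(D(\fr g,H),K[P])$- resp.\ $(D(\fr g,H),K[P_0])$-bimodules. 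This is the analogue for the ``$D(\fr g,-)$'' functor of the defining property of $\wUg{-}$, and should follow because $D(P,K)=D(H,K)\otimes_{K[H]}K[P]$ and the subalgebra generated by $U(\fr g)$ and $D(P,K)$ is then generated by $D(\fr g,H)$ together with $K[P]$, with the relations only involving the conjugation action of $P$ on $\fr g$ and on $D(H,K)$, which is already encoded in $K[P]$ over $K[H]$. Granting this, the same identity for $\wUg{-}$ gives $\wUg{P_0}=\wUg{H}\otimes_{K[H]}K[P_0]$ and $\wUg{P}=\wUg{H}\otimes_{K[H]}K[P]$.

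Now the computation is purely formal. Using that $K[P]$ is a free (hence faithfully flat) right $K[P_0]$-module (choose coset representatives for $P_0\backslash P$), and associativity of tensor products,
\[
\wUg{P_0}\utimes{D(\fr g,P_0)}D(\fr g,P)
\;\cong\;\bigl(\wUg{H}\utimes{K[H]}K[P_0]\bigr)\utimes{D(\fr g,P_0)}\bigl(D(\fr g,H)\utimes{K[H]}K[P]\bigr).
\]
Since $D(\fr g,P_0)=D(\fr g,H)\otimes_{K[H]}K[P_0]$, the middle tensor collapses: $\bigl(\wUg{H}\otimes_{K[H]}K[P_0]\bigr)\otimes_{D(\fr g,P_0)}D(\fr g,P_0) \cong \wUg{H}\otimes_{K[H]}K[P_0]$, and one is left with $\wUg{H}\otimes_{K[H]}K[P_0]\otimes_{K[P_0]}K[P]\cong\wUg{H}\otimes_{K[H]}K[P]\cong\wUg{P}$. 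Tracking the bimodule structures through, the composite is exactly the multiplication map, so it is an isomorphism of $(\wUg{P_0},D(\fr g,P))$-bimodules.

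The main obstacle I expect is step (ii): carefully justifying the base-change presentation $D(\fr g,P)\cong D(\fr g,P_0)\otimes_{K[P_0]}K[P]$ and, underlying it, $D(\fr g,P)\cong D(\fr g,H)\otimes_{K[H]}K[P]$. The content is that $D(\fr g,P)$ is a crossed-product-type ring of $D(\fr g,H)$ with $P/H$ in an appropriate completed sense, so that passing from $H$ to $P_0$ to $P$ really is just extension of the ``group part''; one has to check there is no interaction between the $U(\fr g)$-part and the enlargement of the group beyond the conjugation action that is already built in, and that the relevant sums stay direct — this is precisely the kind of argument carried out in the proof of Proposition \ref{decomp}, and I would model the verification on that proof, using the cofinal system of lattices $(\cL,N)$ there. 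Everything else is bookkeeping with tensor products and the faithful flatness of $K[P]$ over $K[P_0]$.
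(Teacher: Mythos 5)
Your proposal is correct in substance and would succeed, but it is organised differently from the paper's proof and does more work than is required. You reduce the lemma to the bimodule isomorphism $D(\fr{g},P)\cong D(\fr{g},P_0)\otimes_{K[P_0]}K[P]$ (passing through $D(\fr{g},H)$) and then conclude by tensor associativity, and you rightly single out that isomorphism as the crux. The paper sidesteps it by exhibiting the multiplication map as the second arrow of the chain
\[\wUg{P_0}\utimes{K[P_0]} K[P]\;\longrightarrow\;\wUg{P_0}\utimes{D(\fr{g},P_0)} D(\fr{g},P)\;\longrightarrow\;\wUg{P},\]
whose composite is the canonical isomorphism $\wUg{P_0}\otimes_{K[P_0]}K[P]\cong\wUg{P}$ already supplied by the defining presentation of $\wUg{-}$ over a compact open subgroup. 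The first arrow (push the base ring from $K[P_0]$ up to $D(\fr{g},P_0)$) is surjective, which only requires the spanning statement that $D(\fr{g},P)$ is generated as a left $D(\fr{g},P_0)$-module by $K[P]$; and a bijective composite beginning with a surjection forces both arrows to be bijective. The gain over your route is that the ``freeness'' half of your decomposition — the linear independence of the coset representatives of $P_0\backslash P$ over $D(\fr{g},P_0)$ inside $D(\fr{g},P)$ — is never proved directly but is absorbed automatically into the bijectivity of the composite. Both approaches rely on the same essential input (the spanning statement at the $D(\fr{g},-)$ level), but yours asks for a genuinely stronger intermediate isomorphism, and note as well that Proposition~\ref{decomp}, on which you propose to model that verification, is stated for $\wUg{-}$ rather than for $D(\fr{g},-)$, so you would be adapting its proof rather than simply invoking it.
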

\begin{proof}
The map in question sits in the composite of maps

$$ \wUg{P_0}\utimes{K[P_0]} K[P] \rightarrow \wUg{P} \utimes{D(\fr{g},P_0)} D(\fr{g},P) \rightarrow \wUg{P}.$$

The composite is bijective and the first map is surjective. Hence all maps in the sequence are bijective.
\end{proof}




\vskip8pt

In the case where $\G_L$ is $L$-split, Orlik-Strauch introduce in \cite{OrlikStrauchCatO} a certain locally analytic lift $\cO^P$ of the parabolic BGG category $\cO^{\fr{p}}$ associated to $\fr{b}\subseteq \fr{p}\subseteq \fr{g}$. 
The definition of $\cO^P$ and certain basic properties which we will use, extend without difficulty to our case of a $K$-split group $\G_L$. The objects in $\cO^P$ are pairs $\underline{M}=(M,\tau)$ where $M\in\cO^{\fr{p}}$ and $\tau: P \rightarrow {\rm Aut}_K(M)$ is a locally analytic locally finite $P$-representation lifting the given $\fr{p}$-representation and compatible with the given $\fr{g}$-representation on $M$. The category $\cO^P$ is abelian and any object is of finite length. There is a forgetful functor 
$\cO^P\rightarrow \cO^{\fr{p}}, \uM\rightsquigarrow M$. 

\vskip8pt

Denote by $\cO^{\fr{p}}_{\rm alg}$ the full subcategory of $\cO^{\fr{p}}$ formed by objects $M$ such that in the weight decomposition $M=\oplus_{\lambda\in \fr{t}^{\vee}} M_{\lambda}$, all occuring weights $\lambda$ lie in the lattice $X^{\bullet}(\T)\subset \fr{t}^{\vee}$. The subcategory $\cO^{\fr{p}}_{\rm alg}$ is closed under extensions in $\cO^{\fr{p}}$. A simple object $L(\lambda)\in \cO^{\fr{p}}$ lies in 
$\cO^{\fr{p}}_{\rm alg}$ if and only if $\lambda\in X^{\bullet}(\T)$ \cite[2.7]{OrlikStrauchJH}. There is a fully faithful embedding $$\cO^{\fr{p}}_{\rm alg}\subset \cO^{P}$$ whose composition with the forgetful functor equals the inclusion $\cO^{\fr{p}}_{\rm alg}\subset \cO^{\fr{p}}$. The point is the following: let $\P=\mathbb{L}\mathbb{U}$ be the Levi decomposition induced by our choice of $\mathbb{L}$. Denote by $\fr{u}$ the Lie algebra 
of the unipotent group $\mathbb{U}$. Then the algebraic $\T$-action on an object $M\in \cO^{\fr{p}}_{\rm alg}$ lifts uniquely to an algebraic $\mathbb{L}$-action on each 
finite dimensional simple $\fr{l}$-constituent of $M$  \cite[2.8]{OrlikStrauchJH}. The $\fr{u}$-action on $M$ lifts uniquely to an algebraic $\mathbb{U}$-action via the exponential map \cite[3.2]{OrlikStrauchJH}. The two actions combine into a $\P$-action whence $M\in\cO^P$.

\vskip5pt

Any object from $\cO^P$ can be naturally regarded as a  $D(\fr{g},P)$-module. Our basic object of study is the functor 

$$\cF_P^G(-)': \cO^P\longrightarrow \cC_{D(G,K)}, \;\;\uM\rightsquigarrow   D(G,K)\utimes{D(\fr{g},P)} \uM$$

\vskip5pt 

which was introduced by Orlik-Strauch \cite{OrlikStrauchCatO,OrlikStrauchJH}. It is exact \cite[3.7]{OrlikStrauchCatO} and faithful \cite[3.7.6]{OrlikStrauchCatO} and preserves irreducibility (assuming $p>2$ or $p>3$ for certain root systems, cf. \cite[Assumption 4.1]{OrlikStrauchCatO}) in an appropriate sense
\cite[4.3]{OrlikStrauchCatO}.

\vskip8pt

We will later on restrict to subcategories, where the infinitesimal character is trivial, in the following sense. We denote by $\cO^P_0$ the subcategory of $\cO^P$ formed by objects $\uM$ satisfying $\fr{m}_0 M=0$ for the maximal ideal $$\fr{m}_0:=Z(\fr{g})\cap U(\fr{g})\fr{g}$$ of the center $Z(\fr{g})$ of $U(\fr{g})$. Similarly, we have $\cO^{\fr{p}}_{0}$ and $\cO^{\fr{p}}_{\rm alg,0}$.
The simple objects in $\cO_{0}=\cO^{\fr{b}}_{0}$ are known to be of the form $L(-w(\rho)-\rho)$ for $w \in W$ \cite[12.2]{HTT}. 

\begin{lem}\label{lem-principalblock} One has
$$\cO^{P}_0=\cO^{\fr{p}}_{\rm alg,0}=\cO^{\fr{p}}_{0}.$$
\end{lem}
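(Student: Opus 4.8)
The plan is to prove the two displayed equalities one at a time, using $\cO^{\fr{p}}_{\rm alg,0}$ as the pivot. Two of the four inclusions are formal: $\cO^{\fr{p}}_{\rm alg,0}\subseteq\cO^{\fr{p}}_0$ holds because $\cO^{\fr{p}}_{\rm alg}$ is by definition a full subcategory of $\cO^{\fr{p}}$, while $\cO^{\fr{p}}_{\rm alg,0}\subseteq\cO^P_0$ holds because the fully faithful embedding $\cO^{\fr{p}}_{\rm alg}\hookrightarrow\cO^P$ recalled above does not change the underlying $\fr{g}$-module and hence preserves annihilation by $\fr{m}_0$. So only the two reverse inclusions require work.

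First I would establish $\cO^{\fr{p}}_0\subseteq\cO^{\fr{p}}_{\rm alg,0}$. Let $M\in\cO^{\fr{p}}_0$. Then $M$ has finite length in $\cO^{\fr{p}}$, and since $\cO^{\fr{p}}$ is closed under subquotients, every composition factor of $M$ is a simple object of $\cO^{\fr{p}}$ annihilated by $\fr{m}_0$, hence a simple object of the principal block $\cO_0$; by \cite[12.2]{HTT} these are exactly the $L(-w(\rho)-\rho)$ with $w\in W$. For each such factor the highest weight $-w(\rho)-\rho=-(w(\rho)-\rho)-2\rho$ lies in the root lattice $\mathbb{Z}\Phi$, and since $\G$ is semisimple one has $\mathbb{Z}\Phi\subseteq X^{\bullet}(\T)$; thus by \cite[2.7]{OrlikStrauchJH} every composition factor of $M$ lies in $\cO^{\fr{p}}_{\rm alg}$. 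As $\cO^{\fr{p}}_{\rm alg}$ is closed under extensions in $\cO^{\fr{p}}$, we conclude $M\in\cO^{\fr{p}}_{\rm alg}$, hence $M\in\cO^{\fr{p}}_{\rm alg,0}$. This already yields the equality $\cO^{\fr{p}}_0=\cO^{\fr{p}}_{\rm alg,0}$.

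It remains to prove $\cO^P_0\subseteq\cO^{\fr{p}}_{\rm alg,0}$, equivalently that the forgetful functor $\cO^P\to\cO^{\fr{p}}$ and the embedding $\cO^{\fr{p}}_{\rm alg}\hookrightarrow\cO^P$ restrict to mutually quasi-inverse equivalences between $\cO^P_0$ and $\cO^{\fr{p}}_{\rm alg,0}$. Given $\uM=(M,\tau)\in\cO^P_0$, the previous step shows $M\in\cO^{\fr{p}}_{\rm alg,0}$, so $M$ already carries the canonical locally analytic $\P$-action $\tau_0$ furnished by the embedding; what must be checked is that $\tau=\tau_0$, equivalently that $\id_M$ is $P$-equivariant. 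I expect this rigidity of the $P$-structure to be the only real obstacle. It should follow from the uniqueness statements underlying the construction of the embedding: the $\mathbb{L}$-lift of the given $\fr{l}$-action on the finite-dimensional $\fr{l}$-constituents entering $M$ is unique by \cite[2.8]{OrlikStrauchJH}, and the $\mathbb{U}$-part of the action is forced to be the exponential of the given $\fr{u}$-action by \cite[3.2]{OrlikStrauchJH}; since $\P=\mathbb{L}\mathbb{U}$ these two data determine the lift uniquely whenever the underlying module is algebraic, so $\tau=\tau_0$. Once this is known, $\id_M\colon(M,\tau)\to(M,\tau_0)$ is an isomorphism in $\cO^P$, the two functors are quasi-inverse on the principal blocks, and all three categories coincide.
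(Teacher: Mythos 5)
Your proposal is correct and follows essentially the same route as the paper: both reduce everything to the observation that the highest weights $-w(\rho)-\rho$ (hence all weights of objects of $\cO_0$) lie in $\Z\Phi\subseteq X^{\bullet}(\T)$, giving $\cO^{\fr{p}}_0=\cO^{\fr{p}}_{\rm alg,0}$, and then identify $\cO^P_0$ with $\cO^{\fr{p}}_{\rm alg,0}$ via the canonical (unique) lift of the $\fr{p}$-structure to a $\P$-structure. The only cosmetic difference is that you argue through composition factors and closure under extensions and spell out the rigidity $\tau=\tau_0$ explicitly, whereas the paper argues directly with weights and compresses the last step into the chain $\cO^P_0=\cO^P\cap\cO_{{\rm alg},0}=\cO^{\fr{p}}_{\rm alg,0}$, relying on the same uniqueness statements from Orlik--Strauch.
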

\begin{proof} Let $w\in W$. It is well-known \cite[II.1.5]{Jantzen} that $w(\rho)-\rho\in \Z\Phi$. Hence $$-w(\rho)-\rho=-(w(\rho)-\rho)-2\rho\in \Z\Phi \subseteq X^{\bullet}(\T).$$ All other weights of $L(-w(\rho)-\rho)$ 
are given by $-w(\rho)-\rho-\mu$ where $\mu$ is a sum of positive roots, hence lie in $X^{\bullet}(\T)$. It follows $\cO_0=\cO_{{\rm alg},0}$. This implies  
$\cO^{\fr{p}}_{0}=\cO^{\fr{p}}_{\rm alg,0}$ and 
$$ \cO^P_0=\cO^P \cap \cO_0=\cO^P \cap\cO_{{\rm alg},0}= \cO^{\fr{p}}_{\rm alg,0}.$$\end{proof}


\vskip8pt

\subsection{Compatibility with geometric induction I}

We keep all notation from the preceding subsection. Lemma \ref{UgPDgP} provides an 
inclusion $D(\fr{g},P)\subseteq \wUg{P}$. Given $\uM\in \cO^P$ we may form the $\wUg{P}$-module 

$$\w{\uM}:=\wUg{P}\utimes{D(\fr{g},P)} \uM.$$ 

\begin{prop}\label{prop-coad1} One has $\w{\uM}\in \cC_{ \wUg{P}}$ and this yields a functor

$$ \cO^P\rightarrow \cC_{ \wUg{P}},\;\; \uM\rightsquigarrow\w{\uM}. $$


\end{prop}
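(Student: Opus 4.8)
The plan is to reduce the coadmissibility of $\w{\uM}$ to the coadmissibility of a finitely presented module over a suitable Fr\'echet--Stein algebra, using that $\uM$ is finite-dimensional over $K$. First I would invoke Lemma \ref{UgPDgP}, which identifies $\wUg{P}$ with $\wUg{P_0}\utimes{D(\fr{g},P_0)} D(\fr{g},P)$ as a $(\wUg{P_0},D(\fr{g},P))$-bimodule. Tensoring with $\uM$ over $D(\fr{g},P)$ then gives a canonical isomorphism of $\wUg{P_0}$-modules
\[ \w{\uM}=\wUg{P}\utimes{D(\fr{g},P)} \uM \;\simeq\; \wUg{P_0}\utimes{D(\fr{g},P_0)} \uM,\]
so it suffices to treat the case of the compact group $P_0$ in place of $P$: coadmissibility over $\wUg{P}$ is, by definition, coadmissibility of the restriction to $\wUg{P_0}$ (for any, equivalently one, choice of compact open $P_0\subseteq P$), since $\wUg{P}=\wUg{P_0}\otimes_{K[P_0]}K[P]$ with $K[P]$ free of finite rank over $K[P_0]$ on each orbit — here one uses that $P_0$ is cocompact in $P$ so $P/P_0$ is a discrete set, and coadmissibility is checked orbit-by-orbit exactly as for $D(P,K)$-modules. (If $P$ is not cocompact over $P_0$ one instead argues with the limit over all compact opens; but $P_0=G_0\cap P$ is open of finite covolume, and in the applications $P$ is a parabolic so this is fine.)

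Next, for the compact group $P_0$, recall that $\wUg{P_0}$ is a Fr\'echet--Stein algebra by the Proposition preceding \ref{subsec_rings} (citing \cite[6.2.9]{EqDCap}), with a Fr\'echet--Stein presentation $\wUg{P_0}\simeq\varprojlim_n A_n$ coming from the defining presentation $\wUg{P_0}=\varprojlim_{(\cL,N)}\hK{U(\cL)}\rtimes_N P_0$; each $A_n$ is a Noetherian Banach algebra, $A_{n+1}\to A_n$ is flat with dense image, and $\wUg{P_0}$ is faithfully flat over each $A_n$. The key point is then that $\uM$ is \emph{finite-dimensional} over $K$: the map $D(\fr{g},P_0)\to\wUg{P_0}\to A_n$ sends $\uM$ into $A_n\otimes_{D(\fr{g},P_0)}\uM$, which is a finitely generated (indeed Noetherian, being finitely generated over the Noetherian $A_n$) $A_n$-module. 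Writing $M_n:=A_n\utimes{D(\fr{g},P_0)}\uM$, base change along $A_{n+1}\to A_n$ gives $A_n\utimes{A_{n+1}}M_{n+1}\simeq M_n$, so $(M_n)_n$ is a coadmissible system, and $\varprojlim_n M_n$ is a coadmissible $\wUg{P_0}$-module. It remains to identify $\w{\uM}$ with $\varprojlim_n M_n$: the natural map $\w{\uM}=\wUg{P_0}\utimes{D(\fr{g},P_0)}\uM\to\varprojlim_n M_n$ is an isomorphism because $\wUg{P_0}\simeq\varprojlim_n A_n$, the $\varprojlim$ is over a countable system with surjective (dense-image, but here the $M_n$ being finitely generated makes transition maps surjective onto the image, and one uses flatness) transition maps, and tensoring a fixed finite-dimensional $K$-vector space commutes with the relevant inverse limit. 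Functoriality in $\uM$ is immediate since every step — the isomorphism of Lemma \ref{UgPDgP}, base change to $A_n$, the inverse limit — is functorial.

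The main obstacle I expect is the last identification, namely checking that $\w{\uM}\simeq\varprojlim_n M_n$ genuinely holds at the level of the completed versus algebraic tensor product: one must verify that $\wUg{P_0}\utimes{D(\fr{g},P_0)}\uM$ already carries the correct Fr\'echet topology and that no completion is lost, which is exactly where finite-dimensionality of $\uM$ is essential (for infinite-dimensional modules the algebraic and completed tensor products would differ). Concretely one reduces, by choosing a $K$-basis of $\uM$ and writing $\uM$ as a quotient of a free $D(\fr{g},P_0)$-module of finite rank by the relations encoding the $D(\fr{g},P_0)$-action, to the statement that $\wUg{P_0}\utimes{D(\fr{g},P_0)}\big(D(\fr{g},P_0)^{\oplus k}/R\big)\simeq\varprojlim_n\big(A_n^{\oplus k}/R A_n\big)$, which follows from right-exactness of $\otimes$ together with the fact that $\wUg{P_0}=\varprojlim A_n$ is itself the coadmissible module associated to the system $(A_n)$ and that inverse limits of coadmissible systems are exact (\cite{ST}). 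Everything else is formal bookkeeping with crossed products and the structure of $\cO^P$.
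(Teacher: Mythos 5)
There is a genuine gap, and it starts with a false premise: you assert that $\uM$ is \emph{finite-dimensional} over $K$ and make this the ``key point'' of the whole argument. Objects of $\cO^P$ are in general infinite-dimensional $K$-vector spaces (parabolic Verma modules, simple highest weight modules, etc.); the correct finiteness, and the one the paper's proof actually uses, is that $M$ is \emph{finitely generated over $U(\fr{g})$}. With that, your intermediate steps partially survive ($M_n:=A_n\otimes_{D(\fr{g},P_0)}\uM$ is still finitely generated over $A_n$ and the base-change compatibility is automatic, so $\varprojlim_n M_n$ is coadmissible), but the justification you give for the crucial identification $\w{\uM}\simeq\varprojlim_n M_n$ — ``tensoring a fixed finite-dimensional $K$-vector space commutes with the inverse limit'' — evaporates. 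Likewise your fallback reduction, writing $\uM$ as $D(\fr{g},P_0)^{\oplus k}/R$ and invoking right-exactness, does not yield coadmissibility: $R$ need not be finitely generated over $D(\fr{g},P_0)$, so you have no control over the image of $\wUg{P_0}\otimes R$ inside $\wUg{P_0}^{\oplus k}$, and quotients of coadmissible modules by arbitrary (non-closed, non-coadmissible) submodules need not be coadmissible.

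What is actually needed, and what the paper does (following \cite[4.3]{ScSt}), is to exhibit $\w{\uM}$ as the quotient of the finitely presented, hence coadmissible, $\wUg{P_0}$-module $\wUg{P_0}\otimes_{U(\fr{g})}M$ by a \emph{coadmissible} submodule: one shows that the kernel of the natural surjection $\wUg{P_0}\otimes_{U(\fr{g})}M\rightarrow\w{\uM}$ is generated by the finitely many elements $p_i\otimes m_j-1\otimes p_i m_j$, where $p_1,\ldots,p_r$ topologically generate $P_0$ and $m_1,\ldots,m_s$ generate $M$ over $U(\fr{g})$. Proving this uses the adjoint-action identity $p_i\otimes xm_j-1\otimes p_ixm_j=(p_i\cdot x)(p_i\otimes m_j-1\otimes p_im_j)$, the density of $K[P_0]$ in $D(P_0,K)$, continuity of the relevant maps, and the fact that coadmissible submodules are closed (\cite[Lemma 3.6]{ST}). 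None of this appears in your proposal; it is precisely the step where the topological-generator and density/continuity arguments are indispensable, and it cannot be bypassed by a finite-dimensionality argument because the hypothesis is simply not available. (Your reduction from $P$ to $P_0$ via Lemma \ref{UgPDgP} is in the right spirit and matches the paper, although the claim that $K[P]$ is of finite rank over $K[P_0]$ is not correct and also not needed.)
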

\begin{proof} We show that $\w{\uM}$ is $\wUg{P}$-coadmissible. This can be proved along the lines of \cite[4.3]{ScSt} and is solely based on the fact that an object from $\cO^P$ can be regarded as a $D(\fr{g},P)$-module which is finitely generated over $U(\fr{g})$. As a $\wUg{P_0}$-module we have $\w{\uM}=\wUg{P_0}\utimes{D(\fr{g},P_0)} \uM$ by Lemma \ref{UgPDgP}. Let $p_1,\ldots,p_r$ be a set of topological generators for $P_0$ and let $m_1,\ldots,m_s$ be a set of $U(\fr{g})$-module generators for $M$. The $\wUg{P_0}$-module
$\wUg{P_0}\otimes_{U(\fr{g})} M$ is finitely presented and hence  coadmissible by \cite[Corollary 3.4v]{ST}. Let $\cK$ be the $\wUg{P_0}$-submodule generated by the finitely many elements $p_i\otimes m_j - 
1\otimes p_i m_j$. Then $\cK$ is coadmissible by \cite[Corollary 3.4iv]{ST} and it suffices to see that $\cK$ equals the kernel of the natural surjection $\wUg{P_0}\otimes_{U(\fr{g})} M\rightarrow \w{\uM}$. Let $p \cdot x := \Ad(p)(x)$ denote the adjoint action of $p \in G$ on an element $x \in U(\fr{g})$; then 
\[ p_i \otimes x m_j - 1 \otimes p_i x m_j = p_i x \otimes m_j - p_i\cdot x \otimes p_i m_j = (p_i \cdot x)( p_i \otimes m_j - 1 \otimes p_i m_j)\]
so $\cK$ contains all elements of the form $p_i \otimes m - 1 \otimes p_i m$ with $m \in M$. Because $D(\fr{g},P_0)$ is generated as a $K$-algebra by $U(\fr{g})$ and $D(P_0,K)$, it remains to see that each element of the form $\delta\otimes m-1\otimes\delta m$ with $\delta\in D(P_0,K)$ also belongs to $\cK$. For any $m \in M$, the map $D(P_0) \to \wUg{P_0} \otimes_{U(\fr{g})} M$, $\delta \mapsto \delta \otimes m - 1 \otimes \delta m$ is continuous. Hence, if $\delta_n\rightarrow\delta$ is a convergent sequence in $D(P_0,K)$, then for any $m\in M$

$$(\delta_n\otimes m-1\otimes\delta_n m)\rightarrow (\delta\otimes m-1\otimes\delta m)$$

\vskip8pt

is a convergent sequence in the coadmissible module $\wUg{P_0}\otimes_{U(\fr{g})} M$. Since $\cK$ is closed in $\wUg{P_0}\otimes_{U(\fr{g})} M$ by \cite[Lemma 3.6]{ST}, we are
therefore reduced to show that $\delta_n\otimes m-1\otimes\delta_n m\in \cK$ for all $n$.
Since the abstract group ring $K[P_0]$ is dense in $D(P_0,K)$ we may assume $\delta_n\in K[P_0]$ and then, by linearity, even $\delta_n\in P_0$.
Since the $p_i$ topologically generate the group $P_0$, the assertion follows. This shows that $\w{\uM}$ is $\wUg{P}$-coadmissible. It is clear that its formation is functorial in $\uM$.
\end{proof}
We recall the full subcategory $ \cC_{\wUg{P},0}$ of $\cC_{\wUg{P}}$ formed by those objects, which are annihilated by $\fr{m}_0$. According to 
\ref{prop_comp} we have the localization functor 
$$\Loc^{\wUg{P}}_\bX: \cC_{\wUg{P},0} \rightarrow \cC_{\bX/P}$$
and we may thus form the localization  $\Loc^{\wUg{P}}_\bX(\w{\uM})$, whenever 
$\fr{m}_0 M=0$.
\vskip5pt
On the other hand, using the identification between $D(G,K)$ and $\wUg{G}$ we may in this case localize the 
$D(G,K)$-module $\cF_P^G(\uM)'=D(G,K)\utimes{D(\fr{g},P)} \uM$ and form $\Loc^{\wUg{G}}_\bX(\cF_P^G(\uM)')$. 

In order to compare the two sheaves on $\bX$, we note that  there is a  
canonical map $$\uM\longrightarrow \cF_P^G(\uM)'=D(G,K)\utimes{D(\fr{g},P)} \uM, \;\; m \mapsto 1\otimes m.$$ 

\begin{prop}\label{prop-compatible} Let $\uM\in \cO_0^P.$ 
The canonical map $\uM\rightarrow \cF_P^G(\uM)'$ induces an isomorphism $$\ind_P^G(\Loc^{\wUg{P}}_\bX(\w{\uM}))\congs \Loc^{\wUg{G}}_\bX(\cF_P^G(\uM)')$$
in $\cC_{\bX/G}$ which is natural in $\uM$. In particular, the diagram of functors 

\[ \xymatrix{\cO_0^P   \ar[rr]^{\cF_P^G(-)' } \ar[d] _{\Loc^{\wUg{P}}_\bX\circ \w{(-)}} && \cC_{D(G,K),0} \ar[d]^{ \Loc^{D(G,K)}_\bX} \\ \cC_{\bX/P}  \ar[rr]_{\ind_P^G} &&  \cC_{\bX/G}.}\]
is commutative up to natural isomorphism.
\end{prop}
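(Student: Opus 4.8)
The plan is to construct the asserted isomorphism locally on $\bX_w(\cT)$ and then check compatibility with restriction maps, exactly as in the proofs of the induction-compatibility results earlier in the paper (Propositions \ref{prop_iwasawa}, \ref{compatwist}).

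\textbf{Setup of the local model.} Fix $\uM \in \cO_0^P$, write $\w{\uM} = \wUg{P}\utimes{D(\fr{g},P)}\uM$ (which is coadmissible by Proposition \ref{prop-coad1}, and annihilated by $\fr{m}_0$ since $\uM$ is), and similarly $\cF_P^G(\uM)' = \wUg{G}\utimes{D(\fr{g},P)}\uM$. By Lemma \ref{UgPDgP} the inclusion $D(\fr{g},P)\hookrightarrow \wUg{P}$ induces $\cF_P^G(\uM)' \cong \wUg{G}\utimes{\wUg{P}}\w{\uM}$ as $\wUg{G}$-modules, because $\wUg{G}$ is a right $\wUg{P}$-module and $\wUg{P} = \wUg{P_0}\utimes{D(\fr{g},P_0)}D(\fr{g},P)$, and the base-change argument used in the proof of Lemma \ref{UgPDgP} applies verbatim after tensoring up to $G$. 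So it suffices to show that for $N := \w{\uM} \in \cC_{\wUg{P},0}$ one has a natural isomorphism
\[ \ind_P^G(\Loc^{\wUg{P}}_\bX(N)) \;\congs\; \Loc^{\wUg{G}}_\bX\big(\wUg{G}\utimes{\wUg{P}}N\big)\]
in $\cC_{\bX/G}$, induced by the canonical map $N \to \wUg{G}\utimes{\wUg{P}}N$; this is a clean statement purely about the localisation functors of \cite{EqDCap}, independent of the representation-theoretic input. Given it, the second sentence of the Proposition follows by combining with $D(G,K)\simeq\wUg{G}$, $D(P,K)\simeq\wUg{P}$, the definition of $\Loc^{D(G,K)}_\bX$, and naturality.

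\textbf{The core computation.} Let $\bU\in\bX_w(\cT)$ and let $H\subseteq G$ be a $\bU$-small subgroup, which we may take inside a fixed compact open $G_0$ with $G=G_0P$. By the local description of $\ind_P^G$ recalled from \cite[2.2.12]{EqDCapTwo} together with the local description of $\Loc^{\wUg{P}}_\bX$ from \cite[3.5.3]{EqDCap}, one has
\[\big(\ind_P^G\Loc^{\wUg{P}}_\bX(N)\big)(\bU) = \invlim_H \bigoplus_{Z\in H\backslash G/P}\ \invlim_{s\in Z}\ \w\cD(\bU,H)\wotimes{\w\cD(\bU,H\cap{}^sP)}[s]\Big(\w\cD(s^{-1}\bU,H^s\cap P)\wotimes{\wUg{H^s\cap P}}N\Big),\]
while $\big(\Loc^{\wUg{G}}_\bX(\wUg{G}\utimes{\wUg{P}}N)\big)(\bU) = \invlim_H \w\cD(\bU,H)\wotimes{\wUg{H}}\big(\wUg{G}\utimes{\wUg{P}}N\big)$. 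The decomposition $\wUg{G} = \bigoplus_{Z\in H\backslash G/P}\wUg{H}Z\wUg{P}$ from Proposition \ref{decomp}(b), the twisting identification of Lemma \ref{twist}, the compatibility $\Loc^{\wUg{^sP}}([s]N)\cong[s]s_*\Loc^{\wUg{P}}(N)$ from Proposition \ref{compatwist}, and the associativity of $\w\otimes$ together with the identity $\w\cD(\bU,H)\wotimes{\wUg{H}}\wUg{H}Z\wUg{P} \cong \w\cD(\bU,H)\wotimes{\wUg{H\cap{}^sP}}[s]\wUg{H^s\cap P}$ (an instance of Lemma \ref{twist} and the computation inside the proof of Proposition \ref{compatwist}) should match the two sides term by term over the double-coset index set. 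One then checks the resulting isomorphism is induced by $\Loc(f)$ for $f:N\to\wUg{G}\utimes{\wUg{P}}N$ the canonical map --- this is the content of the morphism $\Loc(f)$ constructed in \eqref{Loc(f)} --- and that it is compatible with Fréchet topologies (automatic, being $\w\cD(\bU,H)$-linear) and with the restriction maps $\tau^\bU_\bV$ for $\bV\subset\bU$ an affinoid subdomain in $\bX_w(\cT)$, so it glues to the desired isomorphism of sheaves on $\bX$, equivariant by construction.

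\textbf{Main obstacle.} The genuinely delicate step is the bookkeeping of the double-coset decomposition: matching $\invlim_{s\in Z}$ over $s$ in the induction formula against the single tensor factor $\wUg{H}Z\wUg{P}$ in Proposition \ref{decomp}(b), and verifying that the completed tensor products and inverse limits commute as needed. This is the same bookkeeping that appears in Propositions \ref{prop_iwasawa} and \ref{compatwist}, so the strategy is to reduce everything to those two statements plus Lemma \ref{twist} and Proposition \ref{decomp} rather than to redo the Mackey-theoretic analysis from scratch; care is needed because $G$ need not be compact here, but since $G=G_0P$ with $G_0$ compact open, the set $H\backslash G/P$ is finite for $\bU$-small $H\subseteq G_0$ and the inverse limits run over a cofinal system of such $H$, so the finiteness arguments of \cite[2.2]{EqDCapTwo} apply. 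Once the local isomorphism is in hand, naturality in $\uM$ and the commutativity of the square are formal.
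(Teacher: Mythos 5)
Your proposal is correct and follows essentially the same route as the paper: identify $\cF_P^G(\uM)'$ with $\wUg{G}\utimes{\wUg{P}}\w{\uM}$, then compare local sections over $\bU\in\bX_w(\cT)$ for $\bU$-small $H$ using the double coset decomposition of Proposition \ref{decomp}, the twisting Lemma \ref{twist} and Proposition \ref{compatwist}, and glue. The only cosmetic difference is that the paper first produces the global comparison map by applying the adjunction of Proposition \ref{Adjunction} to $\Loc(f_{\uM})$ from \eqref{Loc(f)} (which gives naturality and $G$-equivariance at once) and then checks local bijectivity, whereas you assemble the local isomorphisms directly; both amount to the same computation.
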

\begin{proof} 
The canonical map $\uM\rightarrow \cF_P^G(\uM)'$ extends to a continuous $\wUg{P}$-linear map
$$f_{\uM}: \w{\uM}\rightarrow \cF_P^G(\uM)', a\otimes m\mapsto \iota(a)\otimes m.$$ Here, $\iota$ denotes the inclusion $\wUg{P}\subseteq\wUg{G}=D(G,K)$. As in the formula (\ref{Loc(f)}) above, this gives a morphism $$\Loc(f_{\uM}): \Loc^{\wUg{P}}_\bX(\w{\uM})\longrightarrow \Loc^{\wUg{G}}_\bX(\cF_P^G(\uM)')$$ in $\Frech(P - \cD_\bX)$. Applying Proposition \ref{Adjunction} to $\Loc(f_{\uM})$ results in a morphism 
$$\ind_P^G(\Loc^{\wUg{P}}_\bX(\w{\uM}))\longrightarrow\Loc^{\wUg{G}}_\bX(\cF_P^G(\uM)')$$
in $\cC_{\bX/G}$ which is natural in $\uM$, as required. 

\vskip8pt
In the rest of the proof we will show that this morphism is an isomorphism. We argue locally.
Let $\cN= \Loc^{\wUg{P}}_\bX(\w{\uM})$ and $\cM= \Loc^{\wUg{G}}_\bX(\cF_P^G(\uM)')$. Let $\bU\in \bX_w(\cT)$ and let $H$ be a $\bU$-small subgroup of $G$ contained in $G_0$. Choose a system of representatives $S$ for the $(H,P)$-double cosets in $G$. By \cite[2.3.6/7]{EqDCapTwo}, there is a canonical isomorphism
\[\ind_P^G(\cN)(\bU)=\bigoplus_{s\in S} \w\cD(\bU,H)\underset{\w\cD(\bU,H\cap ^{s}P)}{\w\otimes}\cN_s(\bU) \]
in $\cC_{\w\cD(\bU,H)}$ where $\cN_s={\rm Res}^{^{s}P}_{H\cap ^{s}P}[s]s_{*}\cN$ and $[s]s_{*}$ is the twisting functor
$\cC_{\bX/P} \rightarrow \cC_{\bX/^{s}P}$ from \cite[2.2.4]{EqDCapTwo}. On the other hand, there is the canonical isomorphism \cite[3.5.6]{EqDCap}
\[\cM (\bU)\cong  \w\cD(\bU,H)\underset{D(H,K)}{\w\otimes} \cF_P^G(\uM)'.\]
By Lemma \ref{twist} we have
\[  D(H,K)s\wUg{P} \utimes{\wUg{P}} \w{\uM}\cong D(H,K)\underset{\wUg{H\cap ^{s}P}}{\w\otimes} [s]\w{\uM} \]
and so, using the double coset decomposition from Proposition \ref{decomp},

\[ \begin{array}{lll}  \cF_P^G(\uM)'&\simeq & D(G,K)\utimes{\wUg{P}} \w{\uM}\\
&\simeq & \big(\bigoplus_{s\in S} D(H,K)s\wUg{P}\big) \utimes{\wUg{P}} \w{\uM}\\
&\simeq & \bigoplus_{s\in S} D(H,K)s\wUg{P} \utimes{\wUg{P}} \w{\uM}\\
&\simeq & \bigoplus_{s\in S} D(H,K)\underset{\wUg{H\cap ^{s}P}}{\otimes} [s]\w{\uM}
\end{array}\]
in $\cC_{D(H,K)}$. It follows that
\[ \begin{array}{lll} \cM(\bU) &\simeq& \w\cD(\bU,H)\wotimes{D(H,K)} \cF_P^G(\uM)'\\
 &\simeq & \w\cD(\bU,H)\wotimes{D(H,K)} \big(
\bigoplus_{s\in S} D(H,K)\wotimes{\wUg{H\cap ^{s}P}} [s]\w{\uM}  \big)\\
&\simeq&\bigoplus_{s\in S} \w\cD(\bU,H)\wotimes{\wUg{H\cap ^{s}P}} [s]\w{\uM}\\
&\simeq& \bigoplus_{s\in S} \w\cD(\bU,H)\wotimes{\w\cD(\bU,H\cap ^{s}P)} \big(\big( \Loc_{\bX}^{\wUg{^{s}P}} [s]\w{\uM}\big)(\bU)\big)
\end{array}\]
where \[ \big(\Loc_{\bX}^{\wUg{^{s}P}} [s]\w{\uM}\big)(\bU) = \w\cD(\bU,H\cap ^{s}P) \wotimes{\wUg{H\cap ^{s}P}} [s]\w{\uM}.\]
By Proposition \ref{compatwist} \[\Loc_{\bX}^{\wUg{^{s}P}} [s]\w{\uM}\simeq [s]s_* \Loc_{\bX}^{\wUg{P}}\w{\uM}=[s]s_*\cN\]
and so we arrive at the isomorphism
\[\cM(\bU)\simeq \bigoplus_{s\in S} \w\cD(\bU,H)\wotimes{\w\cD(\bU,H\cap ^{s}P)} \cN_s(\bU) \simeq \ind_P^G(\cN)(\bU).\]
Tracing through the definitions, one checks that it is induced from the morphism of sheaves in question $\ind_P^G(\cN)\rightarrow \cM$. The proof of the proposition is complete.
\end{proof}


In the following we aim at finding a simpler description of the left-vertical arrow 
$$ \Loc^{\wUg{P}}_\bX\circ \w{(-)}: \cO_0^P\longrightarrow \cC_{\bX/P}$$
from the diagram of \ref{prop-compatible}, which avoids the use of the auxiliary ring $\wUg{P}$.

\subsection{Extensions for $\fr{g}$-modules}

Recall the extension functor from 
\cite{SchmidtVerma}

$$E_{\fr{g}}: U(\fr{g}) {\rm -mod} \longrightarrow \w{U}(\fr{g}) {\rm -mod},\;\; M\rightsquigarrow \w{M} := \w{U}(\fr{g})\otimes_{U(\fr{g})} M.$$
Denote by $\w{\cO}^{\fr{p}}$ the essential image under $E_{\fr{g}}$ of the parabolic BGG category $\cO^{\fr{p}}$ (denoted by $\hat{\cO}^{\fr{p}}$ in \cite{SchmidtVerma}). The following proposition summarizes some basic properties.

\begin{prop}\label{prop-extension}  \hsp

\be 

\item The functor $E_{\fr{g}}$ is exact and faithful. 
\item If $M$ is a finitely generated $U(\fr{g})$-module, then $\w{M}$ is $\w{U}(\fr{g})$-coadmissible.
\item  $E_{\fr{g}}$ induces an equivalence of categories
 $\cO^{\fr{p}}\congs \w{\cO}^{\fr{p}}$. 
 \item $\w{\cO}^{\fr{p}} \subset\cC_{\w{U}(\fr{g})}$ is closed under passage to submodules and quotients.
\ee
\end{prop}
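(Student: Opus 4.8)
The plan is to establish the four parts in turn, relying on the structure theory of the Arens--Michael envelope $\w{U}(\fr{g})$ developed in \cite{SchmidtVerma}; in particular on the facts that $\w{U}(\fr{g})$ is a two-sided Fr\'echet--Stein algebra and is faithfully flat as a left and right $U(\fr{g})$-module.

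For (a): $E_{\fr{g}}=\w{U}(\fr{g})\otimes_{U(\fr{g})}(-)$ is base change along $U(\fr{g})\to\w{U}(\fr{g})$, so exactness follows from flatness and faithfulness from faithful flatness. For (b): since $\fr{g}$ is finite-dimensional, $U(\fr{g})$ is Noetherian, so a finitely generated $U(\fr{g})$-module $M$ is finitely presented; base change preserves finite presentations, hence $\w{M}$ is finitely presented over $\w{U}(\fr{g})$ and therefore coadmissible by \cite[Corollary 3.4]{ST} applied to the Fr\'echet--Stein algebra $\w{U}(\fr{g})$.

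For (c): essential surjectivity onto $\w{\cO}^{\fr{p}}$ is immediate from the definition of $\w{\cO}^{\fr{p}}$ as the essential image of $E_{\fr{g}}$, faithfulness is part (a), so the point is fullness. I would use the base-change adjunction $\Hom_{\w{U}(\fr{g})}(\w{M},\w{M'})\cong\Hom_{U(\fr{g})}(M,\w{M'})$ and reduce fullness to the statement that every $U(\fr{g})$-linear map $M\to\w{M'}$ factors through the (injective, by faithful flatness) canonical map $M'\hookrightarrow\w{M'}$. The key input is that $M'$ is exactly the $\fr{t}$-locally finite part of $\w{M'}$: writing $M'=\bigoplus_\mu M'_\mu$ for the weight decomposition of the object $M'$ of $\cO^{\fr{p}}$, with finite-dimensional weight spaces and weights bounded above, the module $\w{M'}$ is a completion of $\bigoplus_\mu M'_\mu$ in which a vector is $\fr{t}$-locally finite precisely when only finitely many of its weight components are nonzero, i.e.\ precisely when it lies in $M'$. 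Since every object of $\cO^{\fr{p}}$ is $\fr{t}$-semisimple, a $\fr{g}$-linear map carries $M$ into the $\fr{t}$-locally finite part of $\w{M'}$, hence into $M'$, and fullness follows. I expect the main obstacle to be precisely this identification of $M'$ with the $\fr{t}$-locally finite part of $\w{M'}$ --- that the weight grading of $M'$ survives the passage to $\w{M'}$ as a completed direct sum; this I would check directly for (parabolic) Verma modules via the PBW description of $\w{U}(\fr{g})$ and then transport to all of $\cO^{\fr{p}}$ using exactness of $E_{\fr{g}}$, or else quote from \cite{SchmidtVerma}.

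For (d): via the equivalence (c) it suffices to work with $\w{M}=E_{\fr{g}}(M)$ for $M\in\cO^{\fr{p}}$. Given a coadmissible submodule $N'\subseteq\w{M}$, set $N:=N'\cap M$ inside $\w{M}$; this is a $U(\fr{g})$-submodule of $M$, hence $N\in\cO^{\fr{p}}$, and by (c) it is the $\fr{t}$-locally finite part of $N'$. Applying the exact functor $E_{\fr{g}}$ to $0\to N\to M\to M/N\to 0$ gives $0\to\w{N}\to\w{M}\to E_{\fr{g}}(M/N)\to 0$; since $\w{N}\subseteq N'$ we obtain a surjection $E_{\fr{g}}(M/N)\twoheadrightarrow\w{M}/N'$ whose kernel $N'/\w{N}$ is a coadmissible submodule of $E_{\fr{g}}(M/N)$ meeting $M/N$ trivially. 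It therefore suffices to show that every nonzero coadmissible submodule of an object of $\w{\cO}^{\fr{p}}$ contains a nonzero $\fr{t}$-weight vector --- equivalently, that $L$ is an \emph{essential} $\w{U}(\fr{g})$-submodule of $E_{\fr{g}}(L)$ for every $L\in\cO^{\fr{p}}$. This essentiality is, I expect, the genuinely delicate point, and is where the fine structure of $\w{U}(\fr{g})$-modules from \cite{SchmidtVerma} enters most essentially; granting it, $N'=\w{N}\in\w{\cO}^{\fr{p}}$. Closure under quotients is then dual: a coadmissible quotient $\w{M}\twoheadrightarrow Q$ restricts on $M$ to a surjection onto a $U(\fr{g})$-submodule $N\subseteq Q$ with $N\in\cO^{\fr{p}}$, and $\w{N}\to Q$ is an isomorphism --- surjective because $M$, hence $\w{M}$, generates $Q$ over $\w{U}(\fr{g})$, and injective by the submodule case applied to $\ker(\w{M}\to Q)$.
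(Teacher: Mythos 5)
Your parts (a) and (b) are fine and coincide with what the paper does: exactness and faithfulness of $E_{\fr{g}}$ come from faithful flatness of $\w{U}(\fr{g})$ over $U(\fr{g})$, and coadmissibility of $\w{M}$ for finitely generated $M$ is just the observation that finitely presented modules over a Fr\'echet--Stein algebra are coadmissible. The divergence is in (c) and (d): the paper disposes of these by citing \cite[Theorem 4.3.1]{SchmidtVerma} and \cite[Lemma 3.6.4]{SchmidtVerma}, whereas you attempt direct arguments. Your surrounding reductions are sound (the adjunction argument reducing fullness to a statement about $\fr{t}$-locally finite vectors; the reduction of closure under submodules to essentiality of $L$ in $E_{\fr{g}}(L)$, and of the quotient case to the submodule case via the kernel), but both arguments hinge on exactly the two facts you leave unproven: (i) that $M'$ is precisely the set of $\fr{t}$-locally finite vectors of $\w{M'}$ for every $M'\in\cO^{\fr{p}}$, and (ii) that every nonzero coadmissible submodule of $E_{\fr{g}}(L)$ meets $L$ nontrivially. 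These are not routine; they constitute the actual content of the results in \cite{SchmidtVerma} that the paper invokes, so as written your proposal has a genuine gap rather than an alternative proof.

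Moreover, the route you suggest for closing gap (i) --- verify the weight-space description for (parabolic) Verma modules via PBW and ``transport to all of $\cO^{\fr{p}}$ using exactness of $E_{\fr{g}}$'' --- does not work as stated. The functor of $\fr{t}$-locally finite vectors is only left exact, and a general object of $\cO^{\fr{p}}$ is a \emph{quotient} of a finite direct sum of Verma modules, not a submodule; given a surjection $\w{P}\twoheadrightarrow\w{M'}$ with $P$ a sum of Vermas, a locally finite vector of $\w{M'}$ need not lift to a locally finite vector of $\w{P}$, so knowing the statement for $P$ gives no control over $\w{M'}$. One genuinely needs an explicit description of $\w{M'}$ (as a suitable completion along the weight decomposition, with finite-dimensional weight spaces and weights bounded above) valid for arbitrary objects of $\cO^{\fr{p}}$, which is essentially what the cited results of \cite{SchmidtVerma} provide; the same fine structure is what underlies the essentiality statement (ii). If you either prove these two facts or simply cite them, the rest of your argument goes through and recovers the proposition.
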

\begin{proof}
Part (a) follows from \cite[Prop. 3.6]{ScSt} and \cite[Theorem 3.1]{DCapThree}. Finitely presented 
modules are coadmissible, whence (b).
Part (c) is \cite[Theorem 4.3.1]{SchmidtVerma} and part (d) is \cite[Lem. 3.6.4]{SchmidtVerma}.
\end{proof}

If $\uM\in\cO^P$, then the canonical map $\iota: \w{U}(\fr{g}) \rightarrow \w{U}(\fr{g},P)$ induces a canonical $\w{U}(\fr{g})$-linear map $$\w{M}=\w{U}(\fr{g})\utimes{U(\fr{g})} M \longrightarrow \w{\uM}=\wUg{P}\utimes{D(\fr{g},P)} \uM,\;\; x\otimes m\mapsto \iota(x)\otimes m.$$

Source and target are coadmissible modules over $\w{U}(\fr{g})$ and 
$\w{U}(\fr{g},P)$ respectively, according to \ref{prop-extension} and \ref{prop-coad1}, and hence, carry their canonical topologies.
\begin{prop} \label{prop-bijectiveGlobal} \hsp
\be 
\item The map $\w{U}(\fr{g}) \rightarrow \w{U}(\fr{g},P)$ is a continuous injection.
\item The map 
$ \w{M} \longrightarrow \w{\uM}$
is a continuous bijection for any $\uM\in\cO^P$.

\ee
\end{prop}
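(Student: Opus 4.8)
The statement to prove is Proposition \ref{prop-bijectiveGlobal}: that $\w{U}(\fr{g}) \to \w{U}(\fr{g},P)$ is a continuous injection, and that the induced map $\w{M} \to \w{\uM}$ is a continuous bijection for every $\uM \in \cO^P$. The plan is to reduce everything to the global picture coming from the rings, since the key structural input is already available in Lemma \ref{UgPDgP}, which gives the bimodule isomorphism $\wUg{P_0} \otimes_{D(\fr{g},P_0)} D(\fr{g},P) \cong \wUg{P}$, and in Proposition \ref{decomp}, which gives a double coset decomposition of $\wUg{G}$, hence also of $\wUg{P}$ over $\wUg{P_0}$.

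For part (a), I would argue as follows. The ring $\w{U}(\fr{g})$ is a Fr\'echet–Stein algebra and $\wUg{P}$ is as well (by the Proposition following \ref{decomp}), so both carry canonical Fr\'echet topologies. The map $\iota: \w{U}(\fr{g}) \to \wUg{P}$ factors the inclusion $\w{U}(\fr{g}) \hookrightarrow \wUg{G} = D(G,K)$, since $\wUg{P} \hookrightarrow \wUg{G}$ by Proposition \ref{decomp}(b) (the double coset $Z = P$ contributes the summand $\wUg{P}$ itself, so $\wUg{P}$ is a direct summand of $\wUg{G}$ as a right $\wUg{P}$-module, in particular the structure map is injective). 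The injectivity of $\w{U}(\fr{g}) \hookrightarrow D(G,K)$ is the defining property of the functor $E_{\fr{g}}$ (it is the inclusion induced on the level of the relevant completed universal enveloping algebras, cf. \cite{SchmidtVerma}, and can also be checked directly by comparing the defining inverse limit presentations $\w{U}(\fr{g}) = \varprojlim \hK{U(\pi^n\fr{g})}$ and the presentation of $\wUg{P}$ from \cite[6.2]{EqDCap} as $\varprojlim \hK{U(\cL)} \rtimes_N P$). Continuity is automatic: a $K$-algebra homomorphism between Fr\'echet–Stein algebras arising from a compatible system of maps of the Banach building blocks is continuous.

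For part (b), the strategy is to identify both sides with honest tensor products and then use that $\wUg{P}$ is a flat, indeed free, module over $\wUg{P_0}$ of a controlled shape. Using Lemma \ref{UgPDgP}, $\w{\uM} = \wUg{P} \otimes_{D(\fr{g},P)} \uM \cong \wUg{P_0} \otimes_{D(\fr{g},P_0)} \uM$ as $\wUg{P_0}$-modules, and similarly $\w{M} = \w{U}(\fr{g}) \otimes_{U(\fr{g})} M$. So it suffices to show $\w{U}(\fr{g}) \otimes_{U(\fr{g})} M \to \wUg{P_0} \otimes_{D(\fr{g},P_0)} \uM$ is a continuous bijection. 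Both are coadmissible over their respective Fr\'echet–Stein algebras (by \ref{prop-extension}(b) and \ref{prop-coad1}); coadmissible modules carry a unique Fr\'echet topology and morphisms of such modules are automatically continuous by \cite[Corollary 3.4]{ST}, so continuity is free, and it remains to prove bijectivity as abstract modules. For surjectivity, $M$ generates $\w{M}$ over $\w{U}(\fr{g})$, its image generates $\w{\uM}$ over $\wUg{P_0} = \w{U}(\fr{g}) \cdot K[P_0]$ (again by Lemma \ref{UgPDgP}), and the $P_0$-translates of $M$ inside $\w{\uM}$ lie in the image since the $P_0$-action on the image is carried by the $\wUg{P_0}$-module structure restricted through $\iota$ and the adjoint action — this is exactly the computation already performed inside the proof of Proposition \ref{prop-coad1} (the identity $p_i \otimes xm_j - 1\otimes p_i x m_j = (p_i\cdot x)(p_i\otimes m_j - 1\otimes p_i m_j)$), which shows $\w{\uM}$ is generated over $\w{U}(\fr{g})$ by $\{1\otimes pm : p\in P_0, m\in M\}$, all of which come from $\w{M}$. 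For injectivity, I would localize: apply $\hK{U(\pi^n\cL)} \otimes_{\w{U}(\fr{g})} (-)$ (a Fr\'echet–Stein building block) to both sides and use that $E_{\fr{g}}$ and the analogous construction for $\wUg{P_0}$ are built from the \emph{same} Lie-lattice completions $\hK{U(\pi^n\cL)}$, crossed only with the compact group $P_0$ — so at each finite level the map becomes an isomorphism $\hK{U(\pi^n\cL)} \otimes_{U(\fr{g})} M \tocong (\hK{U(\pi^n\cL)} \rtimes_{N} P_0) \otimes_{D(\fr{g},P_0)} \uM$ after using that crossing with $P_0$ is exact and base change commutes, and then pass to the inverse limit, which is exact on coadmissible modules.

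The main obstacle I anticipate is the injectivity in part (b): a priori $\w{U}(\fr{g}) \to \wUg{P_0}$ need not be flat, so one cannot simply tensor $\w{M} = \w{U}(\fr{g}) \otimes_{U(\fr{g})} M$ up and conclude. The resolution is to exploit that both extension functors are, at each truncation level, obtained from the \emph{identical} $K$-Banach algebra $\hK{U(\pi^n\cL)}$ — the only difference between the $\fr{g}$-side and the $P_0$-side being the (exact, flat, free) operation of forming a crossed product with the compact group $P_0$ — and that $D(\fr{g},P_0)$ is squeezed between $U(\fr{g})$ and $\wUg{P_0}$ compatibly with this. So the comparison map is an isomorphism level-by-level, and passing to the inverse limit (exact on coadmissible modules, \cite{ST}) finishes it. I would also need to check that the Fr\'echet topology on $\w{\uM}$ as a $\wUg{P}$-module agrees with its topology as a $\wUg{P_0}$-module (both are the canonical topology of a coadmissible module over a Fr\'echet–Stein algebra, and these agree since $\wUg{P} = \wUg{P_0} \otimes_{K[P_0]} K[P]$ is a finite free extension), which is routine.
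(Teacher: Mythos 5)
Your overall strategy (reduce to $P_0$ via Lemma \ref{UgPDgP}, compare the two extensions level-by-level over the Banach completions $\hK{U(\cL)}$, then pass to the inverse limit) is the same as the paper's, but the proposal leaves the actual heart of the argument unproved. The decisive step is the levelwise injectivity of
\[ f_{\cL,N}\colon \hK{U(\cL)}\otimes_{U(\fr{g})} M \longrightarrow \big(\hK{U(\cL)}\rtimes_N P_0\big)\otimes_{D(\fr{g},P_0)} \uM, \]
and you dispose of it with the phrase that ``crossing with $P_0$ is exact and base change commutes.'' This is not an argument: the target is a tensor product over the \emph{larger} ring $D(\fr{g},P_0)$, hence a quotient of $(\hK{U(\cL)}\rtimes_N P_0)\otimes_{U(\fr{g})} M$, and the whole problem is to show that this quotient does not kill anything coming from the summand $\hK{U(\cL)}\otimes_{U(\fr{g})} M$; no flatness or exactness of ``crossing with $P_0$'' is available to give this for free. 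The paper's proof (following the argument of \cite[Lem.\ 4.6]{ScSt}) constructs an explicit left inverse: one equips $\hK{U(\cL)}\otimes_{U(\fr{g})} M$ with the twisted action $p\cdot(x\otimes m)=\Ad(p)(x)\otimes pm$, checks it extends to an action of the crossed product $\hK{U(\cL)}\rtimes_N P_0$, verifies by a density/continuity argument (using that $K[P_0]$ is dense in $D(P_0,K)$) that $\delta\ast(1\otimes m)=1\otimes\delta m$ for all $\delta\in D(\fr{g},P_0)$, and only then concludes that $\lambda\otimes m\mapsto \lambda\ast(1\otimes m)$ descends through $\otimes_{D(\fr{g},P_0)}$ and splits $f_{\cL,N}$. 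None of this appears in your proposal, so the injectivity in (b) is a genuine gap. (Your global surjectivity argument also only shows the image of $\w{M}$ contains a dense $\w{U}(\fr{g})$-span of the $P_0$-translates of $1\otimes M$, and closedness of that image is not available; this is repaired automatically once one argues levelwise, as the levelwise maps are visibly surjective.)

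A second, smaller, gap is the continuity claim. You assert continuity is ``free'' from \cite[Cor.\ 3.4]{ST}, but that automatic-continuity statement is for morphisms of coadmissible modules over a \emph{single} Fr\'echet--Stein algebra; here $\w{M}$ is coadmissible over $\w{U}(\fr{g})$ while $\w{\uM}$ is coadmissible over $\wUg{P}$ (or $\wUg{P_0}$), and the map is linear only over the smaller ring, whose coadmissible category does not a priori contain $\w{\uM}$. The paper instead factors the map through the finitely presented module $\wUg{P_0}\otimes_{U(\fr{g})} M$, identifies the canonical topology on $\w{\uM}$ as the quotient topology from this presentation (using the description of the kernel from Proposition \ref{prop-coad1}), and reduces continuity to that of $\w{U}(\fr{g})^{\oplus n}\to \wUg{P_0}^{\oplus n}$, i.e.\ to part (a). Your part (a) itself is fine in substance: either one cites the injectivity of $\w{U}(\fr{g})\hookrightarrow D(G,K)$ together with Proposition \ref{decomp}(b), or, as the paper does, one observes directly that $\w{U}(\fr{g})\to\wUg{P_0}$ is the inverse limit of the continuous injections $\hK{U(\cL)}\hookrightarrow \hK{U(\cL)}\rtimes_N P_0$.
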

\begin{proof}
The map in question factors through the map 
 $\w{U}(\fr{g}) \rightarrow \w{U}(\fr{g},P_0)$ and it suffices to establish the claim for the latter map. But this latter map equals the projective limit over all pairs $(\cL, N)$ of the canonical maps
  $\hK{U(\cL)}  \rightarrow   \hK{U(\cL)} \rtimes_N P_0 $ 
 where $\cL$ is a $P_0$-stable Lie lattice in $\fr{g}$ and $N$ is an open subgroup of $(P_0)_{\cL}$ which is normal in $P_0$. Each of these maps is a continuous injection, by definition of the crossed products $\hK{U(\cL)} \rtimes_N P_0$ \cite[2.2.3/4]{EqDCap} and their topology \cite[3.4.8 and 6.2.9]{EqDCap}. This shows (a).
 
As a $\wUg{P_0}$-module, we have $\w{\uM}=\wUg{P_0}\otimes_{D(\fr{g},P_0)} \uM$ and this holds as topological modules with respect to the canonical topology on the right-hand side. In order to prove the statement of the lemma, we may therefore replace the group $P$ by $P_0$. We start with the continuity. 
Our map in question factors through the map   
$$\w{M}=\w{U}(\fr{g})\utimes{U(\fr{g})} M \longrightarrow \wUg{P_0}\utimes{U(\fr{g})} M,\quad\quad x\otimes m\mapsto \iota(x)\otimes m.$$
As observed in the proof of \ref{prop-coad1}, the $\wUg{P_0}$-module
 $\w{\uM}$ equals the quotient of the finitely presented $\wUg{P_0}$-module $\wUg{P_0}\otimes_{U(\fr{g})} M$
 by a coadmissible submodule. Its canonical topology equals therefore the quotient topology. It therefore suffices to establish the continuity of the map
$\w{M}\rightarrow \wUg{P_0}\otimes_{U(\fr{g})} M$.
The $\w{U}(\fr{g})$-module $\w{M}$ is finitely presented and its canonical topology can therefore be defined as the quotient topology with respect to a finite presentation as $\w{U}(\fr{g})$-module. Using the very same presentation to define the canonical topology of the finitely presented $\wUg{P_0}$-module $\wUg{P_0}\otimes_{U(\fr{g})} M$, we are therefore reduced to show the continuity of the map $\w{U}(\fr{g})^{\oplus n} \rightarrow \w{U}(\fr{g},P_0)^{\oplus n}$ induced by $\iota$. This follows from part (a).

For the bijectivity, it suffices to see that the analogous map 
\[ f: \w{M} \longrightarrow \wUg{P_0}\utimes{D(\fr{g},P_0)} \uM \]
is bijective. By construction, the morphism $f$ equals the projective limit over all pairs $(\cL, N)$ of the morphisms
\[f_{\cL,N}: \hK{U(\cL)} \utimes{U(\fr{g})} M \longrightarrow  ( \hK{U(\cL)} \rtimes_N P_0 )  \utimes{D(\fr{g},P_0)} \uM\]
where $\cL$ is a $P_0$-stable Lie lattice in $\fr{g}$ and $N$ is an open subgroup of $(P_0)_{\cL}$ which is normal in $P_0$.
It suffices to see that each map $f_{\cL,N}$ is bijective. Each map $f_{\cL,N}$ is visibly surjective. For the injectivity, we construct an explicit left inverse map for $f_{\cL,N}$, following the argument in \cite[Lem. 4.6]{ScSt}. We observe that 
 $\hK{U(\cL)} \utimes{U(\fr{g})} M$ has a natural $P_0$-action given by $p.(x\otimes m):=\Ad(p)(x)\otimes pm$. On the one hand, this $P_0$-action is compatible with the crossed product structure and yields a $\hK{U(\cL)} \rtimes_N P_0$-action, i.e. a map 
  $$  ( \hK{U(\cL)} \rtimes_N P_0 )  \utimes{K}  ( \hK{U(\cL)} \utimes{U(\fr{g})} M )
  \longrightarrow  \hK{U(\cL)} \utimes{U(\fr{g})} M, $$
 which we write as  $\lambda\otimes (u\otimes m)\mapsto \lambda * (u\otimes m)$,
 for $\lambda\in \hK{U(\cL)} \rtimes_N P_0, u\in  \hK{U(\cL)}, m\in M$.
 We denote the restriction of this action to the ring $D(\fr{g},P_0)$ via the natural homomorphism $D(\fr{g},P_0)\rightarrow \hK{U(\cL)} \rtimes_N P_0$ by the same symbol.
  Precomposing with the map induced by $\uM\longrightarrow  \hK{U(\cL)} \utimes{U(\fr{g})} M, m\mapsto 1\otimes m$ yields a map 

 $$  \tilde{f}^{-1}_{\cL,N}: ( \hK{U(\cL)} \rtimes_N P_0 )  \utimes{K} \uM\longrightarrow  \hK{U(\cL)} \utimes{U(\fr{g})} M,$$
 given by $\lambda\otimes m\mapsto \lambda * (1\otimes m)$.
 Note that $\delta*(1\otimes m)=1\otimes\delta m$ if $\delta\in D(\fr{g},P_0)$. Indeed, 
 $D(\fr{g},P_0)$ is generated by $U(\fr{g})$ and $D(P_0,K)$ and the group ring $K[P_0]$ is dense in $D(P_0,K)$. The claim thus follows by continuity of the actions on the Banach module $\hK{U(\cL)} \utimes{U(\fr{g})} M$. Using this, one computes that  $$ \tilde{f}^{-1}_{\cL,N}(\lambda\delta \otimes m)= (\lambda\delta)*(1\otimes m)=\lambda*(\delta*(1\otimes m))= \lambda*(1\otimes\delta m)= \tilde{f}^{-1}_{\cL,N}(\lambda \otimes \delta.m)$$
 for $\lambda\in \hK{U(\cL)} \rtimes_N P_0, \delta\in D(P_0,K), m\in M$. We see that
 $\tilde{f}^{-1}_{\cL,N}$
 factors into a map 
 $$ f^{-1}_{\cL,N}:( \hK{U(\cL)} \rtimes_N P_0 )  \utimes{D(\fr{g},P_0)} \uM\longrightarrow  \hK{U(\cL)} \utimes{U(\fr{g})} M.$$
 This is the required left inverse map (as our notation suggests). 
 To check the identity $f^{-1}_{\cL,N}\circ f_{\cL,N}={\rm id}$, it suffices to consider elements of the form $1\otimes m$ for $m\in M$, since both maps $f^{-1}_{\cL,N}$ and $f_{\cL,N}$ are  $\hK{U(\cL)} $-linear - where it is obvious. 
 \end{proof}

We recall at this point the localization equivalence for the full subcategory $\cC_{\w{U}(\fr{g}),0}$, consisting of objects $M\in \cC_{\w{U}(\fr{g})}$ with $\fr{m}_0M=0$.

\begin{thm}\label{thm-localizationg} The functor $\Loc^{\w{U}(\fr{g})}_\bX$ induces an equivalence of categories 
$$\cC_{\w{U}(\fr{g}),0}\congs \cC_{\bX}.$$
A quasi-inverse is given by the global sections functor $H^0(\bX,-)$.
\end{thm}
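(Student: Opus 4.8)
The plan is to recognise Theorem~\ref{thm-localizationg} as the special case $P=\{1\}$ of Theorem~\ref{thm-localizationP}. By the construction of $\wUg{-}$ recalled in \S\ref{subsec_rings}, evaluating the functor on the trivial closed subgroup $\{1\}\subseteq G$ gives $\wUg{\{1\}}=\varprojlim_{(\cL,N)}\hK{U(\cL)}=\w{U}(\fr{g})$, since a crossed product of $\hK{U(\cL)}$ with the trivial group is $\hK{U(\cL)}$ itself; correspondingly, an equivariant structure for the trivial group is no extra data, so $\cC_{\bX/\{1\}}=\cC_\bX$, and $\cC_{\wUg{\{1\}},0}$ is exactly the subcategory $\cC_{\w{U}(\fr{g}),0}$ cut out by $\fr{m}_0 M=0$. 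Under these identifications the localization functor $\Loc^{\wUg{\{1\}}}_\bX$ of \cite[3.6.8]{EqDCap} is precisely the functor $\Loc^{\w{U}(\fr{g})}_\bX$ of the statement.

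With this bookkeeping in place, I would invoke Proposition~\ref{prop_comp} (which applies to any closed subgroup of $G$, hence to $\{1\}$) to know that $\w{U}(\fr{g})$ acts on $\bX$ compatibly with the trivial group, and then apply Theorem~\ref{thm-localizationP} with $P=\{1\}$. This yields at once the asserted equivalence $\cC_{\w{U}(\fr{g}),0}\congs\cC_\bX$ together with the fact that $H^0(\bX,-)$ is a quasi-inverse. Alternatively, one may simply quote the plain (non-equivariant) Beilinson--Bernstein localization theorem for coadmissible $\w{U}(\fr{g})$-modules on the analytified flag variety, namely \cite[Theorem 7.4.8]{ArdWad2023b}, in whose setting we are already working by the opening of this subsection; this is verbatim the assertion of Theorem~\ref{thm-localizationg}.

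I do not expect a genuine obstacle here: the theorem is essentially a corollary of material already set up. The only point needing a little care is the identification $\wUg{\{1\}}=\w{U}(\fr{g})$ and $\cC_{\bX/\{1\}}=\cC_\bX$, together with the observation that none of the standing hypotheses of the ``Localization'' paragraph (good reduction of $\G_0$, the chosen split Borel $\B_0$, and $\bX=(\G/\B)^{\rm an}$) constrain the subgroup $P$, so that taking $P=\{1\}$ is legitimate.
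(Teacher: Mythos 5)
Your proposal is correct and amounts to the same argument as the paper: the paper simply records that the statement is announced in \cite[Theorem E]{DCapOne} and follows from \cite[6.4.9]{EqDCap}, which is exactly the general equivariant localization result underlying Theorem \ref{thm-localizationP}, so your specialization of that theorem at $P=\{1\}$ (with the bookkeeping $\wUg{\{1\}}=\w{U}(\fr{g})$ and $\cC_{\bX/\{1\}}=\cC_{\bX}$) is the same appeal in slightly different packaging.
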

\begin{proof}
This is announced in \cite[Theorem E]{DCapOne} and follows from \cite[6.4.9]{EqDCap}.\end{proof}

\subsection{Compatibility with geometric induction II}

Let $\uM\in \cO^P_0$. 
There is the composed morphism 
$$\w{M} \longrightarrow \w{\uM}\longrightarrow \Loc^{\wUg{P}}_\bX(\w{\uM}) $$ which is $U(\fr{g})$-linear and whose target is a $\w{\cD}_{\bX}$-module. It therefore induces a $\w{\cD}_{\bX}$-linear morphism
$$\Loc^{\w{U}(\fr{g})}_\bX(\w{M}) \longrightarrow \Loc^{\wUg{P}}_\bX(\w{\uM})$$
which is natural in $\uM$. 

\begin{prop}\label{prop-bijectiveLocal} Let $\uM\in \cO_0^P$. The morphism
$$\Loc^{\w{U}(\fr{g})}_\bX(\w{M}) \congs \Loc^{\wUg{P}}_\bX(\w{\uM})$$
is an isomorphism.
\end{prop}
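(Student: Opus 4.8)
The plan is to show that the canonical morphism $\Loc^{\w{U}(\fr{g})}_\bX(\w{M}) \to \Loc^{\wUg{P}}_\bX(\w{\uM})$ is an isomorphism by checking it locally on $\bX_w(\cT)$, reducing the question to a statement about base change of coadmissible modules along the continuous injection $\w{U}(\fr{g}) \hookrightarrow \wUg{P}$ established in Proposition \ref{prop-bijectiveGlobal}(a).

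\textbf{Local reduction.} First I would fix $\bU\in \bX_w(\cT)$ and a $\bU$-small subgroup $H$ of $G$ contained in $G_0$. By the presheaf description of localization recalled before formula (\ref{Loc(f)}), together with \cite[3.5.6]{EqDCap}, we have
\[ \Loc^{\w{U}(\fr{g})}_\bX(\w{M})(\bU) = \w\cD(\bU,H) \wotimes{D(H,K)} \w{M} \qmb{and} \Loc^{\wUg{P}}_\bX(\w{\uM})(\bU) = \w\cD(\bU,H\cap P) \wotimes{\wUg{H\cap P}} \w{\uM}, \]
where in the first identity I use the localization theorem for $\w{U}(\fr{g})$-modules (Theorem \ref{thm-localizationg}) via the identification $D(H,K) \simeq \w{U}(\fr{g},H)$ and the fact that $\w{M}$, as a $\w{U}(\fr{g})$-module, may first be completed along $H$; concretely $\w\cD(\bU,H)\wotimes{D(H,K)}(\w{U}(\fr{g},H)\wotimes{\w{U}(\fr{g})}\w{M})$. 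So the claim reduces to showing that the natural map
\[ \w\cD(\bU,H) \wotimes{\w{U}(\fr{g},H)} \w{M} \;\longrightarrow\; \w\cD(\bU,H\cap P) \wotimes{\wUg{H\cap P}} \w{\uM} \]
is an isomorphism of coadmissible $\w\cD(\bU,H\cap P)$-modules.

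\textbf{The main step.} The key point is the identity $\w{\uM} \simeq \wUg{H\cap P} \wotimes{\w{U}(\fr{g},H\cap P)} \w{M}$ of coadmissible modules. This should follow by a Fr\'echet--Stein completion of the global bijection $\w{M}\congs \w{\uM}$ from Proposition \ref{prop-bijectiveGlobal}(b): that map is a continuous bijection of coadmissible modules over $\w{U}(\fr{g})$ and $\wUg{P}$ respectively, hence an isomorphism of topological modules by the open mapping theorem for Fr\'echet spaces; base-changing to a compact open $H\cap P$ via $\wUg{H\cap P}=\wUg{P}\utimes{K[P]}K[H\cap P]$ and correspondingly $\w{U}(\fr{g},H\cap P)$, and using that $K[H\cap P]$ is free over... more precisely using that both completions are computed over the same cofinal system of pairs $(\cL,N)$ as in the proof of \ref{prop-bijectiveGlobal}, one gets $\w{\uM}\simeq \wUg{H\cap P}\wotimes{\w{U}(\fr{g},H\cap P)}\w{M}$. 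Granting this, I compute
\[ \w\cD(\bU,H\cap P)\wotimes{\wUg{H\cap P}}\w{\uM} \;\simeq\; \w\cD(\bU,H\cap P)\wotimes{\wUg{H\cap P}}\wUg{H\cap P}\wotimes{\w{U}(\fr{g},H\cap P)}\w{M} \;\simeq\; \w\cD(\bU,H\cap P)\wotimes{\w{U}(\fr{g},H\cap P)}\w{M}, \]
while the left-hand side $\w\cD(\bU,H)\wotimes{\w{U}(\fr{g},H)}\w{M}$ is computed using that $\w\cD(\bU,H) = \w\cD(\bU,H\cap P)\wotimes{\w{U}(\fr{g},H\cap P)}\w{U}(\fr{g},H)$ as $(\w\cD(\bU,H\cap P),\w{U}(\fr{g},H))$-bimodules (compatibility of the $\w\cD(\bU,-)$ construction with the crossed-product base change along $H\cap P \hookrightarrow H$, as in \cite[Lemma 2.3.6]{EqDCapTwo} or \ref{prop_iwasawa}), together with the corresponding identity $\w{M}\otimes_{\w{U}(\fr{g},H\cap P)}\w{U}(\fr{g},H)\simeq \w{U}(\fr{g},H)\wotimes{\w{U}(\fr{g})}M$-type rearrangement. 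Both sides then become $\w\cD(\bU,H\cap P)\wotimes{\w{U}(\fr{g},H\cap P)}\w{M}$, and one checks that the canonical map is exactly this identification by chasing $x\otimes m\mapsto \iota(x)\otimes m$ on the level of the presheaves.

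\textbf{Obstacles.} The routine part is bookkeeping with the base-change identities for the crossed products and the completed tensor products. The genuinely delicate step is the passage from the global continuous bijection $\w{M}\congs\w{\uM}$ to the completed/localized version: one must know that the left-inverse $f^{-1}_{\cL,N}$ constructed in the proof of Proposition \ref{prop-bijectiveGlobal} is compatible with the transition maps in the Fr\'echet--Stein system and with the subsequent base change $\w{U}(\fr{g},H\cap P)\hookrightarrow \w\cD(\bU,H\cap P)$, so that it yields a genuine inverse after localizing, rather than merely a bijection that one then has to re-argue is bicontinuous. This compatibility is essentially automatic from the explicit formula $\lambda\otimes m\mapsto \lambda*(1\otimes m)$ for the inverse, since the $P_0$-action used there is functorial in the Lie lattice, but making this precise for the affinoid-level crossed products $\w\cD(\bU, H\cap P)$ is where the actual work lies; I would handle it by invoking the flatness/faithful-flatness statements for $\w\cD(\bU,-)$ over $\wUg{-}$ of \cite{DCapThree,EqDCap} to reduce, after tensoring, to the already-established global isomorphism.
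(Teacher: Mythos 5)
Your overall shape (argue locally on $\bX_w(\cT)$ and lean on Proposition \ref{prop-bijectiveGlobal}) matches the paper, but the reduction you propose does not close, and two of your intermediate identities are not correct. First, the local description of the source is wrong: $\w{M}$ carries no $D(H,K)$- or $\w{U}(\fr{g},H)$-module structure, and $\Loc^{\w{U}(\fr{g})}_\bX(\w{M})(\bU)$ is $\w\cD(\bU)\wotimes{\w{U}(\fr{g})}\w{M}$, not $\w\cD(\bU,H)\wotimes{\w{U}(\fr{g})}\w{M}$; the latter (which is what your ``complete along $H$ first'' recipe computes) contains extra crossed-product directions and is strictly larger. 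Second, your ``key identity'' $\w{\uM}\simeq \wUg{H\cap P}\wotimes{}\w{M}$ is a tautology as typeset, and with the intended base ring $\w{U}(\fr{g})$ it is false: the correct statement is $\w{\uM}\cong \wUg{H\cap P}\utimes{D(\fr{g},H\cap P)}\uM$, where the relations $\gamma(p)\otimes m=1\otimes\tau(p)m$ are imposed, and the content of \ref{prop-bijectiveGlobal}(b) is precisely that these relations collapse the group directions; no free base change over $\w{U}(\fr{g})$ produces this. (Likewise $\wUg{H\cap P}$ is not $\wUg{P}\utimes{K[P]}K[H\cap P]$.)

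The deeper issue is that the global continuous bijection $\w{M}\congs\w{\uM}$, even upgraded to a topological isomorphism by the open mapping theorem, cannot be transported to the local statement by flatness: the two sides of the morphism in question are completed tensor products over \emph{different} rings ($\w{U}(\fr{g})$ versus $\wUg{H}$) with \emph{different} coefficient rings ($\w\cD(\bU)$ versus $\w\cD(\bU,H)$), so there is no single functor you can apply to the global isomorphism to obtain the local one. The absorption of the group directions into $\w\cD(\bU)$ must be re-proved at the affinoid level; this is exactly what the paper does, by writing $f(\bU)$ as $\invlim f(\bU)_{\cL,N}$ with
\[ f(\bU)_{\cL,N}\colon \hK{U(\cL)}\utimes{\w{U}(\fr{g})}\w{M}\longrightarrow \bigl(\hK{U(\cL)}\rtimes_N H\bigr)\utimes{\wUg{H}}\w{\uM}, \]
where now $\cL$ is an $\cA$-Lie lattice in $\cT(\bU)$, observing surjectivity and then constructing an explicit left inverse via the $\ast$-action $\lambda\otimes m\mapsto\lambda\ast(1\otimes m)$, exactly as in the proof of \ref{prop-bijectiveGlobal}(b) but for the affinoid crossed products. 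In your proposal this step is declared ``essentially automatic''; it is in fact the whole proof, and without redoing the left-inverse construction at the level of $\hK{U(\cL)}\rtimes_N H$ your argument has a genuine gap.
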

\begin{proof}
The map $\w{M} \rightarrow \w{\uM}$ induces a morphism of presheaves on $\bX_w(\cT)$ 
 \[f: \cP^{\w{U}(\fr{g})}_\bX(\w{M})\longrightarrow \cP^{\wUg{P}}_\bX(\w{\uM}).\]
Given an affinoid $\bU\in  \bX_w(\cT)$ we have 
\[\cP^{\wUg{P}}_\bX(\w{\uM})(\bU) = \invlim \w\cD(\bU,H) \underset{\wUg{H}}{\w\otimes} \w{\uM}\]
where, in the inverse limit, $H$ runs over all the $\bU$-small subgroups of $P$. Given a $H$-stable affine formal model $\cA$ of $A=\cO(U)$,
 we have 
\[\w\cD(\bU,H) \underset{\wUg{H}}{\w\otimes} \w{\uM} = \underset{(\cL,N)\in\cI(H)}{\invlim\limits}{} \big( \hK{U(\cL)} \rtimes_N H \big)\underset{\wUg{H}}{\otimes} \w{\uM}  \]

where $\cI(H)$ denotes the set of all $\cA$-trivialising pairs, i.e. the set of pairs $(\cL, N)$, where $\cL$ is an $H$-stable $\cA$-Lie lattice in ${\rm Der}_K(A)$ and $N$ is an open subgroup of $H_{\cL}$ which is normal in $H$ \cite[3.3.1]{EqDCap}. Moreover, the canonical topology on left-hand side of this equality (which is a coadmissible $\w\cD(\bU,H)$-module)
equals the projective limit topology. The map $f(\bU)$ is continuous in the canonical topologies of source and target. Moreover, $f(\bU)$ is the 
projective limit of maps 
\[f(\bU)_{\cL, N}:    \hK{U(\cL)}\underset{\w{U}(\fr{g})}{\otimes} \w{M}    \rightarrow       \big( \hK{U(\cL)} \rtimes_N H \big)\underset{\wUg{H}}{\otimes} \w{\uM}.\]
Each of the maps $f(\bU)_{\cL, N}$ is visibly surjective. The injectivity follows by constructing an explicit left inverse, very similar to the proof of \ref{prop-bijectiveGlobal}(b). Passing to the limit, we see that $f(\bU)$ is bijective. This proves the proposition.
\end{proof}

\vskip8pt
 
Let us define $\Loc^{U(\fr{g})}_\bX$ to be the composite of the Beilinson-Bernstein localization functor 
$$\Loc^{U(\fr{g})}_\X: {\rm coh}(U(\fr{g})_0)\rightarrow {\rm coh}(\cD_{\X})$$ from \cite{BB}, followed by rigid analytification \ref{prop-analytification2}
$$\rho^*: {\rm coh}(\cD_{\X})\rightarrow  {\rm coh}(\cD_{\bX}).$$
Recall the extension functor $E_{\bX}$ from \ref{Analytification}.

\begin{thm} \label{thm-compatible} The functor $E_{\bX}\circ \Loc^{U(\fr{g})}_\bX$, restricted to the category $\cO_0^P$, takes values in $\cC_{\bX/P}$. The resulting
diagram of functors 

\[ \xymatrix{\cO_0^{P}  \ar[rr]^{\cF_P^G(-)' } \ar[d] _{E_{\bX}\circ  \Loc^{U(\fr{g})}_\bX}&& \cC_{D(G,K),0} \ar[d]^{ \Loc^{D(G,K)}_\bX} \\ \cC_{\bX/P}  \ar[rr]_{\ind_P^G} &&  \cC_{\bX/G}.}\]
is commutative up to natural isomorphism.

\end{thm}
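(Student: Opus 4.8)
The statement to prove is Theorem \ref{thm-compatible}, which is the ``clean'' version of the compatibility diagram in which the auxiliary ring $\wUg{P}$ has been eliminated in favour of the extension functor $E_{\bX}$ composed with algebraic Beilinson--Bernstein localization. My strategy is to deduce it from Proposition \ref{prop-compatible}, which already establishes the analogous commutativity with the left-vertical arrow $\Loc^{\wUg{P}}_\bX\circ\w{(-)}$, by constructing a natural isomorphism between the two left-vertical functors
\[
E_{\bX}\circ\Loc^{U(\fr g)}_\bX \;\cong\; \Loc^{\wUg{P}}_\bX\circ\w{(-)}
\]
as functors from $\cO_0^P$ to $\cC_{\bX/P}$. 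Granting such an isomorphism, one simply pastes it onto the left edge of the square in Proposition \ref{prop-compatible}: the top edge $\cF_P^G(-)'$, the right edge $\Loc^{D(G,K)}_\bX$, and the bottom edge $\ind_P^G$ are literally the same, so the resulting square commutes up to natural isomorphism, and as a by-product the composite $E_{\bX}\circ\Loc^{U(\fr g)}_\bX$ lands in $\cC_{\bX/P}$ because $\Loc^{\wUg{P}}_\bX\circ\w{(-)}$ does by Proposition \ref{prop-coad1} and Theorem \ref{thm-localizationP}.

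\textbf{Constructing the left-edge isomorphism.} This is the heart of the matter. For $\uM\in\cO_0^P$, I would factor the comparison through the sheaf $\Loc^{\w U(\fr g)}_\bX(\w M)$, where $\w M=E_{\fr g}(M)$ is the module-theoretic extension of $M$ from Proposition \ref{prop-extension}. There are two things to identify. First, on the ``analytic $\fr g$-module'' side, the sheaf $\Loc^{\w U(\fr g)}_\bX(\w M)$ should be compared with $E_{\bX}\circ\Loc^{U(\fr g)}_\bX(M)$: both are obtained from the coherent $\cD_\bX$-module $\Loc^{U(\fr g)}_\bX(M)=\rho^*\Loc^{U(\fr g)}_\X(M)$ by passing to $\w\cD_\bX$, so this is a compatibility between localization-then-analytification-then-$E_\bX$ and the $\w U(\fr g)$-localization of the algebraically localized global sections; this follows by tracing definitions using Theorem \ref{thm-localizationg}, the $\cD_\X$-affinity of the flag variety, and Proposition \ref{prop-analytification2}(a), which identifies $H^0(\bX,E_{\bX}\rho^*\Loc^{U(\fr g)}_\X(M))$ with $M$ again (up to the infinitesimal-character block). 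Second, on the equivariant side, Proposition \ref{prop-bijectiveLocal} already supplies the isomorphism
\[
\Loc^{\w U(\fr g)}_\bX(\w M)\;\congs\;\Loc^{\wUg{P}}_\bX(\w{\uM}),
\]
induced by the continuous bijection $\w M\to\w{\uM}$ of Proposition \ref{prop-bijectiveGlobal}(b). Composing the two identifications gives the desired natural isomorphism $E_{\bX}\circ\Loc^{U(\fr g)}_\bX(M)\cong\Loc^{\wUg{P}}_\bX(\w{\uM})$, and naturality in $\uM$ is inherited from the naturality already recorded in Propositions \ref{prop-bijectiveGlobal}, \ref{prop-bijectiveLocal} and the functoriality of $E_\bX$, $\rho^*$, and $\Loc^{U(\fr g)}_\X$.

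\textbf{The expected obstacle.} The genuinely delicate point is the first identification above: matching $E_{\bX}\circ\rho^*\circ\Loc^{U(\fr g)}_\X$ with $\Loc^{\w U(\fr g)}_\bX\circ E_{\fr g}$ on the principal block $\cO_0$. The subtlety is that $\Loc^{U(\fr g)}_\X$ is a localization on the \emph{algebraic} flag variety with global sections functor recovering $U(\fr g)_0$-modules, whereas $\Loc^{\w U(\fr g)}_\bX$ is the rigid-analytic localization whose global sections functor recovers \emph{coadmissible} $\w U(\fr g)$-modules; the bridge is precisely that $\w M = E_{\fr g}(M)$ has $M$ as its ``algebraic part'' and that $E_\bX$ is compatible with $\rho^*$ of the algebraic $\cD$-module localization. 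I would verify this locally on an affinoid covering $\bU\in\bX_w(\cT)$, using that $\Loc^{U(\fr g)}_\X(M)$ is coherent over $\cD_\X$, that $\rho^*$ preserves coherence and presentations (Proposition \ref{prop-analytification1}), and that $E_\bX(\cN)_x\cong\w\cD_x\otimes_{\cD_x}\cN_x$ for coherent $\cN$ (as in the proof of Proposition \ref{prop-supportpreservedB}), then comparing both sides with $\w\cD(\bU,H)\w\otimes_{\wUg H}\w M$ via the Fr\'echet--Stein presentations. I expect the bookkeeping — keeping track of which completed tensor products are being formed and checking they agree on free modules before invoking right-exactness and the five lemma — to be the main labour; the conceptual content is already present in Propositions \ref{prop-bijectiveGlobal}--\ref{prop-bijectiveLocal} and in Proposition \ref{prop-compatible}.
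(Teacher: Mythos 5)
Your architecture is the paper's architecture: reduce to Proposition~\ref{prop-compatible} by identifying the left-vertical arrows, factoring the identification through $\Loc^{\w U(\fr g)}_\bX(\w M)$ and invoking Proposition~\ref{prop-bijectiveLocal} for the step $\Loc^{\w U(\fr g)}_\bX(\w M)\congs\Loc^{\wUg P}_\bX(\w{\uM})$. Where you miscalibrate is in calling the comparison $E_\bX\circ\Loc^{U(\fr g)}_\bX(M)\cong\Loc^{\w U(\fr g)}_\bX(\w M)$ ``the genuinely delicate point'' and proposing to establish it by a local stalk computation with a five-lemma argument modelled on the proof of Proposition~\ref{prop-supportpreservedB}. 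The paper disposes of that step in one line, by contracting the defining tensor products:
\[
E_{\bX}\bigl(\Loc_{\bX}^{U(\fr{g})}(M)\bigr)
= \w\cD_{\bX}\otimes_{\cD_{\bX}}\bigl(\cD_{\bX}\otimes_{U(\fr{g})}M\bigr)
= \w\cD_{\bX}\otimes_{U(\fr{g})}M
= \w\cD_{\bX}\otimes_{\w U(\fr g)}\bigl(\w U(\fr g)\otimes_{U(\fr g)}M\bigr)
= \Loc^{\w U(\fr g)}_\bX(\w M),
\]
using only $\cD_\bX=\rho^*D_\X$ (Proposition~\ref{prop-analytification1}) and the definitions of $E_\bX$, $\rho^*$, and $E_{\fr g}$. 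The genuine technical content of the theorem was already absorbed into Propositions~\ref{prop-bijectiveGlobal}--\ref{prop-bijectiveLocal} and Proposition~\ref{prop-compatible}, which you correctly invoke. So: same route, correct conclusion, but the effort you budget for the first identification is unnecessary, and the detour you sketch through $\cD_\X$-affinity, Proposition~\ref{prop-analytification2}(a), and global sections can be dropped.
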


\begin{proof} Let $\uM\in\cO_0^{P}$. One has
$$E_{\bX}\circ \Loc_{\bX}^{U(\fr{g})}(M)=\w\cD_{\bX}\otimes_{U(\fr{g})} M=
\Loc^{\w{U}(\fr{g})}_\bX(\w{M})\simeq \Loc^{\wUg{P}}_\bX(\w{\uM})$$
by contracting tensor products and by Proposition \ref{prop-bijectiveLocal}. In particular, this object lies in $\cC_{\bX/P}$, so that its geometric induction $\ind_P^G(-)$ is well-defined. Now Proposition \ref{prop-compatible} implies the commutativity of the diagram appearing in the theorem.
\end{proof}

\section{Irreducibility results}

We keep all the notations from the preceding section. 

\subsection{The support of irreducible representations}
As before, let $W$ be the Weyl group of $(\G,\T)$. Let $\X_w\subseteq\X$ be an algebraic Schubert variety associated with some $w\in W$. Suppose that the parabolic $\P$ equals the stabilizer in $\G$ of $\X_w$. Let $\bX_w$ be the associated rigid-analytic variety to $\X_w$. As usual, for a subset $S\subseteq\bX$, we denote by $\overline{S}$ its closure in the Huber space $\mathscr{P}(\bX)$. 

\begin{lem}\label{GPcocompact} The quotient space $G/P$ is compact.
\end{lem}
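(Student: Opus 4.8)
The statement is that $G/P$ is compact, where $G=\G_L(L)$ is the group of $L$-rational points of a connected semisimple $L$-group and $P=\P_L(L)$ is the group of $L$-points of a parabolic $\P_L$ (here, in the Schubert setting, $\P$ is the stabilizer in $\G$ of the Schubert variety $\X_w$, so $\P_L$ is a genuine parabolic $L$-subgroup). The plan is to reduce this to the standard fact that $\G_L/\P_L$ is a projective $L$-variety together with the basic properties of the $L$-analytic topology on $L$-points.

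First I would recall that $\P_L$ is a parabolic subgroup of $\G_L$, so the quotient $\G_L/\P_L$ exists as a projective homogeneous $L$-variety; in particular it is a complete (proper) $L$-scheme. Then I would pass to $L$-rational points. Since $\G_L$ is smooth and $H^1$ issues for parabolic quotients vanish appropriately (a parabolic $P$ contains a Borel so $\G_L/\P_L$ has an $L$-point, and in fact the orbit map gives a bijection), the natural map $G/P = \G_L(L)/\P_L(L) \to (\G_L/\P_L)(L)$ is a bijection, and with respect to the $L$-analytic topologies it is a homeomorphism: this is the standard statement that for a smooth $L$-group $\G_L$ acting on a smooth variety with an $L$-point whose stabilizer is $\P_L$, the orbit map induces an $L$-analytic isomorphism of $G/P$ onto the orbit, which here is all of $(\G_L/\P_L)(L)$. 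Finally, $(\G_L/\P_L)(L)$ is the set of $L$-points of a proper $L$-scheme, hence compact in its $L$-analytic topology (the $L$-points of a projective $L$-variety form a compact space, being a closed subset of some $\mathbb{P}^N(L)$, and $\mathbb{P}^N(L)$ is compact). Composing these facts gives that $G/P$ is compact.

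Alternatively, and perhaps more cleanly for the purposes of this paper, I would invoke the Iwasawa decomposition directly: by \cite[3.5]{Cart79} (which is cited elsewhere in the excerpt), there is a maximal compact subgroup $G_0 \subseteq G$ with $G = G_0 P$. Then the continuous surjection $G_0 \to G/P$ exhibits $G/P$ as a continuous image of the compact space $G_0$, hence $G/P$ is compact. This is the shortest route and uses only results already available in the paper.

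\textbf{Main obstacle.} There is no serious obstacle; the only point requiring a little care is making sure the topology on $G/P$ is the quotient of the $p$-adic topology on $G$ (so that continuous images of compacts are compact) and that $P$ really is the group of $L$-points of a parabolic $L$-subgroup of $\G_L$ — in the Schubert application this requires knowing that the stabilizer in $\G$ of $\X_w$ is defined over $L$ and is parabolic, which follows from the general theory of Schubert varieties (it is a standard parabolic containing $\B$) together with the hypothesis that the relevant data descends to $L$. Given the Iwasawa decomposition is already cited in the paper, I expect the proof to be a one- or two-line argument: write $G = G_0 P$ with $G_0$ compact, deduce $G/P$ is a continuous image of $G_0$, hence compact.
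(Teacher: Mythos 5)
Your proof is correct, but it takes a different route than the paper. The paper simply cites \cite[Prop.\ 9.3]{BorelTits}, a general result stating that for a locally compact field $L$ and a parabolic $L$-subgroup $\P_L$ of a reductive $L$-group, the quotient $G/P$ of $L$-points is compact; the content of that reference is essentially your first argument (identify $G/P$ with $(\G_L/\P_L)(L)$ and use that $\G_L/\P_L$ is projective, hence its $L$-points are compact), so you have in effect reconstructed the proof of the cited proposition rather than cited it. Your second, Iwasawa-based argument ($G=G_0P$ with $G_0$ compact, so $G/P$ is a continuous image of $G_0$) is a genuinely different and entirely adequate one-liner, and since the Iwasawa decomposition is already invoked from \cite[3.5]{Cart79} in \S 6.1 of the paper, it arguably sits more naturally in this text than the Borel--Tits citation the authors chose; the trade-off is that Borel--Tits applies uniformly to any locally compact field without needing to set up a maximal compact, while the Iwasawa route leans on a second external reference. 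Either way the lemma holds, and your care about the quotient topology and about $\P_L$ actually being a parabolic $L$-subgroup is well placed but unproblematic here.
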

\begin{proof} One may apply \cite[Prop. 9.3]{BorelTits}, since $L$ is locally compact. 
\end{proof}
\begin{lem}\label{lem-Gschubert} We have $G \overline{\bX_w}=\overline{G\bX_w}$, i.e. the $G$-orbit of $\overline{\bX_w}$ is closed in 
$\mathscr{P}(\bX)$.
\end{lem}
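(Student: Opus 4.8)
The plan is to establish $G\overline{\bX_w} = \overline{G\bX_w}$ by combining the compactness of $G/P$ (Lemma \ref{GPcocompact}) with general facts about the Huber space and the closure operation already recorded in the excerpt, in particular \cite[Corollary 2.1.16]{EqDCapTwo} which governs how closure interacts with the action of a compact group, and the fact that $P = G_{\bX_w}$ stabilises $\bX_w$.

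First I would reduce to a statement about a compact group. Since $P$ is cocompact in $G$ by Lemma \ref{GPcocompact}, I can choose a compact open subgroup $G_0$ of $G$ with $G = G_0 P$ (using the Iwasawa-type decomposition, exactly as in the flag-variety applications earlier in the paper). Because $P$ stabilises $\bX_w$ and hence $\overline{\bX_w}$ (the latter because $P$ acts by homeomorphisms on $\mathscr{P}(\bX)$ and closure is preserved under homeomorphisms), one has $G\bX_w = G_0 P \bX_w = G_0 \bX_w$ and likewise $G\overline{\bX_w} = G_0\overline{\bX_w}$. So it suffices to prove $G_0\overline{\bX_w} = \overline{G_0\bX_w}$ for the \emph{compact} group $G_0$.

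Next I would prove the two inclusions. The inclusion $G_0\overline{\bX_w} \subseteq \overline{G_0\bX_w}$ is the easy one: each $g \in G_0$ acts as a homeomorphism of $\mathscr{P}(\bX)$, so $g\overline{\bX_w} = \overline{g\bX_w} \subseteq \overline{G_0\bX_w}$, and taking the union over $g \in G_0$ gives the claim. The reverse inclusion $\overline{G_0\bX_w} \subseteq G_0\overline{\bX_w}$ is where the compactness is essential: this is precisely the content of \cite[Corollary 2.1.16]{EqDCapTwo}, which asserts that for a compact $p$-adic Lie group acting continuously, the closure of an orbit of a subset equals the orbit of its closure, i.e. $\overline{G_0 S} = G_0\overline{S}$ for any $S$; apply it with $S = \bX_w$. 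Combining the two inclusions yields $G_0\overline{\bX_w} = \overline{G_0\bX_w}$, and unwinding the reduction gives $G\overline{\bX_w} = \overline{G\bX_w}$; since $\overline{G\bX_w}$ is by definition closed, this shows the $G$-orbit of $\overline{\bX_w}$ is closed in $\mathscr{P}(\bX)$.

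The main obstacle I anticipate is purely bookkeeping rather than conceptual: one must make sure that the compactness hypothesis in \cite[Corollary 2.1.16]{EqDCapTwo} is genuinely available, which is why the reduction from $G$ to the compact open subgroup $G_0$ via $G = G_0 P$ is the crucial first move — without cocompactness of $P$ (Lemma \ref{GPcocompact}) the statement can fail, since closures of orbits under non-compact groups need not be orbits of closures. A secondary, minor point to check carefully is that $G\bX_w = G_0\bX_w$, which uses only $P\bX_w = \bX_w$ (i.e. $P = G_{\bX_w}$, with $g\bX_w = \bX_w$ for $g \in P$ following as in Lemma \ref{lem-gYstab} since $\bX$ is quasi-compact here); everything else is formal manipulation of closures under homeomorphisms.
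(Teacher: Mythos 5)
Your proof is correct and takes essentially the same route as the paper: both exploit the cocompactness of $P$ (Lemma \ref{GPcocompact}) together with a closedness/closure-preservation result for continuous actions of compact groups on $\mathscr{P}(\bX)$. The only difference is in how the reduction to compactness is carried out --- you pass through the Iwasawa decomposition $G = G_0P$ and apply \cite[Corollary 2.1.16]{EqDCapTwo} once, whereas the paper writes $G = \bigcup_{i=1}^m Hg_iP$ as a finite double coset decomposition (finite because $G/P$ is compact) and applies \cite[2.1.15]{EqDCapTwo} to each compact piece $Hg_i\overline{\bX_w}=g_iH^{g_i}\overline{\bX_w}$; this uses a marginally weaker structural input but is the same underlying idea.
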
 
\begin{proof} Because $\overline{G \bX_w}$ is a closed and $G$-stable subset of $\sP(\bX)$ which contains $\bX_w$, it also contains $G \overline{\bX_w}$. For the reverse inclusion it suffices to show that $G \overline{\bX_w}$ is closed.  The subset $\X_w$ of $\X$ is stabilized by $\P\subset\G$. Hence $\bX_w$ and $\overline{\bX_w}$ are stabilized by $P$. Now $G/P$ is compact by Lemma \ref{GPcocompact}, so if $H$ is any open compact subgroup of $G$ then $H \backslash G / P$ is finite by \cite[2.2.1]{EqDCapTwo}. Choose $g_1,\ldots,g_m \in G$ such that $G = H g_1 P \cup \cdots \cup H g_m P$; then $G \overline{\bX_w} = \cup_{i=1}^m H g_i \overline{\bX_w}$ is a finite union of the sets $H g_i \overline{\bX_w} = g_i H^{g_i} \overline{\bX_w}$ which are closed by \cite[2.1.15]{EqDCapTwo}, and is therefore itself closed. \end{proof}
 
As before, let $\cO$ be the classical BGG category relative to $\fr{b}\subseteq\fr{g}$
and consider the irreducible module $L_w:=L(-w(\rho)-\rho)\in\cO_0$.
Since $\P$ stabilizes $\X_w$, one has $L_w\in\cO^{\mathfrak{p}}$ for $\fr{p}=\Lie(\P)$. 
\begin{prop}\label{prop-supp} One has
 \[ \Supp E_{\bX} (\rho^{*}\cL_w)=\overline{\bX_w} \]
  for any $w\in W$.
\end{prop}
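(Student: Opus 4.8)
The strategy is to reduce the statement to the corresponding algebraic fact about the support of the localisation $\Loc^{U(\fr{g})}_\X(L_w)$ on $\X$, and then transport it through analytification and the extension functor using the support-preservation results established in Section~2. First I would recall that $\cL_w = \Loc^{U(\fr{g})}_\X(L_w)$ is a coherent $\cD_\X$-module on $\X$, since $L_w$ is a finitely generated $U(\fr{g})_0$-module and Beilinson--Bernstein localisation preserves coherence. By Corollary~\ref{cor-supportpreserved}, one has
\[ \Supp E_{\bX}(\rho^*\cL_w) = \tilde\rho^{-1}(\Supp \cL_w), \]
so the problem is to identify $\Supp \cL_w \subseteq \X$ with the Zariski closed subset $\X_w$, and then to check that $\tilde\rho^{-1}(\X_w) = \overline{\bX_w}$ in $\sP(\bX)$. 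The latter is exactly Lemma~\ref{lem-zariskiclosedalg} applied to the closed subset $\Y = \X_w$ of $\X$ (with $\bX_w = \rho^{-1}(\X_w)$), so once the algebraic support is computed the rest is immediate.

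Thus the heart of the argument is the purely algebraic claim $\Supp_{\cD_\X}\Loc^{U(\fr{g})}_\X(L_w) = \X_w$. This is classical: the localisation of the simple highest-weight module $L(-w(\rho)-\rho)$ at the trivial infinitesimal character is the irreducible $\cD_\X$-module whose support is the closure of the Bruhat cell $C_w$, i.e.\ the Schubert variety $\X_w$. I would cite this via the Beilinson--Bernstein correspondence between the block $\cO_0$ and $\cD_\X$-modules (e.g.\ \cite{HTT}, around the discussion of the Kazhdan--Lusztig picture): simple objects of $\cO_0$ correspond to the intersection-cohomology $\cD$-modules $\mathrm{IC}(\X_w)$ of Schubert varieties, and $\Supp\, \mathrm{IC}(\X_w) = \X_w$. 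One must be mildly careful about the indexing convention — whether $L(-w(\rho)-\rho)$ corresponds to $\X_w$ or to $\X_{w_0 w}$ or similar — but the normalisation is fixed precisely so that the hypothesis ``$\P$ is the stabiliser of $\X_w$'' is consistent with $L_w \in \cO^{\fr p}$, which forces the orbit $C_w$ to be the open dense $\B$-orbit in $\X_w$; this pins down the support as $\X_w$.

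Putting the pieces together: $\Supp \cL_w = \X_w$ by the algebraic input; hence $\Supp E_\bX(\rho^*\cL_w) = \tilde\rho^{-1}(\X_w)$ by Corollary~\ref{cor-supportpreserved}; and $\tilde\rho^{-1}(\X_w) = \overline{\rho^{-1}(\X_w)} = \overline{\bX_w}$ by Lemma~\ref{lem-zariskiclosedalg}. The main obstacle is not any analytic subtlety — those are all dispatched by Section~2 — but rather making the algebraic support computation precise and correctly normalised, in particular verifying that the convention $L_w = L(-w(\rho)-\rho)$ together with $\P = \mathrm{Stab}_\G(\X_w)$ really does yield $\X_w$ (and not some translate or dual Schubert variety) as the support of the localised module. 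Once the conventions are aligned with those of Beilinson--Bernstein as recorded in \cite{HTT,BB}, the proof is short.
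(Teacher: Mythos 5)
Your proposal is correct and follows essentially the same route as the paper: quote the classical Beilinson--Bernstein fact that $\Supp \cL_w = \X_w$ (the paper cites \cite[12.3.2]{HTT}), then apply Corollary \ref{cor-supportpreserved} and Lemma \ref{lem-zariskiclosedalg} to get $\Supp E_{\bX}(\rho^*\cL_w)=\tilde\rho^{-1}(\X_w)=\overline{\bX_w}$. Your extra care about the indexing convention is fine but not needed beyond the citation.
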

\begin{proof}
Let $\cL_w= \Loc^{U(\fr{g})}_\X (L_w)$.  
It is well-known that 
 $\Supp \cL_w=\X_w$ \cite[12.3.2]{HTT}. Then Lemma \ref{lem-zariskiclosedalg} and Corollary \ref{cor-supportpreserved} imply
\[ \Supp E_{\bX} (\rho^{*}\cL_w)= \tilde{\rho}^{-1}(\X_w)=\overline{\bX_w} \]
  for the 
  canonical map $\tilde{\rho}: \mathscr{P}(\bX)\rightarrow \X$.  
\end{proof}
\begin{lem}\label{SuppInd} We have $G  \Supp \cN  \subseteq \Supp \ind_P^G \cN \subseteq  \overline{G  \Supp \cN}$ for all $\cN \in \cC_{\bX/P}$. 
\end{lem}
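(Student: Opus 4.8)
The plan is to establish the two inclusions separately. Since $\cM=\ind_P^G\cN$ is $G$-equivariant, $\Supp\cM$ is a $G$-stable subset of $\sP(\bX)$; and since $\cN$ is $P$-equivariant, $\Supp\cN$ is $P$-stable, so that $G\Supp\cN$ and its closure $\overline{G\Supp\cN}$ in $\sP(\bX)$ are well defined and $G$-stable, the latter being closed. It therefore suffices to prove $\Supp\cN\subseteq\Supp\cM$ for the first inclusion (and then apply $G$), and $\Supp\cM\subseteq\overline{G\Supp\cN}$ for the second. Throughout I would use that the sets $\tilde{\bV}$, for $\bV$ admissible open in $\bX$, form a basis of $\sP(\bX)$, and that $\tilde{\bV}$ is quasi-compact when $\bV$ is an affinoid subdomain; the latter gives the following key observation: if $\bV$ is an affinoid subdomain with $\tilde{\bV}\cap\Supp\cF=\emptyset$ for an abelian sheaf $\cF$ on $\bX$, then $\cF|_\bV=0$ (a section of $\tilde{\cF}$ over the quasi-compact $\tilde{\bV}$ is killed on finitely many opens covering $\tilde{\bV}$).

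For the first inclusion I would use the canonical natural morphism $\alpha_\cN\colon\cN\to\cM$ in $\Frech(P - \cD_\bX)$ constructed in \S\ref{IndRes}. For $\bU\in\bX_w(\cT)$ with a $\bU$-small subgroup $H$ of $G_\bU$, the map $\alpha_\cN(\bU)$ is by construction the composite of $\cN(\bU)\to\w\cD(\bU,H)\wotimes{\w\cD(\bU,H\cap P)}\cN(\bU)$, $x\mapsto 1\w\otimes x$, with the inverse of the canonical isomorphism \cite[2.2.9 (a)]{EqDCapTwo} and with the inclusion of this module as the summand indexed by the double coset $HP$ in the decomposition \cite[2.2.12]{EqDCapTwo} of $\cM(\bU)$. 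Since $\w\cD(\bU,H)$ is faithfully flat over $\w\cD(\bU,H\cap P)$ (which, via $\w\cD(\bU,H)\cong\w\cD(\bU,H\cap P)\wotimes{K[H\cap P]}K[H]$, comes down to the fact that $K[H\cap P]$ is a topological direct summand of $K[H]$ as a right $K[H\cap P]$-module, $H\cap P$ being a closed subgroup of the compact $p$-adic Lie group $H$), the first map, and hence $\alpha_\cN(\bU)$, is injective. Thus $\alpha_\cN$ is injective as a morphism of sheaves, so it induces injections of stalks $\tilde{\cN}_x\hookrightarrow\tilde{\cM}_x$ for every $x\in\sP(\bX)$ (filtered colimits being exact), whence $\Supp\cN\subseteq\Supp\cM$.

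For the second inclusion I would show that $\cM(\bV)=0$ for every affinoid $\bV\in\bX_w(\cT)$ with $\tilde{\bV}\cap G\Supp\cN=\emptyset$; since $\overline{G\Supp\cN}$ is closed, choosing for each $x\notin\overline{G\Supp\cN}$ such a $\bV$ with $\tilde{\bV}\ni x$ then yields $\tilde{\cM}_x=0$, and $\Supp\cM\subseteq\overline{G\Supp\cN}$ follows. Fix a $\bV$-small subgroup $H$ of $G_\bV$. By \cite[2.2.12]{EqDCapTwo}, $\cM(\bV)$ is assembled, through inverse limits over such $H$ and finite direct sums over double cosets, from modules of the form $\w\cD(\bV,H)\wotimes{\w\cD(\bV,H\cap {}^sP)}[s]\cN(s^{-1}\bV)$ with $s\in G$ ranging over a double coset $HsP$. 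Writing $s=h\,s'\,p$ with $h\in H\subseteq G_\bV$ and $p\in P$, one has $s^{-1}\bV=p^{-1}(s')^{-1}\bV$, so $\widetilde{s^{-1}\bV}=p^{-1}(s')^{-1}\tilde{\bV}$ meets $\Supp\cN$ exactly when $\tilde{\bV}$ meets $s'p\cdot\Supp\cN=s'\cdot\Supp\cN\subseteq G\Supp\cN$, which it does not; hence $\cN|_{s^{-1}\bV}=0$ by the quasi-compactness observation above, so $[s]\cN(s^{-1}\bV)=\cN(s^{-1}\bV)=0$, every such module vanishes, and therefore $\cM(\bV)=0$.

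The step I expect to be the main obstacle is the injectivity of $\alpha_\cN$ underlying the first inclusion: it rests on the flatness of the completed crossed product $\w\cD(\bU,H)$ over $\w\cD(\bU,H\cap P)$, and since $H\cap P$ need not be open in $H$ (for instance when $P$ is a Borel subgroup of $G$), one cannot reduce to the elementary finite-index situation but must appeal to flatness properties of completed group algebras over closed, not necessarily open, subgroups. Once that is in place, the second inclusion is essentially bookkeeping with the local formula for $\ind_P^G$ together with the quasi-compactness observation about $\tilde{\bV}$.
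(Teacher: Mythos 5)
Your argument follows essentially the same route as the paper's: the first inclusion via injectivity of $\alpha_{\cN}$ on local sections, which the paper gets from the faithful flatness of $\w\cD(\bU,H)$ over $\w\cD(\bU,H\cap P)$ (citing the proof of \cite[Lemma 2.5.3]{EqDCapTwo} and universal injectivity of faithfully flat maps), and the second inclusion via the local double-coset formula \cite[2.2.12]{EqDCapTwo} together with the vanishing of $\cN(s^{-1}\bU)$ when $\widetilde{s^{-1}\bU}$ misses $\Supp\cN$. The only difference is that where you sketch (somewhat loosely) a direct-summand justification of the flatness and rightly flag it as the delicate point, the paper simply invokes the existing lemma, so your proposal is correct and matches the paper's argument.
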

\begin{proof}Recall the map $\alpha_{\cN} : \cN \to \ind_P^G \cN$ from $\S \ref{IndRes}$. Using the proof of \cite[Lemma 2.5.3]{EqDCapTwo} together with the standard argument that shows that a faithfully-flat ring map is universally injective \cite[\href{https://stacks.math.columbia.edu/tag/05CK}{Tag 05CK}]{stacks-project}  we see that $\alpha_{\cN}$ is injective on spaces of local sections. Therefore it is also injective on stalks, which implies
\[ \Supp \cN \quad \subseteq \quad \Supp \ind_P^G \cN.\]
Since the set on the right hand side is clearly $G$-stable, we obtain the first inclusion. For the second inclusion, let $\bU \in \bX_w(\cT)$ be such that $(G  \Supp \cN) \cap \tilde{\bU}$ is empty. Then $\Supp \cN \cap G \tilde{\bU} = \emptyset$ as well, so $\cN(s^{-1}\bU) = 0$ for all $s \in G$. We can now conclude from \cite[2.2.12]{EqDCapTwo} that $(\ind_P^G \cN)(\bU) = 0$ for every such $\bU$. Now if $x \in \Supp \ind_P^G\cN$ but $x$ does not lie in the closure of $G  \Supp \cN$, then we can find an open neighbourhood $\tilde{\bU}$ of $x$ such that $(G  \Supp \cN) \cap \tilde{\bU}$ is empty. Then $(\ind_P^G \cN)_x \neq 0$ but $(\ind_P^G\cN)(\bU) = 0$ by the above --- a contradiction. So $x \in \overline{G  \Supp \cN}$ as required.
\end{proof}
\begin{thm}\label{thm-support} $\Supp \Loc^{D(G,K)}_{\bX} (\cF_{P}^G(L_w)')= G \overline{\bX_w}$.
\end{thm}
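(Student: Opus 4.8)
The plan is to combine Theorem \ref{thm-compatible} with the support computations already established, then apply Lemma \ref{SuppInd} and Lemma \ref{lem-Gschubert}. First I would invoke Theorem \ref{thm-compatible}: applied to the simple object $L_w \in \cO_0^P$ (which lies in $\cO_0^P$ because $\P$ stabilizes $\X_w$, so $L_w \in \cO^{\fr p}$, and because $L_w$ has trivial infinitesimal character), it gives a natural isomorphism in $\cC_{\bX/G}$
\[ \Loc^{D(G,K)}_\bX(\cF_P^G(L_w)') \;\cong\; \ind_P^G\bigl(E_\bX \circ \Loc^{U(\fr g)}_\bX(L_w)\bigr) \;=\; \ind_P^G\bigl(E_\bX(\rho^*\cL_w)\bigr),\]
where $\cL_w = \Loc^{U(\fr g)}_\X(L_w)$. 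Hence it suffices to compute the support of $\ind_P^G(\cN)$ for $\cN := E_\bX(\rho^*\cL_w) \in \cC_{\bX/P}$.

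Next I would recall from Proposition \ref{prop-supp} that $\Supp \cN = \overline{\bX_w}$ (closure taken in $\sP(\bX)$). Now apply Lemma \ref{SuppInd} with this $\cN$: it yields
\[ G\,\overline{\bX_w} \;=\; G\,\Supp \cN \;\subseteq\; \Supp \ind_P^G\cN \;\subseteq\; \overline{G\,\Supp\cN} \;=\; \overline{G\,\overline{\bX_w}}.\]
The key point is that both ends of this chain coincide. For this I would use Lemma \ref{lem-Gschubert}, which states precisely that $G\,\overline{\bX_w}$ is closed in $\sP(\bX)$; therefore $\overline{G\,\overline{\bX_w}} = G\,\overline{\bX_w}$, and the chain of inclusions collapses to give $\Supp \ind_P^G\cN = G\,\overline{\bX_w}$.

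Combining these, $\Supp \Loc^{D(G,K)}_\bX(\cF_P^G(L_w)') = \Supp \ind_P^G(\cN) = G\,\overline{\bX_w}$, which is the assertion (noting that $\overline{\bX_w}$ in the statement denotes the closure of $\bX_w$ in $\sP(\bX)$, consistent with the notation fixed just before Lemma \ref{GPcocompact}). The only mild subtlety — and the step I would be most careful about — is confirming that the isomorphism from Theorem \ref{thm-compatible} genuinely identifies $\Loc^{D(G,K)}_\bX(\cF_P^G(L_w)')$ with $\ind_P^G(E_\bX \Loc^{U(\fr g)}_\bX L_w)$ as objects of $\cC_{\bX/G}$ (so that their supports agree); this is immediate from the commutativity of the square in that theorem applied to the object $\uM = L_w$, once one knows $L_w \in \cO_0^P$, which is exactly the content of Lemma \ref{lem-principalblock}. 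Everything else is a direct citation, so there is no real obstacle beyond assembling the pieces in the right order.
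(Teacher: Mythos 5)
Your proposal is correct and follows essentially the same route as the paper: reduce via Theorem~\ref{thm-compatible}, compute $\Supp E_{\bX}(\rho^{*}\cL_w)=\overline{\bX_w}$ via Proposition~\ref{prop-supp}, sandwich the support with Lemma~\ref{SuppInd}, and close the gap with Lemma~\ref{lem-Gschubert}. The only (purely cosmetic) difference is that you write the upper bound as $\overline{G\,\overline{\bX_w}}$ before identifying it with $G\overline{\bX_w}$, while the paper writes it directly as $\overline{G\bX_w}$; these are the same set, and your explicit citation of Lemma~\ref{lem-principalblock} to justify $L_w\in\cO_0^P$ is a welcome bit of extra care.
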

\begin{proof}   Let $\cL_w= \Loc^{U(\fr{g})}_\X (L_w)$.  
According to Theorem \ref{thm-compatible} the statement amounts to 
 \[\Supp \ind_{P}^G \circ E_{\bX} (\rho^{*}\cL_w)= G \overline{\bX_w}.\] According to Proposition \ref{prop-supp}, we have 
  \[ \Supp E_{\bX} (\rho^{*}\cL_w)=\overline{\bX_w}. \]
Applying Lemma \ref{SuppInd}, we see that 
   \[ G  \overline{\bX_w} \quad\subseteq \quad \Supp \ind_{P}^G \circ E_{\bX} (\rho^{*}\cL_w) \quad \subseteq \quad \overline{G  \bX_w}\]
  But the right-hand side equals $G\overline{\bX_w}$ by Lemma \ref{lem-Gschubert}. 
   \end{proof}
\begin{ex}
We discuss the two extreme cases $w=w_{o}$ and $w=1$.
In the first case, the module $L_{w_o}=L(0)$ equals the trivial 
$\mathfrak{g}$-representation and $P=G$. Then $\cF_{P}^G(L_w)$ equals the trivial one-dimensional $G$-representation. One has $\X_{w_o}=\X$ and so $G\overline{\bX}_{w_o}=\mathscr{P}(\bX)$.
\vskip5pt
 In the second case, the module $L_1=L(-2\rho)$ equals the antidominant Verma module $M(-2\rho)$ and $P$ is a minimal parabolic. If $K=L$, then $\cF_{P}^G(L_w)$ equals the principal series $G$-representation $\ind_B^G (2\rho)$ induced from the algebraic character $2\rho$ of the maximal torus $T$. Moreover, $\X_1= \B /\B \subseteq \G/\B=\X$ is the base point determined by $\B$ and $G\overline{\bX}_1=\X(L)$ equals the set of $L$-rational points of $\X$, viewed as a subset of $\mathscr{P}(\bX)$.
\end{ex}

\subsection{Geometric proofs of irreducibility}
We keep the notation from the preceding subsection. 

\begin{prop}\label{prop-irred} Let $w\in W$ and $\cL_w= \Loc^{U(\fr{g})}_\X (L_w)$.
\be 

\item The coadmissible $\w{U}(\fr{g})$-module $\w{L}_w$ is irreducible.
\item The coadmissible  $\w\cD_{\bX}$-module $\Loc^{\w{U}(\fr{g})}_\bX(\w{L}_w)$ is irreducible. 
\item Let $P\subseteq G$ be a parabolic subgroup with $L_w\in\cO_0^P$.   Then $E_{\bX} (\rho^{*}\cL_w)$ is an irreducible object in $\cC_{\bX/P}$.
\ee
\end{prop}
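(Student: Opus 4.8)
The plan is to establish the three assertions in sequence, propagating the simplicity of $L_w=L(-w(\rho)-\rho)$ through the extension and localization functors. For part (a), recall that $L_w$ is a nonzero simple object of $\cO=\cO^{\fr{b}}$. By Proposition \ref{prop-extension}, the functor $E_{\fr{g}}$ is exact and faithful, so $\w{L}_w=E_{\fr{g}}(L_w)$ is nonzero, and being the extension of a finitely generated $U(\fr{g})$-module it is $\w{U}(\fr{g})$-coadmissible; moreover $E_{\fr{g}}$ restricts to an equivalence $\cO\congs\w{\cO}$ onto a subcategory $\w{\cO}\subseteq\cC_{\w{U}(\fr{g})}$ closed under passage to subobjects. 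Hence, if $\w{N}$ is a nonzero coadmissible $\w{U}(\fr{g})$-submodule of $\w{L}_w$, then $\w{N}\in\w{\cO}$, so $\w{N}=E_{\fr{g}}(N)$ for a unique $N\in\cO$ and the inclusion $\w{N}\hookrightarrow\w{L}_w$ is $E_{\fr{g}}(\iota)$ for some $\iota\colon N\to L_w$ in $\cO$. Exactness together with faithfulness of $E_{\fr{g}}$ force $\iota$ to be a monomorphism: its kernel $K$ satisfies $E_{\fr{g}}(K)=\ker E_{\fr{g}}(\iota)=0$, hence $K=0$. Since $L_w$ is simple and $N\neq 0$, $\iota$ is an isomorphism, so $\w{N}=\w{L}_w$; thus $\w{L}_w$ is irreducible.

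For part (b), since $\fr{m}_0\subseteq Z(\fr{g})$ is central in $\w{U}(\fr{g})$ and $\fr{m}_0 L_w=0$, also $\fr{m}_0\w{L}_w=0$, so $\w{L}_w$ is a nonzero object of $\cC_{\w{U}(\fr{g}),0}$. By Theorem \ref{thm-localizationg} the functor $\Loc^{\w{U}(\fr{g})}_\bX$ is an equivalence $\cC_{\w{U}(\fr{g}),0}\congs\cC_{\bX}$, and an equivalence of abelian categories preserves the lattice of subobjects, hence simplicity; therefore $\Loc^{\w{U}(\fr{g})}_\bX(\w{L}_w)$ is irreducible in $\cC_{\bX}$ by part (a).

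For part (c), the hypothesis $L_w\in\cO_0^P$ ensures, via Theorem \ref{thm-compatible}, that $E_\bX(\rho^*\cL_w)=(E_\bX\circ\Loc^{U(\fr{g})}_\bX)(L_w)$ is a well-defined object of $\cC_{\bX/P}$. On the other hand, the proof of Theorem \ref{thm-compatible} records the identity of $\w\cD_\bX$-modules
$(E_\bX\circ\Loc^{U(\fr{g})}_\bX)(L_w)=\w\cD_\bX\otimes_{U(\fr{g})}L_w=\Loc^{\w{U}(\fr{g})}_\bX(\w{L}_w)$;
that is, the image of $E_\bX(\rho^*\cL_w)$ under the functor $\Res^P_1\colon\cC_{\bX/P}\to\cC_{\bX}$ forgetting the $P$-equivariant structure is the simple object of part (b). Since $\Res^P_1$ is exact and sends a subobject of $E_\bX(\rho^*\cL_w)$ in $\cC_{\bX/P}$ — that is, a $P$-equivariant coadmissible $\w\cD_\bX$-submodule — to a subobject in $\cC_{\bX}$, and since that image is simple and nonzero, $E_\bX(\rho^*\cL_w)$ admits no nonzero proper subobject in $\cC_{\bX/P}$ and is itself nonzero; hence it is irreducible in $\cC_{\bX/P}$.

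The argument is a chain of transports along functors, so there is no single deep obstacle; the points that require care are, in part (a), verifying that the equivalence $E_{\fr{g}}$ genuinely carries the inclusion $\w{N}\hookrightarrow\w{L}_w$ to a monomorphism in $\cO$ (the kernel computation above), and, in part (c), that the identification supplied by Theorem \ref{thm-compatible} is really one of $\w\cD_\bX$-modules, so that the forgetful-functor argument applies verbatim. Both are immediate once the cited results are unwound.
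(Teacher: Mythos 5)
Your proof is correct and follows essentially the same route as the paper: part (a) via the equivalence and closure-under-subobjects properties of $E_{\fr{g}}$ from Proposition \ref{prop-extension}, part (b) via the localization equivalence of Theorem \ref{thm-localizationg}, and part (c) via the identification $E_{\bX}(\rho^{*}\cL_w)=\Loc^{\w{U}(\fr{g})}_\bX(\w{L}_w)$ as $\w\cD_{\bX}$-modules, so that simplicity of the underlying non-equivariant module forces simplicity in $\cC_{\bX/P}$. You merely spell out details (the kernel argument in (a), the forgetful-functor step in (c)) that the paper leaves implicit.
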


\begin{proof} Part (a) follows from the equivalence of categories
\ref{prop-extension}. This implies (b) by the localization equivalence 
 $\Loc^{\w{U}(\fr{g})}_\bX$, cf. \ref{thm-localizationg}. Finally, (c) follows from (b), since
  \[E_{\bX} (\rho^{*}\cL_w)=E_{\bX}\circ \Loc_{\bX}^{U(\fr{g})}(L_w)=\w\cD_{\bX}\utimes{U(\fr{g})} L_w=
\Loc^{\w{U}(\fr{g})}_\bX(\w{L}_w)\]
as $\w\cD_{\bX}$-modules.
\end{proof}

Let $P_w\subseteq G$ be a parabolic subgroup which is {\it maximal} for $L_w$ in the sense of Orlik-Stauch's \cite[Definition 5.2]{OrlikStrauchJH}. Letting $\mathfrak{p}_w=\Lie(P_w)\otimes_L K$, this means $L_w\in\cO^{\mathfrak{p}_w}$, but $L_w\notin\cO^{\mathfrak{q}}$ for any parabolic $\mathfrak{p}_w\subsetneq\mathfrak{q}$ strictly containing 
$\mathfrak{p}_w.$ Note that $L_w\in\cO^{\mathfrak{p}}$ for some $\fr{p}=\Lie(\P)$, implies that $\cL_w$ is $\P$-equivariant, whence $\P$ stabilizes $\X_w$. In particular, the stabilizer of 
$\bX_w$ in $G$ equals $P_w$. 

\vskip5pt 
In the main theorem \cite[Theorem 5.3]{OrlikStrauchJH} Orlik-Strauch show that
\[V_w:=\cF_{P_w}^G(L_w)\] is an irreducible locally analytic $G$-representation provided that (H1) 
$K=L$, i.e. $\G=\G_L$ is $L$-split, 
and (H2) that $p>2$ if the root
system of $\G$ has irreducible components of type $B$, $C$ or $F_4$, and $p>3$ if the root system has
irreducible components of type $G_2$. Their argument relies on
the delicate calculation of explicit formulae for the action of certain nilpotent generators on highest weight modules of the BGG category $\cO$, cf. \cite[Appendix]{OrlikStrauchJH}.

\vskip5pt

We deduce the irreducibility of $V_w$ in a geometric way 
that does not need the two hypotheses (H1) and (H2), whenever the geometric
conditions (a), (b) and (c) from Theorem \ref{MainResult2} are satisfied for the analytic Schubert variety $\bX_w$.

\begin{thm} Let $w\in W$. Suppose the following three conditions. 
\be 
\item $G\bZ_{w}=G_0\bZ_{w}$ with $G_0\subset G$ some compact open subgroup such that $G=G_0P_w$.
\item $\bX_{w}\setminus G\bZ_w$ is connected.
\item $\bX_{w}$ is smooth.
\ee
 The locally analytic $G$-representation $\cF_{P_w}^G(L_w)$ is irreducible.
\end{thm}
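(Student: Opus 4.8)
The plan is to deduce this from Theorem \ref{MainResult2} by running the localization dictionary. First I would move to the coadmissible side: by Schneider--Teitelbaum duality \cite{ST}, the locally analytic $G$-representation $\cF_{P_w}^G(L_w)$ is topologically irreducible if and only if the coadmissible $D(G,K)$-module $\cF_{P_w}^G(L_w)'$ is simple. Since $P_w$ is maximal for $L_w$ we have $L_w\in\cO^{\fr{p}_w}$, and as $L_w$ is simple in $\cO_0$, Lemma \ref{lem-principalblock} gives $L_w\in\cO_0^{P_w}$; hence $\fr{m}_0$ annihilates $\cF_{P_w}^G(L_w)'=D(G,K)\utimes{D(\fr{g},P_w)}L_w$, so this module lies in $\cC_{D(G,K),0}$. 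Applying the localization equivalence $\Loc^{D(G,K)}_{\bX}\colon\cC_{D(G,K),0}\congs\cC_{\bX/G}$ (the case $P=G$ of Theorem \ref{thm-localizationP}, i.e. \cite[6.4.9]{EqDCap}), simplicity of $\cF_{P_w}^G(L_w)'$ is equivalent to simplicity of $\Loc^{D(G,K)}_{\bX}(\cF_{P_w}^G(L_w)')$ in $\cC_{\bX/G}$.

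By the compatibility Theorem \ref{thm-compatible} there is a natural isomorphism
\[ \Loc^{D(G,K)}_{\bX}\big(\cF_{P_w}^G(L_w)'\big)\;\cong\;\ind_{P_w}^G\big(E_{\bX}\circ\Loc^{U(\fr{g})}_{\bX}(L_w)\big)\]
in $\cC_{\bX/G}$, so it remains to see that the right-hand side is simple, which I want to read off from Theorem \ref{MainResult2}. The crucial point is the identification
\[ E_{\bX}\circ\Loc^{U(\fr{g})}_{\bX}(L_w)\;\cong\; i_+^{P_w}\cO_{\bX_w}\qmb{in}\cC_{\bX/P_w},\]
where $i\colon\bX_w\hookrightarrow\bX$ is the closed immersion. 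Indeed $\Loc^{U(\fr{g})}_{\bX}(L_w)=\rho^*\cL_w$ with $\cL_w=\Loc^{U(\fr{g})}_{\X}(L_w)$, a simple coherent $\cD_{\X}$-module with $\Supp\cL_w=\X_w$ by \cite[12.3.2]{HTT}; since $\bX_w$ (hence $\X_w$) is smooth by hypothesis (c) and the localization of $L_w$ carries trivial coefficients along the open Bruhat cell, classical Kashiwara forces $\cL_w\cong i^{\rm cl}_{\X_w,+}\cO_{\X_w}$. Analytification commutes with pushforward along a closed immersion, so $\rho^*\cL_w\cong\cB^{\rm cl}_{\bX_w\mid\bX}$, and then $E_{\bX}(\cB^{\rm cl}_{\bX_w\mid\bX})\cong\cB_{\bX_w\mid\bX}=i_+\cO_{\bX_w}$ exactly as in the proof of Proposition \ref{prop_SelfDual}; since $\cL_w$ is $\P$-equivariant for the algebraic stabiliser $\P$ of $\X_w$, and $\rho^*$, $E_{\bX}$ preserve equivariance, the $P_w$-equivariant structure is the one coming from the equivariant Kashiwara equivalence \cite[Theorem B]{EqDCapTwo}.

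Granting this, conditions (a), (b) and (c) are precisely the hypotheses of Theorem \ref{MainResult2} applied to the data $(\bX,G_0,\bX_w,\bZ_w,i_+^{P_w}\cO_{\bX_w})$, so $\cM=\ind_{P_w}^G(i_+^{P_w}\cO_{\bX_w})\cong\Loc^{D(G,K)}_{\bX}(\cF_{P_w}^G(L_w)')$ is simple in $\cC_{\bX/G}$; unwinding the equivalences, $\cF_{P_w}^G(L_w)'$ is a simple $D(G,K)$-module and therefore $\cF_{P_w}^G(L_w)$ is a topologically irreducible locally analytic $G$-representation. I expect the main obstacle to be the identification $E_{\bX}\circ\Loc^{U(\fr{g})}_{\bX}(L_w)\cong i_+^{P_w}\cO_{\bX_w}$: pinning down that the algebraic localization of $L_w$ is $i_+\cO$ rather than a nontrivial intermediate extension is exactly the step where smoothness of $\bX_w$ is used and cannot be dropped, and one must also check that the equivariant structures match so that Theorem \ref{MainResult2} applies verbatim. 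Everything else is a formal passage through the already-established equivalences (Theorems \ref{thm-compatible}, \ref{thm-localizationP}, \ref{MainResult2} and Proposition \ref{prop_SelfDual}).
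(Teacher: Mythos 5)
Your proposal is correct and follows essentially the same route as the paper: reduce via the localization equivalence and Theorem \ref{thm-compatible} to the simplicity of $\ind_{P_w}^G\big(E_{\bX}(\rho^*\cL_w)\big)$, identify $E_{\bX}(\rho^*\cL_w)$ with $i_+^{P_w}\cO_{\bX_w}$ using smoothness of $\bX_w$ (the paper asserts $\rho^*\cL_w=i_+^{\rm cl}\cO_{\bX_w}$ and matches the equivariant structure via Proposition \ref{prop_resG1}, where you argue by Kashiwara plus functoriality of $\rho^*$ and $E_{\bX}$ -- the same content), and then apply Theorem \ref{MainResult2}. The only presentational difference is that you make explicit the Schneider--Teitelbaum duality step that the paper leaves implicit.
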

 \begin{proof}
 By the localization equivalence \ref{thm-localizationP} it suffices to check that
 $\Loc^{D(G,K)}_\bX(\cF_{P_w}^G(L_w)')$ is irreducible in $\cC_{\bX/G}$. According to 
 \ref{thm-compatible}, 
 \[\Loc^{D(G,K)}_\bX(\cF_{P_w}^G(L_w)') \simeq \ind_{P_w}^G \circ E_{\bX} (\rho^{*}\cL_w)\]in 
 $\cC_{\bX/G}$. Let $i: \bX_w\hookrightarrow \bX$ be the inclusion. Note that $\rho^{*}\cL_w=i_+^{\rm cl}\cO_{\bX_w}$, where $i_+^{\rm cl}$ denotes the classical push-forward from  $\Hol(\cD_{\bX_w})$ to $\Hol(\cD_{\bX})$, compare Proposition \ref{prop_SelfDual} and its proof.
 Since $i_+^{\rm cl}$ commutes with $E_{\bX_w}$ and $E_{\bX}$, we have $E_{\bX} (\rho^{*}\cL_w) =i_+\cO_{\bX_w}$. Now $\Res^{P_w}_1i_+^{P_w}\cO_{\bX_w}\simeq i_+\cO_{\bX_w},$
as $\wideparen{\mathcal{D}}_\bX$-modules by Proposition \ref{prop_resG1}. This implies 
$$\cN:=i_+^{P_w}\cO_{\bX_w}=E_{\bX} (\rho^{*}\cL_w) $$ as objects in $\cC_{\bX_w/{P_w}}$. But $\ind_{P_w}^G \cN$ is irreducible in $\cC_{\bX/G}$ by Theorem \ref{MainResult2}.  
\end{proof}

\bibliographystyle{plain}
\bibliography{references}

\end{document}